\definecolor{boubelcolor}{rgb}{.65,0.05,0}
\theoremstyle{plain}
\newtheorem{themBJ}{Theorem}
\newtheorem{them}{Theorem}[section]
\newtheorem{pro}[them]{Proposition}
\newtheorem{pronotation}[them]{Proposition/Notation}
\newtheorem{prodefinition}[them]{Proposition/Definition}
\newtheorem{lemnotation}[them]{Lemma/Notation}
\newtheorem{cor}[them]{Corollary}
\newtheorem{lem}[them]{Lemma}
\theoremstyle{definition}
\newtheorem{vocabulary}[them]{Vocabulary}
\newtheorem{defi}[them]{Definition}
\newtheorem{convention}[them]{Convention}
\newtheorem{notation}[them]{Notation} 
\newtheorem{remnotation}[them]{Remark/Notation} 
\newtheorem{remdefi}[them]{Remark/Definition}
\newtheorem{reminder-notation}[them]{Reminder/Notation}
\newtheorem{problem}[them]{Problem}
\theoremstyle{remark}
\newtheorem{rem}[them]{Remark}
\newtheorem{ex}[them]{Example}    
\newtheorem{reminder}[them]{Reminder}  
\newcommand{\dd}{\mathrm{d}}
\newcommand{\id}{\mathrm{Id}}
\newcommand{\join}{\mathrm{Joint}}
\newcommand{\como}{\mathfrak{Q}}
\newcommand{\mq}{\mathfrak{MQ}}
\newcommand{\lev}{\mathrm{Lev}}
\newcommand{\ML}{\mathrm{LK}}
\newcommand{\IT}{\mathrm{IK}}
\newcommand{\disp}{\mathfrak{Disp}}
\newcommand{\vol}{\operatorname{Vol}}
\newcommand{\parti}{\operatorname{Part}}
\newcommand{\idk}{\operatorname{id}}
\newcommand{\espace}{E}
\newcommand{\subd}{R} 
\newcommand{\Tt}{{\mathcal T}}
\newcommand{\XX}{{\mathcal X}}
\newcommand{\EE}{{\mathcal E}}
\newcommand{\A}{{\mathcal A}}
\newcommand{\Var}{{\mathcal L}}
\newcommand{\Id}{\operatorname{Id}}
\newcommand{\one}{\mathbbm{1}}
\newcommand{\mb}{\mathversion{bold}}
\newcommand{\mn}{\mathversion{normal}}
\newcommand{\trans}{\leftidx{^{\rm t}}}
\renewcommand{\leq}{\leqslant} 
\renewcommand{\geq}{\geqslant} 
\newcommand{\eqdef}{\coloneqq}
\newcommand{\eqdefup}{\coloneqq} 
\newcommand{\espcdot}{\mbox{$\,\cdot\,$}}
\newcommand\longoverline{\bgroup\markoverwith
{{\rule[1.5ex]{2pt}{0.4pt}}}\ULon}
\newcounter{point}
\newcounter{souspoint}[point]
\renewcommand{\thepoint}{\alph{point}}
\renewcommand{\thesouspoint}{\roman{souspoint}}
\newcommand{\point}{\refstepcounter{point}{\bf(\thepoint)}}
\newcommand{\souspoint}{\refstepcounter{souspoint}{\bf(\thesouspoint)}}
\newcommand{\maz}{\setcounter{point}{0}\setcounter{souspoint}{0}}
\newcommand{\eps}{\varepsilon}
\newcommand{\Lg}{\lambda|_{[0,1]}}
\newcommand{\la}{\lambda}
\renewcommand{\P}{\mathbb{P}}
\newcommand{\p}{\mathcal{P}}
\newcommand{\m}{\mathcal{M}}
\newcommand{\ma}{\mathrm{Marg}}
\newcommand{\C}{\mathcal{C}}
\newcommand{\R}{\mathbb{R}}
\newcommand{\Q}{\mathbb{Q}}
\newcommand{\N}{\mathbb{N}}
\newcommand{\Z}{\mathbb{Z}}
\newcommand{\AND}{\quad\text{and}\quad}
\newcommand{\E}{\mathbb{E}}
\newcommand{\law}{\operatorname{Law}}
\newcommand{\sto}{\mathrm{sto}}
\newcommand{\stosup}{\operatorname{stosup}}
\newcommand{\stoinf}{\operatorname{stoinf}}
\newcommand{\lcsup}{\operatorname{losup}}
\newcommand{\lcinf}{\operatorname{loinf}}
\newcommand{\leqc}{\preceq_{C}}
\newcommand{\leqcs}{\preceq_{C,\sto}} 
\newcommand{\leqs}{\preceq_{\text{\rm sto}}}
\newcommand{\geqs}{\succeq_{\text{\rm sto}}}
\newcommand{\e}{\mathrm{e}}
\newcommand{\pr}{\operatorname{proj}}
\newcommand{\pdec}{\mathcal{P}^{\scriptscriptstyle\searrow}}
\newcommand{\mdec}{\mathcal{M}^{\scriptscriptstyle\searrow}}
\newcommand{\op}{\mathopen}
\newcommand{\clo}{\mathclose}
\newcommand{\leqlc}{\preceq_{\rm lo}}
\newcommand{\geqlc}{\succeq_{\rm lo}}
\newcommand{\rsemibracket}{\mathclose{\rfloor\hspace*{-.9ex}\lceil}}
\newcommand{\semi}{\mathclose{\rfloor\hspace*{-.8ex}\lceil}}
\newlength{\arraycolsepsauvegardegenerale}
\newlength{\arraycolsepsauvegardetemporaire}
\newlength{\tabcolsepsauvegardegenerale}
\newlength{\tabcolsepsauvegardetemporaire}
\newenvironment{nnarray}%
{\setlength{\arraycolsepsauvegardetemporaire}{\arraycolsep}\setlength{\arraycolsep}{0cm}\begin{array}}
{\end{array}\setlength{\arraycolsep}{\arraycolsepsauvegardetemporaire}}
\let\cal\mathcal
\title[The Markov-quantile process]{The Markov-quantile process attached to a family of marginals}
\author{Charles Boubel and Nicolas Juillet}
\address{Institut de Recherche Math\'ematique Avanc\'ee, UMR 7501, Universit\'e de Strasbourg et CNRS, 7 rue Ren\'e Descartes, 67\,000 Strasbourg, France}
\email{charles.boubel@unistra.fr, nicolas.juillet@unistra.fr}
\subjclass[2010]{60A10, 28A33, 60J25, 35Q35, 60G44, 49J55}
\begin{document}

\begin{abstract}
Let $\mu=(\mu_t)_{t\in \R}$ be any 1-parameter family of probability measures on $\R$. Its quantile process $(G_t)_{t\in\R}:\op]0,1\clo[\to\R^\R$, given by $G_t(\alpha)=\inf\{x\in\R:\mu_t(\op]-\infty,x])\geqslant\alpha\}$, is not Markov in general. We modify it to build the Markov process we call ``Markov-quantile''.

We first describe the discrete analogue: if $(\mu_n)_{n\in \Z}$ is a family of probability measures on $\R$, a Markov process $Y=(Y_n)_{n\in\Z}$ such that $\law(Y_n)=\mu_n$ is given by the data of its couplings from $n$ to $n+1$, i.e.\ $\law((Y_n,Y_{n+1}))$, and the process $Y$ is the inhomogeneous Markov chain having those couplings as transitions. Therefore, there is a canonical Markov process with marginals $\mu_n$ and as similar as possible to the quantile process: the chain whose transitions are the quantile couplings.

We show that an analogous process exists for a continuous parameter $t$: there is a \emph{unique} Markov process $X$ with the measures $\mu_t$ as marginals, and being a limit for the finite dimensional topology of quantile processes where the past is made independent of the future at finitely many times (many non-Markovian limits exist in general). The striking fact is that the construction requires no regularity for the family $\mu$. We rely on order arguments, which seems to be completely new for the purpose.

We also prove new results the Markov-quantile process yields in two contemporary frameworks:\smallskip 

-- In case $\mu$ is increasing for the stochastic order, $X$ has increasing trajectories. This is an analogue of a result of Kellerer dealing with the convex order, peacocks and martingales. Modifiying Kellerer's proof, we also prove simultaneously his result and ours in this case.\smallskip

-- If $\mu$ is absolutely continuous in Wasserstein space $\p_2(\R)$ then $X$ is solution of a Benamou--Brenier transport problem with marginals $\mu_t$. It provides a Markov probabilistic representation of the continuity equation, unique in a certain sense.\medskip

\emph{Keywords}: Markov process, optimal transport, continuity equation, increasing process, Kellerer's theorem, martingale optimal transport, peacocks, copula.
\end{abstract}
\maketitle

\section{Introduction}\label{sec:intro}

We prove four main theorems, stated and labelled as \ref{them:a}, \ref{them:b}, C and D in this introduction and proved in the order C, A, B, D in the paper ; see also their interdependence in Figure \ref{fig:un} p.\ \pageref{fig:un}.  
Theorem \ref{them:a} answers Problem \ref{problem} p.\ \pageref{problem} below, and is a general theoretical result in Probability Theory; it builds a certain stochastic process with given marginals: the {\em Markov-quantile} process. Theorem \ref{them:b} gives a convergence result to it. Theorems C and D present applications of the Markov-quantile process to two other contexts (Martingales and a theorem of Kellerer \cite{Ke72,Ke73}, and Optimal Transport), giving by the way Problem \ref{problem} additional motivations, see \S\ref{subsec:intro_kellerer}-\ref{subsec:intro_transport} and Figure \ref{fig:un}. We prove also Theorems \ref{them:sous_n} and \ref{them:commun_d_et_1}, linked with Theorem C and D respectively. Being a bit more technical they are not stated in this introduction.

In this introduction we first give the very necessary notions to state Theorems \ref{them:a}-\ref{them:b} as quickly as possible in \S\ref{subsec:resultats}, then state Theorems C and D in \S\ref{subsec:intro_kellerer}-\ref{subsec:intro_transport}. We slow the flow in \S\ref{subsec:intro_quantilemarkovien} to give a qualitative insight into Problem \ref{problem} and its difficulties, that also shows why it is a natural problem in itself. We give in \S\ref{subsec:organisation} the structure of the article and in \S\ref{subsec:notation} an index of our notation.

This paper treats of Measure, Probability, and Transport Theories. To be understood by a large panel of readers, we give the definitions of the more specific notions of each of these fields, or {\em Reminders} about them if needed.

\subsection{Our results and their motivation}\label{subsec:resultats}

Take $(\espace_\tau)_{\tau\in\Tt}$ a (finite or not) family of measurable spaces; $\prod_{\tau\in\Tt}\espace_\tau$ is endowed with its cylindrical $\sigma$-algebra, generated by the preimages of those of the factors by the projections.

\begin{reminder}\label{reminder:process}
A \emph{process} 
is a family $X=(X_\tau)_{\tau\in\Tt}$ of measurable maps $X_\tau:\Omega\to \espace_\tau$, called random variables, defined on the same probability space $(\Omega,\P)$. In this article, contrarily to what may be considered usual, no measurability condition is required on $\Omega\times \Tt$. For every $\Tt'\subset \Tt$, $(X_\tau)_{\tau\in\Tt'}$ defines a map $F_{\Tt'}$ from $\Omega$ to $\prod_{\tau\in\Tt'}\espace_\tau$. The \emph{law} of $(X_\tau)_{\tau\in\Tt'}$ is the pushed-forward probability measure $(F_{\Tt'})_\#\P$ on $\prod_{\tau\in\Tt'}\espace_\tau$, which is also called the {\em marginal law} of the measure $(F_{\Tt})_\#\P$ on $\prod_{\tau\in\Tt'}\espace_\tau$. \end{reminder}

Now let $\mu_\tau$ be a probability measure on $E_\tau$, for each $\tau$.

\begin{notation}\label{notation:marg} For all measurable space $E$, \mb$\m(E)$\mn\ and \mb$\p(E)$\mn\ are the spaces of measures and probability measures on $E$. If $\Tt'\subset\Tt$, \mb$\pr^{\Tt'}$\mn\ is the projection $\prod_{\tau\in\Tt}E_\tau\to\prod_{\tau\in\Tt'}E_\tau$; in case $\Tt=\{\tau_1,\ldots,\tau_m\}$ is finite, $\pr^{\tau_1,\ldots,\tau_m}$ means $\pr^{\{\tau_1,\ldots,\tau_m\}}$. When $P\in\p\bigl(\prod_{\tau\in\Tt}\espace_\tau\bigr)$ and $s<t$, $P^s$ stands  for $(\pr^{s})_\#P$ and $P^{s,t}$ for $(\pr^{s,t})_\#P$, and \mb$\ma((\mu_\tau)_{\tau\in\Tt})$\mn\ denotes $\{P\in\p\bigl(\prod_{\tau\in\Tt}\espace_\tau\bigr):\ \forall\tau\in\Tt,(\pr^\tau)_\#P=\mu_\tau\}$. When not otherwise specified, what we call the marginals of $P$ are its marginals $P^s$ on a single factor.

\end{notation}

\begin{reminder}\label{remind:processus_canonique}\maz \point\label{item:processus_canonique} If $P\in\ma((\mu_\tau)_{\tau\in\Tt})$, setting $\Omega\eqdefup(\prod_{\tau\in\Tt}E_\tau,P)$ and $X=(X_\tau)_{\tau\in \Tt}\eqdef(\pr^\tau)_{\tau\in \Tt}$ we get a process called the \emph{canonical process}, of law $P$. For this reason, by an abuse of language ---e.g.,  in the title of this article---, we sometimes call \emph{process} a probability measure on a product space. For the same reason we may also see $\ma((\mu_\tau)_{\tau\in\Tt})$ as the set of the processes $(X_\tau)_{\tau\in\Tt}$ such that $\law(X_\tau)=\mu_\tau$ for all $\tau$.

\point\ If $\sharp{\mathcal T}=2$, i.e.\ if $\mu\in\p(\espace)$ and $\nu\in\p(\espace')$, a measure $P\in\ma(\mu,\nu)$ is called a {\em transport (plan)} from $\mu$ to $\nu$, or a {\em coupling} between $\mu$ and $\nu$.
\end{reminder}

Here is our problem. It is stated for any family of measures, without any assumption of regularity in the parameter $t$.

\begin{problem}\label{problem}\maz If $\mu=(\mu_t)_{t\in\R}$ is a one-parameter family of probability measures on $\R$, we want to build a measure $\mq\in\ma(\mu)$ that at once:
\begin{itemize}
\item[\point\label{q:a}] is Markov,
\item[\point\label{q:b}] resembles as much as possible the quantile measure $\como\in\ma(\mu)$.\medskip
\end{itemize}
\end{problem}

Let us explain those two points. Take $P\in\ma((\mu_t)_{t\in\R})$ and $(X_{t})_{t\in\R}$ a process of law $P$; point \ref{q:a} means:
\begin{equation}\label{eq:def_markovien}
\forall s\in\R,\forall t>s, \law(X_t|\,(X_{u})_{u\leqslant s})=\law(X_t|\,X_s),
\end{equation}
where $\law(X_t|\,(X_u)_u)$ is the law of $X_t$ conditionally to the $\sigma$-algebra generated by the $X_u$. See also Definition \ref{defi:markov2}, where the Markov property is  introduced only through notions defined in this article.

For point \ref{q:b}, 
$\como\in\ma((\mu_t)_{t\in\R})$ may be defined by an explicit construction (see Reminder \ref{remind:quantile} below) or implicitly, as the unique measure such that for all $x\in\R$ and all $t>s$, if $X$ is a process of law $\como$:
\begin{equation}\label{eq:quantile_as_minimum}\law(X_{t}|\,X_{s}\leqslant x)=\min\{\law(Y_{t}|\,Y_{s}\leqslant x): \law(Y)\in\ma((\mu_t)_{t\in\R})\},
\end{equation}
where this minimum is with respect to the stochastic order:\medskip

\begin{reminder}\label{remind:sto} The \emph{stochastic order} on $\p(\R)$ is defined by: $\mu\leqs\nu$ if, for all $x\in\R$, $\mu(\op]-\infty,x])\geqslant\nu(\op]-\infty,x])$.
\end{reminder}

The problem is that $\como$ is not Markov in general, see Remark \ref{rem:como_markovien}\ref{p1:rem:como_markovien} a bit below. In view of its definition through \eqref{eq:quantile_as_minimum}, we seek some Markov process $\mq$, if it exists, such that:\smallskip

-- Processes of law $\mq$ satisfy a version of (\ref{eq:quantile_as_minimum}) {\em among Markov processes},\smallskip

-- like for $\como\in \ma((\mu_t)_{t\in \R})$, if $(\mu_t)_{t\in \R}$ is increasing for $\leqs$, some processes $(X_t)_{t\in\R}$ of law $\mq\in \ma((\mu_t)_{t\in \R})$ are increasing, i.e.\ the functions $t\to X_t(\omega)$ are,\smallskip

-- like for $\como$, the couplings $(\pr^{s,t})_\#\mq$ of $\mq$ have an  \emph{increasing kernel} (or briefly, $\mq$ has increasing kernels), as  follows (see Definition \ref{prodef:transitions} for alternative definitions not resorting to conditional laws):

\begin{defi}\label{defi:increasing_kernel_intro}Take $P\in\ma((\mu_t)_t)$ and for $s<t$, set $P^{s,t}=(\pr^{s,t})_\#P$. We call {\em kernel} of $P^{s,t}$ the data of the conditional measures $\law(X_t|X_{s}=x)$ where $X$ is a process of law $P$; just below we denote it by $(P^{s,t}_x)_{x\in \R}$.

We say that $P^{s,t}$ has {\em increasing kernel} if, for all process $(X_s,X_t)$ of law $P^{s,t}$, $x\leqslant y$ $\Rightarrow$ $P^{s,t}_x\leqs P^{s,t}_y$, and that $P$ has increasing kernels if every $P^{s,t}$ has.
\end{defi}

Be careful that the following convention is now used throughout. Our answer to Problem \ref{problem} is Theorem \ref{them:a} below.

\begin{convention}\label{conv:croissant}When we introduce finite sets $\{r_1,\ldots,r_m\}$ or $m$-tuples $(r_k)^m_{k=1}$ of real numbers, we mean implicitly that $r_1<\ldots<r_m$.
\end{convention}

\begin{themBJ}\label{them:a}\maz
Let $(\mu_t)_{t\in\R}$ be a family of probability measures on $\R$.\smallskip

\point\label{p1:them:a} There exists a unique measure $\mq\in \ma((\mu_t)_{t\in \R})$ such that:
\begin{enumerate}
\setcounter{enumi}{0}
\item[\souspoint\label{item:markov}] $\mq$ is Markov,
\item[\souspoint\label{item:transitions}]  $\mq$ has increasing kernels,
\item[\souspoint\label{item:minimal}] 
$\mq$ has minimal couplings (alias transports) among the measures satisfying {\rm\ref{item:markov}} and {\rm\ref{item:transitions}}, in the sense that it satisfies {\rm (\ref{eq:quantile_as_minimum})} where the minimum is taken among processes $(Y_t)_t$ satisfying \ref{item:markov} and \ref{item:transitions}.
\end{enumerate}

\noindent It is also the unique process satisfying {\ref{item:markov}} above and:
 \begin{enumerate}
\setcounter{enumi}{3}
\item[\souspoint\label{item:limite_de_compose}] each $\mq^{s,t}$ is a limit of products 
of quantile couplings $(\como_{[R^{s,t}_n]}^{s,t})_{n\in \N}$ where for $R=\{r_1,\ldots,r_m\}\subset[s,t]$, $\como_{[R]}^{s,t}$ 
is the product
$$\como^{s,r_1}. \como^{r_1,r_2}.\cdots. \como^{r_{m-1},r_m}. \como^{r_{m},t}$$
of the quantile couplings $\como^{r_i,r_j}\in\ma(\mu_{r_i},\mu_{r_j})$.
\end{enumerate}

\noindent Moreover:\smallskip

\point\label{item:croissant} If $(\mu_t)_{t\in \R}$ is increasing for the stochastic order, i.e. $s\leqslant t\Rightarrow\mu_s\leqs \mu_t$,  there exists a process $X=(X_{t})_{t\in\R}:\Omega\rightarrow\R^\R$ of law $\mq$ with increasing trajectories, i.e.\ such that $t\mapsto X_t(\omega)$ is an increasing function, for all $\omega\in\Omega$. 
\end{themBJ}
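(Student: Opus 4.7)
My approach is to take item \ref{item:limite_de_compose} as the definition of $\mq$ and then derive the other properties. For each pair $s<t$, I consider the net of measures $\bigl(\como_{[R]}^{s,t}\bigr)_R$ indexed by finite subsets $R\subset\op]s,t\clo[$ ordered by inclusion, and aim to show it converges in the weak topology to a measure I call $\mq^{s,t}\in\ma(\mu_s,\mu_t)$. The key lemma, which I expect to be \emph{the} main obstacle, is a monotonicity statement: if $R\subset R'$, then $\como_{[R]}^{s,t}$ and $\como_{[R']}^{s,t}$ are comparable in a suitable partial order on couplings (likely the stochastic order on kernels, or an equivalent one in terms of $\leqc$ or $\leqcs$). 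This is what the authors highlight as a new kind of order argument, and it is what replaces any regularity hypothesis on $t\mapsto \mu_t$: the order bounds are uniform and monotone convergence of the net follows from compactness of $\ma(\mu_s,\mu_t)$ together with monotonicity, with no measurability of the parameter needed. Since products of couplings with increasing kernels again have increasing kernels, the limit inherits this property.

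Once $(\mq^{s,t})_{s<t}$ is constructed, I verify the Chapman--Kolmogorov relation $\mq^{s,u}\cdot\mq^{u,t}=\mq^{s,t}$ by inserting the point $u$ in every refinement $R$ and passing to the limit, using continuity of composition in the relevant topology on transitions with increasing kernels. Kolmogorov's extension theorem then produces the Markov measure $\mq\in\ma((\mu_t)_{t\in\R})$ with transitions $\mq^{s,t}$, establishing properties \ref{item:markov}, \ref{item:transitions} and \ref{item:limite_de_compose}. Property \ref{item:minimal} (minimality among Markov processes with increasing kernels) will come from two ingredients: first, that the quantile coupling $\como^{r,r'}$ is itself the $\leqs$-minimal element of $\ma(\mu_r,\mu_{r'})$ at the level of conditional laws $\law(X_{r'}|X_r\leqslant x)$, and second, that this order-minimality is preserved under composition when kernels are monotone, so that after refining partitions enough one sandwiches any competitor $\mq'$ from below by $\mq$.

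For uniqueness, the cleanest path is to first show that \emph{any} Markov $\mq'$ satisfying \ref{item:markov}--\ref{item:minimal} must satisfy \ref{item:limite_de_compose}: minimality forces each coupling $\mq'^{s,t}$ to lie below the products $\como_{[R]}^{s,t}$, while a symmetric upper bound coming from Markovianity and the monotonicity of kernels squeezes it onto the same limit. Uniqueness under characterization \ref{item:limite_de_compose} is immediate since a limit of a convergent monotone net is unique. The two characterizations are then equivalent.

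Finally, for item \ref{item:croissant}, assume $s\leqslant t\Rightarrow\mu_s\leqs\mu_t$. I build an explicit realization as follows: approximate $\mq$ by finite-dimensional chains based on partitions $R_n\uparrow$ a countable dense set of times; each finite chain can be realized with increasing trajectories by using the quantile coupling at every elementary step (since $\mu_{r_i}\leqs\mu_{r_{i+1}}$ makes the monotone coupling order-increasing on sample paths), and these realizations are compatible in the sense that the path at step $n+1$ refines the one at step $n$ while remaining increasing in $t$. Passing to a limit, e.g.\ via a monotone convergence along the dense subset and right-continuous extension (or the selection-by-uniform-variable $\omega\mapsto U(\omega)$ trick applied consistently), yields a process $X$ of law $\mq$ with $t\mapsto X_t(\omega)$ increasing for every $\omega$. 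The main subtlety here is ensuring the limit is still Markov with law $\mq$ and not just realizing $\como$; this is taken care of by the fact that the finite approximations use $\mq$-transitions, not $\como$-transitions, between the sampled times.
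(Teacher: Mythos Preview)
Your overall strategy matches the paper's: define $\mq^{s,t}$ as a monotone limit over refinements, check Chapman--Kolmogorov, assemble via Kolmogorov extension, and derive minimality by an induction comparing products of quantile couplings to any Markov competitor with increasing kernels. The order that actually works is the lower orthant order $\leqlc$ on $\ma(\mu_s,\mu_t)$ (equivalently, stochastic order on the conditional laws $\law(X_t\mid X_s\le x)$), not $\leqc$ or $\leqcs$.

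The genuine gap is in the monotonicity step, which you correctly flag as the crux but do not solve. The inclusion $R\subset R'$ does \emph{not} obviously imply $\como_{[R]}^{s,t}\leqlc\como_{[R']}^{s,t}$ if you try to argue directly with kernels acting on arbitrary measures or on Dirac masses: the paper gives explicit counterexamples (Remark~\ref{rem:qualitative}, Figure~\ref{figure2}) showing that inserting an extra time can move the image of a Dirac \emph{farther} from uniform. The paper's resolution is to lift everything to the quantile-level space $([0,1],\lambda)$ via the kernels $\ell_r=k_r.{}^t k_r$, and to observe that the monotonicity $\mu.\ell_R\leqs\mu.\ell_{R'}$ \emph{does} hold when restricted to measures $\mu\in\mdec(\lambda)$, i.e.\ measures with bounded decreasing density (Lemma~\ref{lem:mono2}). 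Since $F[L_R](x,y)=(\lambda\lfloor_{[0,x]}).\ell_R([0,y])$ and $\lambda\lfloor_{[0,x]}\in\mdec$, this yields $L_R\leqlc L_{R'}$ and then, pushing forward by $G_s\otimes G_t$, the monotonicity you need. Without identifying the class $\mdec$ as the right domain, this step does not go through.

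Two smaller points. First, your uniqueness sketch has the inequalities reversed: for any Markov $P$ with increasing kernels one has $\como_{[R]}^{s,t}\leqlc P^{s,t}$ (this is the induction $(H_m)$ in the paper's proof), hence $\mq^{s,t}=\lcsup_R\como_{[R]}^{s,t}\leqlc P^{s,t}$; minimality of a competitor $\mq'$ then gives the reverse inequality against $\mq$, and the sandwich closes. Second, your plan for \ref{item:croissant} is more elaborate than necessary and the ``compatibility'' of successive realizations is doubtful since the transitions of $\como_{[R_n]}$ and $\como_{[R_{n+1}]}$ differ even at common times. The paper simply observes that when $\mu_s\leqs\mu_t$ each $\como^{s,t}$ is concentrated on $\{x\le y\}$, this is stable under products and weak limits, so every $\mq^{s,t}$ is an increasing coupling; then Lemma~\ref{coro} upgrades ``increasing couplings'' to ``a version with increasing trajectories'' by a direct argument on a countable dense time set.
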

One may also see $\como^{s,r_1}. \como^{r_1,r_2}.\cdots. \como^{r_{m},t}$ as $(\pr^{s,t})_\#\como_{[R]}$, where $\como_{[R]}$ is introduced in Proposition \ref{pro:markovinise_fini1} below.\medskip

Informally, we may interpret Theorem \ref{them:a}\ref{p1:them:a} as the following answer to Problem \ref{problem}: $\mq$ is the Markov process whose ``infinitesimal transitions'' are those of the quantile process. Besides, an immediate consequence of Theorem \ref{them:a}\ref{p1:them:a} is the important point \ref{p2:rem:como_markovien} of Remark \ref{rem:como_markovien}.

\begin{rem}\label{rem:como_markovien}\maz\point\label{p1:rem:como_markovien} In general, the quantile measure $\como\in\ma((\mu_t)_{t\in \R})$ is not Markov. Take, e.g.,  $\mu_t=\frac12(\delta_0+\delta_1)$ for $t\neq0$ and $\mu_0=\delta_0$, then $\como=\law((X_t)_{t\in\R})$ with $(X_t)_{t\in\R}=0$ or $(X_t)_{t\in\R}=\one_{\R^\ast}$, both with probability $\frac12$. Hence for all $t>0$, $\law(X_t|X_0)=\mu_t$. Now $\law(X_t|X_0=0,\,X_{-1}=i)=\delta_i$ for $i\in\{0,1\}$, so that \eqref{eq:def_markovien} is false for $u=0$. As proved in \cite[Proposition 3]{Ju_seminaire}, $\como$ is Markov except if a phenomenon of this type happens, see details in Example \ref{ex:quantile}.
In particular, $\como$ is Markov when the measures $\mu_t$ have no atoms (see a direct proof in Remark \ref{rem:demo_nonatomique}).

\point\label{p2:rem:como_markovien} When $\como\in\ma((\mu_t)_{t\in \R})$ is Markov, $\mq=\como$. Indeed, then, $\como^{s,t}=\como^{s,r_1}.\cdots.\como^{r_n,t}$ for every $s\leq r_1<\cdots<r_m\leq t$. Hence according to \ref{item:limite_de_compose}, $\forall s, \forall t>s, \mq^{s,t}=\como^{s,t}$. Since both processes are Markov they coincide in law (see Corollary \ref{cor:consist}).
\end{rem}

\begin{reminder}[The quantile process]\label{remind:quantile}The quantiles of a measure $\mu\in\p(\R)$ generalize the notion of the median, which is the quantile of level $\frac{1}{2}$. 
The {\em quantile of $\mu$ of level $\alpha$} is the smallest real number $x_\mu(\alpha)$ such that $\mu(\op]-\infty,\linebreak[1]x_\mu(\alpha)])\geq \alpha$ and $\mu([x_\mu(\alpha),+\infty\clo[)\geq 1-\alpha$. 
The quantile process $X=(X_\tau)_{\tau\in \Tt}$, defined on $\Omega=[0,1]$ with the Lebesgue measure, is given by $X_t(\alpha)=x_{\mu_t}(\alpha)$, and we denote $\law(X)$ by $\como\in \ma((\mu_t)_{t\in \Tt})$.
\end{reminder}

\begin{rem}[Justification of the name ``Markov-quantile'']While Properties \ref{item:markov} and \ref{item:transitions} of Theorem \ref{them:a}\ref{p1:them:a} are satisfied by the product measure (law of the independent process) $\bigotimes_{t\in \R} \mu_t$, the quantile process $\como$ satisfies \ref{item:transitions} and \ref{item:minimal} in the sense that it satisfies \ref{item:transitions} and its couplings $\como^{s,t}$ are minimal among those of the measures satisfying \ref{item:transitions}. In fact, Theorem  \ref{them:a} is constructive and builds $\mq$ as a modification of $\como$; therefore we call this measure $\mq$ the {\bf ``Markov-quantile'' measure} attached to $(\mu_t)_{t\in\R}$.
\end{rem}

In fact, a deeper convergence statement holds than that resulting from point \ref{item:limite_de_compose} above. Indeed we introduce the following notion of a measure in $\ma((\mu_t)_{t\in\R})$ ``turned into a Markov law at a finite set of instants'', denoted in a way that is consistent with the notation of Theorem \ref{them:a}\ref{item:limite_de_compose}.

\begin{pronotation}\label{pro:markovinise_fini1} If $P\in\ma((\mu_t)_{t\in\R})$ and $R\subset \R$ is finite, there is a unique measure in $\ma((\mu_t)_{t\in\R})$, denoted by $P_{[R]}$, such that:\smallskip

-- $P_{[R]}$ is the law of a family of variables $(X_t)_{t\in\R}$ that is ``Markov at the instants of\/ $R$'' i.e.\ (\ref{eq:def_markovien}) holds with ``$\,\forall s\in R$'' instead of ``$\,\forall s\in\R$'',\smallskip

-- for the closure $I$ of each connected component of $\R\setminus R$, $(\pr^I)_\# P_{[R]}=(\pr^I)_\# P$.
\end{pronotation}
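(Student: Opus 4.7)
My plan is to enumerate $R=\{r_1<\cdots<r_m\}$ (the case $R=\emptyset$ being trivial, with $P_{[R]}=P$) and denote by $I_0=\op]-\infty,r_1\clo]$, $I_j=[r_j,r_{j+1}]$ for $1\le j\le m-1$, and $I_m=[r_m,+\infty\clo[$ the closures of the connected components of $\R\setminus R$, so that consecutive $I_j$'s share exactly one point of $R$. The strategy is to determine $P_{[R]}$ through its finite-dimensional marginals and invoke Kolmogorov's extension theorem.

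For uniqueness, the cylindrical $\sigma$-algebra on $\R^\R$ is generated by finite coordinate projections, so any $Q\in\p(\R^\R)$ is characterized by the family $(Q^F)_F$ indexed by finite $F\subset\R$. Fixing such $F$, enlarging it to $F^\ast=F\cup R$, and setting $F^\ast_j=F^\ast\cap I_j$, a standard iteration of the Markov property \eqref{eq:def_markovien} at the instants of $R$ gives that, conditionally on $(X_{r_j})_{j=1}^m$, the random vectors $(X_t)_{t\in F^\ast_j}$ are mutually independent. Together with the prescribed $(\pr^{F^\ast_j})_\# Q=(\pr^{F^\ast_j})_\# P$ coming from the second condition, this specifies $Q^{F^\ast}$, and hence $Q^F=(\pr^F)_\# Q^{F^\ast}$.

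For existence, the same recipe dictates the construction. For $F\supset R$ finite and each $j$, disintegrate the finite-dimensional, hence standard Borel, measure $(\pr^{F\cap I_j})_\# P$ along its projection to $\R^{R\cap I_j}$: write $(\pr^{F\cap I_j})_\# P=\int \pi_j^{F,y}\,d(\pr^{R\cap I_j})_\#P(y)$. Define $Q^F\in\p(\R^F)$ by assigning the coordinates at $R$ the law $(\pr^R)_\#P$ and, conditionally on the value $y$ there, making the blocks $(X_t)_{t\in F\cap I_j\setminus R}$ independent across $j$ with respective laws the restrictions of $\pi_j^{F,y|_{R\cap I_j}}$ to $F\cap I_j\setminus R$. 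For finite $F$ not containing $R$, put $Q^F\eqdef(\pr^F)_\# Q^{F\cup R}$.

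The main step I expect to be delicate is verifying the Kolmogorov consistency $Q^F=(\pr^F)_\# Q^{F'}$ for $F\subset F'$: after reducing to the case $F,F'\supset R$, this amounts, piece by piece and for each $j$, to the elementary fact that the disintegration of $(\pr^{F'\cap I_j})_\# P$ pushes forward to that of $(\pr^{F\cap I_j})_\# P$---the tower property of conditional expectations in finite dimension. Kolmogorov's extension theorem then produces $P_{[R]}\in\p(\R^\R)$; it lies in $\ma((\mu_t)_{t\in\R})$ since every single-coordinate marginal is inherited from $P$, it restricts correctly to each $I_j$ by construction, and the conditional product structure across the pieces gives Markovness at every $r_j\in R$, while uniqueness is guaranteed by the first paragraph.
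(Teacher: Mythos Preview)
Your existence construction has a genuine error: you take the law of $(X_{r_1},\ldots,X_{r_m})$ to be $(\pr^R)_\#P$, but the resulting measure is then \emph{not} Markov at the points of $R$ unless $P$ itself already is. Indeed, with your $Q$, for $m\ge 3$ one has $\law_Q(X_{r_3}\mid X_{r_1},X_{r_2})=\law_P(X_{r_3}\mid X_{r_1},X_{r_2})$, which in general differs from $\law_P(X_{r_3}\mid X_{r_2})=\law_Q(X_{r_3}\mid X_{r_2})$; so property~\eqref{eq:def_markovien} fails at $s=r_2$. Your uniqueness paragraph is also slightly incomplete on the same point: conditional independence of the blocks given $(X_{r_j})_j$ together with the prescribed block marginals does \emph{not} by itself determine $Q^{F^\ast}$ --- you still need the joint law on $R$, and it is the Markov property at each $r_j$ (combined with the two-marginals $P^{r_j,r_{j+1}}$ coming from the second condition) that forces this law to be the catenation $P^{r_1,r_2}\circ\cdots\circ P^{r_{m-1},r_m}$, not $(\pr^R)_\#P$.

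The fix is exactly what the paper does in Definition~\ref{defi:quantile_discretement_markovien}: for finite $S\supset R$ set directly
\[
(\pr^S)_\#P_{[R]}=P^{S\cap I_0}\circ P^{S\cap I_1}\circ\cdots\circ P^{S\cap I_m},
\]
the catenation of Definition~\ref{defi:catenation} being precisely your disintegrate-and-glue recipe but with the correct law on $R$ built in. Once you replace $(\pr^R)_\#P$ by this catenation, the rest of your outline (Kolmogorov consistency via the tower property, then the extension theorem) goes through and matches the paper's argument.
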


This proposition follows from the way we define $P_{[R]}$ in Definition \ref{defi:quantile_discretement_markovien}, using the catenation of transport plans given by  Definition \ref{defi:catenation}. We show:

\begin{themBJ}\label{them:b}There is an increasing sequence $(R_n)_{n\in\N}$ of finite subsets of $\R$ such that $\como_{[R_n]}\in\ma((\mu_t)_{t\in\R})$ converges weakly to $\mq$.
\end{themBJ}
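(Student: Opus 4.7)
The plan is to promote the per-pair bivariate convergences $\como^{s,t}_{[R^{s,t}_n]}\to\mq^{s,t}$ supplied by Theorem~\ref{them:a}\ref{item:limite_de_compose} to a single sequence of finite sets $(R_n)$ working uniformly, then to identify the resulting limit in the finite-dimensional topology on $\R^\R$ via the Markov property and the uniqueness assertion of Theorem~\ref{them:a}\ref{p1:them:a}. The bridge between the bivariate and process-level viewpoints is the identity
$$(\como_{[R]})^{s,t} \;=\; \como^{s,t}_{[R\cap(s,t)]}\quad\text{whenever } s,t\in R,$$
a direct consequence of Proposition/Notation~\ref{pro:markovinise_fini1}: on the closure of each component of $\R\setminus R$ the two sides coincide with $\como$, and Markovization at the intermediate points of $R\cap(s,t)$ reassembles the product of quantile couplings defining $\como^{s,t}_{[R\cap(s,t)]}$.

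First I would fix a countable dense subset $D=\{d_k\}_{k\ge 1}$ of $\R$ and a metric on $\p(\R^2)$ compatible with weak convergence. For each pair $d_i<d_j$ in $D$, Theorem~\ref{them:a}\ref{item:limite_de_compose} supplies a sequence $R^{i,j}_m\subset(d_i,d_j)$ of finite sets with $\como^{d_i,d_j}_{[R^{i,j}_m]}\to\mq^{d_i,d_j}$. By a standard Cantor diagonal argument I would build an increasing sequence of finite sets $R_n\subset\R$ such that $\{d_1,\ldots,d_n\}\subset R_n$ and, for every pair $i<j\le n$, $R_n\cap(d_i,d_j)\supseteq R^{i,j}_{m(n,i,j)}$ with $m(n,i,j)\to\infty$. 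Combining the displayed identity with a \emph{refinement stability} property---namely that $\como^{s,t}_{[R']}$ stays close to $\mq^{s,t}$ for any $R'$ containing some $R^{s,t}_m$ with $m$ large---I would conclude $(\como_{[R_n]})^{d_i,d_j}\to\mq^{d_i,d_j}$ for every $(d_i,d_j)\in D^2$.

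Tightness of $(\como_{[R_n]})$ in the finite-dimensional topology is immediate because the one-dimensional marginals are fixed equal to the $\mu_t$, so any subsequence admits a further sub-subsequential limit $\nu\in\ma((\mu_t)_{t\in\R})$. From the preceding step $\nu^{d_i,d_j}=\mq^{d_i,d_j}$ on $D^2$, and Markovianity of each $\como_{[R_n]}$ at the points of $R_n$ combined with the density of $\bigcup_n R_n\supset D$ would yield Markovianity of $\nu$ at every point of $D$, then of every $t\in\R$ by a further density argument; by the same argument $\nu^{s,t}=\mq^{s,t}$ for all $s<t$. The fact that a Markov measure in $\ma((\mu_t)_{t\in\R})$ is determined by its bivariate marginals (Corollary~\ref{cor:consist}) then forces $\nu=\mq$, and since every subsequence has a sub-subsequence converging to $\mq$, the whole sequence $(\como_{[R_n]})$ converges weakly to $\mq$. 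The principal obstacle I anticipate is the refinement stability above: Theorem~\ref{them:a}\ref{item:limite_de_compose} asserts only the existence of \emph{some} converging sequence per pair, so establishing it likely requires revisiting the proof of Theorem~\ref{them:a} to extract a monotonicity of $R\mapsto\como^{s,t}_{[R]}$ under inclusion---in an order (stochastic or convex) compatible with weak convergence---that guarantees convergence along arbitrarily refining cofinal sequences.
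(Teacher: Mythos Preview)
Your proposal correctly identifies the refinement stability as the crux, and indeed the paper supplies exactly what you anticipate: on the quantile-level space $[0,1]$ the map $R\mapsto L_R$ is \emph{increasing for the lower orthant order} $\leqlc$ under inclusion (Lemma~\ref{lem:existenceT}), and $\como^{s,t}_{[R]}=(G_s\otimes G_t)_\#L_{R\cap\op]s,t\clo[}$, so enlarging $R$ can only push $\como^{s,t}_{[R]}$ monotonically toward its supremum $\mq^{s,t}$. This makes the diagonal construction go through at the bivariate level.

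The genuine gap is downstream. Two of your steps fail without an ingredient you never invoke:

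\textbf{(1) Markovianity of the limit.} You write that ``Markovianity of each $\como_{[R_n]}$ at the points of $R_n$ combined with the density of $\bigcup_n R_n$ would yield Markovianity of $\nu$''. But Markovianity does \emph{not} pass to weak limits in general---this is precisely one of the obstacles flagged in \S\ref{subsec:intro_quantilemarkovien}. Concretely, you need that if $P_n^{s_1,\ldots,s_k}=P_n^{s_1,s_2}\circ\cdots\circ P_n^{s_{k-1},s_k}$ and each factor converges, then the catenation converges to the catenation of the limits. The paper secures this via Lemma~\ref{lem:closed} (continuity of $\circ$ when the kernels are \emph{increasing}), applied on the level space where all the $L_{R_n\cap[s,t]}$ have increasing kernel. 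Your argument never uses increasing kernels, so this step is unsupported.

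\textbf{(2) Extension from $D$ to $\R$.} Even granting Markovianity and $\nu^{s,t}=\mq^{s,t}$ on $D^2$, no density argument gives you $\nu^{s,t}=\mq^{s,t}$ for arbitrary $s<t$, nor Markovianity at $t\notin D$: there is \emph{no regularity assumption} on $t\mapsto\mu_t$. A measure on $\R^\R$ is not determined by its restriction to a dense set of times. The paper avoids this entirely by constructing an auxiliary Markov measure $\widetilde\lev$ on the level space with 2-marginals $L_{R\cap[s,t]}$, proving $\lev_{R_n}\to\widetilde\lev$ via Lemma~\ref{lem:closed}, and then showing $G_\#\widetilde\lev=\mq$ directly (using that non-essential atomic times can be ignored, Lemma~\ref{lem:distributivite}). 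No density-in-time argument is used.

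In short: your diagonalization would deliver the bivariate marginals on $D$, but the passage to a Markov limit on all of $\R$ requires the increasing-kernel continuity lemma and the level-space machinery of \S\ref{subsec:little_t}--\S\ref{subsec:preuve_deuxieme}, not a density argument.
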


\begin{reminder}\label{reminder:weak_convergence} A sequence $(P_n)_n$ of (probability) measures on some measurable topological space $E$ {\em converges weakly} to $P$ if, for all bounded continuous function $f$, $\int f\dd P_n\to\int f\dd P$. For $E=\R^\R$ with the weak topology, this convergence amounts to the weak convergence of all finite marginals.
\end{reminder}

\begin{rem} Of course, not every sequence $(R_n)_{n\in \N}$ is admissible for Theorem \ref{them:b}. In fact, we prove a more precise version of Theorem \ref{them:b}, see Theorem \ref{them:deuxieme_sec}. It introduces notably the notion of {\em essential atomic times} of $(\mu_t)_t$ that turn out to be the times contained in $\cup_n R_n$ for all sequence $(R_n)_{n\in \N}$ admissible for Theorem \ref{them:b}; see also Remark \ref{rem:choix} in this introduction.
\end{rem}
\noindent{\bf Our problem: a classical type of question.} The problem of defining a measure or a process $P$ with given marginals and additional properties is a general problem that includes Problem \ref{problem} and has already been explored several times in pure and applied Probability Theory as well as in Analysis or Dynamics. Without claiming exhaustiveness on this rich topic we review some research streams and provide references.

A result related to Theorem \ref{them:a}\ref{item:croissant} is proved by Kamae and Krengel in \cite{KaKr}. The measures $(\mu_t)_{t\in \R}$ are in $\p(E)$ where $E$ is a partially ordered Polish space. Assuming the measures in stochastic order, in a suitable sense, the authors prove that there exists an increasing process in $\ma(\mu)$. Other orders can be considered together with expected properties on the processes. For $E=\R^d$, Chapter 8 of \cite{SS} proposes plenty of orders. In Stochastic Analysis and Mathematical Finance, the topic of peacocks and their associated martingales is closely related to our problem. ``Peacock'' stands for PCOC: Processus Croissant Pour l'Ordre Convexe (French), that is, increasing process for the convex order. One aims at defining a martingale in $\ma(\mu)$, where $\mu=(\mu_t)_{t\in \R}$ are the marginals of some peacock, using various techniques. Most of the time the peacock is part of a specific class, so the purpose is more specific than the work of Kellerer presented in Subsection \ref{subsec:intro_kellerer}. In most of this literature the martingales may or not be Markov \cite{Gy,MaYo02,Ju_ejp,HTT,HiRo13,Hob16,pages}. The papers by Lowther \cite{Low,Low2} on limits of diffusion processes for the finite dimensional convergence permitted some authors to refocus on the Markov setting (see, e.g.,  \cite{BHS,HiRoYo14,Ju_seminaire}), rediscovering Kellerer's work by the way. An important example in the topic are the fake Brownian motions, that are processes sharing some of the properties characterizing the standard Brownian motion: they are continuous Markov martingales with marginals $\mu_t=\mathcal{N}(0,t)$. See, e.g.,  \cite{MaYo02,HaKl,Albin,Ol,Hob_fake,HTT} for examples of constructions.

In Section \ref{sec:cont_eq} we will present a more analytic field related to Optimal Transport Theory: Benamou--Brenier's study of the incompressible Euler equation \cite{BeBr}, transport representation for solutions of PDEs after Jordan, Kinderlehrer and Otto \cite{JKO,Ott}. This transport formalism was thoroughly studied by Ambrosio, Gigli and Savar\'e  \cite{AGS}, and continued among others (see \cite{GH,ST,AT}) by Lisini \cite{Lis} in metric spaces.

\subsection{Relations to Kellerer's Theorem}\label{subsec:intro_kellerer}

If you forget about $\mq$ itself, Theorem \ref{them:a}\ref{item:croissant} gives the following existence property.

\begin{cor}\label{coro:c} If $(\mu_t)_{t\in \R}\in\p(\R)^\R$ is increasing for $\leqs$, there exists a Markov process $X=(X_{t})_{t\in\R}:\Omega\rightarrow\R^\R$ such that $\law(X)\in\ma((\mu_t)_t)$ and that the trajectories $t\mapsto X_t(\omega)$ are increasing. 
\end{cor}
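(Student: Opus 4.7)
The plan is to simply invoke Theorem \ref{them:a}, of which this corollary is essentially an immediate consequence; no further argument beyond assembling the right pieces is required.

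First I would apply Theorem \ref{them:a}\ref{p1:them:a} to the family $(\mu_t)_{t\in\R}$, producing the Markov-quantile measure $\mq\in\ma((\mu_t)_{t\in\R})$. By property \ref{item:markov} of that theorem, $\mq$ is Markov, so any process of law $\mq$ satisfies condition \eqref{eq:def_markovien}, which is the Markov property required in the statement of the corollary.

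Next, since by hypothesis $(\mu_t)_{t\in\R}$ is increasing for the stochastic order $\leqs$, I would invoke Theorem \ref{them:a}\ref{item:croissant}: it provides a probability space $(\Omega,\P)$ and a process $X=(X_t)_{t\in\R}:\Omega\to\R^\R$ of law $\mq$ such that, for every $\omega\in\Omega$, the trajectory $t\mapsto X_t(\omega)$ is an increasing function of $t$. Since $\law(X)=\mq\in\ma((\mu_t)_{t\in\R})$, this process has the prescribed marginals $\mu_t$, and because $\mq$ is Markov the process $X$ is Markov too. Hence $X$ is a Markov process with marginals $\mu_t$ and with increasing trajectories, which is exactly the conclusion of the corollary. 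There is no real obstacle to overcome here: the entire content of Corollary \ref{coro:c} is packaged inside Theorem \ref{them:a}, and one simply discards the uniqueness and ``minimality among Markov processes'' parts of the conclusion, keeping only the existence of an increasing Markov representative.
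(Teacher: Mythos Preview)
Your proposal is correct and matches the paper's own presentation: immediately before stating the corollary, the paper says that Theorem~\ref{them:a}\ref{item:croissant} gives this existence property directly, and your argument does exactly that. The paper also notes a second, independent route via Theorem~\ref{them:c} combined with Lemma~\ref{coro}, but the derivation from Theorem~\ref{them:a} that you wrote is the primary one.
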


This extends to the case of the stochastic order a famous theorem of Kellerer on martingales and submartingales with given marginals \cite{Ke72,Ke73}. Our Theorem \ref{them:c} recovers, with a different proof, Kellerer's result, as well as (simultaneously) Corollary \ref{coro:c} on increasing processes. The proof of Theorem \ref{them:c} is also completely independent from that of Theorem \ref{them:a}. Moreover the method used to show it leads to an existence statement for certain Markov processes, Theorem \ref{them:sous_n} p.\ \pageref{them:sous_n}, omitted in this introduction. To state Theorem \ref{them:c} we need to recall two definitions.

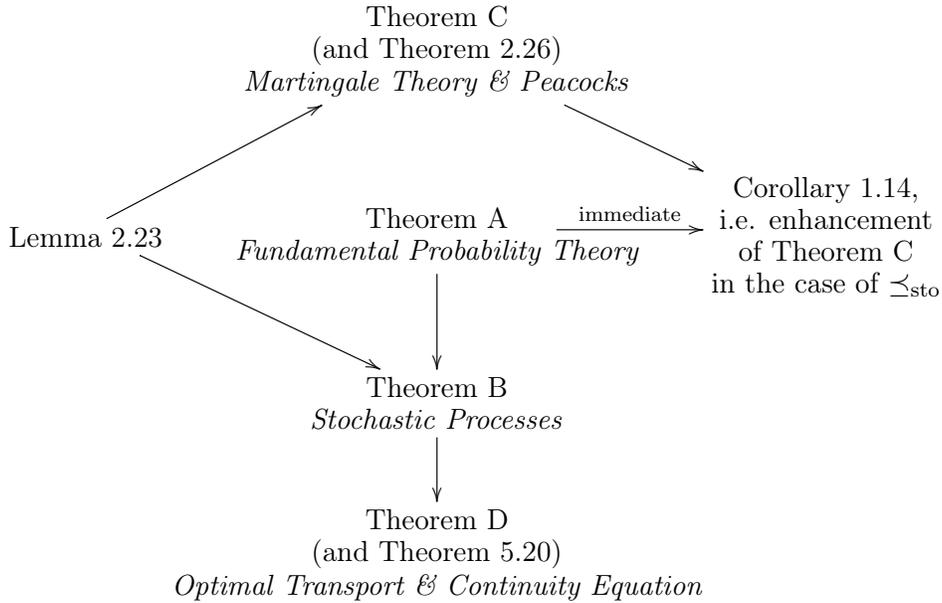
\begin{figure}[ht]
\xymatrix{
&\txt{Theorem C\\
(and Theorem \ref{them:sous_n})\\\emph{Martingale Theory \& Peacocks}}\ar[dr]\\
\txt{Lemma \ref{lem:closed}}\ar[ur]\ar[dr]&\txt{Theorem A\\\hspace*{-1.2cm}{\em Fundamental Probability Theory}\hspace*{-1.2cm}}\ar[d]\ar[r]^-{\txt{\scriptsize immediate}}&\txt{Corollary \ref{coro:c},\\  i.e. enhancement\\of Theorem \ref{them:c}\\ in the case of $\leqs$}\\
&\txt{Theorem B\\ \emph{Stochastic Processes}}\ar[d]&\\
&\txt{Theorem D\\
(and Theorem \ref{them:commun_d_et_1})\\
\hspace*{-3cm}{\em Optimal Transport \& Continuity Equation}\hspace*{-3cm}
}}
\caption{Main theorems: interdependance and field of application.}\label{fig:un}
\end{figure}

\begin{defi}\label{defi:leqc}Two measures $\mu$ and $\nu$ on $\R$, with finite first moments, are said to be in convex order $\leqc$, respectively in convex stochastic order $\leqcs$, if for every convex, respectively convex increasing function $\varphi$:
\begin{align}\label{eq:order}
\int \varphi\, \dd \mu\leq \int \varphi\, \dd \nu.
\end{align}
\end{defi}

Notice that $\mu\leqs\nu$ if and only if \eqref{eq:order} holds for (bounded) increasing functions $\varphi$. Now we define a martingale. We do it in the Markov framework (all we need), where this is a bit simpler. We add in \ref{item:increasing_coupling} a terminology of our own.

\begin{defi}\label{defi:martingale}\maz\point\label{item:martingale}\ A measure $P$ on $(\R^d)^2$ is a \emph{martingale coupling} if for every non-negative continuous bounded function $f:\R^d\to \R$:
\begin{equation}\label{eq:def_martingale}\iint f(x)(y-x) \dd P(x,y)=0.
\end{equation}
For $\Tt\subset\R$, a Markov measure $P$ on $(\R^d)^{\Tt}$ is a Markov martingale if for every $\{s,t\}\subset\Tt$ with $s<t$, the coupling $(\pr^{s,t})_\#P$ is.

\point\label{item:sousmartingale}\ When $d=1$, {\em submartingale couplings} and {\em Markov submartingales} are defined alike, the integral in \eqref{eq:def_martingale} being non-negative instead of null.

\point\label{item:increasing_coupling} A measure $P$ on $\R^2$ is called an {\em increasing coupling} if $P(\{(x,y)\in \R^2:\,x\leq y\})=1$, i.e.\ $P=\law((X_i)_{i\in\{1,2\}})$ where $X_1\leq X_2$. For $\Tt\subset\R$, we say that a measure $P$ on $\R^\Tt$ has increasing couplings if all $(\pr^{s,t})_\#P$ are so.
\end{defi}

\begin{rem}\label{rem:martingale}\maz\point\label{item:traj_croiss} If a measure $P$ on $\R$ is the law of a process with increasing trajectories, as in Theorem \ref{them:a}\ref{item:croissant}, $P$ is in case \ref{item:increasing_coupling} of Definition \ref{defi:martingale}. Actually a classical type of reasoning shows the converse, see Lemma \ref{coro} below. So the result in Corollary \ref{coro:c} amounts to give the existence of a Markov measure $P\in\ma((\mu_t)_t)$ with increasing couplings.

\point\label{item:partie_directe} Take $\Tt\subset\R$. If there exists $P\in\ma((\mu_t)_{t\in\Tt})$ as defined in case \ref{item:martingale}, \ref{item:sousmartingale}, or \ref{item:increasing_coupling} of Definition \ref{defi:martingale}, then it is immediate that $(\mu_t)_t$ is respectively increasing for $\leqc$, $\leqcs$ or $\leqs$. When $\#\Tt=2$, by Strassen's theory \cite{St65}, the converse is true. More generally, it is also true when $\Tt=\N$; one can deduce it by a quite simple induction based on the Markov catenation (Definition \ref{defi:catenation}).
\end{rem}

Kellerer shows Remark \ref{rem:martingale}\ref{item:partie_directe} for $\leqc$ and $\leqcs$ in the much more delicate case $\Tt=\R$.

\begin{them}[Kellerer, {\cite[Theorem 3]{Ke72},\cite{Ke73}}]\label{them:kellerer_intro} If $(\mu_t)_{t\in \R}$ is a family of\linebreak[4] probability measures on\/ $\R$, increasing for $\leqc$ (or $\leqcs$), there is a Markov measure $P\in\ma((\mu_t)_t)$ which is a (sub)martingale.
\end{them}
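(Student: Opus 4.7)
The plan is a three-stage construction, passing from finite to countable dense to full real parameter, with a closedness property of a suitable class of Markov kernels (Lemma \ref{lem:closed}, cited in Figure \ref{fig:un}) doing the heavy lifting in the two limit passages. For a finite $\Tt = \{t_1 < \cdots < t_n\}$, Strassen's theorem supplies a (sub)martingale coupling $\pi_i \in \ma(\mu_{t_i}, \mu_{t_{i+1}})$ for each $i$, and the Markov catenation (Definition \ref{defi:catenation}) of the $\pi_i$ yields a Markov (sub)martingale $P_\Tt \in \ma((\mu_t)_{t\in\Tt})$, as already pointed out in Remark \ref{rem:martingale}\ref{item:partie_directe}.

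For a countable dense $D \subset \R$, enumerate finite $\Tt_n \nearrow D$. The sequence $(P_{\Tt_n})$, extended canonically to $\ma((\mu_t)_{t\in D})$, is tight (the one-dimensional marginals being prescribed), so up to a subsequence it converges weakly to some $P_D$. The (sub)martingale property survives the limit thanks to the uniform integrability afforded by the fixed first moments. Preserving the Markov property simultaneously is more delicate: one must build each $P_{\Tt_n}$ from kernels in a distinguished class $\mathcal K$ that Lemma \ref{lem:closed} characterises as closed under weak limits, closed under Chapman--Kolmogorov catenation, and containing a (sub)martingale kernel between any two marginals in the prescribed order.

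To extend from $D$ to $\R$, for each $t \in \R \setminus D$ take sequences $(d_n^-), (d_n^+) \subset D$ with $d_n^- \nearrow t$ and $d_n^+ \searrow t$. Closedness of $\mathcal K$ lets one define kernels into and out of $t$ as weak limits of the kernels $K_{s,d_n^-}, K_{d_n^+,u}$ extracted from $P_D$, still satisfying the (sub)martingale condition and having $\mu_t$ as marginal at $t$. A Kolmogorov extension over these kernels then produces the desired Markov (sub)martingale $P \in \ma((\mu_t)_{t\in\R})$.

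The crux of the argument is the identification of the class $\mathcal K$ in Lemma \ref{lem:closed}. In Kellerer's original proof this is the set of Lipschitz-Markov kernels (1-Lipschitz dependence on the initial condition in the stochastic order), precisely what allows both passages to the limit while preserving the Markov property. The main obstacle is the simultaneous preservation of the Markov property, the (sub)martingale property, and the fixed marginals $\mu_t$ across both limit procedures; formulating $\mathcal K$ so that it accommodates $\leqc$, $\leqcs$, and (for Corollary \ref{coro:c}) $\leqs$ uniformly is where the substantive work sits, and is presumably what Theorem \ref{them:sous_n} packages on the Markov-existence side.
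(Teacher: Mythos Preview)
Your three-stage scheme (finite $\to$ countable dense $\to$ all of $\R$) is structurally different from the paper's argument, and your third stage has a real gap.

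The paper, following Kellerer, does not go through a countable dense set at all. Theorem \ref{them:existence} is a single compactness argument directly in $\ma((\mu_t)_{t\in\R})$: for each finite $S=\{s_1,\ldots,s_d\}$ one forms the closed set $\mathcal L_S=(\pr^S)_\#^{-1}(\mathcal N_{s_1,s_2}\circ\cdots\circ\mathcal N_{s_{d-1},s_d})$; properties (1)--(5) make each $\mathcal L_S$ nonempty and closed with $S\subset S'\Rightarrow\mathcal L_{S'}\subset\mathcal L_S$, and weak compactness of $\ma((\mu_t)_{t\in\R})$ forces $\bigcap_S\mathcal L_S\neq\varnothing$. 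Any element of this intersection is Markov and has the prescribed marginal $\mu_t$ at \emph{every} $t\in\R$ by the very definition of $\ma((\mu_t)_t)$. No extension from a dense subset is ever performed. You are right that the substantive input is a class of kernels with good closure properties (nonempty, closed, stable under composition, continuous under catenation); but this class feeds into the compactness machine of Theorem \ref{them:existence}, not into a sequential limit-and-extend procedure.

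Your stage 3 is where this matters. Kellerer's theorem carries no regularity hypothesis on $t\mapsto\mu_t$, so weak limits of $K_{s,d_n^-}$ along $D$ have no reason to produce the correct second marginal $\mu_t$ at $t\notin D$; and even where they do, you still owe the Chapman--Kolmogorov relations $K_{s,t}.K_{t,u}=K_{s,u}$ for \emph{all} the newly inserted $t$ simultaneously, which a pointwise limit construction does not supply. The compactness argument sidesteps both issues by never leaving the constraint set $\ma((\mu_t)_{t\in\R})$.

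Two smaller corrections. First, Lemma \ref{lem:closed} is the paper's \emph{new} continuity lemma for \emph{increasing} kernels, used for Theorem \ref{them:c}; Kellerer's original class is the Lipschitz kernels of Definition \ref{defi:lipschitz}, and the relevant continuity statement is Lemma \ref{lem:lipschtz_implique_cinq} (see Remark \ref{rem:plan_kellerer}). Second, Theorem \ref{them:sous_n} is about Markovinifying an already given process with increasing kernels and plays no role in the proof of Kellerer's theorem.
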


Kellerer's Theorem remains unproved for vectorial measures, see Open question \ref{ss:Kel_dimsup}. This a major motivation to search new methods to construct or to establish the existence of certain Markov processes, as it is done to prove Theorems \ref{them:a} and \ref{them:c}. Following Kellerer's line of proof, but replacing one of its key lemmas by another one (see details below), we prove its following generalization.
\begin{themBJ}\label{them:c}If $(\mu_t)_{t\in \R}$ is a family of probability measures on\/ $\R$, increasing for $\leqc$, $\leqcs$ or $\leqs$, there is a Markov measure $P\in\ma((\mu_t)_t)$ which respectively is a martingale, is a submartingale or has increasing couplings.
\end{themBJ}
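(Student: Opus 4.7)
The plan is to follow Kellerer's proof strategy but substitute his key compactness lemma on Lipschitz-Markov kernels by the more flexible closedness statement Lemma \ref{lem:closed}. This replacement is what will allow the three orders $\leqc$, $\leqcs$, $\leqs$ to be handled in a unified way.

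For each pair $s < t$ with $\mu_s$ and $\mu_t$ in the prescribed order, Strassen's theorem (see Remark \ref{rem:martingale}\ref{item:partie_directe}) produces a coupling of the correct type (martingale, submartingale or increasing). In each of the three cases one can moreover select a \emph{monotone} such coupling --- the quantile coupling in the $\leqs$ case, and a left-curtain-type coupling in the $\leqc$ and $\leqcs$ cases --- whose kernel $x \mapsto P^{s,t}_x$ is increasing in the stochastic sense of Definition \ref{defi:increasing_kernel_intro}. Catenating these kernels via the Markov catenation (Definition \ref{defi:catenation}) then yields, for every finite $R \subset \R$, a Markov measure $P_R \in \ma((\mu_t)_{t \in R})$ that lies in the required class, since each of the three coupling properties is preserved by Markov composition.

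Next I would pick a countable dense $D \subset \R$, enumerate an increasing sequence of finite sets $R_n \subset D$ with $\cup_n R_n = D$, and apply Prokhorov factor by factor (the marginals $\mu_t$ being fixed, tightness is automatic on each finite product) to extract a subsequential weak limit $P_D \in \p(\R^D)$ of $(P_{R_n})$ along a diagonal subsequence. The prescribed coupling property is trivially closed on each pair of factors, so $P_D$ still satisfies it; the Markov property on $D$ is then delivered by Lemma \ref{lem:closed}, whose role here is precisely that played by Lipschitz-Markov compactness in Kellerer's original argument. Finally, $P_D$ is extended to a measure $P \in \ma((\mu_t)_{t \in \R})$ by taking right-sided weak limits of finite-dimensional marginals along sequences $d_n \downarrow t$ for each $t \in \R \setminus D$; existence and consistency of such limits is secured by the monotonicity of the kernels together with a further application of the closedness lemma.

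The main obstacle is Lemma \ref{lem:closed} itself: one must show that, within a class of Markov kernels characterized by a monotonicity property of $x \mapsto P^{s,t}_x$, the Markov property survives weak convergence of all finite-dimensional marginals. Kellerer obtained the analogous statement via metric equicontinuity of Lipschitz-Markov kernels, which is unavailable in the $\leqs$ setting; the substitute argument here must proceed purely from the order-theoretic structure of the kernels, and it is precisely this order-based formulation that makes the approach work uniformly for $\leqc$, $\leqcs$ and $\leqs$, whereas Kellerer's method is restricted to the convex orders.
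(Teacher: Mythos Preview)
Your core substitution---increasing kernels and Lemma \ref{lem:closed} in place of Lipschitz kernels and Kellerer's continuity lemma---is exactly the paper's idea, and your construction on a countable dense $D$ is essentially sound. The gap is the extension from $D$ to all of $\R$. Taking right-sided weak limits of finite-dimensional marginals along $d_n \downarrow t$ produces a coupling whose marginal at $t$ is the right limit $\mu_{t^+}$, not $\mu_t$; since no continuity of $t \mapsto \mu_t$ is assumed, these may differ. Monotonicity of the kernels does not repair this: it governs dependence on the space variable $x$, not on the time parameter. You could try to enlarge $D$ to contain the (countably many) discontinuity times of the monotone family, but then the process at those times must come from the diagonal extraction itself rather than from a one-sided limit, and you have given no argument that the Markov property extends from $D$ to points of $\R\setminus D$.

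The paper sidesteps all of this by applying Kellerer's abstract existence machine, Theorem \ref{them:existence}, directly. One checks that the sets $\mathcal{N}^\IT_{s,t}$ of couplings in $\ma(\mu_s,\mu_t)$ with increasing kernel satisfy hypotheses (1)--(5) there: (3) is Lemma \ref{lem:transitions_croissantes_limite}, (5) is Lemma \ref{lem:closed}, and (2), (4) are immediate. Theorem \ref{them:existence} then produces the Markov measure via a finite-intersection-property argument in the compact set $\ma((\mu_t)_{t\in\R})$, with the correct marginal at every $t\in\R$ built in from the start---no dense subset, no extension. A secondary point: for non-emptiness in the $\leqc$ and $\leqcs$ cases the paper does not invoke a left-curtain coupling; it recycles Kellerer's own constructions, using the equivalence (from \cite{BHS}) of Lipschitz and increasing kernels among martingale transports for $\leqc$, and observing that Kellerer's explicit submartingale coupling already has increasing kernel for $\leqcs$.
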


Using Remark \ref{rem:martingale}\ref{item:traj_croiss}, one sees that this theorem proves Corollary \ref{coro:c}, that follows from Theorem \ref{them:a}, by another way.

Kellerer's proof uses a continuity result for certain kernels, recalled in Lemma \ref{lem:lipschtz_implique_cinq}. We replace it by Lemma \ref{lem:closed}, a continuity result for the increasing kernels of Definition \ref{defi:increasing_kernel_intro}. A form of Lemma \ref{lem:lipschtz_implique_cinq} appears in every proof of Kellerer's theorem we know \cite{Ke72,Ke73,Low,HiRoYo14,BHS}, so that in this respect our proof, resting on another type of kernels, is new. About Lemma \ref{lem:closed}, the following comment, that also appear on Figure \ref{fig:un}, are in order.\smallskip

-- It is a significant result of this article; \S\ref{sec:auxiliaire} is devoted to its proof.\smallskip

-- It is not used in the proof of Theorem \ref{them:a}, so that the proofs of our Theorems \ref{them:a} and \ref{them:c} are really independent. They bring separately Corollary \ref{coro:c}, an enhancement Theorem \ref{them:c} in the (new with respect to Kellerer's work) case of $\leqs$.\smallskip

-- It plays a prominent role in Theorem \ref{them:b}, see p.\ \pageref{appel_lemme_closed}, and, as a consequence of it, in all the results of \S\ref{sec:cont_eq} concerning the representation of absolutely continuous curves of order two in Wasserstein space.

\begin{rem}The Doob--Meyer decomposition theorem of some submartingales in a sum of an increasing process and a martingale is another reason our generalization of Theorem \ref{them:kellerer_intro} to $\leqs$ is a natural work.
\end{rem}

We mention also that Kellerer seems to have never considered the question of the extension to $\leqs$ in his papers. However in \cite{Ke86} with application in \cite{Ke87}, he explored the related question of the existence of increasing couplings $P\in \ma(\mu,\nu)$ that are as independent as possible, in a suitable sense.

Finally, here is the lemma announced in Remark \ref{rem:martingale}\ref{item:traj_croiss}, yielding Corollary \ref{coro:c} from Theorem \ref{them:c}. It is proven p.\ \pageref{subsec:proof_coro} in \S\ref{subsec:proof_coro}.

\begin{lem}\label{coro}If $P$ is a probability measure on $\R^\R$ with increasing couplings (see Definition \ref{defi:martingale}\ref{item:increasing_coupling}), there exists an increasing process $X=(X_t)_{t\in\R}:\Omega\to\R$, i.e.\ the functions $t\mapsto X_t(\omega)$ are increasing, such that $P=\law(X)$.  
\end{lem}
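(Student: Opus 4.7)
I would work on $\Omega=\R^\R$ with the measure $P$, via the canonical process $Y_t(\omega)=\omega(t)$. The hypothesis that $P$ has increasing couplings is exactly that $Y_s\leq Y_t$ holds $P$-a.s.\ for every $s<t$. Intersecting these full-measure events over the \emph{countable} family of rational pairs $(s,t)\in\Q^2$ with $s<t$, I obtain a measurable set $A$ of full $P$-measure on which $q\mapsto Y_q(\omega)$ is non-decreasing on $\Q$.

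On this basis I build a ``rational envelope'' for each $t\in\R$:
$$X_t^-(\omega)\eqdef \sup_{q\in\Q,\ q<t} Y_q(\omega),\qquad X_t^+(\omega)\eqdef \inf_{q\in\Q,\ q>t} Y_q(\omega).$$
On $A$ these quantities are finite (sandwiched between $Y_{q_0}$ and $Y_{q_1}$ for any rationals $q_0<t<q_1$), non-decreasing in $t$, and for $s<t$ any rational $r\in(s,t)$ gives $X_s^+\leq Y_r\leq X_t^-$. I then set
$$X_t(\omega)\eqdef \bigl(Y_t(\omega)\vee X_t^-(\omega)\bigr)\wedge X_t^+(\omega)\ \text{ for } \omega\in A,$$
and $X_t(\omega)\eqdef 0$ for $\omega\notin A$. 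Each $X_t$ is measurable, and for \emph{every} $\omega$ the trajectory $t\mapsto X_t(\omega)$ is non-decreasing: on $A$ and for $s<t$ one has $X_s\leq X_s^+\leq X_t^-\leq X_t$, while outside $A$ the trajectory is identically zero.

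To conclude $\law(X)=P$, fix $t\in\R$: intersecting the countably many full-measure events $\{Y_q\leq Y_t\}$ for rational $q<t$ (and symmetrically $\{Y_t\leq Y_{q'}\}$ for rational $q'>t$), we obtain $X_t^-\leq Y_t\leq X_t^+$ $P$-a.s.\ on $A$, hence $X_t=Y_t$ $P$-a.s. Applied to any finite tuple of times, this equates the finite-dimensional marginals of $X$ and $Y$, and since the cylindrical $\sigma$-algebra on $\R^\R$ is generated by these, $\law(X)=P$. The main subtlety --- and the reason the construction is needed at all --- is the order of quantifiers: one cannot demand $X_t(\omega)=Y_t(\omega)$ \emph{simultaneously for all} $t$ on a full-measure set, for this would require an uncountable intersection. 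The envelope $[X_t^-,X_t^+]$ circumvents this by producing a non-decreasing process built from countably many values $\{Y_q\}_{q\in\Q}$ that nevertheless captures $Y_t$ at each individual $t$ almost surely.
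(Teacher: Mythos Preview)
Your proof is correct and takes a genuinely different route from the paper's. The paper first observes that the marginals $(\mu_t)_t$ are increasing for $\leqs$, shows that their discontinuity times form a countable set, and chooses a countable dense $C\subset\R$ containing all of them; it then sets $\tilde X_t=\lim_{s\uparrow t,\,s\in C}X_s$ for $t\notin C$ and proves $\tilde X_t=X_t$ a.s.\ by a distributional argument (at a continuity point, $\law(\tilde X_t)=\mu_{t^-}=\mu_t$, and $\tilde X_t\leq X_t$ a.s., forcing equality). Your two-sided envelope-and-clamp construction $X_t=(Y_t\vee X_t^-)\wedge X_t^+$ is more elementary: it bypasses the $\stosup/\stoinf$ machinery and the analysis of discontinuity times entirely, and the verification that $X_t=Y_t$ a.s.\ at each fixed $t$ is a direct countable-intersection argument rather than a law-level comparison. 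The paper's version buys a slightly more explicit description of the modification (left-continuous along $C$, with $C$ adapted to the marginals), which it exploits in the subsequent remark about c\`adl\`ag versions; your version buys simplicity and portability, since it uses nothing about $\leqs$ beyond the pairwise a.s.\ inequalities already given.
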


\subsection{Application to the continuity equation and its treatment in Optimal Transport Theory}\label{subsec:intro_transport}
Our Markov-quantile process provides in \S\ref{sec:cont_eq} a uniqueness statement in the context of the transport (i.e.\ continuity) equation. We introduce it briefly here and state Theorem \ref{them:d}; in the introduction of \S\ref{sec:cont_eq} we give more detail on  the context of Optimal Transport, we place and motivate our work within it, and introduce Theorem \ref{them:commun_d_et_1}, absent of the general introduction of the article ---this theorem shows that the Markov-quantile process is the limit of processes built by a construction by interpolation, classical in Transport, and opens the question of a generalization of this construction to greater dimensions.

We stress that the Markov property was not involved up to now in the a priori rather analytic context of the transport equation. As we have seen in \S\ref{subsec:intro_kellerer}, considering this property comes from Kellerer's Theorem that is nowadays mostly represented in Martingale Optimal Transport (and Peacocks). This provides new results in Optimal Transport, where it is a novelty. The links between both fields was up to now only the other way around.\medskip

In \S\ref{sec:cont_eq}, we consider only the —nevertheless still rich--- set of {\em continuous} curves $(\mu_t)_{t\in\R}:\R\to\p(\R)$, $\p(\R)$ being endowed with a topology as follows.

\begin{notation}\label{not:mac} For every metric space $(\mathcal{X},d)$ we denote by $\mathcal{C}([0,1],\mathcal{X})$ the space of continuous curves from $[0,1]$ to $\mathcal{X}$ ---simply by $\mathcal{C}$ when $\mathcal{X}=\R$. We denote by $\ma_\mathcal{C}(\mu)$ the set $\{\Gamma\in \p(\mathcal{C}):\,\Gamma^t=\mu_t\ \text{for every }t\in [0,1]\}$ and by $\mathcal{P}_2(\R^d)$ the 2-Wasserstein space $\{\mu\in\mathcal{P}(\R^d):\int \|x\|^2\,\dd \mu(x)<\infty\}$.
\end{notation}

\begin{rem}The set $\p(\mathcal{C}([0,1],\mathcal{X}))$ is considered with its weak topology, which is finer that the trace topology given by $\p(\mathcal{C})\subset\p(\R^{[0,1]})$.  
\end{rem}

We introduce the energy of a curve $\gamma:[0,1]\mapsto \mathcal{X}$ in a metric space $(\mathcal{X},d)$:
$$\EE:\gamma\in \mathcal{C}([0,1],\mathcal{X})\mapsto\sup_{R}\sum_{k=0}^{m}\frac{d(\gamma(r_k),\gamma(r_{k+1}))^2}{r_{k+1}-r_k}\in [0,+\infty],$$
where $R=\{r_1,\ldots,r_m\}\subset\op]0,1\clo[$ and $(r_0,r_{m+1})=(0,1)$, as well as  the 2-Wasserstein distance $W_2$ on $\p_2(\R^d)$, based on the distance of $\R^d$ itself. Then an inequality involving energies for curves in $\R^d$ and $\p(\R^d)$ is proven in Remark \ref{rem:about_action}: 
for all $\Gamma\in\p(\mathcal{C}([0,1],\R^d))$, if $(\pr^t)_\#\Gamma\in\p_2(\R^d)$ for all $t\in [0,1]$:
\begin{equation}\label{eq:inegalite_energie}\int \EE(\gamma)\,\dd\Gamma(\gamma)\geq \EE((\Gamma^t)_{t\in [0,1]}),\ \ \text{where }\Gamma^t\!\eqdef\!(\pr^t)_\#\Gamma.
\end{equation}

In Theorem \ref{them:d} below, the existence of $\Gamma$ such that  \eqref{eq:inegalite_energie} is an equality is well-known. The theorem rather proves the existence and uniqueness of one {\em Markov} such measure.

\begin{themBJ}\maz\label{them:d}
Let $\mu=(\mu_t)_{t\in[0,1]}$ be a curve of finite energy $\EE(\mu)$ in $\p_2(\R)$.\smallskip

\point\label{p1:them:d} {\em (Existence of a Markov representation)} There exists a {\em Markov} probability measure $\Gamma$ in $\ma_\mathcal{C}(\mu)$ such that \eqref{eq:inegalite_energie} is an equality, i.e.:
$$\int \EE(\gamma)\,\dd \Gamma(\gamma)=\EE(\mu),$$
and such that there exists a nested sequence $(R_n)_{n\in \N}$ of finite subsets of\/ $\op]0,1\clo[$ such that $\como_{[R_n]}$ converges to $\Gamma$ in $\p(\mathcal{C})$.\smallskip

\point\ {\em(Uniqueness)} If\/ $\Gamma$ is as in \ref{p1:them:d} then its law is $\mq$.
\end{themBJ}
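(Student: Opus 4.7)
My plan is to deduce Theorem \ref{them:d} from Theorems \ref{them:a} and \ref{them:b}, combined with an energy-based tightness argument in $\p(\mathcal{C})$. The natural candidate for $\Gamma$ in \ref{p1:them:d} is $\mq$ itself, now viewed as a measure on $\mathcal{C}$; uniqueness will then follow from the characterization \ref{item:limite_de_compose} in Theorem \ref{them:a}.

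For existence I would first apply Theorem \ref{them:b} to produce a nested sequence $(R_n)_{n\in\N}$ of finite subsets of $\op]0,1\clo[$ such that $\como_{[R_n]}\to\mq$ weakly in $\p(\R^{[0,1]})$. The key a priori estimate is the equality
\[
\int \EE(\gamma)\,\dd\como_{[R_n]}(\gamma)=\EE(\mu),
\]
which comes from the additivity of $\EE$ over subintervals together with the fact that on each connected component $[r_k,r_{k+1}]$ of $[0,1]\setminus R_n$ the law $\como_{[R_n]}$ coincides with $\como$, and from the one-dimensional identity $\int(x-y)^2\,\dd\como^{s,t}(x,y)=W_2^2(\mu_s,\mu_t)$ (which by monotone convergence over refinements yields $\int\EE(\gamma|_{[r_k,r_{k+1}]})\,\dd\como=\EE(\mu|_{[r_k,r_{k+1}]})$). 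Since finite energy implies $\tfrac12$-Hölder continuity, Arzelà--Ascoli gives that the sublevel sets $\{\gamma\in\mathcal{C}:\EE(\gamma)\leq M,\,|\gamma(0)|\leq A\}$ are compact in $\mathcal{C}$; Markov's inequality applied to $\EE$ and to $|\gamma(0)|$ (whose law is the fixed $\mu_0$) then yields tightness of $(\como_{[R_n]})$ in $\p(\mathcal{C})$. The finite-dimensional limit being $\mq$, we conclude $\como_{[R_n]}\to\mq$ in $\p(\mathcal{C})$, so in particular $\mq\in\p(\mathcal{C})$. Lower semicontinuity of $\EE$ together with \eqref{eq:inegalite_energie} sandwiches
\[
\EE(\mu)\leq\int\EE\,\dd\mq\leq\liminf_n\int\EE\,\dd\como_{[R_n]}=\EE(\mu),
\]
giving the desired equality, and $\Gamma:=\mq$ satisfies all the properties of \ref{p1:them:d}.

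For uniqueness I would take $\Gamma$ satisfying \ref{p1:them:d}, with an associated nested sequence $(R_n)$ such that $\como_{[R_n]}\to\Gamma$ in $\p(\mathcal{C})$, and invoke the characterization \ref{item:limite_de_compose} of Theorem \ref{them:a}: $\mq$ is the unique Markov measure whose couplings $\mq^{s,t}$ are limits of products $\como^{s,r_1}\cdots\como^{r_m,t}$ along subdivisions of $[s,t]$. Fix $s<t$ and set $R_n':=R_n\cup\{s,t\}$. By Proposition \ref{pro:markovinise_fini1}, $\como_{[R_n']}^{s,t}$ is exactly such a product along $(R_n\cap\op]s,t\clo[)\cup\{s,t\}$; it remains to verify $\como_{[R_n']}^{s,t}\to\Gamma^{s,t}$. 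The catenation formula (Definition \ref{defi:catenation}) combined with the continuity of $t\mapsto\mu_t$ in $\mathcal{P}_2(\R)$ shows that, as the mesh of $R_n$ vanishes around $s$ and $t$, the two Markov resets inserted in passing from $R_n$ to $R_n'$ become asymptotically transparent, so $\como_{[R_n']}^{s,t}$ and $\como_{[R_n]}^{s,t}$ share the same weak limit $\Gamma^{s,t}$. Hence $\Gamma^{s,t}$ is a limit of products of quantile couplings, and since $\Gamma$ is Markov, Theorem \ref{them:a}\ref{item:limite_de_compose} forces $\Gamma=\mq$.

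I expect the main obstacle to be the tightness step in $\p(\mathcal{C})$: the quantitative bound $\int|X_s-X_t|^2\,\dd\como_{[R_n]}\leq\EE(\mu)(t-s)$ is exactly at Kolmogorov's borderline and does not by itself yield tightness of continuous trajectories. One must directly exploit the compactness of the sublevel sets of $\EE$ in $\mathcal{C}$ rather than rely on standard moment estimates; this is where the precise form of the quantile coupling and its interaction with the energy functional $\EE$ is decisive.
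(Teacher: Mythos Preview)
Your proposal is correct and follows essentially the same route as the paper: existence is exactly Proposition~\ref{pro:ap_egale_emu} (the paper uses the compact Sobolev embedding $W^{1,2}([0,1])\hookrightarrow\mathcal{C}$ where you invoke Arzel\`a--Ascoli, but these are the same thing here), and uniqueness is deduced from the characterization \ref{item:markov}+\ref{item:limite_de_compose} in Theorem~\ref{them:a}.

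One simplification for the uniqueness part: your detour through $R_n'=R_n\cup\{s,t\}$ and the ``asymptotic transparency'' argument are unnecessary. The two-marginal $\como_{[R_n]}^{s,t}$ coming from Definition~\ref{defi:quantile_discretement_markovien} is \emph{already} the product $\como^{s,r_1}.\como^{r_1,r_2}.\cdots.\como^{r_m,t}$ along $R_n\cap\op]s,t\clo[$; this is Remark~\ref{rem:comp_mini} (the point being that the projection of the quantile block $\como^{\ldots,s,\ldots,r_1}$ onto $\{s,r_1\}$ is just $\como^{s,r_1}$). So from $\como_{[R_n]}\to\Gamma$ in $\p(\mathcal{C})$ you get directly that each $\Gamma^{s,t}$ is a limit of products of quantile couplings, and Theorem~\ref{them:a}\ref{item:limite_de_compose} applies at once. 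What you called ``asymptotic transparency'' is in fact an identity $\como_{[R_n']}^{s,t}=\como_{[R_n]}^{s,t}$, not merely an asymptotic statement.
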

Note that Theorem \ref{them:d} relies on Theorems \ref{them:a}-\ref{them:b} and Lemma \ref{lem:closed}, as represented on Figure \ref{fig:un}.

\subsection{A first insight in the Markov-quantile process}\label{subsec:intro_quantilemarkovien}

We build the Markov-quantile process $\mq((\mu_t)_{t\R})$ answering Problem \ref{problem} in elementary examples of increasing difficulty, where what it shall be is clear. This makes the generalization of this construction, i.e.\ solving Problem \ref{problem}, a natural goal. Trying to achieve it by a naive strategy will then reveal its difficulties.

\begin{notation}\label{not:leb}
In \S\ref{subsec:intro_quantilemarkovien} we denote the Lebesgue measure on $[0,1]$ by $\la$, on $\R$ by $\dd x$, $\la_\R$ or also $\la$ when there is no ambiguity, on $\R^d$ by $\la_{\R^d}$. 
\end{notation}

\begin{ex}\label{ex1}Define $\mu=(\mu_t)_{t\in\R}$ by:
\begin{align*}
\mu_t=
\begin{cases}
\delta_0&\text{if }t=0\\
\frac{1}{|t|}\one_{[0,|t|]}\dd x&\text{if }t\in \R\setminus \{0\}.
\end{cases}
\end{align*}
The quantile trajectory  $(X_t(\alpha))_{t\in\R}$ associated with the level $\alpha\in [0,1]$ on the probability space $\Omega=([0,1],\la)$ is $t\in \R\mapsto \alpha|t|$. The process $X$ is not Markov because at time $t=0$, with information on $(X_t)_{t<0}$, we better know $(X_t)_{t>0}$ ---actually here, we determine it completely. A modification makes $X$ Markov. Namely, consider the catenation $X^{[0]}$ at time $0$ of $t\in \R^-\mapsto\alpha(-t)$ and $t\in \R^+\mapsto\beta t$, where $(\alpha,\beta)$ is uniformly picked in  $[0,1]^2$, in place of $\alpha=\beta$ picked in $[0,1]$. Then $X^{[0]}$ is the only possible answer to Problem \ref{problem}: it is Markov (hence $(X_t)_{t<0}$ and $(X_t)_{t>0}$ are independent) and equal to the quantile process where the latter is Markov. Moreover it satisfies the properties of Theorem \ref{them:a}, so it is the Markov-quantile process.
\end{ex}

\begin{defi}\label{defi:atomic}A measure without atom is said to be diffuse. We say that $t$ is an {\em atomic time} of a family $(\mu_t)_{t\in\R}$ of measures if $\mu_t$ is not diffuse.
\end{defi}

\begin{ex}\label{ex2}Now take $\mu_0$ any non-diffuse measure and, for $t\neq 0$, $\mu_t$ any diffuse measure, for instance $\mu_0\ast \theta_t$, where $\theta_t$ stands for $\mu_t$ of Example \ref{ex1}, or $\mu_t=\la$. The quantile process is $(X_t)_t=(\min\{x\in \R:\,\mu_t(\op]-\infty,x])\geq \alpha\})_t$; restricted to $\R^-$ or $\R^+$ it is Markov. Pick $\beta$ uniformly on $[0,1]$. On $([0,1]^2,\la\otimes \la)$, we consider the modified process $X^{[0]}$ defined by:
\begin{align*}
X_t^{[0]}(\alpha,\beta)=
\begin{cases}
X_t(\alpha)&\text{if }t\leq 0\\
X_t(\alpha^-+\beta(\alpha^+-\alpha^-))&\text{it }t>0,
\end{cases}
\end{align*}
where $\alpha^+=\alpha^-=\alpha$ if $\mu_0(X_t(\alpha))=0$, and otherwise $\op]\alpha^-,\alpha^+\clo[$ is the maximal open interval such that $X_0(\alpha')=X_0(\alpha)$ for all $\alpha'$ in it. Let us introduce $I_\alpha=\{\alpha\}$ in the first case and $I_\alpha=\op]\alpha^-,\alpha^+\clo[$ in the second one. Conditionally on the fact that $X_0^{[0]}$ equals some atom $x_0=X_0(\tilde\alpha)$ of $\mu_0$, the values of $X^{[0]}_t$ for $t>0$ are made independent of those of $X^{[0]}_t$ for $t<0$. Indeed, the vector $(\alpha,\alpha^- +\beta(\alpha^+-\alpha^-))$ is uniformly distributed on $I_{\tilde\alpha}\times I_{\tilde\alpha}$: this generalizes Example \ref{ex1}; $X^{[0]}$ is Markov and has law\/ $\mq\in \ma(\mu)$.
\end{ex}

\begin{ex}When the set $R$ of atomic times of $(\mu_t)_{t\in \R}$ is finite, one may repeat at each $r\in R$ the independence operation described above at time $0$, to produce a Markov process. Moreover, one may check that it does not matter in which order, because these operations commute. The resulting process is indeed our Markov-quantile process. With the notation of the paper, its law is $\como_{[R]}$. Similarly it is easy to imagine the Markov-quantile process when the atomic times form a locally finite set, like, e.g.,  $\mathbb{Z}$.
\end{ex}

\begin{rem} More examples of Markov-quantile processes are given  in Section \ref{sec:exemples}.
\end{rem}

The situation becomes  complicated when the set $\A$ of atomic times is not locally finite or even worse, uncountable. Consider the following a priori reasonable approach. Let $(R_n)_{n\in \N}$ be a nested family of finite sets such that $R_\infty=\cup_n R_n$ is dense in $\A$. We consider the sequence $(\como_{[R_n]})_n$ \label{page:tentative_limite} and hope for a limit  $Q$. Then we encounter three problems:\smallskip
 
-- By compactness of $\ma((\mu_t)_t)$, $(\como_{[R_n]})_n$ has an accumulation point (see similar reasonings in \cite{HiRo13,Ju_ejp,KaKr,Lis,Vi2}), but it has no reason to be unique.\smallskip
 
-- If $\A$ is uncountable, this limit needs not to satisfy the Markov property \eqref{eq:def_markovien} at times $s\in\A\setminus R_\infty$, at which we did not perform the modification of Examples \ref{ex1}-\ref{ex2}. A continuity assumption on $t\mapsto \mu_t$ could let hope to yield it (this was used, with other goals, for measures in stochastic or convex order, see, e.g.,   \cite{BHS,HiRoYo14}) but we do not make such an assumption. Also in the ``space of quantile levels'', the irregularity may be maximal: the set $\{(t,\alpha)\in\R\times[0,1]:\,X_t(\alpha)$ is an atom of $\mu_t\}$ needs not to be measurable.\smallskip
 
-- Anyway, limits of Markov processes are in general not Markov so here, property \eqref{eq:def_markovien} is not ensured even at $s\in R_\infty$. To our knowledge, before the present paper this type of problem has principally one solution, based on Lipschitz kernels (see Lemma \ref{lem:lipschtz_implique_cinq}), first discovered by Kellerer \cite{Ke72,Low,HiRoYo14,BeJu16, BHS}. However, see \cite[Lemma 5.3]{Naga} for a different statement.\smallskip
 
\noindent Another consequence of this non stability of the Markov property is that it is also not possible to consider the sequence of quantile processes for mollified curves $\mu^{(n)}=(\mu_t\ast \theta_n)_t$, relying on the fact that all the measures $\mu_t^{(n)}$ are diffuse, so that each $\como\in\ma((\mu^{(n)}_t)_{t\in\R})$ is Markov.

\begin{rem}\label{rem:choix}\maz\point\  In fact, the convergence in our Theorem \ref{them:a}\ref{item:limite_de_compose}, and hence in its enhancement Theorem \ref{them:b}, rests on the {\em order} $\leqlc$ introduced in Definition \ref{defi:geqlc}: for all $s$ and $t>s$, the choice of the times $r_i$ follow from that of a sequence in some set of measures, tending to the supremum of this set for $\leqlc$, see Lemma \ref{lem:existenceT}, in particular its point \ref{p3:def_L_R}.

\point\ As the examples above suggest, for all $s$ and $t>s$, the products appearing in point \ref{item:limite_de_compose} of Theorem \ref{them:a} need only to use couplings $\como_{r_i,r_{i+1}}$ where the times $r_i$ are atomic. Adding non-atomic times has no effect. Similarly, in Theorem \ref{them:b}, that provides nested finite sets $R_n$ such that $\como_{[R_n]}\to\mq$, $R=\cup_nR_n$ may avoid all non-atomic times.

Now it appears moreover, but only as a consequence of Theorem \ref{them:a} once it is proven, that all the atomic times of $(\mu_t)_t$ do not play the same role:\smallskip
 
-- Some are ``essential'' (see Definition \ref{defi:essentiel}). All of them that lie in $\op]s,t\clo[$ must eventually appear among the $r_i$ in Theorem \ref{them:a}\ref{item:limite_de_compose}, and all of them must belong to $R$ in Theorem \ref{them:b}. This is possible as they turn out to be at most countable (Proposition \ref{pro:denombrable}).\smallskip
 
 -- One may choose the $r_i$ in Theorem \ref{them:a}, or $R$ in Theorem \ref{them:b}, so that they avoid any fixed finite set of the other atomic times.\smallskip
 
 \noindent Therefore, the intersection of the sets $R$ satisfying the convergence property of Theorem \ref{them:b} is the set of the essential atomic times.
 
The existence, for any sequence $(\mu_t)_t$, of the set of its essential atomic times, at most countable  even if the set $\{(t,\alpha)\in\R\times[0,1]:\,X_t(\alpha)$ is an atom of $\mu_t\}$ is not  measurable, is in itself a significant result of this article. Perhaps does this notion admit generalizations when the set of parameters or the measurable space, both equal to $\R$ in this work, are more general spaces.
\end{rem}

\begin{rem}It is also very important to notice that as soon as the set $\{(t,\alpha)\in\R\times[0,1]:\,X_t(\alpha)$ is an atom of $\mu_t\}$ is regular enough (see the examples of \S\ref{sec:exemples} for clearly stated instances of this), the Markov-Quantile process may be explicitly computed, as it is done in the important Example \ref{ex:integers}. More generally, certain properties of this set and of $\mq$ are linked, see the whole of \S\ref{sec:exemples}. Through its various examples, that section gives also an intuition of how $\mq$ behaves.
\end{rem}

\subsection{Organization of the paper}\label{subsec:organisation}
In \S\ref{sec:catenation+kellerer+boujui} we introduce in \S\ref{subsec:kernels_markov} kernels and transport plans, their composition and catenation, and the Markov property expressed in this language. We give in \S\ref{subsec:proof_kelsto} the structure of Kellerer's work \cite{Ke72,Ke73}, explain why it motivates our reasoning towards  Theorem \ref{them:c}, and prove the latter. However, we postpone the introduction of one auxiliary notion, and the proof of Lemma \ref{lem:transitions_croissantes_limite} and of the essential Lemma \ref{lem:closed} to \S\ref{sec:auxiliaire}. In \S\ref{subsec:markovinification} we state and prove the ``Markovinification'' Theorem \ref{them:sous_n}.

In \S\ref{sec:auxiliaire} we introduce the auxiliary notions and results leading to the proofs of Lemmas \ref{coro}, \ref{lem:closed} and \ref{lem:transitions_croissantes_limite}, then also used in \S\ref{sec:construction}, namely:
\begin{itemize}\maz
\item[\point] the ``lower orthant'' and stochastic orders, related suprema and the notion of increasing kernel in \S\ref{subsec:stochastic_order},
\item[\point] the quantile transport and the notion of minimal coupling in \S\ref{ss:minimality},
\item[\point] two distances, $\rho$ and $\tilde \rho$, inducing the weak topology on spaces of transport plans $\ma(\mu,\nu)$ in \S\ref{subsec:rho}.
\end{itemize}
Lemma \ref{coro} is proved in \S\ref{subsec:proof_coro} and Lemmas \ref{lem:closed} and \ref{lem:transitions_croissantes_limite} are  in \S\ref{subsec:proof_lemmas}.

By the way, \S\ref{subsec:kernels_markov}, \S\ref{subsec:stochastic_order}, and \S\ref{ss:minimality} also give all the background to understand in detail the three properties \ref{item:markov}--\ref{item:minimal} characterizing $\mq$ in Theorem \ref{them:a}.

In \S\ref{sec:construction} we prove Theorems \ref{them:a} and \ref{them:b}: in \S\ref{subsec:little_t} we explain how the situation may be pushed forward to the space $[0,1]$ of ``levels of quantiles'', in \S\ref{subsec:preuve_premier} we prove Theorem \ref{them:a}, i.e.\ build the Markov-quantile process, and in \S\ref{subsec:preuve_deuxieme} we state and prove Theorem \ref{them:deuxieme_sec} which is a more precise and complete version of Theorem  \ref{them:b}. To do this we introduce the essential atomic times of $(\mu_t)_t$.

In \S\ref{sec:cont_eq} we state and prove Theorem D, restated as Theorem \ref{them:action}, and Theorem \ref{them:commun_d_et_1}. See details in the introduction \S\ref{subsec:intro5}.

In \S\ref{sec:exemples} we exhibit the Markov-quantile process in a series of examples, state three last remarks about Theorem \ref{them:a}, and give open questions.\medskip

\noindent{\em Note.} When we introduce various tools, sometimes classical, we do it in a way and with remarks adapted to our context. The reader already knowing them may read quickly, taking notice of our few specific remarks, which are useful in the rest of the article.

\subsection{Notation}\label{subsec:notation}\maz\point\ We gather here the notation we use widely, indicating where each item is introduced, so that the reader can find it quickly if needed.
\mb

We introduce $\m(E)$, $\p(E)$, $\pr^{\Tt'}$, $P^t$ (similarly $\Gamma^t$), $P^{s,t}$ and the set $\ma((\mu_\tau)_{\tau\in\Tt})$ in Notation \ref{notation:marg}, the quantile process $\como$ in Reminder \ref{remind:quantile} and, together with the quantile coupling $\como(\mu,\nu)$, in Definitions \ref{defi:quantile1} and \ref{defi:quantile_process} and Notation \ref{not:multi}, the stochastic order $\leqs$ in Reminder \ref{remind:sto}, $\mq$ in Definition \ref{pro:comp_conv}\ref{bbb}, if \mn$P$ is some process, $R\subset\R$ and $\#R<\infty$\mb, $P_{[R]}$ in Notation \ref{pro:markovinise_fini1} and Definition \ref{defi:quantile_discretement_markovien}, $\leqc$ and $\leqcs$ in Definition \ref{defi:leqc}, $\mathcal{C}$ and $\ma_\mathcal{C}(\mu)$ in Notation \ref{not:mac} and  and \ref{nota:courbes_continues}, $\la$ in Notation \ref{not:leb},  the composition $k.k'$ of kernels in \S\ref{ss:compos}, the kernels $k_P$ in Notation \ref{notation:k_P} and $\idk_E$ in Notation \ref{not:ident}, $\join(\mu,k)$ in Notation \ref{notation_join}, the transport plans $\id_{2,\mu}$ and $\id_{n,\mu}$ in Notation \ref{notation:transport_id}, if \mn$P$\mb\ is some proces, $\trans P$ in Definition \ref{defi:transpose}, $P\circ P'$ in Definition \ref{defi:catenation}, $\mathcal{N}^\ML_{s,t}$ in Definition \ref{defi:lipschitz} and $\mathcal{N}^\IT_{s,t}$ in Notation \ref{nota:Nik}, $x\leq y$ when \mn $(x,y)\in(\R^d)^2$\mb\ in Notation \ref{notation:intervallesRd}, $F_\mu$ or $F[\mu]$, if \mn$\mu$\mb\ is some measure, in Definition \ref{def:cumul}, $\mu\leqlc\nu$ in Definition \ref{defi:geqlc}, $\lcsup_\tau P_\tau$ in Definition \ref{def:lcsup}, $\m(\mu)$, $\p(\mu)$, $\mdec(\mu)$ and $\pdec(\mu)$ in Notation \ref{notation:mdec}, $G_\mu$ in Definition \ref{defi:quantile2}, $\rsemibracket$ in Notation \ref{notation:intervallesRd}, the distance $\rho$ in Notation \ref{pro:rho}, $\tilde \rho$ in \S\ref{ss:another}, the kernels $q_r,\, k_r$ and $\trans k_r$ in Notation \ref{notation:kernels}, $A_{r,x}$, $A_{r}$ and $\ell_r$ in Notation \ref{nota:Asx}, $L_R$, for \mn$R\subset\R$\mb, in Notation \ref{lem:existenceT} and $\ell_R$ in Notation \ref{nota:kernel_tS}, $\lev$ in Definition \ref{defi:lev}, $\parti([a,b])$ and $|R|$ in Definition \ref{defi:parti}, $\mathcal{L}_a^b(\gamma)$, $\mathcal{AC}$ and $\mathcal{AC}_2$ in Notation \ref{nota:length}, $\EE_a^b(\gamma)$ and $\EE_a^b(\gamma,R)$ in Definition \ref{defi:energie}, $W_2(\mu,\nu)$ in Reminder \ref{reminder:wasserstein}, $\A(\Gamma)$ in Defintion \ref{defi:action}, $u^\Gamma_t$ in Definition \ref{defi:barycentre} and $\disp_{[R]}$ in Definition \ref{defi:disp}
\mn.\smallskip

\point\ In this article, if $E'\subset E$, $\mbox{\mb$\one_{E'}$\mn}:E\rightarrow\{0,1\}$ is the indicator function of $E'$ and, if $\mu\in\m(E)$, \mb$\mu\lfloor_{E'}$\mn\ stands for $\one_{E'}\mu$. The Dirac measure at $x\in E$ is denoted by \mb$\delta_x$\mn\ and \mb$\la$\mn\ stands for the Lebesgue measure on $[0,1]$, $\R$ or $\R^d$. Most of the time we deal with $\la|_{[0,1]}$ so, when there is no ambiguity, we simply write $\la$ in this case. If $f:E\rightarrow F$ is measurable and $\mu\in\m(E)$, ${\text{\mb$f_\#\mu$\mn}}\in\m(F)$ is defined by $f_\#\mu(B)=\mu(f^{-1}(B))$. Product measures are denoted by \mb$\mu\otimes \nu$\mn. If $f$ and $g$ are functions, \mb$f\otimes g$\mn\ stands for $(x,y)\mapsto(f(x),g(y))$.\smallskip

\point\ Recall also Convention \ref{conv:croissant}: introducing $\{r_1,\ldots,r_m\}$ or $m$-tuples $(r_k)^m_{k=1}$ of real numbers, we mean implicitly that $r_1<\ldots<r_m$.

\begin{vocabulary}\maz\point\ Similarly, we gather  our common vocabulary:

\begin{itemize}
\item[--] \emph{process} and \emph{marginal (law)}: see Reminder \ref{reminder:process},
\item[--] \emph{canonical process, coupling} and \emph{transport (plan)}: see Reminder \ref{remind:processus_canonique},
\item[--] \emph{martingale coupling, submartingale coupling and increasing coupling}: see Definition \ref{defi:martingale},
\item[--] \emph{atomic} and \emph{essential atomic times}: see Definitions \ref{defi:atomic} and \ref{defi:essentiel},
\item[--] \emph{increasing}, resp.\ \emph{Lipschitz kernel}: see Definitions \ref{defi:increasing_kernel_intro} and \ref{prodef:transitions}, resp.\ Definition \ref{defi:lipschitz},
\item[--] \emph{quantile coupling} and \emph{quantile measure}: see Definitions \ref{defi:quantile1} and \ref{defi:quantile_process},
\item[--] \emph{atomic levels}: see Notation \ref{nota:Asx},
\item[--] a process ``\emph{$M$ made Markov at the points of $R$}'': see Definition \ref{defi:quantile_discretement_markovien},
\item[--] a \emph{partition} of an interval: see Definition \ref{defi:parti},
\item[--] an \emph{absolutely continuous} curve, its \emph{energy}: see Reminder \ref{nota:length} and Definition \ref{defi:energie}.
\end{itemize}
\end{vocabulary} 

\point\ Throughout this paper ``increasing'' and ``decreasing'' mean ``nondecreasing'' and ``nonincreasing''. Indeed we deal often with partial orders, for which the two latter terms are unclear: the contrary of ($\forall s<t$, $\mu_s\leq \mu_t$ and $\mu_s\neq \mu_t$) is ($\exists s<t: \mu_s\geq \mu_t$ or ({$\mu_s$ and $\mu_t$ are incomparable)).

\subsubsection*{Aknowledgements} The authors wish to thank Martin Huesmann, Christian L\'eonard, Emmanuel Opshtein and Xiaolu Tan for bibliographic or editorial suggestions as well as Michel \'Emery and Erwan Hillion for discussions on examples related to this work.

\section{An extension of a theorem of Kellerer}\label{sec:catenation+kellerer+boujui}

\subsection{The Markov property, composition and catenation of kernels and transport plans}\label{subsec:kernels_markov}

Everywhere $E$, $E'$, $E''$ {\em etc.}\ are topological spaces (or sometimes Polish spaces) and $\mathcal{B}(E)$, $\mathcal{B}(E')$ and $\mathcal{B}(E'')$ their Borel $\sigma$-algebras. 

\begin{defi}
A probability kernel, or kernel $k$ from $E$ to $E'$ is a map $k:E\times \mathcal{B}(E')\to [0,1]$ such that $k(x,\cdot)$ is a probability measure on $E'$ for every $x$ in $E$ and $k(\cdot,B)$ is a measurable map for every $B\in {\mathcal B}(E')$.
\end{defi}

Probability kernels are usually interpreted as transition matrices, see Remark \ref{rem:matrix}: after one step a particle at $x$ in $E$ arrives at a random position in $E'$, distributed with respect to $k(x,\cdot)$. We often have that interpretation in mind.

\begin{remnotation}\label{notation:k_P}Every transport plan $P\in\p(E\times E')$ can be disintegrated with respect to its first marginal $P^1\eqdefup(\pr^1)_\#P$ and a kernel that we denote by $k_P$, defined from $E$ to $E'$, so that:
$$\iint f(x,y)\,\dd P(x,y)=\int\left(\int f(x,y)\,k_P(x,\dd y)\right)\,\dd P^1(x)$$
for every bounded continuous function $f$. Observe that $x\mapsto k(x,\cdot)$ is $P^1$-almost surely uniquely determined.
\end{remnotation}

\subsubsection{Composition and action of kernels.}\label{ss:compos} Kernels $k$ from $E$ to $E'$ and $k'$ from $E'$ to $E''$ can be composed as follows:
$$(k.k')(x,A)=\int_{E'} k'(y,A) k(x,\dd y).$$
Similarly, acting on the right, kernels from $E$ to $E'$ \emph{transport}, or \emph{send} positive measures $\theta$ on $E$ on positive measures on $E'$. Acting on the left, they send (adequately integrable) functions $f:E'\rightarrow \R$, on functions $E\rightarrow \R$:
$$(\theta.k)(A)=\int_{E'}k(y,A)\theta(\dd y)\ \ \text{and:}\ \ (k.f)(x)=\int_{E'} f(y) k(x,\dd y).$$
\noindent Associativity holds, e.g.,  $(k.k').k''=k.(k'.k'')$, and $\theta.(k.f)=(\theta.k).f$ where the action of measures on functions is the obvious one. This is consistent with the following remark.

\begin{rem}\label{rem:matrix}
 We recall the usual interpretation of the composition as matrix multiplication. If $E=\{x_i\}_{i=1}^{n}$, $E'=\{y_j\}_{j=1}^{n'}$, $E''=\{z_k\}_{k=1}^{n''}$ are finite, a measure $\theta\in\m(E)$ is a row vector $(\theta(\{x_i\}))_{i=1}^{n}=(\theta_i)_{i=1}^{n}$, a kernel $k$ from $E$ to $E'$ is a matrix $k=((k_{i,j})_{i=1}^{n})_{j=1}^{n'}$ where $(k_{i,j})_{j=1}^{n'}$ is the measure $k(x_i,\,\cdot\,)\in\m(E')$, viewed as a vector, and a function $f$ from $E''$ to $\R$ is a (column) vector $f=(f(z_j))_{j=1}^{n''}$. Then, taking $k'=((k'_{j,k})_{j=1}^{n'})_{k=1}^{n''}$ a kernel from $E'$ to $E''$, $\theta.k$, $k.k'$ and $k'.f$  introduced above have the same sense as products of matrices.
\end{rem}

\begin{notation}\label{not:ident} We denote by $\idk_E$ the identity kernel (that acts trivially) $(x,B)\mapsto\delta_x(B)=\one_B(x)$.
\end{notation}

\begin{notation}\label{notation_join} With $\mu\in\m(E)$ and $k$ a kernel from $E$ to $E'$ is naturally associated the law $\join(\mu,k)\in\m(\espace\times \espace')$, having $\mu$ as first marginal and the family $(k(x_0,\cdot))_{x_0\in E}$ as laws (on $E'$) conditioned by $x_0\in E$:
$$\forall B,B'\in{\mathcal B}(E)\times{\mathcal B}(E'), \join(\mu,k)(B\times B')=\int_Bk(x,B')\dd\mu(x).$$
In particular $P=\join(P^1,k_P)$.
\end{notation}

\subsubsection{Composition of transport plans.}\label{subsubsec:composition} If $P\in\ma(\mu,\mu')$ and $Q\in\ma(\mu',\linebreak[1]\mu'')$, we can compose them in a similar way as we compose kernels, getting the product:
$$P.Q\eqdefup\join(\mu,k_P.k_Q)\in\ma(\mu,\mu''),\ \text{so that: }k_{P.Q}=k_P.k_Q.$$

\begin{notation}\label{notation:transport_id}
We denote $((\Id_E)_{i=1}^n)_\#\mu\in\ma((\mu)_{i=1}^n)$ by $\Id_{n,\mu}$ or simply $\id_n$ when there is no ambiguity. With $n=2$, $\Id_{2,\mu}=\join(\mu,\idk_E)$. It is moreover the identity transport: $\id_{2,\mu}.P=P=P.\id_{2,\mu'}$.
\end{notation}

\subsubsection{Action of transport plans on measures and functions.}\label{subsubsec:action_des_transports} If $\mu\in\m(E)$ and $\mu'\in\m(E')$, transport plans $P\in\ma(\mu,\mu')$ have an action similar to that of kernels from $E$ to $E'$, on ($\mu'$-almost surely defined) classes of functions $f$ and on measures $\theta\in\m(E;\mu)$ absolutely continuous with respect to $\mu$. For instance, the latter are transported in $\m(E',\mu')$, as follows:
$$\text{if $\theta\ll \mu$ has density $g$ and $B'\in{\mathcal B}(E')$, }(\theta.P)(B')=\int_{B'}\int_E g(x)\dd P(x,y).$$
If $k$ is a kernel from $E$ to $E'$, $\theta.k=\theta.\join(\mu,k)$. Conversely $\theta.P=\theta.k_P$.\medskip

\begin{defi}\label{defi:transpose} Take $P\in\ma(\mu_1,\ldots,\mu_k)$, or $P\in\ma((\mu_t)_{t\in\R})$. We define its transpose $\trans P\in\ma(\mu_k,\ldots,\mu_1)$, resp.\  $\trans P\in\ma((\mu_{-t})_{t\in\R})$ by: $ \trans P(B_1\times \ldots\times B_k)=P(B_k\times\ldots\times B_1)$, resp.\ $\trans P(\prod_iB_{t_i})=P(\prod_iB_{-t_i})$. 
\end{defi}

The notation $\trans P$ is a reference to a transposed matrix in Remark \ref{rem:matrix}. For $P=\ma(\mu,\nu)$ we will often consider the bilinear map 
$$B:(\theta,f)\mapsto (\theta P)f=\theta (Pf)=\iint g(x)f(y)\,P(\dd x,\dd y)$$
where $g$ is the density of $\theta$ with respect to $\mu$. The case $\theta=\mu\lfloor_{\op]-\infty,x]}$, $f=\one_{\op]-\infty,y]}$ with $B(\theta,\nu)=P(\op]-\infty,x]\times\op]-\infty,y])$ is of special interest, see \S \ref{subsec:stochastic_order}.\medskip

\subsubsection{Catenation of transport plans.}\label{subsec:caten} (See, e.g.,  \cite{Ke72} p.\ 111, \cite{Vi2} p.\ 23.)
\begin{defi}\label{defi:catenation}
If $\mu_i\in\p(E_i)$ for $i\in\{1,2,3\}$, if $P_{1,2}\in\ma(\mu_1,\mu_2)$ and $P_{2,3}\in\ma(\mu_2,\mu_3)$, their catenation $P_{1,2}\circ P_{2,3}$ is the unique $R\in\p(\R^3)$ such that for every $ (B_1,B_2,B_3)\in{\cal B}(E_1)\times{\cal B}(E_2)\times{\cal B}(E_3)$:
\begin{align}\label{eq:catenation}
R(B_1\times B_2\times B_3)&=\int_{x\in B_1}\int_{y\in B_2}\int_{z\in B_3} \dd\mu_1(x)k_{1,2}(x,\dd y)k_{2,3}(y,\dd z).
\end{align}
In particular, $R\in\ma((\mu_1,\linebreak[1]\mu_2,\linebreak[1]\mu_3)$, $(\pr^{1,2})_\#R=P_{1,2}$, and $(\pr^{2,3})_\#R=P_{2,3}$.
\end{defi}

\begin{rem} Let $k_{i,j}$ be a disintegration kernel for $P_{i,j}$ and $P_{2,1}\eqdefup\trans P_{1,2}$. The right side of \eqref{eq:catenation} also reads: $\iint_{B_1\times B_2} \int_{B_3}k(y,\dd z)\dd P_{1,2}(x,y)$, hence also:
\begin{equation}
\int_{y\in B_2} \iint_{x\in B_1,\,z\in B_3}k_{2,1}(y,\dd x)k_{2,3}(y,\dd z)\mu_2(\dd y).\label{eq:catenation3}
\end{equation}
Catenation is ``reversed when time is reversed'' in the sense that $\trans P_{2,3}\circ{}\trans P_{1,2}=\trans(P_{1,2}\circ P_{2,3})$; this is immediate after (\ref{eq:catenation3}). Formula (\ref{eq:catenation}) gives immediately that $\circ$ is associative, leading to its following generalization: 
$$P_{1,2}\circ\ldots\circ P_{n-1,n}\Bigl({\prod_{i=1}^n}B_i\Bigr)=\int_{(x_i)_i\in \prod_iB_i}
\hspace{-4ex}\dd\mu_1(x_1)k_{1,2}(x_1,\dd x_2)\ldots k_{n-1,n}(x_{n-1},\dd x_n).$$
\end{rem}

\begin{rem}\maz\point\ In \S\ref{subsubsec:composition}, one can also define $P.Q$ as $\pr^{E,E''}_\#(P\circ Q)$.

\point\ The composition and catenation of transport plans find an easy interpretation in terms of random variables. If $(X_1,X_2,X_3)$ is a random vector with $\law(X_1,X_2)=P$ and $\law(X_2,X_3)=Q$ such that $(X_i)_{i\in\{1,2,3\}}$ is a Markov process ($X_1$ and $X_3$ are independent conditionally on $X_2$), then $P.Q$ is the law of $(X_1,X_3)$ and $P\circ Q$ the law of $(X_1,X_2,X_3)$, see \eqref{eq:catenation3}.
\end{rem}

\subsubsection{The Markov Property.} 

We introduce the Markov property here in an alternative, equivalent way to the usual one.

\begin{remdefi}\label{defi:markov2}As recalled in \eqref{eq:def_markovien}, a process $(X_t)_{t\in\R}$ is said to be {\em Markov} if: $\forall s\in\R,\forall t>s, \law(X_t|\,(X_{u})_{u\leqslant s})=\law(X_t|\,X_{s})$. Denoting $\law((X_t)_t)\in\p(\R^\R)$ by $P$, this is equivalent to the fact that for all finite subset $S=\{s_1,\ldots,s_d\}$ of $\R$, $(\pr^S)_\#P$ is the catenation $P^{s_1,s_2}\circ\cdots\circ P^{s_{d-1},s_d}$, where $P^{s_i,s_{i+1}}$ denotes $(\pr^{\{s_i,s_{i+1}\}})_\#P$. More generally, we say that any measure $P\in\p(\R^\R)$ is Markov if it satisfies this property.

We extend these definition in the obvious way to processes indexed on subsets $R\subset \R$ and measures on $\p(\R^R)$.
\end{remdefi}

We recall the Kolmogorov--Daniell theorem and its usual corollary on Markov measures.

\begin{pro}[Kolmogorov--Daniell theorem]\label{pro:inductive_limit}
Let $(\mu_S)_S$ be a family of probability measures on some Polish space $E$, indexed by the finite subsets $S$ of $\R$. If $(\pr^{S'})_\#\mu_S=\mu_{S'}$ for every $S'\subset S$, there exists a unique $P\in \p(E^\R)$ with $(\pr^S)_\#P=\mu_S$ for every $S$.
\end{pro}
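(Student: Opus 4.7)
The plan is to follow the classical Carath\'eodory strategy: define $P$ on the algebra of cylinder sets, then extend it to the cylindrical $\sigma$-algebra. Let $\mathcal{A}$ be the algebra of finite-dimensional cylinders in $E^\R$, i.e.\ the sets of the form $A=(\pr^S)^{-1}(B)$ where $S\subset\R$ is finite and $B\in\mathcal{B}(E^S)$. For such an $A$, set $P(A)\eqdef\mu_S(B)$. The consistency condition $(\pr^{S'})_\#\mu_S=\mu_{S'}$ whenever $S'\subset S$ ensures that $P(A)$ does not depend on the choice of base: any two finite bases $S_1,S_2$ of the same cylinder can be compared through $S_1\cup S_2$. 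Finite additivity on $\mathcal{A}$ follows from finite additivity of each $\mu_S$, after bringing two cylinders to a common base by the same comparison.

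The main step, and the only delicate one, is $\sigma$-additivity of $P$ on $\mathcal{A}$, or equivalently continuity at $\emptyset$: if $(A_n)_{n\in\N}$ is a decreasing sequence in $\mathcal{A}$ with $P(A_n)\geq\eps>0$ for all $n$, I must show $\bigcap_nA_n\neq\emptyset$. Up to replacing the bases by their unions, I may assume $A_n=(\pr^{S_n})^{-1}(B_n)$ with $S_n\subset S_{n+1}$, and I set $S_\infty=\bigcup_nS_n$. Here the \emph{Polish} hypothesis enters: each Borel probability measure $\mu_{S_n}$ on the Polish space $E^{S_n}$ is inner regular, so I can pick a compact $K_n\subset B_n$ with $\mu_{S_n}(B_n\setminus K_n)<\eps\,2^{-n-1}$. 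Let $A'_n=(\pr^{S_n})^{-1}(K_n)$ and $C_n=\bigcap_{k\leq n}A'_k\subset A_n$; a union bound gives $P(C_n)\geq\eps/2$, so $C_n\neq\emptyset$ and I pick $x^{(n)}\in C_n$.

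Now I run a diagonal extraction. For each fixed $k$, the sequence $(\pr^{S_k}(x^{(n)}))_{n\geq k}$ lies in the compact set $K_k$ by monotonicity of the $C_n$, so I can extract subsequences, diagonal them, and obtain a sequence $(y^{(j)})_{j\in\N}$ such that for every $k$, $\pr^{S_k}(y^{(j)})$ converges in $K_k$ as $j\to\infty$. The limits are consistent along $k$ and define a point $y\in E^{S_\infty}$ with $\pr^{S_k}(y)\in K_k\subset B_k$ for every $k$. Extending $y$ arbitrarily to $E^\R$ produces a point lying in every $A_n$, a contradiction. Hence $P$ is $\sigma$-additive on $\mathcal{A}$, and Carath\'eodory's extension yields the measure $P\in\p(E^\R)$ on the cylindrical $\sigma$-algebra $\sigma(\mathcal{A})$ with $(\pr^S)_\#P=\mu_S$ for every finite $S$.

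For uniqueness, $\mathcal{A}$ is a $\pi$-system (stable under finite intersections, since the intersection of two cylinders is a cylinder over the union of their bases) that generates $\sigma(\mathcal{A})$; two probability measures agreeing on $\mathcal{A}$ therefore agree on $\sigma(\mathcal{A})$ by Dynkin's $\pi$--$\lambda$ theorem. The main obstacle is really the compactness argument in the second paragraph: it is the only place where the Polish assumption is used, and it encapsulates the reason the theorem fails for arbitrary measurable spaces. Everything else (the definition of $P$ on cylinders, finite additivity, the final uniqueness step) is essentially bookkeeping.
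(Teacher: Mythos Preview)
Your argument is correct and is the standard Carath\'eodory-plus-inner-regularity proof of the Kolmogorov extension theorem. Note, however, that the paper does not actually prove Proposition~\ref{pro:inductive_limit}: it is stated as a reminder of a classical result (``We recall the Kolmogorov--Daniell theorem\ldots''), with no proof given. So there is nothing to compare against; you have simply supplied a valid proof where the paper chose to cite the result as known.
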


One of the most usual applications of Proposition \ref{pro:inductive_limit} is for measures $\mu_S$ of type $\mu_{s_1,s_2}\circ\cdots\circ \mu_{s_{d-1},s_d}$ where $S=\{s_1,\ldots,s_d\}$.

\begin{cor}\label{cor:consist}
Let $(\mu_{s,t})_{s<t}$ be a family of transport plans in $\p(E\times E)$ such that:
$$\mu_{s,u}=\mu_{s,t}.\mu_{t,u}$$
for every $s<t<u$. Then there exists a unique Markov measure $P\in \p(E^\R)$ with $P^{s,t}=\mu_{s,t}$ for every $s<t$.
\end{cor}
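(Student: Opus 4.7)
The strategy is a direct application of the Kolmogorov--Daniell theorem (Proposition \ref{pro:inductive_limit}): I will construct the finite dimensional marginals using catenation and check the consistency condition.

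First I would verify that the family $(\mu_{s,t})_{s<t}$ induces a well-defined one-parameter family of marginals on $E$. Composition of transport plans preserves the first and last marginals, so the assumption $\mu_{s,u}=\mu_{s,t}.\mu_{t,u}$ forces $\mu_{s,u}^1=\mu_{s,t}^1$ and $\mu_{s,u}^2=\mu_{t,u}^2$. Hence, setting $\nu_s\eqdef\mu_{s,t}^1$ (independent of $t>s$) and verifying that this agrees with $\mu_{r,s}^2$ for $r<s$ by the same relation, we obtain a family $(\nu_s)_{s\in\R}\in\p(E)^\R$ with $\mu_{s,t}\in\ma(\nu_s,\nu_t)$.

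For every finite $S=\{s_1,\ldots,s_d\}\subset\R$, define
$$\mu_S\eqdef\mu_{s_1,s_2}\circ\mu_{s_2,s_3}\circ\cdots\circ\mu_{s_{d-1},s_d}\in\p(E^S),$$
using the associativity of $\circ$ recalled in \S\ref{subsec:caten}. I would then check the consistency condition $(\pr^{S'})_\#\mu_S=\mu_{S'}$ for every $S'\subset S$. By induction, it suffices to treat the removal of one point $s_i$. If $1<i<d$, the identity $\pr^{E_{i-1},E_{i+1}}_\#(\mu_{s_{i-1},s_i}\circ\mu_{s_i,s_{i+1}})=\mu_{s_{i-1},s_i}.\mu_{s_i,s_{i+1}}=\mu_{s_{i-1},s_{i+1}}$ (first by Remark/Definition after \ref{defi:catenation} and the first part of the subsequent remark, then by hypothesis) shows that projecting out the $s_i$ coordinate from $\mu_S$ replaces the two adjacent factors by a single $\mu_{s_{i-1},s_{i+1}}$; the other factors are untouched by associativity, so one obtains $\mu_{S\setminus\{s_i\}}$. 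If $i=1$ or $i=d$, the projection simply drops an endpoint factor and the same identification applies to the marginal of a single catenation factor, using that $\mu_{s_1,s_2}$ has second marginal $\nu_{s_2}$.

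By Proposition \ref{pro:inductive_limit}, this yields a unique $P\in\p(E^\R)$ with $(\pr^S)_\#P=\mu_S$ for every finite $S$; in particular $P^{s,t}=\mu_{s,t}$. The Markov property then follows immediately from Remark/Definition \ref{defi:markov2}: for every finite $S=\{s_1,\ldots,s_d\}$, $(\pr^S)_\#P=\mu_S$ is by construction the catenation $P^{s_1,s_2}\circ\cdots\circ P^{s_{d-1},s_d}$. For uniqueness, any Markov $P'$ with $P'^{s,t}=\mu_{s,t}$ satisfies, again by Remark/Definition \ref{defi:markov2}, $(\pr^S)_\#P'=P'^{s_1,s_2}\circ\cdots\circ P'^{s_{d-1},s_d}=\mu_S$ for every finite $S$, so the uniqueness clause of Proposition \ref{pro:inductive_limit} gives $P'=P$.

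There is no real obstacle here; the only point requiring attention is the verification that projecting out an intermediate coordinate in a catenation implements the composition of transport plans, which is exactly the content of the formulas \eqref{eq:catenation} and the computation given in \S\ref{subsubsec:composition}.
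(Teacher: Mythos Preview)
Your proof is correct and follows precisely the approach the paper intends: the paper does not give a detailed proof of this corollary but simply remarks, just before stating it, that the usual application of Proposition~\ref{pro:inductive_limit} is to families $\mu_S=\mu_{s_1,s_2}\circ\cdots\circ\mu_{s_{d-1},s_d}$, which is exactly the construction you carry out and verify.
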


\begin{defi}\label{defi:consist}It is usual to call \emph{consistent family} every family $(\mu_S)_S$ or $(\mu_{s,t})_{s<t}$ as in Proposition \ref{pro:inductive_limit} and Corollary \ref{cor:consist}.
\end{defi}

\subsection{Kellerer's work. Our motivation and proof of Theorem \ref{them:c}}\label{subsec:proof_kelsto}

In \cite{Ke72} and \cite{Ke73}, Kellerer proves the three results that we reproduce as Theorem \ref{them:existence}, Lemma \ref{lem:lipschtz_implique_cinq} and finally Theorem \ref{them:kellerer_final}, which is a more precise version of Theorem \ref{them:kellerer_intro} given in the introduction. He also introduces Definition \ref{defi:lipschitz}. As we will see Theorem \ref{them:existence} extends Corollary \ref{cor:consist}: take $\mathcal{N}_{s,t}=\{\mu_{s,t}\}$.

Our goal here is to prove Theorem \ref{them:c}. To put forward quickly both the background and our reasoning we postpone all the intermediate proofs, as well as the introduction of the technical tools they require  to the next section.

Kellerer first proves the following statement ---we give the sketch of proof p.\ \pageref{proof:them:existence}. It seems a bit stronger than in \cite{Ke72} but is what he actually shows.

\begin{them}[{\cite[Theorem 1]{Ke72}}]\label{them:existence}
Let $(\mu_t)_{t\in \R}$ be a family of probability measures on some Polish space $E$, and for every $s< t$ let $\mathcal{N}_{s,t}\in\p(E^2)$ be a set of transport plans.
Assume that:
\begin{itemize}
\item[{\bf (1)}] for every $s,t$, $\mathcal{N}_{s,t}$ is not empty,
\item[{\bf (2)}] for every $s,t$, $\mathcal{N}_{s,t}\subset\ma(\mu_s,\mu_t)$,
\item[{\bf (3)}] for every $s,t$, $\mathcal{N}_{s,t}$ is closed for the weak topology,
\item[{\bf (4)}] for $r<s<t$ and any $(P,P')\in\mathcal{N}_{r,s}\times \mathcal{N}_{s,t}$, $P.P'\in\mathcal{N}_{r,t}$,
\item[{\bf (5)}] for every $d$ and $t_1<\cdots<t_d$, if the sequences $(Q^n_{t_i,t_{i+1}})_n\in \mathcal{N}_{t_i,t_{i+1}}$ converge weakly to $Q_{t_i,t_{i+1}}$, then the sequence $(Q^n_{t_1,t_2}\circ\cdots \circ Q^n_{t_{d-1},t_d})_{n}$ tends weakly to $Q_{t_1,t_2}\circ\cdots \circ Q_{t_{d-1},t_d}$.
\end{itemize}

Then, there exists a Markov measure $P\in\ma((\mu_t)_t)$ with $(\pr^{s,t})_\#P \in \mathcal{N}_{s,t}$ for every $s<t$.
\end{them}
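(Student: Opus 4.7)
The plan is to apply the Kolmogorov--Daniell theorem (Proposition~\ref{pro:inductive_limit}) to a consistent family of finite-dimensional Markov measures obtained by a compactness-and-gluing argument. For each finite $R=\{r_1,\ldots,r_d\}\subset\R$, introduce
$$M(R)\eqdef\{Q\in\ma((\mu_r)_{r\in R}):Q\text{ is Markov and }(\pr^{r_i,r_{i+1}})_\#Q\in\mathcal{N}_{r_i,r_{i+1}}\text{ for }i<d\}.$$
By (1), picking some $P_i\in\mathcal{N}_{r_i,r_{i+1}}$ and catenating yields a Markov element of $M(R)$, so $M(R)\neq\emptyset$. A repeated use of (4) shows moreover that for any $Q\in M(R)$ and any (not necessarily consecutive) $s<t$ in $R$, $(\pr^{s,t})_\#Q\in\mathcal{N}_{s,t}$; in particular $M$ is compatible with restriction, i.e.\ $(\pr^{R'})_\#M(R)\subset M(R')$ whenever $R'\subset R$.

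Next I would show that $M(R)$ is weakly compact. Since each $\{\mu_r\}$ is tight, Prokhorov's theorem makes $\ma((\mu_r)_{r\in R})$ weakly compact; condition (3) makes each constraint $(\pr^{r_i,r_{i+1}})_\#Q\in\mathcal{N}_{r_i,r_{i+1}}$ closed. The Markov property of $Q$, by the reformulation in Remark/Definition~\ref{defi:markov2}, amounts to $Q=Q^{r_1,r_2}\circ\cdots\circ Q^{r_{d-1},r_d}$, and condition (5) is precisely the weak continuity of catenation on products of $\mathcal{N}_{r_i,r_{i+1}}$'s; so the Markov property is preserved under weak limits within the set of measures satisfying the constraints, and $M(R)$ is compact.

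The gluing step is a Tychonoff plus finite-intersection argument. Form the compact product $X\eqdef\prod_{R\subset\R\text{ finite}}M(R)$, and for each pair $R'\subset R$ of finite subsets of $\R$ introduce the closed subset $C_{R',R}\eqdef\{(Q_S)_S\in X:(\pr^{R'})_\#Q_R=Q_{R'}\}$. For any finite collection $(R'_1,R_1),\ldots,(R'_k,R_k)$ of such pairs, setting $R^\ast\eqdef R_1\cup\cdots\cup R_k$, picking some $Q\in M(R^\ast)$, taking $Q_R\eqdef(\pr^R)_\#Q$ when $R\subset R^\ast$ (using restriction compatibility to stay inside $M(R)$), and taking $Q_R$ arbitrary in $M(R)$ otherwise, produces a point of $\bigcap_{i=1}^k C_{R'_i,R_i}$. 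By compactness, $\bigcap_{R'\subset R}C_{R',R}$ is non-empty, which furnishes a consistent family $(Q_R)_R$. Proposition~\ref{pro:inductive_limit} then yields a unique $P\in\p(E^\R)$ with $(\pr^R)_\#P=Q_R$ for all finite $R$; since each $Q_S$ is Markov, so is $P$ (Remark/Definition~\ref{defi:markov2}), and for every $s<t$, $(\pr^{s,t})_\#P=Q_{\{s,t\}}\in\mathcal{N}_{s,t}$.

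The main obstacle is the closure of $M(R)$ under weak limits: the Markov property is not generally stable under weak convergence (the conditional independence it encodes can be lost at the limit), and it is precisely hypothesis (5) --- the weak continuity of catenation along $\mathcal{N}$-classes --- that restores this stability here. Without it, the compactness-and-glue scheme collapses at this step, even though (1)--(4) alone would still supply the finite-dimensional ingredients.
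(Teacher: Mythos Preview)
Your proof is correct and rests on the same core mechanism as the paper's: a compactness-plus-finite-intersection argument, with (3) and (5) providing closedness of the ``Markov with $\mathcal{N}$-couplings'' sets $M(R)$ (which are precisely the paper's $\mathcal{N}_{s_1,s_2}\circ\cdots\circ\mathcal{N}_{s_{d-1},s_d}$), and (4) providing the monotonicity $(\pr^{R'})_\#M(R)\subset M(R')$.

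The packaging differs slightly. The paper works directly inside the compact space $\ma((\mu_t)_{t\in\R})$: it lifts each $M(S)$ to $\mathcal{L}_S\eqdef(\pr^S)_\#^{-1}M(S)$, shows these closed sets have the finite intersection property (via the inclusion $\mathcal{L}_{S\cup S'}\subset\mathcal{L}_S\cap\mathcal{L}_{S'}$ coming from (4)), and extracts $P$ from $\cap_S\mathcal{L}_S$ in one stroke. You instead stay at the finite-dimensional level, invoke Tychonoff on $\prod_R M(R)$, extract a consistent family from the intersection of the compatibility sets $C_{R',R}$, and then apply Kolmogorov--Daniell. Your route trades the single cited fact ``$\ma((\mu_t)_{t\in\R})$ is weakly compact'' for the more standard pair Tychonoff + Kolmogorov; the paper's route is a line shorter but hides the same structure. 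Either way, your diagnosis of the obstacle is right: (5) is exactly what makes the Markov constraint closed, and without it the whole scheme fails.
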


\begin{defi}[{\cite[Definition 3]{Ke72}}]\label{defi:lipschitz} Let $(E,\mu)$ and $(E',\mu')$ be two measure metric spaces and $P$ be in $\ma(\mu,\mu')$. Then $P$ {\bf has Lipschitz kernel} if for every 1-Lipschitz map $h:E'\to [0,1]$, $P.h:E\to [0,1]$ is also 1-Lipschitz, i.e., more exactly, there is a 1-Lipschitz $\tilde h:E\to [0,1]$ such that $\tilde h=k_P.h$, $\mu$-almost surely (see Notation \ref{notation:k_P} for $k_P$). For $(\mu_t)_{t\in\R}\in\p(\R)^\R$, we denote $\{P\in\ma(\mu_s,\mu_t):\,P\ \text{has Lipschitz kernel}\}$ by $\mathcal{N}^\ML_{s,t}$.
\end{defi}

Remark \ref{rem:comments_ml} gives some comments, Remark \ref{rem:otherNs} is used in the following.

\begin{rem}\label{rem:comments_ml}{\bf (a)} The terminology ``Lipschitz property'' was introduced in \cite[Definition 4.1]{Low} for a Markov process with Lipschitz transition kernels. It is renamed as ``Lipschitz-Markov property'' by Hirsch, Roynette and Yor in \cite{HiRoYo14}. The fact that this property is stable for finite dimensional convergence of processes is crucial in those papers and in \cite{Low2} and appears as an avatar of Kellerer's Lemma \ref{lem:lipschtz_implique_cinq} stating that the catenation operator $\circ$ is continuous for the corresponding class of kernels. These kernels are called Lipschitz in \cite{Ju_seminaire,BeJu16} and the present paper, and Lipschitz-Markov in \cite{BHS}.

{\bf(b)} You may compare Definition \ref{defi:lipschitz} with that of transport plans with increasing kernel in Definition \ref{prodef:transitions} \ref{pt:condexp}.

{\bf(c)} \cite[p.\ 115]{Ke72} In case the topology of $E$ and $E'$ is discrete, hence induced, e.g.,  by the distance $d(x,y)=1-\delta_{x,y}$, every $\tilde h$ is 1-Lipschitz; hence any $P$ has Lipschitz kernel.
\end{rem}

\begin{rem}\label{rem:otherNs}\maz
\point\label{item:rem:otherNsA} If some family $(\mathcal{N}_{s,t})_{s,t}$ satisfies the properties of Theorem \ref{them:existence}, a family of subsets $(\mathcal{N}'_{s,t})_{s,t}$ with $\mathcal{N}'_{s,t}\subset \mathcal{N}_{s,t}$ satisfies them as soon as it satisfies (1), (3) and (4), (2) and (5) being automatically true.

\point\label{item:rem:otherNsB} For $(\mu_t)_t$ any family of measures on $\R$, it is easy to check that $(\mathcal{N}^\ML_{s,t})_{s<t}$ satisfies (1)--(4) in Theorem \ref{them:existence} (for (3), see \cite[Satz 13]{Ke72}).
\end{rem}

\begin{lem}[Continuity of $\circ$ when the kernels are Lipschitz]\label{lem:lipschtz_implique_cinq}\cite[Sätze 14 and 15]{Ke72} 
If $(E_t,\mu_t)$ are complete and separable measure metric spaces, $(\mathcal{N}^\ML_{s,t})_{s<t}$ satisfies (5) in Theorem \ref{them:existence}.
\end{lem}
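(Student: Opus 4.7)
The plan is to prove the lemma by induction on $d$, the crux being the case $d=3$: that the catenation map $(P,P')\mapsto P\circ P'$ on $\mathcal{N}^{\ML}_{t_1,t_2}\times\mathcal{N}^{\ML}_{t_2,t_3}$ is continuous for the weak topology. Write $P_n\eqdef Q^n_{t_1,t_2}$, $P'_n\eqdef Q^n_{t_2,t_3}$, denote their weak limits by $P$ and $P'$, and set $R_n\eqdef P_n\circ P'_n$. Each $R_n$ has fixed marginals $\mu_{t_1},\mu_{t_2},\mu_{t_3}$ on the three factors, which are tight on their respective Polish spaces; hence $(R_n)_n$ is tight on the product. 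By Prokhorov, any subsequence admits a weakly convergent further subsequence $R_{n_k}\to R$. I will show that necessarily $R=P\circ P'$, which by uniqueness of subsequential limits yields the desired convergence $R_n\to P\circ P'$.

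To identify $R$, I test against $f\otimes g\otimes h$, with $f,g$ bounded continuous on $E_{t_1},E_{t_2}$ and $h:E_{t_3}\to[0,1]$ bounded $1$-Lipschitz; such tensor products form a separating class on probability measures on the triple product. Set $\phi_n(y)\eqdef(k_{P'_n}\cdot h)(y)$. The Lipschitz-kernel assumption on $P'_n$ (Definition \ref{defi:lipschitz}) produces a $1$-Lipschitz representative of $\phi_n$, uniformly bounded by $1$. Using the tightness of the single measure $\mu_{t_2}$ to exhaust $E_{t_2}$ by compacts of full $\mu_{t_2}$-measure, Arzel\`a--Ascoli together with a diagonal extraction yield a further subsequence, still denoted $\phi_{n_k}$, converging locally uniformly to some $1$-Lipschitz bounded $\phi$. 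For any bounded continuous $\psi$ on $E_{t_2}$, dominated convergence against the fixed $\mu_{t_2}$ gives $\int\psi\phi_{n_k}\dd\mu_{t_2}\to\int\psi\phi\dd\mu_{t_2}$; on the other hand, by definition of $\phi_n$ and the weak convergence $P'_n\to P'$, $\int\psi\phi_{n_k}\dd\mu_{t_2}=\iint\psi(y)h(z)\dd P'_{n_k}(y,z)\to\iint\psi(y)h(z)\dd P'(y,z)=\int\psi\cdot(k_{P'}\cdot h)\dd\mu_{t_2}$. Therefore $\phi=k_{P'}\cdot h$ holds $\mu_{t_2}$-almost everywhere.

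I then invoke the classical fact that, for a tight sequence $\nu_k$ converging weakly to $\nu$ on a Polish space, and a uniformly bounded sequence of continuous functions $\eta_k$ converging locally uniformly to $\eta$, $\int\eta_k\dd\nu_k\to\int\eta\dd\nu$ (a standard $\varepsilon$-argument that splits along a large compact set supplied by tightness). Applied with $\nu_k=P_{n_k}$, $\nu=P$, $\eta_k(x,y)=f(x)g(y)\phi_{n_k}(y)$ and $\eta(x,y)=f(x)g(y)\phi(y)$, and combined with $\int (f\otimes g\otimes h)\dd R_{n_k}=\iint f(x)g(y)\phi_{n_k}(y)\dd P_{n_k}(x,y)$, it gives $\int (f\otimes g\otimes h)\dd R=\iint f(x)g(y)(k_{P'}\cdot h)(y)\dd P(x,y)=\int (f\otimes g\otimes h)\dd (P\circ P')$. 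Since the tested tensor products separate measures, $R=P\circ P'$, as required.

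For general $d$, I argue by induction: assuming $S^n\eqdef Q^n_{t_1,t_2}\circ\cdots\circ Q^n_{t_{d-2},t_{d-1}}\to S$ weakly on $E_{t_1}\times\cdots\times E_{t_{d-1}}$, the very same Lipschitz-equicontinuity argument applied to the kernels $k_{Q^n_{t_{d-1},t_d}}\cdot h$ on the \emph{last} factor shows that $S^n\circ Q^n_{t_{d-1},t_d}\to S\circ Q_{t_{d-1},t_d}$ weakly on $E_{t_1}\times\cdots\times E_{t_d}$; only the kernel of the last factor enters Arzel\`a--Ascoli, and the previous factors enter merely as the reference measure against which an integrand converging locally uniformly is integrated. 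The main obstacle is the general fact that disintegration kernels are not continuous under weak convergence of joint measures; the Lipschitz property of Definition \ref{defi:lipschitz} delivers precisely the uniform equicontinuity needed so that Arzel\`a--Ascoli converts the weak convergence of $P'_n$ into the locally uniform convergence of the tested kernels $\phi_n=k_{P'_n}\cdot h$, which is what unlocks the passage to the limit in the catenation.
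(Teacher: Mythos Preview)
The paper does not give its own proof of this lemma; it simply cites Kellerer's original work \cite[S\"atze 14 and 15]{Ke72}. Your argument is correct and is essentially the classical route: the Lipschitz-kernel hypothesis supplies uniform equicontinuity of the functions $\phi_n=k_{P'_n}\cdot h$, Arzel\`a--Ascoli on compacts (furnished by tightness of the fixed marginal $\mu_{t_2}$ on its Polish space) yields a locally uniformly convergent subsequence, and this compactness is exactly what permits passing to the limit in the catenation integral. The subsequence-of-subsequence logic you use to identify the limit $R$ for each fixed test function $h$ is sound, and bounded Lipschitz functions on a metric space do determine measures, so your separating class is adequate.

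As the paper remarks immediately after the statement, Kellerer's original version is slightly stronger in that it does not assume fixed marginals; your reliance on fixed marginals (for the tightness of $(R_n)_n$ and for the identification $\phi=k_{P'}\cdot h$ against the single measure $\mu_{t_2}$) is harmless here, since property~(5) in Theorem~\ref{them:existence} only concerns sequences inside $\mathcal{N}^{\ML}_{t_i,t_{i+1}}\subset\ma(\mu_{t_i},\mu_{t_{i+1}})$. For context, the paper's own contribution is the parallel Lemma~\ref{lem:closed} for \emph{increasing} kernels, where the role of Arzel\`a--Ascoli is played instead by monotone pointwise convergence (Lemma~\ref{lem:pointwise}).
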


\begin{rem}In fact \cite[Sätze 14 and 15]{Ke72} proves (5) for sequences $(Q_{t_i,t_{i+1}}^n)_n$ of Markov-Lipschitz transports, without the assumption that the $Q_{t_i,t_{i+1}}^n$ have the same marginals for all $n$, though this stronger result is not used further in \cite{Ke72}. In our Lemma \ref{lem:closed} this assumption is crucial.
\end{rem}

Finally Kellerer proves this more precise version of Theorem \ref{them:kellerer_intro}.

\begin{them}[{\cite[Theorem 3]{Ke72},\cite{Ke73}}]\label{them:kellerer_final} If $(\mu_t)_t$ is an increasing family of measures on $\R$, for $\leqc$ (or $\leqcs$), there is a Markov measure $P\in\ma((\mu_t)_t)$ such that $P$ is a (sub)martingale and the couplings $P^{s,t}$ have Lipschitz kernel.
\end{them}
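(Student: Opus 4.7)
The plan is to apply Kellerer's Theorem~\ref{them:existence} to the family
$$\mathcal{N}_{s,t} \eqdef \{P \in \mathcal{N}^\ML_{s,t} : P \text{ is a (sub)martingale coupling}\} \subset \ma(\mu_s,\mu_t)$$
of (sub)martingale couplings between $\mu_s$ and $\mu_t$ with Lipschitz kernel. Once the five hypotheses of Theorem~\ref{them:existence} are verified, it produces a Markov measure $P \in \ma((\mu_t)_t)$ with $P^{s,t} \in \mathcal{N}_{s,t}$ for every $s<t$, which is exactly the conclusion claimed.

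By Remark~\ref{rem:otherNs}\ref{item:rem:otherNsA}, since $\mathcal{N}_{s,t} \subset \mathcal{N}^\ML_{s,t}$ and the family $(\mathcal{N}^\ML_{s,t})_{s<t}$ already satisfies (2) and (5), it suffices to check conditions (1), (3), (4) for $\mathcal{N}$, condition (5) being inherited via Lemma~\ref{lem:lipschtz_implique_cinq}. Condition (3) is routine: $\mathcal{N}^\ML_{s,t}$ is weakly closed by Remark~\ref{rem:otherNs}\ref{item:rem:otherNsB}, and the (sub)martingale condition
$$\iint f(x)(y-x)\,\dd P(x,y) = 0 \quad (\text{resp.}\ \geq 0)\quad \text{for bounded continuous }f:\R\to\R,$$
is closed under weak convergence of couplings with fixed marginals, since $\mu_s\leqc\mu_t$ (or $\leqcs$) forces both marginals to have finite first moment, providing the uniform control of $\iint|y-x|\,\dd P^n(x,y)$ needed to approximate the unbounded integrand $f(x)(y-x)$ by truncations. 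Condition (4) is standard: martingale kernels compose into martingale kernels by the tower property of conditional expectation, and the composition of two $1$-Lipschitz kernels is $1$-Lipschitz since $(k.k').h = k.(k'.h)$ is $1$-Lipschitz whenever $k'.h$ is.

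The crux is condition (1), the non-emptiness of $\mathcal{N}_{s,t}$: one must exhibit a single (sub)martingale coupling of $\mu_s$ and $\mu_t$ with Lipschitz kernel. Strassen's theorem gives \emph{some} (sub)martingale coupling, but not a priori with Lipschitz kernel. The classical route is an approximation scheme: choose finitely supported $\mu_s^n,\mu_t^n$ with $\mu_s^n\leqc\mu_t^n$ (resp.\ $\leqcs$) and $\mu_s^n\to\mu_s$, $\mu_t^n\to\mu_t$ weakly; on such finite supports, construct an explicit (sub)martingale transition whose Lipschitz constant with respect to the ambient distance on $\R$ is controlled — for instance via an iterative dilation procedure à la Kellerer, splitting mass along ``convex paths'' so as to preserve both the martingale structure and a uniform Lipschitz bound on the kernel. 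Then extract a weakly convergent subsequence: the joint use of the closedness verified in (3) and the uniform Lipschitz bound ensures the limit $P_{s,t}$ lies in $\mathcal{N}_{s,t}$.

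The main obstacle is this explicit discrete construction in (1): one needs a stable recipe for a (sub)martingale transition whose Lipschitz constant is bounded independently of the level of discretization, so that the property survives the passage to the weak limit. Once (1) is secured, conditions (2)--(5) follow mechanically as described above and Theorem~\ref{them:existence} delivers the desired Markov (sub)martingale with Lipschitz kernels.
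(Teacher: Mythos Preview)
Your proposal is correct and follows essentially the same approach as the paper's sketch in Remark~\ref{rem:plan_kellerer}: apply Theorem~\ref{them:existence} to $\mathcal{N}_{s,t}'^\ML=\{P\in\mathcal{N}_{s,t}^\ML:P\text{ is a (sub)martingale coupling}\}$, inherit (2) and (5) from $\mathcal{N}^\ML$ via Remark~\ref{rem:otherNs}\ref{item:rem:otherNsA} and Lemma~\ref{lem:lipschtz_implique_cinq}, check (3) and (4) routinely, and identify the non-emptiness (1) as the substantive step requiring Kellerer's explicit dilation construction. Your discussion of (1) via discrete approximation with uniformly Lipschitz kernels is exactly the content of \cite[Definition~7 and Theorem~2]{Ke72}, which the paper references but does not reproduce.
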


\begin{rem}\label{rem:plan_kellerer}To prove Theorem \ref{them:kellerer_final}, by both points of Remark \ref{rem:otherNs}
and \ref{item:rem:otherNsB}, 
and Lemma \ref{lem:lipschtz_implique_cinq}, Kellerer has only to show that $(\mathcal{N}_{s,t}'^\ML)_{s<t}\eqdefup(\{P\in\mathcal{N}_{s,t}^\ML :\,P$ is a (sub)mar\-tin\-gale transport$\})_{s<t}$ ---see Definition \ref{defi:martingale}--- satisfies Assumptions (1), (3) and (4) of Theorem \ref{them:existence} and to apply this theorem. Checking (3) and (4) being easy, we see that the two important facts enabling to use Theorem \ref{them:existence} and thereby getting Theorem \ref{them:kellerer_final} are:\smallskip

{\bf(i)} Lemma \ref{lem:lipschtz_implique_cinq},\smallskip

{\bf(ii)} the proof of Property (1), i.e.\ the non-emptiness of the $\mathcal{N}_{s,t}'^\ML$.
\end{rem}

Replacing point (i) by an alternative version (i'), consisting of Lemma \ref{lem:closed} below, and proving a version of (ii) adapted to this change, we prove Theorem \ref{them:c}. Namely we prove that increasing kernels, introduced in Definition \ref{defi:increasing_kernel_intro} (see also Definition \ref{prodef:transitions} for more details), satisfy Lemma \ref{lem:closed}, a counterpart of Lemma \ref{lem:lipschtz_implique_cinq}, as well as Property (3), i.e.\ the little Lemma \ref{lem:transitions_croissantes_limite}. They are proven respectively on pp.\ \pageref{proof:lem:closed} and \pageref{proof:lem:transitions_croissantes_limite}. Then we prove Theorem \ref{them:c}.

\begin{lem}[Continuity of $\circ$ when the kernels are increasing]\label{lem:closed}Take $
(\mu_1,\linebreak[4]\ldots,\mu_n)\in\p(\R)^n$ and for all $i\in\llbracket1,n-1\rrbracket$ a closed set $\mathcal{I}_{i,i+1}\subset\ma(\mu_i,\mu_{i+1})$ of transport plans with increasing kernel. The sets $\mathcal{I}_{i,i+1}$ satisfy Property (5) stated for the sets $\mathcal{N}_{t_i,t_{i+1}}$ in Theorem \ref{them:existence}.
\end{lem}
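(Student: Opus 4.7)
I first reduce to the case of two factors by induction on $n$. By associativity of the catenation,
\[
Q^{(k)}_{t_1,t_2} \circ \cdots \circ Q^{(k)}_{t_{n-1},t_n} = \bigl(Q^{(k)}_{t_1,t_2} \circ \cdots \circ Q^{(k)}_{t_{n-2},t_{n-1}}\bigr) \circ Q^{(k)}_{t_{n-1},t_n}
\]
(interpreting the left-hand side as the iterated formula from \S\ref{subsec:caten}). The induction step uses: (a) the set of increasing-kernel transports is weakly closed (Lemma~\ref{lem:transitions_croissantes_limite}); and (b) the composition $k_P . k_Q$ of two increasing kernels is again increasing, since for any bounded increasing $h$ the function $k_Q . h$ is increasing, hence $x \mapsto (k_P(x) . k_Q) . h = k_P(x).(k_Q . h)$ is increasing. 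With these, it suffices to handle $n=3$: given weak convergences $P^{(k)} \to P$ in $\ma(\mu_1,\mu_2)$ and $Q^{(k)} \to Q$ in $\ma(\mu_2,\mu_3)$, all plans with increasing kernels, show $R^{(k)} := P^{(k)} \circ Q^{(k)}$ converges weakly to $P \circ Q$.

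The family $(R^{(k)})_k \subset \ma(\mu_1,\mu_2,\mu_3)$ is tight by the fixed marginals; any weak cluster point $R$ satisfies $(\pr^{1,2})_\# R = P$ and $(\pr^{2,3})_\# R = Q$. It remains to show $R = P \circ Q$, i.e.\ that $X_1$ and $X_3$ are $R$-conditionally independent given $X_2$. I test against a product $f_1(x_1) f_2(x_2) f_3(x_3)$ with $f_1, f_2$ bounded continuous and $f_3 : \R \to [0,1]$ continuous and \emph{increasing}. Disintegration along $P^{(k)}$ yields
\[
\int f_1 f_2 f_3 \, dR^{(k)} = \int f_1(x_1) f_2(x_2)\, \psi^{(k)}(x_2)\, dP^{(k)}(x_1,x_2), \quad \psi^{(k)} := k_{Q^{(k)}} . f_3.
\]
The increasing-kernel condition on $Q^{(k)}$ and the monotonicity of $f_3$ make $\psi^{(k)}$ a $[0,1]$-valued increasing function of $x_2$. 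Helly's selection theorem delivers a subsequence converging pointwise, off a countable set, to a bounded increasing $\psi$. Weak convergence $Q^{(k)} \to Q$ combined with dominated convergence identifies $\psi = k_Q . f_3$ $\mu_2$-a.e.; at each atom $a$ of $\mu_2$ one also has $\psi^{(k)}(a) \to (k_Q . f_3)(a)$, by approximating $\one_{\{a\}}$ by a continuous bump $\eta_\varepsilon$ and exploiting the $k$-independent identity $Q^{(k)}(\{a\} \times \R) = \mu_2(\{a\})$.

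The decomposition
\[
\int f_1 f_2 \psi^{(k)} dP^{(k)} - \int f_1 f_2 \psi \, dP = \int f_1 f_2 (\psi^{(k)} - \psi)\, dP^{(k)} + \int f_1 f_2 \psi \, d(P^{(k)} - P)
\]
then drives the passage to the limit. The first term tends to $0$ by dominated convergence, using $(\pr^2)_\# P^{(k)} = \mu_2$ and the pointwise convergence $\psi^{(k)} \to \psi$ both $\mu_2$-a.e.\ and at atoms. The second tends to $0$ by weak convergence $P^{(k)} \to P$: the integrand $f_1 f_2 \psi$ is bounded with discontinuity set $\R \times D$, $D$ countable, and the contributions of atoms of $\mu_2$ lying in $D$ are controlled by the same bump-function trick applied now to $P^{(k)}$. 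Hence $\int f_1 f_2 f_3 \, dR^{(k)} \to \int f_1 f_2 f_3 \, d(P \circ Q)$. Extending from increasing $f_3$ to bounded-variation $f_3$ by linearity, to bounded continuous $f_3$ by uniform approximation on compacts (tightness of $\mu_3$), and to arbitrary bounded continuous test functions on $\R^3$ via Stone--Weierstrass forces $R = P \circ Q$; uniqueness of the cluster point gives weak convergence of the entire sequence.

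The main obstacle is the passage to the limit with a $k$-dependent integrand $f_1 f_2 \psi^{(k)}$ against a weakly convergent $P^{(k)}$: weak convergence of measures does not pass to varying test functions in general. The increasing-kernel hypothesis earns its keep precisely by rendering $\psi^{(k)}$ monotone in $x_2$, which unlocks Helly's theorem and thereby a dominated convergence argument. The secondary subtlety is the handling of atoms of $\mu_2$ lying in the countable discontinuity set of $\psi$, where Helly gives no information; it is overcome by the bump-function trick, which uses only that the atomic mass is preserved along the sequence.
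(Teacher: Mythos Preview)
Your approach is essentially the same as the paper's: both reduce by induction to the step $P^{(k)}\circ Q^{(k)}\to P\circ Q$, write $\int f_1 f_2 f_3\,\dd(P^{(k)}\circ Q^{(k)})=\int f_1 f_2\,\psi^{(k)}\,\dd P^{(k)}$ with $\psi^{(k)}=k_{Q^{(k)}}.f_3$, and exploit the increasing-kernel hypothesis to make $\psi^{(k)}$ monotone so that a pointwise convergence argument becomes available, followed by the same two-term splitting you write.

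The difference lies in the tooling. The paper proves full-sequence $\mu_2$-a.s.\ convergence $\psi^{(k)}\to k_Q.f_3$ directly (Lemma~\ref{lem:pointwise}), using the metric $\tilde\rho$ of Proposition~\ref{pro:product}: since $\tilde\rho$ is defined through \emph{increasing} test functions, it immediately gives $\int g\,\psi^{(k)}\,\dd\mu_2\to\int g\,\psi\,\dd\mu_2$ for $g=\one_{[a,b]}$, and then a pointwise argument at ``continuity points'' of the monotone functions $\psi^{(k)}$ finishes. You instead reach pointwise convergence via Helly plus identification of the subsequential limit, and then handle the second term by a Portmanteau-type argument with bump functions at atoms of $\mu_2$. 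Both are correct; the paper's route buys a cleaner treatment of the monotone-but-discontinuous limit $\psi$ (no discontinuity-set bookkeeping, no separate handling of countably many atoms), at the cost of setting up the bespoke distance $\tilde\rho$ in \S\ref{subsec:rho}. Your route is more classical and self-contained, but you should state explicitly that the $n=3$ argument extends verbatim to the multi-marginal case $P^{(k)}\in\ma(\mu_1,\ldots,\mu_{d},\eta)$ needed for the induction (this is the paper's Proposition~\ref{pro:continuity}), and that the sub-subsequence argument closes via uniqueness of the cluster point.
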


\begin{lem}\label{lem:transitions_croissantes_limite} Take $\mu$ and $\mu'$ in $\p(\R)$. The space of transport plans with increasing kernel in $\ma(\mu,\mu')$ is closed for the weak topology.
\end{lem}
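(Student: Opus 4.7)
The plan is to reformulate the condition ``$P$ has increasing kernel'' as a family of inequalities on the joint CDF of $P$, each one being continuous in $P$ for the weak topology at continuity points of the marginals.

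For $P\in\ma(\mu,\mu')$ and $z\in\R$, set $G_z(a)\eqdef P(\op]-\infty,a]\times\op]-\infty,z])$ and $M(a)\eqdef\mu(\op]-\infty,a])$. Since $G_z(B\times\R)\leq\mu(B)$ for every Borel $B\subset\R$, the measure $dG_z$ is absolutely continuous with respect to $dM$, with Radon--Nikodym derivative $F_z(x)=k_P(x,\op]-\infty,z])$ for $\mu$-a.e.\ $x$, where $k_P$ is the disintegration kernel of Notation \ref{notation:k_P}. By the characterization of $\leqs$ through CDFs (Reminder \ref{remind:sto}) and the right-continuity of CDFs, $P$ has increasing kernel if and only if, for each $z$ in a countable dense subset of $\R$, $F_z$ admits a $\mu$-a.s.\ decreasing version.

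\textbf{Step 1 (Reformulation).} For fixed $z$, I claim that $F_z$ has a $\mu$-a.s.\ decreasing version iff for all $a_1\leq a_2\leq a_3\leq a_4$,
\begin{equation}\label{eq:plan_concav}
[G_z(a_2)-G_z(a_1)]\,[M(a_4)-M(a_3)] \;\geq\; [G_z(a_4)-G_z(a_3)]\,[M(a_2)-M(a_1)].
\end{equation}
The forward direction is immediate: divided by $[M(a_2)-M(a_1)][M(a_4)-M(a_3)]$, the two sides are respectively the $M$-averages of $F_z$ over $\op]a_1,a_2]$ and $\op]a_3,a_4]$, ordered since $F_z$ is decreasing and the first interval lies to the left of the second. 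The converse uses Besicovitch differentiation with respect to $M$: applying \eqref{eq:plan_concav} with $(a_1,a_2)=(x-r,x+r)$ and $(a_3,a_4)=(y-r,y+r)$ for $2r<y-x$ and letting $r\to 0^+$ yields $F_z(x)\geq F_z(y)$ for $\mu$-a.e.\ pair $x<y$, from which a decreasing version can be built by standard means.

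\textbf{Step 2 (Passage to the weak limit).} Let $(P_n)_n\subset\ma(\mu,\mu')$ have increasing kernels and converge weakly to $P$. The marginals $\mu$ and $\mu'$ have at most countably many atoms each, so their continuity points are dense. For $a_1<a_2\leq a_3<a_4$ continuity points of $\mu$ and $z$ a continuity point of $\mu'$, each rectangle $\op]-\infty,a_i]\times\op]-\infty,z]$ is a $P$-continuity set, since its topological boundary lies in $(\{a_i\}\times\R)\cup(\R\times\{z\})$, a set of $P$-measure zero by the marginal condition. By the Portmanteau theorem, $P_n(\op]-\infty,a_i]\times\op]-\infty,z])\to G_z(a_i)$, so \eqref{eq:plan_concav} passes to the weak limit.

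\textbf{Step 3 (Density argument).} To extend \eqref{eq:plan_concav} for $P$ to arbitrary $(a_1,\ldots,a_4,z)$, approximate each coordinate from the right by continuity points of $\mu$ or $\mu'$, preserving the ordering (choosing equal approximants when $a_2=a_3$). By right-continuity of $G_z$ in each argument and of $M$, the inequality passes to the limit, yielding \eqref{eq:plan_concav} in general. Combined with Step 1 applied to a dense set of $z$, this shows $P$ has increasing kernel.

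\textbf{Main obstacle.} The technical crux is the equivalence of Step 1, which translates the $\mu$-a.s.\ monotonicity of the conditional kernel $x\mapsto k_P(x,\op]-\infty,z])$ into the closed set of inequalities \eqref{eq:plan_concav} on the joint CDF of $P$. Once this reformulation is available, weak closedness follows from Portmanteau's theorem and a density/right-continuity argument.
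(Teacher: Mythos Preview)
Your proof is correct and conceptually parallel to the paper's: both reformulate ``increasing kernel'' as a family of inequalities on the joint distribution function and then pass these to the weak limit. Your inequality \eqref{eq:plan_concav} is exactly Remark~\ref{rem:mdec}'s criterion \eqref{eq:reformulation_mdec}, applied to the measure $B\mapsto P(B\times\op]-\infty,z])$ with respect to $\mu$; the paper then invokes Remark~\ref{rem:stable} (increasing kernel $\Leftrightarrow$ $\trans P$ preserves $\mdec$) together with Remark~\ref{rem:preserver_mdec_ferme}. The one genuine difference is in how the limit is taken: the paper uses Proposition~\ref{pro:rho}, which upgrades weak convergence in $\ma(\mu,\mu')$ to \emph{uniform} convergence of the bivariate CDFs, so the inequalities pass at every point simultaneously with no continuity-point argument needed; you instead use Portmanteau at continuity points and then extend by right-continuity and density. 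Your route is self-contained and avoids the paper's auxiliary distance $\rho$, at the cost of the extra Step~3; the paper's route is shorter once that machinery is available.
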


\begin{notation}\label{nota:Nik}If $(\mu_t)_t\in\p(\R)^\R$ is given, we denote $\{P\in\ma(\mu_s,\mu_t):\,P$ has increasing kernel$\}$ by $\mathcal{N}^\IT_{s,t}$.
\end{notation}

\begin{proof}[Proof of Theorem \ref{them:c}] Take $(\mu_t)_t\in\p(\R)^\R$, increasing for $\leqc$ (case (a)), $\leqcs$ (case (b)) or for $\leqs$ (case (c)) to prove the corresponding cases of Theorem \ref{them:c}. In the sketch of proof of Theorem \ref{them:kellerer_final} given in Remark \ref{rem:plan_kellerer}, replace $\mathcal{N}^\ML_{s,t}$ by $\mathcal{N}^\IT_{s,t}$ and introduce, similarly as defined in Remark \ref{rem:plan_kellerer} for case (a) and (b), the spaces $\mathcal{N}_{s,t}'^\IT$ ---equal to 
$\{P\in\mathcal{N}^\IT_{s,t}:\,P(\{(x,y)\in\R^2:\,x\leqslant y\})=1\}$ in case (c).

Properties (2)--(5) of Theorem \ref{them:existence} are satified by $(\mathcal{N}^\IT_{s,t})_{s<t}$: (2) by definition, (3) by Lemma \ref{lem:transitions_croissantes_limite}, (4) is straightforward and (5) by Lemma \ref{lem:closed}. By Remark \ref{rem:otherNs} (a) we are left with showing (1), (3) and (4) for $(\mathcal{N}_{s,t}'^\IT)_{s<t}$. Plainly, the conditions defining the $\mathcal{N}_{s,t}'^\IT$ as subspaces are closed and stable by composition, (3) and (4) follow. For (1), in our three cases:\smallskip

{\bf(a)} By \cite[Subsection 3.1]{BHS}, if $P\in\ma(\mu_s,\mu_t)$ is a martingale transport plan, then $P\in\mathcal{N}_{s,t}^\ML\Leftrightarrow P\in\mathcal{N}_{s,t}^\IT$. Therefore $\mathcal{N}_{s,t}'^\IT\neq\varnothing$ if and only if $\mathcal{N}_{s,t}'^\ML\neq\varnothing$, which Kellerer proved.\smallskip

{\bf(b)} The element of $\mathcal{N}_{s,t}'^\ML$ Kellerer built in \cite[Definition 7 and Theorem 2]{Ke72} is in $\mathcal{N}_{s,t}'^\IT$ when $\mu_s\leqc\mu_t$.\smallskip

{\bf(c)} As explained in Remark \ref{rem:quantile_up}, $\como(\mu_s,\mu_t)\in \mathcal{N}_{s,t}'^\IT$.
\end{proof}

For the completeness of this exposition, we also provide the following.

\begin{proof}[Sketch of proof of Theorem \ref{them:existence}, after Kellerer]\label{proof:them:existence} Take $S=\{s_1,\ldots,s_d\}$ any finite subset of $\R$. We introduce $\mathcal{N}_{s_1,s_2}\circ\cdots\circ\mathcal{N}_{s_{d-1},s_d}\eqdef\{N_1\circ\ldots\circ N_{d-1}:\,\forall i, N_i\in\mathcal{N}_{t_i,t_{i+1}}\}$ and $\mathcal{L}_S\eqdefup(\pr^S)_\#^{-1}(\mathcal{N}_{s_1,s_2}\circ\cdots\circ\mathcal{N}_{s_{d-1},s_d})$. Since $\pr^S:\ma((\mu_t)_t)\rightarrow\ma((\mu_s)_{s\in S})$ is onto (since for any $\eta\in\p(E^S)$, $(\pr^S)_\#(\eta\otimes(\otimes_{s\in \R\setminus S}\mu_s))=\eta$), (1) and (2) imply that $\mathcal{L}_S\neq\varnothing$; by Properties (3) and (5), $\mathcal{N}_{s_1,s_2}\circ\cdots\circ\mathcal{N}_{s_{d-1},s_d}$ is weakly closed in $\p(E^S)$ hence so is $\mathcal{L}_S$.

By \cite[Subsection 1.2]{Ke72} or \cite[Section 2]{BHS}, $\ma((\mu_t)_{t\in \R})$ is weakly compact, so that $\mathcal{L}_\R\eqdef\cap_{S\subset \R\ \text{finite}}\mathcal{L}_S\neq\varnothing$.  Indeed, else, $\ma((\mu_t)_{t\in \R})$ would be covered by the union of open sets $\cup_S(\R\setminus\mathcal{L}_S)$, so that $\mathcal{L}_{S_1}\cap\cdots \cap \mathcal{L}_{S_N}=\varnothing$ for some $N$-tuple $(S_i)_{i\leq N}$ of finite sets. But by (4), $S\subset S'\Rightarrow \mathcal{L}_{S'}\subset \mathcal{L}_{S}$, hence $\mathcal{L}_{S_1}\cap\cdots \cap \mathcal{L}_{S_N}\supset\mathcal{L}_{S_1\cup\cdots\cup S_N}\neq\varnothing$, a contradiction.

Finally take some $P\in\mathcal{L}_\R$. For every finite $S=\{s_1,\ldots,s_d\}$, $(\pr^S)_\#P\in\mathcal{N}_{s_1,s_2}\circ\cdots\circ\mathcal{N}_{s_{d-1},s_d}$ hence $P$ is a Markov measure.
\end{proof}

\subsection{Relation to the Markov-quantile process}\label{subsec:markovinification}

What precedes provides also, through the application of Theorem \ref{them:existence}, the following existence theorem for Markov processes being limits of products of a given process. When applied to the quantile measure $\como\in\ma((\mu_t)_t)$ introduced in \S\ref{ss:minimality}, it provides the existence part of Theorem \ref{them:a}, see below.

\begin{them}[Markovinification]\label{them:sous_n}\maz
Take $P\in\p(\R^\R)$. If for each $s$ and $t>s$, $P^{s,t}$ has increasing kernel,  there exists a Markov measure $P'$ in $\ma((\mu_t)_t)$ such that each $P^{\prime s,t}$ is a limit of products $P^{s,r_1}.\cdots.P^{r_m,t}$ with $\{r_1,\ldots,r_m\}\subset\op]s,t\clo[$. One may take $P'$ such that for each $(s,t)$, the limit is obtained with a sequence $(\{r^n_1,\ldots,r^n_{m(n)}\})_n$ such that $\max_{k=0}^{m(n)} |r_{k+1}-r_k|\to_{n\to\infty}0$, where $(r_0,\linebreak[1]r_{m+1})$ stands for $(s,t)$.
\end{them}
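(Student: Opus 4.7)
The plan is to derive Theorem \ref{them:sous_n} as a direct consequence of Theorem \ref{them:existence} by choosing suitable sets $\mathcal{N}_{s,t}$. For each $\eps>0$, I would define $\mathcal{A}^\eps_{s,t}$ to be the set of all products
\[
P^{s,r_1}.P^{r_1,r_2}.\cdots.P^{r_m,t}, \quad \{r_1,\ldots,r_m\}\subset\op]s,t\clo[,\quad \max_{0\leq k\leq m}|r_{k+1}-r_k|\leq\eps
\]
(with the convention $r_0=s$, $r_{m+1}=t$), let $\mathcal{N}^\eps_{s,t}$ be its weak closure in $\ma(\mu_s,\mu_t)$, and finally set $\mathcal{N}_{s,t}\eqdef\bigcap_{\eps>0}\mathcal{N}^\eps_{s,t}$. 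The intersection over $\eps$ is what will force the refinement $\max_k|r_{k+1}^n-r_k^n|\to 0$ in the final assertion.

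The first step is the key algebraic observation that the composition of two transport plans with increasing kernel has increasing kernel. Indeed, if $k,k'$ are increasing and $f$ is bounded decreasing on $\R$, then $k'.f$ is decreasing (by definition of $\leqs$), hence $k.(k'.f)=(k.k').f$ is decreasing; applied to $f=\one_{\op]-\infty,z\clo]}$, this yields $x\leq x'\Rightarrow (k.k')(x,\cdot)\leqs(k.k')(x',\cdot)$. Thus every element of $\mathcal{A}^\eps_{s,t}$ has increasing kernel, and by Lemma \ref{lem:transitions_croissantes_limite} so do all elements of $\mathcal{N}^\eps_{s,t}$ and of $\mathcal{N}_{s,t}$.

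Next I would verify properties (1)--(5) of Theorem \ref{them:existence} for $(\mathcal{N}_{s,t})_{s<t}$. Property (2) holds because composition of transport plans preserves endpoint marginals and weak convergence does too; (3) holds by construction. For (1), uniform subdivisions of $[s,t]$ with increasingly many points yield elements of $\mathcal{A}^\eps_{s,t}$ for all sufficiently small $\eps$, and weak compactness of $\ma(\mu_s,\mu_t)$ provides an accumulation point lying in every $\mathcal{N}^\eps_{s,t}$. For (4), given $Q\in\mathcal{N}_{r,s}$, $Q'\in\mathcal{N}_{s,t}$ and $\eps>0$, one approximates them weakly by products of mesh $\leq\eps$ over $\op]r,s\clo[$ and $\op]s,t\clo[$; concatenating the subdivisions and inserting $s$ yields approximants of $Q.Q'$ in $\mathcal{A}^\eps_{r,t}$, and continuity of the composition $.$ (obtained from Lemma \ref{lem:closed} by projecting the three-factor catenation onto the two outer factors, thanks to the increasing-kernel property established above) gives $Q.Q'\in\mathcal{N}^\eps_{r,t}$, hence in $\mathcal{N}_{r,t}$. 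Finally, (5) is exactly Lemma \ref{lem:closed}.

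The main obstacle is really Lemma \ref{lem:closed}, the continuity of catenation on increasing-kernel transports, but that is granted as an earlier result of the paper. Once Theorem \ref{them:existence} applies, it provides a Markov measure $P'\in\ma((\mu_t)_t)$ with $P'^{s,t}\in\mathcal{N}_{s,t}$ for every $s<t$. Unwinding the definition of $\mathcal{N}_{s,t}$ as an intersection over $\eps$ and using metrizability of the weak topology on $\ma(\mu_s,\mu_t)$, one then extracts for each $(s,t)$ a sequence of products $P^{s,r_1^n}.\cdots.P^{r_{m(n)}^n,t}$ with mesh at most $1/n$ converging weakly to $P'^{s,t}$, which gives both assertions of the theorem simultaneously.
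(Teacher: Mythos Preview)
Your proof is correct and follows essentially the same approach as the paper: both define $\mathcal{N}_{s,t}$ as the intersection over $\eps>0$ of the closures of the sets of products with mesh at most $\eps$, verify that these sets lie in $\mathcal{N}^{\IT}_{s,t}$, and apply Theorem~\ref{them:existence}. The paper streamlines the verification slightly by invoking Remark~\ref{rem:otherNs}\ref{item:rem:otherNsA} (so that only (1), (3), (4) need checking for the subset) and cites Proposition~\ref{pro:continuity} directly for (4), whereas you derive the needed continuity from Lemma~\ref{lem:closed} via projection; these are equivalent routes.
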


\begin{proof} If $t>s$, setting $P_{[R]}^{s,t}\eqdef P^{s,r_1}.\cdots.P^{r_m,t}$ for all $R=\{r_1,\ldots,r_m\}\subset\op]s,t\clo[$, we introduce $\mathcal{N}_{s,t}^{\rm (P)}=\cap_{\sigma>0}\overline{\{P_{[R]}^{s,t}:\,\max_{k=0}^m |r_{k+1}-r_k|\leqslant\sigma\}}$, where $(r_0,\linebreak[1]r_{m+1})$ stands for $(s,t)$. It is included in $\mathcal{N}_{s,t}^\IT$ by Lemma  \ref{lem:transitions_croissantes_limite}, and it satisfies Assumptions (1), (3) and (4) of Theorem \ref{them:existence}. Indeed, for (1), $\mathcal{N}_{s,t}^{\rm (P)}\neq\varnothing$ as an intersection of nested non-empty compact (closed in the compact space $\ma(\mu_s,\mu_t)$\ ) sets; (3) is true by definition, and (4) by Proposition \ref{pro:continuity}. Thus by Remark \ref{rem:otherNs}(a), $\mathcal{N}_{s,t}^{\rm (P)}$ satisfies all the assumptions of Theorem \ref{them:existence}. We are done. (Notice that the alternative definition $\mathcal{N}_{s,t}^{\rm (P)}=\overline{\{P_{[R]}^{s,t}\}}$ would have given the same result, except its last sentence.)
\end{proof}

Note that if $P$ is Markov the spaces $\mathcal{N}_{s,t}^{\rm (P)}$ and $\overline{\{P_{[R]}^{s,t}\}}$ are both reduced to $\{P^{s,t}\}$, so that the Markov measure obtained from any of them is $P$ itself. This conservation property also holds locally on intervals $I\subset \R$ if $(P_t)_{t\in I}$ is Markov. Notice also that Theorem \ref{them:sous_n}  does not require $(\mu_t)_{t\in\R}$ to be increasing for $\leqs $.

Theorem \ref{them:sous_n} links \S\ref{sec:catenation+kellerer+boujui} with the Markov-quantile process $\mq$ built in \S\ref{sec:construction}. Indeed, taking $P=\como$, 
 $\como^{s,t}$ is in $\mathcal{N}_{s,t}^\IT$ for all $s<t$ by Remark \ref{rem:quantile_transitions_croissantes}, so Theorem \ref{them:sous_n} gives the existence of a Markov process with 2-marginals in $\mathcal{N}_{s,t}^{\rm (\como)}$ (here equal to $\overline{\{\como_{[R]}^{s,t}\}}$). We prove in \S\ref{sec:construction}, by completely different means, that:\smallskip

--  this process is unique,\smallskip

-- it may be built using the order $\leqs$ (see also Remark \ref{rem:choix}), instead of being obtained by a non-constructive compactness argument.\smallskip

\noindent This is Theorem \ref{them:a}. See also Open question \ref{ss:marko}.

\section{Three auxiliary notions, and postponed proofs of three lemmas}\label{sec:auxiliaire}

The next section introduces the notions needed to prove the results of \S\ref{sec:construction} below. They are also necessary for the proofs of three lemmas that were therefore postponed: Lemma \ref{coro} on versions of increasing processes, the important Lemma \ref{lem:closed} on the continuity of $\circ$ when the kernels are increasing, and Lemma \ref{lem:transitions_croissantes_limite}.

\subsection{Lower orthant and stochastic orders, related suprema, and increasing kernels}\label{subsec:stochastic_order}

\begin{notation}\label{notation:intervallesRd}
\point\ Let us denote $(x_i)_{i=1}^d$ and $(y_i)_{i=1}^d$ in $\R^d$ by $x$ and $y$. We endow $\R^d$ with the natural partial order defined by:
$$x\leqslant y\ \text{if: }\forall i,x_i\leqslant y_i.$$

We also set $[x,y]\eqdef\{z\in\R^d:x\leqslant z\leqslant y\}=\prod_i[x_i,y_i]$ and similarly $]x,y]$ {\em etc.} In particular $\op]-\infty,x]=\op]-\infty,x_1]\times\cdots\times\op]-\infty,x_d]$. 

\point\ Several times appear statements where some intervals have to be considered closed or open at some of their bounds, either arbitrarily or depending on possible cases. To alleviate the writing, we introduce the symbol ``$\rsemibracket$'' and place it at those bounds.
\end{notation}

\begin{defi}\label{def:cumul}
If $\mu\in\m(\R^d)$, its cumulative distribution function $F_\mu$, that we also denote by $F[\mu]$ to avoid multiple subscripts, is defined, using Notation \ref{notation:intervallesRd}, by:
$$F_\mu:x\in\R^d\mapsto\mu(\op]-\infty,x]).$$
\end{defi}

\begin{reminder}\maz\label{remind:F}Recall for instance from \cite[Theorem 3.25]{Kal} that such functions $F$ are characterized by the fact that:\smallskip
\point\label{item:Fonctions_partielles} for the natural partial order of $\R^d$ (see Notation \ref{notation:intervallesRd}), $F$ is increasing and upper semi-continuous in the sense that for all $x\in\R^d$:
\begin{equation}\label{eq:usc}
\forall\varepsilon>0,\exists\eta>0,\forall y\geqslant x, \|y-x\|_\infty\leqslant\eta\Rightarrow |F(y)-F(x)|\leqslant \varepsilon,\smallskip
\end{equation}

\point\label{item:Flimite} $\lim_{\min_i(x_i)\rightarrow-\infty} F(x)=0$ and $\lim_{\min_i(x_i)\rightarrow+\infty} F(x)=1$,\smallskip

\point\label{item:Fdifferences_croisees} for every $h=(h_1,\ldots,h_d)\in[0,+\infty\clo[^d$ and $x
\in \R^d$, the quantity $\sum_{\eps} \sigma(\eps) F(x+\eps h)$, which is the measure of the rectangle $]x,x+h]\subset \R^d$, is non-negative. Here $\eps=(\eps_1,\ldots,\eps_d)$ ranges over $\{0,1\}^d$, $\sigma$ is $1$ if $\sum \eps_i$ is even, $-1$ otherwise, and $\eps h$ means $(\eps_1h_1,\ldots,\eps_dh_d)$.
\end{reminder}

\begin{defi}\label{defi:geqlc} If $d\in\N^\ast$ and $m\in\op]0,+\infty\clo[$, following \cite[Section 6.G]{SS}, we define the \emph{lower orthant order} on $\{\mu\in\m(\R^d):\,\mu(\R^d)=m\}$ by: $\mu\leqlc\nu$ if $F_\mu\geqslant F_\nu$. 
\end{defi}

\begin{defi}\label{def:lcsup}We call lower orthant supremum of a family $(P_\tau)_{\tau\in\Tt}$ of measures of same mass $m$ on $\R^d$, the smallest upper bound of $\{P_\tau\} _\tau$ for $\leqlc$, if it exists, i.e.\ a measure $P$ of mass $m$ such that:
\begin{itemize}
\item[--] for every $\tau$, $P_\tau\leqlc P$,
\item[--] $P\leqlc Q$ as soon as $P_\tau\leqlc Q$ for every $\tau$.
\end{itemize}
By definition, if it exists it is unique. We denote it by $\lcsup_\tau P_\tau$. Similarly we define $\lcinf_\tau P_\tau$.

\end{defi}

\begin{rem}\label{rem:lcsup}\maz
\point\label{p:lcsupa}\ In Reminder \ref{remind:F}, \ref{item:Fonctions_partielles} and the first limit of \ref{item:Flimite} pass to the infimum of functions that are both monotone and upper semi-continuous. To see it notice that for such functions upper semi-continuity (\ref{eq:usc}) reads:
\begin{equation*}
\forall\varepsilon>0,\exists\eta>0,\forall y\geqslant x, \|y-x\|_\infty\leqslant\eta\Rightarrow F(x)\leqslant F(y)\leqslant F(x)+\varepsilon.
\end{equation*}
 If moreover $(P_\tau)_\tau$ has an upper bound $P$, then the second limit of \ref{item:Flimite} holds. Indeed, for all $\tau$, $F[P_\tau]\geqslant F[P]$, so that $\inf_\tau F[P_\tau]\geqslant F[P]$, and besides $\lim_{\min_i(x_i)\rightarrow+\infty}F[P](x)=1$. Thus, if $(P_\tau)_\tau$ is bounded from above, $\inf_\tau F[P_\tau]$ satisfies \ref{item:Fonctions_partielles}--\ref{item:Flimite} of Reminder \ref{remind:F}, so  is a cumulative distribution function if and only if it satisfies \ref{item:Fdifferences_croisees}.\\
\point\label{p:lcsupb}\ If \ref{item:Fdifferences_croisees} of Reminder \ref{remind:F} is satisfied by the functions $F[P_\tau]$, then $(P_\tau)_\tau$ has a lower orthant supremum, and $F[\lcsup_\tau P_\tau]=\inf_\tau F[P_\tau]$.
\end{rem}

\begin{remnotation}\label{nota:stosup} If $d=1$ the order $\leqlc$ is usually called stochastic order and denoted by $\leqs$; we will then call ``stochastic supremum'' the lower orthant supremum of Definition \ref{def:lcsup} and denote it by $\stosup$.
\end{remnotation}

\begin{lem}[Existence criteria for $\lcsup$]\maz\label{lem:sto_order}
\point\label{item:suite_croissante_majoree} If, for $\leqlc$, a sequence\linebreak[4] $(P_n)_{n\in \N}\in(\p(\R^d))^\N$ is bounded from above, and increasing, i.e.\ $n\leqslant m$ $\Rightarrow$ $P_{n}\leqlc P_{m}$, then $\lcsup_nP_n$ exists and  $(P_n)_{n}$ converges weakly to it.

\point\label{item:treillis} If a family $(P_\tau)_{\tau\in\Tt}$ is bounded from above for $\leqlc$ and if for every $\tau,\tau'\in \Tt$ there exists $\sigma\in \Tt$ such that $P_\sigma\geqlc P_\tau$ and $P_\sigma\geqlc P_{\tau'}$, then $\lcsup_\tau P_\tau$ exists and there is an increasing sequence $(P_{\tau_n})_n$ 
that converges weakly to it.

The results extend in an obvious way to measures of mass $m>0$ in $\m(\R).$
\end{lem}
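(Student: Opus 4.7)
The plan is to reduce both parts to an analysis of the pointwise infimum of the cumulative distribution functions, using Remark \ref{rem:lcsup}: once such an infimum $F_\infty$ is shown to satisfy properties (i)--(iii) of Reminder \ref{remind:F}, it is automatically a cdf, corresponding to a unique measure $P_\infty$, which will be $\lcsup_\tau P_\tau$ by construction. Conditions (i) and (ii) of Reminder \ref{remind:F} come for free from Remark \ref{rem:lcsup}\ref{p:lcsupa}; only the rectangle inequality (iii) requires work.

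For part \ref{item:suite_croissante_majoree}, the sequence $(F[P_n])_n$ is decreasing (since $P_n \leqlc P_{n+1}$) and bounded below by $F[P]$, so $F_\infty \eqdef \lim_n F[P_n] = \inf_n F[P_n]$ exists pointwise. For (iii), the alternating rectangle sum $\sum_\eps \sigma(\eps) F[P_n](x + \eps h) = P_n(]x, x+h])$ is non-negative for every $n$; as a pointwise limit of non-negative quantities, so is $\sum_\eps \sigma(\eps) F_\infty(x + \eps h)$. Hence $F_\infty$ defines $P_\infty = \lcsup_n P_n$. To obtain weak convergence I would argue tightness: from $P_1 \leqlc P_n \leqlc P$ the $i$-th coordinate marginals satisfy $P_1^{(i)} \leqs P_n^{(i)} \leqs P^{(i)}$, so the upper tails $P_n^{(i)}(]M,+\infty[)$ are dominated by $P^{(i)}(]M,+\infty[)$ and the lower tails $P_n^{(i)}(]-\infty,-M])$ by $P_1^{(i)}(]-\infty,-M])$; both vanish as $M \to \infty$, uniformly in $n$, yielding tightness of $(P_n)$. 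By Prokhorov's theorem, any weak accumulation point has $F_\infty$ as cdf, hence equals $P_\infty$, so the whole sequence converges weakly to $P_\infty$.

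For part \ref{item:treillis}, set $F_\infty \eqdef \inf_\tau F[P_\tau]$. The main obstacle is the rectangle inequality: a general infimum does not preserve the sign of the alternating sum, and this is precisely where directedness intervenes. Fix a rectangle with vertices $y_\eps = x + \eps h$ for $\eps \in \{0,1\}^d$. For each $\eps$ pick a sequence $(\tau_n^\eps)_n$ with $F[P_{\tau_n^\eps}](y_\eps) \to F_\infty(y_\eps)$, then, iterating directedness over the finitely many indices $\{(m,\eps):m\leq n,\eps\in\{0,1\}^d\}$, build an increasing sequence $(\sigma_n)$ in $\Tt$ with $P_{\sigma_n} \geqlc P_{\tau_m^\eps}$ for all such $(m,\eps)$. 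Then the squeeze $F_\infty(y_\eps) \leq F[P_{\sigma_n}](y_\eps) \leq F[P_{\tau_n^\eps}](y_\eps) \to F_\infty(y_\eps)$ gives $F[P_{\sigma_n}](y_\eps) \to F_\infty(y_\eps)$ at every vertex, and the non-negativity $\sum_\eps \sigma(\eps) F[P_{\sigma_n}](y_\eps) = P_{\sigma_n}(]x, x+h]) \geq 0$ passes to the limit. This shows $F_\infty$ is a cdf, giving $P_\infty = \lcsup_\tau P_\tau$.

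To exhibit an increasing sequence converging weakly to $P_\infty$, I would fix a countable dense subset $D \subset \R^d$ and perform a diagonal construction, using directedness to build an increasing sequence $(\tau_n)$ with $F[P_{\tau_n}](x) \to F_\infty(x)$ for every $x \in D$. By part \ref{item:suite_croissante_majoree}, $(P_{\tau_n})$ converges weakly to $\lcsup_n P_{\tau_n}$; the cdf of this limit agrees with $F_\infty$ on the dense set $D$, hence agrees everywhere by the upper semi-continuity of cdfs from above, so the weak limit is $P_\infty$. The extension from $\p(\R)$ to measures of arbitrary mass $m>0$ is a trivial scaling. The conceptual difficulty in the whole argument is confined to the rectangle inequality in part \ref{item:treillis}, which explains why the directedness hypothesis cannot be dispensed with.
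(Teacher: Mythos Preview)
Your proof is correct. Part \ref{item:suite_croissante_majoree} is essentially the paper's argument, though you give an explicit tightness proof for weak convergence where the paper simply invokes the cdf characterization of weak convergence (Reminder \ref{remind:billingsley}).

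Part \ref{item:treillis} is where you diverge. You verify the rectangle inequality for $F_\infty$ \emph{directly}, box by box: for a fixed rectangle, directedness lets you build a sequence $(\sigma_n)$ whose cdfs converge to $F_\infty$ at the $2^d$ vertices, and non-negativity passes to the limit. Only afterwards do you construct the approximating sequence on a dense set and identify its limit with $P_\infty$ via upper semi-continuity. The paper reverses the order: it builds the diagonal sequence $(P_{\sigma_n})$ on a dense set $\mathcal{C}$ first, invokes part \ref{item:suite_croissante_majoree} to obtain a weak limit $P$, and then argues by contradiction---using upper semi-continuity of a single $F[P_\tau]$ on $[x,+\infty\clo[$---that $F[P]$ equals $\inf_\tau F[P_\tau]$ everywhere, not merely on $\mathcal{C}$. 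The cdf property of $F_\infty$ thus falls out as a corollary rather than being checked head-on. Your route makes it more transparent \emph{why} directedness is the right hypothesis (it is exactly what lets the rectangle inequality survive the infimum, cf.\ Remark \ref{rem:stosup_existe}(b)); the paper's route is more economical, with a single construction serving both existence and approximation. Both arguments ultimately use the same upper-semi-continuity mechanism to pass from a dense set to all of $\R^d$.
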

\begin{proof} {\bf\ref{item:suite_croissante_majoree}}
Set $F\eqdef\inf_nF[P_n]$. After Remark \ref{rem:lcsup}, showing that $F$ satisfies \ref{item:Fdifferences_croisees} of Reminder \ref{remind:F} ensures the existence of $\lcsup_nP_n$. Consider $M\eqdef\sum_{\eps} \sigma(\eps) F(x+\eps h)$ as in Reminder \ref{remind:F}\ref{item:Fdifferences_croisees}. Since $F[P_n]$ is decreasing, $F$ is its simple limit, so $M$ is the simple limit of $\sum_{\eps} \sigma(\eps) F[P_n](x+\eps h)$, hence $M\geqslant0$. We are done. The weak convergence is given by the pointwise convergence of the cumulative distribution functions, see Reminder \ref{remind:billingsley}.\smallskip

{\bf\ref{item:treillis}} This is a diagonal construction. Let $\mathcal{C}=\cup_{k\in\N}\{x_k\}$ be a countable dense set in $\R^d$. Set $F\eqdef\inf_{\tau\in\Tt}F_\tau$. Then for every $(k,n)\in\N^{\ast2}$ we find $\tau_{k,n}\in \Tt$ such that $F[P_{\tau_{k,n}}](x_k)\leqslant F(x_k)+\frac1n$. For each $n$, by a finite induction using the assumption of \ref{item:treillis} on the $P_{\tau_{k,n}}$, we find $\sigma_{n}\in \Tt$ such that: $\forall k\leqslant n,\, F[P_{\sigma_{n}}](x_k)\leqslant F(x_k)+\frac1n$ and $(P_{\sigma_n})_n$ is increasing. Hence:
\begin{equation}\label{eq:convergence_sur_partie_dense}
\forall x\in \mathcal{C},\, F[P_{\sigma_{n}}](x)\rightarrow F(x).
\end{equation}
By (a), $P=\lcsup_n P_{\sigma_n}$ exists and
\begin{equation}\label{eq:conv_sur_Rd}
\forall x\in \R^d,\, F[P_{\sigma_{n}}](x)\rightarrow F[P](x).
\end{equation}
Let us prove that (\ref{eq:convergence_sur_partie_dense}) holds for any $x\in\R^d$, so that $F=F[P]$. Assume by contradiction that, for some $x$, $F(x)<\inf_nF[P_{\sigma_n}](x)$, i.e., by definition of $F$, that for some $\tau\in\Tt$, $F(x)<F[P_\tau](x)<\inf_nF[P_{\sigma_n}](x)=F[P](x)$. Now $F[P_\tau]$ is upper semi-continuous so in a neighbourhood ${\cal U}$ of $x$ in $[x,+\infty\clo[$, $F[P_\tau]<F[P]$. But on the dense set ${\cal C}$,  by \eqref{eq:convergence_sur_partie_dense} and \eqref{eq:conv_sur_Rd}, $F[P]=F$, hence on ${\cal U}\cap{\cal C}\neq\varnothing$, $F[P_\tau]<F$, a contradiction.

Finally $F=F[P]$, so $F$ is  a cumulative distribution function, so by Remark \ref{rem:lcsup}\ref{p:lcsupb}, $P=\lcsup_{\tau\in\Tt}P_{\tau}$. Moreover $P_{\sigma_n}\rightarrow P$. 
\end{proof}

\begin{rem}\label{rem:stosup_existe} {\bf(a)} (Case $d=1$) In this case, Reminder \ref{remind:F}\ref{item:Fdifferences_croisees} is automatically true. Hence, in Lemma \ref{lem:sto_order}, \ref{item:suite_croissante_majoree} is true for any bounded $(P_n)_n$, increasing or not, hence \ref{item:treillis} shows that any ${\cal S}\subset\p(I)$ bounded from above has a stochastic supremum (which has though not to be the weak limit of a sequence of elements of ${\cal S}$, consider $\frac12(\delta_{1}+\delta_{2})=\stosup\bigl\{\frac12(\delta_{0}+\delta_{2}),\delta_{1}\bigr\}$). Symmetrically, a family bounded from below has a stochastic infimum.

{\bf(b)} Point (a) is false for $d>1$. Consider, e.g.,  ${\cal S}=\{P_1,P_2\}=\bigl\{\frac12(\delta_{(1,0)}+\delta_{(0,1)}),\frac12(\delta_{(0,0)}+\delta_{(2,2)})\bigr\}\subset\p(\R^2)$, then $\inf\{F[P_1],F[P_2]\}$ does not satisfy \ref{item:Fdifferences_croisees} of Reminder \ref{remind:F} and ${\cal S}$ has no lower orthant supremum: both $P\eqdef\frac12(\delta_{(1,1)}+\delta_{(2,2)})$ and $P'\eqdef\frac12(\delta_{(0,2)}+\delta_{(2,0)})$ are upper bounds for it but no upper bound $P''$ satisfies $P''\leqlc P$ and $P''\leqlc P'$ (observe $F[P]$ and $F[P']$).
\end{rem}

\begin{rem}In the following we use several times the Lebesgue differentiation theorem for Borel measures; a reference is , e.g.,  \cite[\S2.8--2.9]{Fe}.
\end{rem}

\begin{prodefinition}\label{prodef:transitions}\maz
Take $\mu$ and $\nu$ in $\p(\R)$. We say that a transport plan $P\in\ma(\mu,\nu)$ has {\em increasing kernel} if one (and then any) of the following statements holds:

\point\label{item:transitions_croissantes_application_croissante} Initial definition: if $\theta,\theta'\ll \mu$ and $\theta$ and $\theta'$ have he same mass, then $\theta\leqs \theta'$ implies $\theta .P\leqs \theta'. P$.

\point\label{pt:condexp} For every increasing $h:\R\to[0,1]$, $P.h$ is $\mu$-almost surely increasing, i.e., more exactly, there is an increasing $\tilde{h}:\R\to [0,1]$ such that, for every bounded continuous function $g$:
\[\int g(x)h(y) \dd P(x,y)= \int g(x)\tilde{h}(x)\dd \mu(x).\]

\point\ There exists a kernel $k$ in the $\mu$-equivalence class of $k_P$ such that $x\mapsto k(x,\cdot)$ is increasing from $(\R,\leq)$ to $(\p(\R),\leqs)$.

\point\ There exists a random vector $(X,Y)$ with $\law(X,Y)=P$ such that $x\in\R\mapsto\law(Y|\,X=x)$ is increasing from $(\R,\leq)$ to $(\p(\R),\leqs)$ (in the sense of the $\mu$-equivalence classes of {\small increasing} functions: it is increasing on a set of full measure $F\subset \R$).

\end{prodefinition}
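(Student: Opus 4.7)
The plan is to close the cycle $(\text{iv})\Leftrightarrow(\text{iii})\Rightarrow\ref{pt:condexp}\Rightarrow(\text{iii})$ and $(\text{iii})\Rightarrow\ref{item:transitions_croissantes_application_croissante}\Rightarrow(\text{iii})$, where (iii) and (iv) denote the third and fourth items. The equivalence (iii) $\Leftrightarrow$ (iv) is tautological: by disintegration, any random vector $(X,Y)$ with $\law(X,Y)=P$ satisfies $\law(Y|X=x)=k_P(x,\cdot)$ for $\mu$-a.e.\ $x$, so the two assertions encode the same datum.

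For the two easy implications (iii) $\Rightarrow$ \ref{pt:condexp} and (iii) $\Rightarrow$ \ref{item:transitions_croissantes_application_croissante}, I will fix an increasing representative $k$ of $k_P$ and, for any increasing $h:\R\to[0,1]$, set $\tilde h(x)\eqdef\int h(y)\,k(x,\dd y)$. The very definition of $\leqs$ applied to $k(x,\cdot)\leqs k(y,\cdot)$ for $x\le y$ makes $\tilde h$ increasing and $[0,1]$-valued, and Fubini gives the identity of \ref{pt:condexp}. For \ref{item:transitions_croissantes_application_croissante}, taking $\theta,\theta'\ll\mu$ of equal mass with $\theta\leqs\theta'$ and any bounded increasing $h$, I will write $\int h\,\dd(\theta.P)=\int\tilde h\,\dd\theta\le\int\tilde h\,\dd\theta'=\int h\,\dd(\theta'.P)$, the middle inequality being the standard dual characterization of $\leqs$ between measures of equal mass, whence $\theta.P\leqs\theta'.P$.

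For \ref{pt:condexp} $\Rightarrow$ (iii), I will apply \ref{pt:condexp} to each $h_q\eqdef\one_{[q,+\infty\clo[}$ with $q\in\Q$ (approximating from below by continuous increasing $[0,1]$-valued maps if continuity of $h$ is wanted) to obtain an increasing $\tilde h_q:\R\to[0,1]$. I will then form a $\mu$-null set $N\subset\R$ gathering the exceptional sets for all $q\in\Q$, enlarged so that for $x\notin N$ the limits $\tilde h_q(x)\to 0$ as $q\to+\infty$ and $\tilde h_q(x)\to 1$ as $q\to-\infty$ hold (both hold $\mu$-a.e.\ by dominated convergence on $\int\tilde h_q\,\dd\mu=\nu([q,+\infty\clo[)$). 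For $x\notin N$, $q\mapsto\tilde h_q(x)$ is the complementary cdf of a unique $k(x,\cdot)\in\p(\R)$; I will set $k(x,\cdot)\eqdef\nu$ on $N$. Then $k$ is a probability kernel in the $\mu$-class of $k_P$, and monotonicity of each $\tilde h_q$ in $x$ transfers from rational to real thresholds by right-continuity of cumulative distribution functions, yielding an increasing representative.

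For \ref{item:transitions_croissantes_application_croissante} $\Rightarrow$ (iii), I will fix a representative of $k_P$ and let $F\subset\R$ be the full-$\mu$-measure set of points where Lebesgue differentiation holds for $k_P$ against $\mu$, i.e.\ $\theta^x_\eps.P\to k_P(x,\cdot)$ weakly as $\eps\to 0^+$, with $\theta^x_\eps\eqdef\mu\lfloor_{]x,x+\eps]}/\mu(]x,x+\eps])$ (using one-sided variants or shrinking balls and a countable dense family of bounded continuous test functions to pin down a single exceptional null set; see e.g.\ \cite[\S2.9]{Fe}). For $x<y$ in $F$ and $\eps$ so small that $]x,x+\eps]$ and $]y,y+\eps]$ are disjoint, $\theta^x_\eps\leqs\theta^y_\eps$ with common mass $1$, so \ref{item:transitions_croissantes_application_croissante} will give $\theta^x_\eps.P\leqs\theta^y_\eps.P$; closedness of $\leqs$ under weak convergence will then yield $k_P(x,\cdot)\leqs k_P(y,\cdot)$ in the limit, and redefining $k_P$ on $\R\setminus F$ arbitrarily will complete the construction. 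The main obstacle in both converse directions is the null-set bookkeeping: one must isolate a \emph{single} exceptional $\mu$-null set outside of which monotonicity holds simultaneously for all $x<y$ and all thresholds; the rest reduces to standard Fubini and Lebesgue-differentiation arguments.
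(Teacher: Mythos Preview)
Your argument is essentially the paper's. The paper also treats (c)$\Leftrightarrow$(d) (your (iii)$\Leftrightarrow$(iv)) as immediate, derives (c)$\Rightarrow$(b) from the definition of $P.h$, and closes via the single chain (b)$\Rightarrow$(a)$\Rightarrow$(c), the last implication by Lebesgue differentiation. Your cycle is slightly redundant---you prove both (b)$\Rightarrow$(iii) and (a)$\Rightarrow$(iii) when one would do---but this is harmless.

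One technical caution in your (a)$\Rightarrow$(iii): one-sided Lebesgue differentiation with respect to a general $\mu$ can fail at atoms. If $\mu=\delta_0+\la\lfloor_{[0,1]}$ and $f=\one_{\{0\}}$, the right $\mu$-average of $f$ over $\op]0,\eps]$ is $0\neq 1=f(0)$, yet $\mu(\{0\})>0$, so this is not a $\mu$-null exceptional set. Switch to the two-sided intervals $[x-\eps,x+\eps]$ you already allude to; the paper does exactly this, and in fact applies (a) directly to the normalised restrictions (one checks $\mu\lfloor_{[x-\eps,x+\eps]}/\mu([x-\eps,x+\eps])\leqs\mu\lfloor_{[x'-\eps,x'+\eps]}/\mu([x'-\eps,x'+\eps])$ for \emph{every} $\eps>0$ and $x\le x'$, overlapping or not), then tests only against $\one_{\op]-\infty,q]}$ for $q\in\Q$ to pin down a single null set---more direct than routing through weak convergence of the full kernel measures.
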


\begin{rem}Be cautious that having increasing kernel is distinct from being an increasing coupling, a notion defined in Definition \ref{defi:martingale}\ref{item:increasing_coupling}.
\end{rem}

\begin{proof}[Proof of the equivalence in Proposition \ref{prodef:transitions}]
Statements (c) and (d) are essentially a change of notation (To get (c) from (d), notice that for $y$ in the $\mu$-null set $\R\setminus E$ of (d), $k(y,\cdot)$ can be defined as $\stosup_{x\in E,\,x<y} k(x,\cdot)$) and (c) $\Rightarrow$ (b) follows from the definition of $P.h$. Let us show (b) $\Rightarrow$ (a) $\Rightarrow$ (c).

{\bf\mathversion{bold}(b) $\Rightarrow$ (a)\mathversion{normal}}: if $\theta\leqs \theta'$, take $h:\R\rightarrow[0,1]$ increasing, then $(\theta .P).h= \theta .\tilde h\leq \theta' .\tilde h$ since $\theta\leqs \theta'$ and $\tilde h$ is increasing by (b). Then $\theta'.\tilde h=(\theta'.P). h$ yields (a). 

{\bf\mathversion{bold}(a) $\Rightarrow$ (c)\mathversion{normal}}: Suppose (a) and set $I_q\eqdef\op]-\infty,q]$ for all $q$. We will build $R\subset\R$, with $\mu(R)=1$, on which  $x\leqslant x'$ $\Rightarrow$ $k(x,I_q)\geqslant k(x',I_q)$ for all $q\in\Q$, hence all $q\in\R$, ensuring (c). By definition of the kernel $k_P$, $k_P(\,.\,,I_q)$ is the density with respect to $\mu$ of the measure $B\mapsto P(B\times I_q)$, thus by the Lebesgue differentiation theorem, setting $r(x,\eps)\eqdef\frac{P([x-\eps,x+\eps]\times I_q)}{\mu([x-\eps,x+\eps])}$, the function $x\mapsto\lim_{\eps\rightarrow0}r(x,\eps)$ is $\mu$-almost everywhere defined and equal to it. Now $r(x,\eps)=\bigl(\bigl(\frac1{\mu([x-\eps,x+\eps])}\mu_{\lfloor[x-\eps,x+\eps]}\bigr).P\bigr)(I_q)$, thus by (a), $x\leqslant x'$ $\Rightarrow$ $r(x,\eps)\geqslant r(x',\eps)$. Hence for all $q\in\Q$ there is some $R_q\subset \R$ with $\mu(R_q)=1$ on which $x\leqslant x'$ $\Rightarrow$ $k(x,I_q)\geqslant k(x',I_q)$. Finally set $R\eqdef\cap_{q\in\Q} R_q$.
\end{proof}

\begin{notation}\label{notation:mdec}
If $\mu\in\m(\R)$ we denote by $\m(\mu)$ and $\p(\mu)$ the sets of positive measures, respectively probability measures, absolutely continuous with respect to $\mu$, and $\mdec(\mu)$ and $\pdec(\mu)$, or $\mdec$ and $\pdec$ if there is no ambiguity, their subsets of measures with bounded and decreasing density. 
\end{notation}

\begin{rem}\label{rem:transitions_croissantes_composee} A direct consequence of the first point of Proposition \ref{prodef:transitions} is that, if $P\in\ma(\mu_1,\mu_2)$ and $Q\in\ma(\mu_2,\mu_3)$ have increasing kernel, so has their product $P.Q$.
\end{rem}

\begin{rem}[$\mdec(\mu)$ is closed for the weak topology]\label{rem:mdec} A measure $\theta$ belongs to $\mdec(\mu)$ if and only if its density is bounded and, for all $(a,b,c,d)\in\R^4$:
\begin{equation}\label{eq:reformulation_mdec}
\bigl(a<b<c<d\ \text{and}\ \mu([a,b]).\mu([c,d])\neq0\bigr)\Rightarrow \frac{\theta([a,b])}{\mu([a,b])}\geqslant \frac{\theta([c,d])}{\mu([c,d])}.
\end{equation}
``Only if'' is clear. For the ``if'' part, by the Lebesgue differentiation theorem, $x\mapsto\lim_{\eps\rightarrow0}\frac{\theta([x-\varepsilon,x+\varepsilon])}{\mu([x-\varepsilon,x+\varepsilon])}$ provides a representative of the density. Now if a sequence $(\theta_n)_n$ satisfies (\ref{eq:reformulation_mdec}) and weakly tends (see Reminder \ref{remind:billingsley}) to $\theta\in\m(\mu)$, $\theta$ satisfies it also (if $a$, $b$, $c$ or $d$ is an atom of $\theta$, re-obtain (\ref{eq:reformulation_mdec}) by limit of larger intervals).
\end{rem}

\begin{rem}\label{rem:stable} ($P$ has increasing kernel if and only if $\trans\!P$ maps $\mdec(\nu)$ to $\mdec(\mu)$).
In Proposition \ref{prodef:transitions}, (b) is equivalent to the same statement with decreasing functions; in turn, transposing, this means that, for all decreasing $h:\R\to[0,1]$, $(h\nu).\trans\!P$, which is equal to $(P.h).\mu$, has decreasing density.
\end{rem}

\subsection{Quantile measures and minimal couplings}\label{ss:minimality}
We define the quantile coupling (Definition \ref{defi:quantile1}) and quantile process law (Definition \ref{defi:quantile_process}) through a minimality property that is crucial in our paper. We also state the direct and more classical Definition \ref{defi:quantile2} of the quantile measure. Further characterizations of these coupling and process are given throughout the paper, in particular in \S\ref{sec:cont_eq} where the approach is optimal transport. The reader can refer to \cite{RR1, RR2, Vi1} for more background. See also the papers \cite{Ju_seminaire, Pass}.

\begin{reminder}Take $\mu$ and $\nu$ in $\m(\R)$. Every $P$ of $\ma(\mu,\nu)$ satisfies the Hoeffding-Fr\'echet bound:
\begin{align}\label{eq:HF}
\forall(x,y)\in\R^2,F_P(x,y)\leq \min(F_\mu(x),F_\nu(y)).
\end{align}
\end{reminder}

\begin{prodefinition}\label{defi:quantile1}There is a unique $P$ such that \eqref{eq:HF} is an equality. We call it the Fr\'echet-Hoeffding, comonotonic or {\em quantile coupling} and denote it by $\como(\mu,\nu)$. Said briefly: $\como(\mu,\nu)=\lcinf \ma(\mu,\nu)$.
\end{prodefinition}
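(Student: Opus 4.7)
The plan is to handle uniqueness and existence separately and then translate the equality into the $\lcinf$ formulation.

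For uniqueness, suppose $P$ and $P'$ are both in $\ma(\mu,\nu)$ and both make \eqref{eq:HF} an equality. Then $F_P=F_{P'}=\min(F_\mu,F_\nu)$ pointwise on $\R^2$. Since a finite Borel measure on $\R^2$ is determined by its cumulative distribution function --- the family $\{\op]-\infty,x]\times\op]-\infty,y]\}_{(x,y)\in\R^2}$ is a $\pi$-system generating $\mathcal{B}(\R^2)$, and one can also invoke Reminder \ref{remind:F}\ref{item:Fdifferences_croisees} to recover the measure of any rectangle from $F$ --- we conclude $P=P'$.

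For existence, I would build the candidate explicitly via quantile functions. Set $G_\mu(\alpha)\eqdef\inf\{x\in\R:F_\mu(x)\geqslant\alpha\}$ and $G_\nu$ similarly; these are the objects of Reminder \ref{remind:quantile}. Define
\[P\eqdef(G_\mu\otimes G_\nu)_\#\la_{[0,1]}\in\p(\R^2).\]
Using the standard equivalence $G_\mu(\alpha)\leqslant x\Leftrightarrow \alpha\leqslant F_\mu(x)$, valid for all $\alpha\in\op]0,1\clo[$ and $x\in\R$, we get for every $(x,y)\in\R^2$:
\[F_P(x,y)=\la_{[0,1]}\bigl(\{\alpha:G_\mu(\alpha)\leqslant x\ \text{and}\ G_\nu(\alpha)\leqslant y\}\bigr)=\la_{[0,1]}\bigl(\op]0,F_\mu(x)]\cap\op]0,F_\nu(y)]\bigr),\]
which equals $\min(F_\mu(x),F_\nu(y))$. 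Taking $y\to+\infty$ (resp.\ $x\to+\infty$) shows the marginals of $P$ are $\mu$ and $\nu$, so $P\in\ma(\mu,\nu)$ and \eqref{eq:HF} is an equality for $P$.

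For the $\lcinf$ reformulation, by Definition \ref{defi:geqlc} the lower orthant order on measures of common mass $1$ is $\leqlc\Leftrightarrow F_\cdot\geqslant F_\cdot$. The Hoeffding--Fr\'echet bound \eqref{eq:HF} then exactly says that every $Q\in\ma(\mu,\nu)$ satisfies $F_Q\leqslant\min(F_\mu,F_\nu)=F_P$, i.e.\ $P\leqlc Q$. Hence $P$ is a lower bound for $\ma(\mu,\nu)$ belonging to $\ma(\mu,\nu)$; it is therefore the minimum and coincides with $\lcinf\ma(\mu,\nu)$.

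The only step that is not completely routine is the check of the equivalence $G_\mu(\alpha)\leqslant x\Leftrightarrow \alpha\leqslant F_\mu(x)$ in presence of atoms, and the verification that boundary effects (a countable set of levels $\alpha$) do not spoil the Lebesgue computation of $F_P$. This is the main --- but standard --- obstacle; it follows from the right-continuity of $F_\mu$ and the definition of $G_\mu$ as an infimum.
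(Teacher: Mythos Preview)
Your proposal is correct and follows essentially the same approach as the paper: the paper declares uniqueness ``clear'' (your $\pi$-system argument spells this out) and defers existence to Definition~\ref{defi:quantile2}, which is precisely your pushforward construction $(G_\mu\otimes G_\nu)_\#\la_{[0,1]}$ together with the verification of equality in \eqref{eq:HF}. The $\lcinf$ reformulation you add is a direct unpacking of Definition~\ref{defi:geqlc} that the paper leaves implicit.
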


The proof follows from Definition \ref{defi:quantile2}, see below.

\begin{rem}[Minimality in the language of transport plans]\label{rem:q_is_mini}

{\bf(a)} The quantile process $\como$ may also be defined by the fact that its transitions are \emph{minimal} among those of all transport plans of $\ma(\mu,\nu)$: for any $P\in\ma(\mu,\nu)$ and fixed $x$,  $F[\mu\lfloor_{\op]-\infty,x]}.k_P](y)= F_P(x,y)$ for every $y\in \R$, hence, after Definition \ref{defi:quantile1} and the characterization of stochastic order of Remark \ref{rem:stosup_existe}:
\begin{equation}\label{eq:minimalite_des_transitions}\forall P\in\ma(\mu,\nu),\,\mu\lfloor_{\op]-\infty,x]}.\como(\mu,\nu)\leqs\mu\lfloor_{\op]-\infty,x]}.k_P
\end{equation}
(in fact, $\mu\lfloor_{\op]-\infty,x]}.\como(\mu,\nu)$ is of the type $\nu\lfloor_{\op]-\infty,y[}+a\delta_y$), i.e.\ the quantile coupling maps the measures $\mu\lfloor_{\op]-\infty,x]}$ to the stochastically smallest possible measures, \eqref{eq:minimalite_des_transitions} being also true for any $\theta\in \mdec(\mu)$ in place of $\mu\lfloor_{\op]-\infty,x]}$.

Notice that a property similar to \eqref{eq:minimalite_des_transitions}, with $\leqc$ in place of $\leqs$, defines the (left\nobreakdash-)curtain coupling in \cite{BJ}.

{\bf(b)} As $\mu\lfloor_{\op]x,+\infty\clo[}.k_P=\nu-\mu\lfloor_{\op]-\infty,x]}.k_{P}$, $\mu\lfloor_{\op]x,+\infty\clo[}.\como(\mu,\nu)$ is paradoxically {\em maximal} for $\leqs$, hence minimal transitions mean that the mass of $\mu$ is mixed as less as possible when transported on that of $\nu$. \end{rem}

\begin{prodefinition}\label{defi:quantile_process}If $(\mu_\tau)_{\tau\in\Tt}$ is a family of measures, there is a unique measure $\como\in \ma((\mu_{\tau})_{\tau\in\Tt})$ such that for every $\tau\neq\sigma$, the transport plan $\como^{\tau,\sigma}=(\pr^{\{\tau,\sigma\}})_\#\como$ is the quantile coupling $\como(\mu_\tau,\mu_\sigma)$. We call it the {\em quantile measure} of $(\mu_\tau)_{\tau\in\Tt}$.
\end{prodefinition}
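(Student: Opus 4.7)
The statement asks both for existence and uniqueness of a probability measure on $\R^\Tt$ whose two-dimensional marginals are all quantile couplings. My strategy is to produce a concrete $\como$ via the quantile process of Reminder \ref{remind:quantile}, then prove uniqueness by showing that the comonotonicity of every pair forces the joint CDF of any finite marginal to saturate a multivariate Hoeffding–Fr\'echet bound.

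\textbf{Step 1 (Existence).} On the probability space $\Omega \eqdef ([0,1],\la)$, set $X_\tau \eqdef G_{\mu_\tau}$ for each $\tau\in\Tt$, where $G_{\mu_\tau}$ is the quantile function of Reminder \ref{remind:quantile}, and define $\como \eqdef F_\#\la$ where $F\colon\alpha\mapsto(G_{\mu_\tau}(\alpha))_{\tau\in\Tt}$. Since $(G_{\mu_\tau})_\#\la=\mu_\tau$ by the standard change of variables for the quantile function, $\como\in\ma((\mu_\tau)_{\tau\in\Tt})$. For any pair $\tau\neq\sigma$, $(\pr^{\tau,\sigma})_\#\como=(G_{\mu_\tau},G_{\mu_\sigma})_\#\la$; a direct computation of its cumulative distribution function at $(x,y)\in\R^2$ gives $\la(\{\alpha:G_{\mu_\tau}(\alpha)\leq x,\ G_{\mu_\sigma}(\alpha)\leq y\})=\min(F_{\mu_\tau}(x),F_{\mu_\sigma}(y))$, so this 2-marginal saturates the Hoeffding–Fr\'echet bound and hence equals $\como(\mu_\tau,\mu_\sigma)$ by the uniqueness part of Definition \ref{defi:quantile1}.

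\textbf{Step 2 (Uniqueness of every finite marginal).} Let $\como'$ be another measure satisfying the conclusion, fix $\{\tau_1,\ldots,\tau_n\}\subset\Tt$, and let $(Y_1,\ldots,Y_n)$ be a random vector with law $(\pr^{\tau_1,\ldots,\tau_n})_\#\como'$. I claim
\[
\P(Y_1\leq x_1,\ldots,Y_n\leq x_n)=\min_{i}F_{\mu_{\tau_i}}(x_i)\quad\text{for every }(x_1,\ldots,x_n)\in\R^n.
\]
The inequality $\leq$ is trivial (project on any coordinate). For the reverse inequality, pick $j$ attaining the minimum. The 2-marginal assumption yields
\[
\P(Y_i\leq x_i,\ Y_j\leq x_j)=\min(F_{\mu_{\tau_i}}(x_i),F_{\mu_{\tau_j}}(x_j))=F_{\mu_{\tau_j}}(x_j),
\]
so $\P(Y_i>x_i,\ Y_j\leq x_j)=0$ for every $i\neq j$. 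Taking the union bound over $i$ gives $\P(Y_j\leq x_j,\ \exists i:Y_i>x_i)=0$, hence $\P(Y_1\leq x_1,\ldots,Y_n\leq x_n)=\P(Y_j\leq x_j)=F_{\mu_{\tau_j}}(x_j)$, as claimed. Since the CDF of a probability measure on $\R^n$ determines it, $(\pr^{\tau_1,\ldots,\tau_n})_\#\como'$ is forced to equal $(\pr^{\tau_1,\ldots,\tau_n})_\#\como$.

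\textbf{Step 3 (Conclusion).} All finite marginals of $\como$ and $\como'$ coincide, so by Proposition \ref{pro:inductive_limit} (Kolmogorov–Daniell), $\como=\como'$ on the cylindrical $\sigma$-algebra of $\R^\Tt$. Neither step is really an obstacle; the only point that may look non-trivial is Step 2, but it is essentially the observation that pairwise comonotonicity is rigid enough to concentrate the joint law along the one-parameter curve $\alpha\mapsto(G_{\mu_{\tau_i}}(\alpha))_i$, and this rigidity is precisely what the Hoeffding–Fr\'echet saturation encodes.
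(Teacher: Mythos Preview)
Your proof is correct and follows essentially the same approach as the paper: existence via the explicit quantile-function construction, and uniqueness by showing that pairwise saturation of the Hoeffding--Fr\'echet bound forces the multivariate bound to be saturated as well, then appealing to Kolmogorov--Daniell. Your Step~2 in fact spells out the union-bound argument that the paper compresses into the phrase ``so that finally $P^S(\op]-\infty,x])=F[\mu_{s_j}](x_j)$''.
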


\begin{proof}[Proof of Propositions \ref{defi:quantile1} and \ref{defi:quantile_process}.] The existence parts follows from Definition \ref{defi:quantile2} below and are proved just after it; in Proposition \ref{defi:quantile1} uniqueness is clear; let us prove it in Proposition \ref{defi:quantile_process}. It is rather easy to prove that the equality in \eqref{eq:HF} for every pair of measures implies the equality in the Hoeffding-Fr\'echet bound of general dimension:
\begin{align}\label{eq:HF2}
\forall(x_i)_{i=1}^d\in\R^d,F_{P^S}(x_1,\ldots,x_d)\leq \min_{i\in\{1\,\ldots,d\}}(F_{\mu_{s_i}}(x_i)),
\end{align}
where $S=\{s_1,\ldots,s_d\}$ is any finite subset of $\R$. (Take $j$ such that $F_{\mu_{s_j}}(x_j)=\min_{1\leq i\leq d}(F_{\mu_{s_i}}(x_i))$. For every $k\neq j$, $F[\pr^{\{s_j,s_k\}}_\#P^S](x_j,x_k)=F[\mu_{s_j}](x_j)$ so that finally $P^S(\op]-\infty,x])=F[\mu_{s_j}](x_j)$.) Since equalities \eqref{eq:HF2} for all finite $S\subset\R$ are a compatible set of conditions, this proves with Proposition \ref{pro:inductive_limit} that there exists at most one quantile measure $P$ in $\ma((\mu_\tau)_{\tau\in\Tt})$.
\end{proof}

\begin{notation}\label{not:multi} We denote by $\como(\mu_{s_1},\ldots,\mu_{s_d})$ and $\como((\mu_t)_{t\in \R})$ the multidimensional quantile coupling and the quantile measure. 
\end{notation}

\begin{rem}
Definition \ref{defi:quantile_process} uses no order on the set $\Tt$ of indices to define a quantile measure. So if $\Tt=\R$, the order of the marginals does not matter; bijections of $\R$ act naturally on the quantile measures and their marginals. But because the Markov property is based on the order on $\Tt=\R$, it will be different for our Markov-quantile measure. Only monotone bijections act naturally, see Example \ref{ex:reversed}.
\end{rem}

Now here is the definition of the quantile measure through the {\em quantile function}. It ensures the existence of $\como(\mu,\nu)$ in Definition \ref{defi:quantile1}.

\begin{defi}\label{defi:quantile2} Take $\mu\in \p(\R)$. The {\em quantile function} $G_\mu:[0,1]\to \R\cup\{-\infty,+\infty\}$ is the increasing left-continuous function such that $(G_\mu)_\#\Lg=\mu$, i.e.\  the generalized inverse of the cumulative distribution function $F_\mu$:
$$G_\mu(q)=\inf\{x\in\R:\,F_\mu (x)\geq q\}.$$
The quantile measure $\como((\mu_\tau)_{\tau\in\Tt})$ is obtained by pushing forward $\Lg$ on $\p(\R^{\Tt})$ by the map $G=(G_{\mu_\tau})_{\tau\in \Tt}:x\in [0,1]\mapsto (G_{\mu_{\tau}}(x))_{\tau\in\Tt}\in \R^{\Tt}$.
In other words, the functions $G_{\mu_{\tau}}$ can be seen as random variables defined on $([0,1],\Lg)$ and $\como((\mu_\tau)_{\tau\in\Tt})$ is the law of the process $(G_{\mu_{\tau}})_{\tau\in\Tt}$. One can check from this definition the equality in \eqref{eq:HF} and \eqref{eq:HF2}.
\end{defi}

\begin{rem}\label{rem:quantile_up}
When $(\mu_{\tau})_{\tau\in \Tt}=(\mu,\nu)$, $\law (G_\mu,G_\nu)=\como(\mu,\nu)$. Now $\mu\leqs \nu\Leftrightarrow F_\mu\geq F_\nu \Leftrightarrow G_\mu\leq G_\nu$, hence $\mu\leqs \nu$ if and only if $\como(\mu,\nu)$ is an increasing coupling, i.e.\ concentrated on $\{(x,y)\in \R^2:\,x\leq y\}$.
\end{rem}

\begin{rem}\label{rem:quantile_transitions_croissantes}(Products of quantile couplings have increasing kernel and map $\mdec$ on $\mdec$). We will see in \S\ref{subsec:little_t} that $\como(\mu,\nu)$ is a composition of two transport plans with increasing kernel. Therefore by Remark \ref{rem:transitions_croissantes_composee} it has increasing kernel. Since $\trans \como(\mu,\nu)=\como(\nu,\mu)$, Remark \ref{rem:stable} ensures that $\como(\nu,\mu)$ maps $\mdec(\mu)$ to $\mdec(\nu)$.

All this also ensure both properties for products of quantile couplings.

\end{rem}

\subsection{Distances $\rho$ and $\tilde \rho$.}\label{subsec:rho}

\subsubsection{A distance that metricizes $\ma(\mu,\nu)$}

\begin{reminder}\label{remind:billingsley}We remind the reader of the ``Portmanteau theorem'' (see, e.g.,  \cite[Theorem 2.1]{Bi}): the weak convergence on some metric space $E$ endowed with its Borel $\sigma$-algebra is (equivalently) defined by:
\begin{equation}\label{eq:defi-weakconvergence}P_n\underset{n\rightarrow\infty}{\longrightarrow}P\ \text{if }P_n(R)\underset{n\rightarrow\infty}{\longrightarrow}P(R)\ \text{for all $R$ such that }P(\partial R)=0.
\end{equation}
In $\R^n$, it is equivalent to consider in (\ref{eq:defi-weakconvergence}) only sets  $R$ of the form $\prod_{i=1}^n\op]-\infty,x_i\clo]$, see Example 2.3 p.\ 18 of \cite{Bi}. 
\end{reminder}

\begin{pronotation}\label{pro:rho} Let \mb$\rho$\mn~be the function defined on $\p(\R^d)^2$ by:
$$\rho(P,Q)=\|F_P-F_Q\|_\infty.$$
(See Definition \ref{def:cumul} for $F_P$ and $F_Q$.) This is a distance. If $\mu_1$,\ldots,$\mu_d$ are probability measures on $\R$, it induces the weak topology on $\ma((\mu_i)_{1\leq i\leq d})$. More precisely, for any $P\in\ma((\mu_i)_{1\leq i\leq d})$ and any sequence $(P_n)_{n\in\N}$ of elements of $\ma((\mu_i)_{1\leq i\leq d})$, the following are equivalent:\smallskip

{\bf (i)} For all $x=(x_i)_{i=1}^d$, $P\bigl(\partial\bigl( \prod_{i=1}^d\op]-\infty,x_i\clo]\bigr)\bigr)=0$ $\Rightarrow$ $\lim_nF_{P_n}(x)=F_{P}(x)$,\smallskip

{\bf (ii)} For all $x=(x_i)_{i=1}^d$, $\lim_nF_{P_n}(x)=F_{P}(x)$,\smallskip

{\bf (ii')} For all $x=(x_i)_{i=1}^d$, $\lim_n\bigl(P_n\bigl(\prod_i\op]-\infty,x_i\rsemibracket\bigr)\bigr)=P\bigl(\prod_i\op]-\infty,x_i\rsemibracket\bigr)$,\smallskip

{\bf (iii)}  $(F_{P_n})_{n\in \N}$ converges uniformly to $F_P$.
\end{pronotation}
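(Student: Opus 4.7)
The plan is as follows. First, I would verify that $\rho$ is a distance on $\p(\R^d)$: symmetry, non-negativity and the triangle inequality are inherited from the sup-norm, while separation ($\rho(P,Q)=0 \Rightarrow P=Q$) follows from the fact (Reminder \ref{remind:F}) that a probability measure on $\R^d$ is determined by its cumulative distribution function.

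Next, the chain of implications $(iii) \Rightarrow (ii) \Rightarrow (ii') \Rightarrow (i)$ is essentially immediate: uniform convergence of CDFs entails pointwise convergence; $(ii)$ and $(ii')$ differ only through the values of $F_{P_n}$ and $F_P$ across boundary slabs $\{y_i = x_i\}$, whose $P$-measure for any $P \in \ma((\mu_i))$ is bounded by the common marginal atom mass $\mu_i(\{x_i\})$, so $(ii) \Rightarrow (ii')$ follows by approximation from below; and $(ii') \Rightarrow (i)$ is the case of all-closed orthants. The equivalence of $(i)$ with weak convergence $P_n \to P$ in $\p(\R^d)$ is then the Portmanteau theorem applied to orthant sets, as recalled in Reminder \ref{remind:billingsley}.

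The substantive implication is weak convergence $\Rightarrow (iii)$. Fix $\eps > 0$. Since all $P_n$ and $P$ share the marginals $\mu_i$, the family is tight: pick $M > 0$ with $\mu_i(\R \setminus [-M, M]) < \eps$ for every $i$. On $\R^d \setminus [-M, M]^d$, monotonicity of $F_{P_n}$, $F_P$ together with the marginal tail bounds already give $|F_{P_n}(x) - F_P(x)| = O(\eps)$ uniformly in $n$: both CDFs are $\leq \eps$ whenever some $x_i \leq -M$, and $\geq 1 - d\eps$ whenever all $x_i \geq M$. On the compact $K = [-M, M]^d$, I would construct a finite grid $G = \prod_i G_i$ in which each $G_i$ contains the finitely many atoms of $\mu_i$ of mass $\geq \eps$ that lie in $[-M, M]$, refined so that any two consecutive points $g < g'$ of $G_i$ satisfy $\mu_i(\op]g, g'\clo[) < \eps$. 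For an arbitrary $x \in K$, bracket it between $x^- \leq x \leq x^+$ in $G$; monotonicity of CDFs, the slab bound $|F_P(x^+) - F_P(x^-)| \leq \sum_i \mu_i(\op]x^-_i, x^+_i])$ (and its counterpart for $P_n$), and pointwise convergence on the finite set $G$ together yield $|F_{P_n}(x) - F_P(x)| = O(\eps)$ on $K$ for $n$ large enough.

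The main obstacle is the delicate handling of the marginal atoms: the CDFs $F_{P_n}$ and $F_P$ jump across hyperplanes $\{y_i = a\}$ whenever $a$ is an atom of $\mu_i$. The crucial structural point that saves the argument is that, thanks to the shared marginals, these jumps have the same magnitude $\mu_i(\{a\})$ for $P_n$ and for $P$, so they match across the two CDFs and cancel in the difference, leaving only the continuous part of each marginal to be controlled by grid refinement. Without the fixed-marginal hypothesis the conclusion fails, as the elementary example $\delta_{1/n} \to \delta_0$ on $\R$ already shows.
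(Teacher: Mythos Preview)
Your overall route---grid plus marginal slab bounds---is the paper's, but the sandwich step is mis-executed. You bracket $x$ between grid points $x^-\le x\le x^+$ and invoke $F_P(x^+)-F_P(x^-)\le\sum_i\mu_i(\op]x_i^-,x_i^+])$, implicitly treating the right side as $O(\eps)$. It is not: your grid $G_i$ \emph{contains} the big atoms of $\mu_i$, so when $x_i$ lies strictly between consecutive grid points $g<g'$ and $g'=x_i^+$ happens to be a big atom, $\mu_i(\op]g,g'])\ge\mu_i(\{g'\})\ge\eps$, possibly close to $1$. Your ``jumps cancel'' heuristic does not repair this: the jump of $F_P$ across the hyperplane $\{y_i=a\}$ at the point $x$ equals $P\bigl(\{y_i=a\}\cap\prod_{j\ne i}\op]-\infty,x_j]\bigr)$, which depends on the full coupling $P$, not merely on $\mu_i(\{a\})$. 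For the comonotone and antimonotone couplings of $\tfrac12(\delta_0+\delta_1)$ with itself one finds $F_P(0,0)=\tfrac12$ versus $F_Q(0,0)=0$, while all marginal atom masses coincide.

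The correction is to discard $x^+$ and use only the lower bracket, exactly as the paper does with its rectangle $R_\eps^-$:
\[
|F_{P_n}(x)-F_P(x)|\le|F_{P_n}(x)-F_{P_n}(x^-)|+|F_{P_n}(x^-)-F_P(x^-)|+|F_P(x^-)-F_P(x)|.
\]
The outer terms are bounded by $\sum_i\mu_i(\op]x_i^-,x_i])$; since $x_i<x_i^+$ whenever $x_i$ is not itself a grid point (and $x_i^-=x_i$ otherwise), one has $\op]x_i^-,x_i]\subset\op]x_i^-,x_i^+\clo[$, the \emph{open} interval whose $\mu_i$-mass is $<\eps$ by your own grid construction. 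The middle term tends to $0$ by pointwise convergence on the finite grid. (A minor aside: your tail case-split on $\R^d\setminus[-M,M]^d$ omits points with some $x_i>M$ and others in $[-M,M]$; the paper sidesteps this by letting its grid run from $-\infty$ to $+\infty$.)
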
 

\begin{proof} The fact that $\rho$ is a distance is immediate. To get (iii) $\Rightarrow$ (ii') use that $\op]-\infty,x\clo[=\cup_{y<x}\op]-\infty,y]$. Now it suffices to prove (i) $\Rightarrow$ (ii) $\Rightarrow$ (iii).

{\bf (i)} \mathversion{bold}$\Rightarrow$\mathversion{normal} {\bf (ii)}. Take $Q\in\ma((\mu_i)_{1\leq i\leq d})$ and $b=(b_i)_{i=1}^d$. For $b'=(b'_i)_{i=1}^d\geq b$:
\begin{align}\label{eq:rectangles}
F_Q(b')-F_Q(b)=Q(R')-Q(R)\leq \mu_1(\op]b_1,b'_1])+\cdots+ \mu_d(\op]b_d,b'_d])
\end{align}
where $R'=\op]-\infty,b']$ and $R=\op]-\infty,b]$, which shows that $(F_Q)_Q$ is ``equicontinuous on the right''. Take $\eps>0$. Since each $b_i\in \R$ can be approached from the right by a sequence of non-atomic points for $\mu_i$, there exists $b'\geqslant b$ such that $\mu_1(\op]b_1,b'_1])+\cdots+ \mu_d(\op]b_d,b'_d])<\eps$ and $\mu_1(b'_1)+\cdots+ \mu_d(b'_d)=0$, hence in particular $Q(\partial R')=0$ for every $Q\in \ma((\mu_i)_{1\leq i\leq d})$ so that (i) applies to $R'$ for $P$ and all the $P_n$. Thus:
\begin{align*}
|P_n(R)- P(R)|&\leq|P_n(R)- P_n(R')|+|P_n(R')-P(R')|+|P(R')- P(R)|\\
&\leq 2\eps+|P_n(R')-P(R')|
\end{align*}
and if (i) holds, $|P_n(R')-P(R')|\leqslant\eps$ for $n$ great enough, hence (ii) follows.

{\bf (ii)} \mathversion{bold}$\Rightarrow$\mathversion{normal} {\bf (iii)}. We apply an adapted version of the prior argument. Suppose (ii) and take $\eps>0$. For every $i$ there exists a finite sequence $-\infty=b_0^{(i)}(\eps)<\cdots< b^{(i)}_{N(\eps,i)}(\eps)=+\infty$ avoiding the big atoms of $\mu_i$, i.e.\  so that $\mu_i(\op]b_k^{(i)},b_{k+1}^{(i)}\clo[)< \eps$ (this is classical and is proved, e.g.,  in \cite[Section 12]{Bi}, which deals with the modulus of continuity of c\`adl\`ag paths). Every $R=\op]-\infty,b]$ contains some rectangle $R^-_\eps$ and is included in the interior of some rectangle $R^+_\eps$, both bounded by consecutive points $(b^{(i)}_k)_{i,k}$. Using again \eqref{eq:rectangles} for the first and last terms:
\begin{align*}
|P_n(R)- P(R)|&\leq\underbrace{|P_n(R)- P_n(R_\eps^-)|}_{\leqslant d\eps}+\underbrace{|P_n(R_\eps^-)-P(R_\eps^-)|}_{\underset{n\to\infty}\longrightarrow0\ \text{by (ii)}\quad\text{\bf$(\ast)$}}+\underbrace{|P(R_\eps^-)- P(R)|}_{\leqslant d\eps},
\end{align*}
where {\bf$(\ast)$} is uniform since there are finitely many rectangles $R_\eps^-$. We get (iii).
\end{proof}

\subsubsection{Another expression for $\rho$; an alternative distance $\tilde \rho$}\label{ss:another}

Let $\mu_1,\ldots,\mu_d$ be probability measures on $\R$ and $P$, $Q$ stand for elements of $\ma((\mu_i)_{i=1}^d)$.

\begin{notation} When $f$, $g$ and $(f_i)_{i=1}^d$ are functions, by a slight abuse in this subsection, $f.g$ denotes the function $(x,y)\mapsto f(x)g(y)$ and\/ $\prod_{i=1}^df_i$ the function $(x_1,\ldots ,x_d)\mapsto \prod_{i=1}^df_i(x_i)$
\end{notation}

\begin{rem}\label{rem:product}By definition, $\rho(P,Q)=\sup\left|\int \prod_{i=1}^df_i \,\dd P-\int \prod_{i=1}^df_i \,\dd Q \right|$, where each $f_i$ ranges over $\{\one_{\op]-\infty,x]}:\,x\in\R\}$. But in fact:
\end{rem}

\begin{prodefinition}\label{pro:product} {\bf(a)} For any $P$ and $Q$ in $\ma((\mu_i)_{i=1}^d)$:
\begin{equation}\label{eq:rho_comme_sup}
\rho(P,Q)=\sup\left|\int\prod_{i=1}^df_i \,\dd P-\int \prod_{i=1}^df_i \,\dd Q \right|
\end{equation}
where each $f_i$ ranges over $\{f:\R\to[0,1]: f\text{ is decreasing}\}$.

{\bf(b)} Proposition \ref{pro:rho} may be stated with a distance $\tilde\rho$ based on $x\in \R^n\mapsto P([x,+\infty\clo[)$ in place of $F[P]$. Then $\tilde\rho$ satisfies \eqref{eq:rho_comme_sup} with {\em increasing} functions $f_i$.

\end{prodefinition}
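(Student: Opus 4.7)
The plan is to prove (a) by combining the easy one-sided inequality noted in Remark~\ref{rem:product} with a layer-cake representation of decreasing functions, and then to deduce (b) from (a) by the involution $x\mapsto -x$.

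For (a), one direction is immediate: each $\one_{\op]-\infty,x_i\clo]}$ is a decreasing $[0,1]$-valued function on $\R$, and $\int\prod_i\one_{\op]-\infty,x_i\clo]}\,\dd P=F_P(x)$, so Remark~\ref{rem:product} gives $\rho(P,Q)\leq \sup\bigl|\int\prod f_i\,\dd P-\int\prod f_i\,\dd Q\bigr|$. For the reverse inequality, I will use that any decreasing $f_i:\R\to[0,1]$ admits the layer-cake representation $f_i(x)=\int_0^1\one_{\{f_i>t\}}(x)\,\dd t$, and that, $f_i$ being decreasing, the superlevel set $\{f_i>t_i\}$ is either empty, all of $\R$, or a half-line of the form $\op]-\infty,b_i(t_i)\rsemibracket$ (open or closed at $b_i(t_i)$). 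The function $t_i\mapsto b_i(t_i)$ is itself monotone, hence Borel, so Fubini yields
\[
\int \prod_{i=1}^d f_i\,\dd P=\int_{[0,1]^d}P\Bigl(\prod_{i=1}^d\op]-\infty,b_i(t_i)\rsemibracket\Bigr)\,\dd t_1\cdots \dd t_d,
\]
and the same identity with $Q$ in place of $P$. Subtracting and applying the triangle inequality, it suffices to prove that
\[
\Bigl|P(R)-Q(R)\Bigr|\leq \rho(P,Q)\quad\text{for every rectangle }R=\prod_i\op]-\infty,b_i\rsemibracket.
\]
If every factor is closed, $P(R)=F_P(b)$ and $Q(R)=F_Q(b)$, and the bound is immediate; if some factors are open, write $R=\bigcup_n R_n$ as an increasing union of closed rectangles $R_n=\prod_i\op]-\infty,c_i^{(n)}]$, where $c_i^{(n)}=b_i$ at each closed coordinate and $c_i^{(n)}\uparrow b_i$ at each open one. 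Then $P(R)=\lim_n F_P(c^{(n)})$ and similarly for $Q$, so $|P(R)-Q(R)|\leq \limsup_n|F_P(c^{(n)})-F_Q(c^{(n)})|\leq \rho(P,Q)$, finishing (a).

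For (b), let $\sigma:\R\to\R$, $\sigma(x)=-x$, and $\Sigma=\sigma^{\otimes d}:\R^d\to\R^d$. Then $P\bigl([x,+\infty\clo[\bigr)=(\Sigma_\#P)\bigl(\op]-\infty,-x]\bigr)$ up to a $P$-null boundary (handled by the same limiting argument as above), so $\tilde\rho(P,Q)=\rho(\Sigma_\#P,\Sigma_\#Q)$. This makes $\tilde\rho$ a distance, and it metricizes the weak topology on $\ma((\mu_i)_{i=1}^d)$ because Proposition~\ref{pro:rho} applies to $\ma((\sigma_\#\mu_i)_{i=1}^d)$, onto which $\Sigma_\#$ sends $\ma((\mu_i)_{i=1}^d)$ homeomorphically for the weak topology. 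Moreover $f\mapsto f\circ\sigma$ is a bijection between increasing and decreasing functions $\R\to[0,1]$, and a change of variables gives $\int \prod_i f_i\,\dd P=\int\prod_i(f_i\circ\sigma)\,\dd(\Sigma_\#P)$; applying (a) to $\Sigma_\#P$ and $\Sigma_\#Q$ with the decreasing functions $f_i\circ\sigma$ then yields the analogue of \eqref{eq:rho_comme_sup} with $\tilde\rho$ and increasing $f_i$.

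No serious obstacle is expected; the only subtlety is the clean treatment of the half-open factors in the rectangles $\prod_i\op]-\infty,b_i(t_i)\rsemibracket$, which is handled by the monotone approximation above.
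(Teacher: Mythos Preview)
Your proof is correct and follows essentially the same route as the paper: both use the layer-cake representation $f_i(x)=\int_0^1\one_{\{f_i\geq t\}}(x)\,\dd t$ to reduce the supremum over decreasing $[0,1]$-valued functions to a supremum over rectangles $\prod_i\op]-\infty,b_i\rsemibracket$, then bound by $\rho(P,Q)$. You are in fact slightly more explicit than the paper on two points: you justify the bound $|P(R)-Q(R)|\leq\rho(P,Q)$ when some factors of $R$ are half-open via monotone approximation, and you give a concrete argument for (b) via the involution $x\mapsto -x$, whereas the paper simply declares (b) ``clear'' once (a) is shown.
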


\begin{proof}Once (a) is shown, (b) is clear. To show  (a), by Remark \ref{rem:product}, we have only to prove $\geqslant$. For $f:\R\to [0,1]$ a decreasing function and $I(t)\eqdef f^{-1}([t,+\infty\clo[)$,  $f(x)=\int_0^1 \one_{I(t)}(x)\dd t$, thus for $(f_i)_{i=1}^d$ such functions:
$$\biggl(\prod_{i=1}^df_i\biggr) (x_1,\ldots, x_d)=\int_{t\in[0,1]^d}\one_{R(t)}(x_1,\ldots,x_d)\dd t,$$
where $R(t)=I(t_1)\times\cdots\times I(t_d)$. Therefore:
\begin{align*}
\left|\int \prod_{i=1}^d f_i\,\dd P-\int \prod_{i=1}^d f_i \,\dd Q\right|&=\left|\int_{t\in[0,1]^d}P(R(t))-Q(R(t))\dd t\right|\\
&\leq \int_{t\in[0,1]^d}|P(R(t))-Q(R(t))|\dd t\\
&\leq \rho(P,Q).\qedhere
\end{align*}
\end{proof}

Proposition \ref{pro:product} has a corollary in the case of transport plans ($d=2$).
\begin{pro}\label{pro:multiplicationcontractante}Take $\mu$, $\mu'$ in $\p(\R)$ and $P$, $Q$ in $\ma(\mu,\mu')$. Then:
$$\rho(P,Q)=\sup\{\rho(\theta. P, \theta .Q):\theta\in\mdec(\mu)\ \text{and $\theta$ has density bounded by 1}\},$$
where the distance $\rho$ on the right of the equality is that on $\p(\R)$.

Hence if, for some $\nu\in \p(\R)$, $R\in\ma(\nu,\mu)$ is a transport preserving $\mdec$, i.e.\ $\forall\eta\in\mdec(\nu)$, $\eta.R\in\mdec(\mu)$, then $\rho(R.P,R.Q)\leqslant\rho(P,Q)$.
\end{pro}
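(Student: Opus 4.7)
The plan is to deduce both statements from Proposition \ref{pro:product}(a). For the equality, apply that proposition in dimension two to $P, Q \in \ma(\mu, \mu')$:
$$\rho(P,Q)=\sup_{f_1,f_2}\Bigl|\int f_1\otimes f_2\,\dd P-\int f_1\otimes f_2\,\dd Q\Bigr|,$$
with $f_1, f_2 : \R \to [0,1]$ decreasing. For fixed $f_1$, setting $\theta := f_1 \mu$ gives an element of $\mdec(\mu)$ with density $\leq 1$, and the description of the action $\theta.P$ recalled in \S\ref{subsubsec:action_des_transports} yields $\int f_1 \otimes f_2\,\dd P = \int f_2\,\dd(\theta.P)$, and likewise for $Q$. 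Moreover $\theta.P$ and $\theta.Q$ are finite positive measures of the same mass, so the one-dimensional version of Proposition \ref{pro:product}(a) (whose proof carries over verbatim to such measures) identifies the inner supremum over $f_2$ with $\rho(\theta.P, \theta.Q)$. As $f_1$ ranges over decreasing $[0,1]$-valued functions on $\R$, $\theta = f_1\mu$ sweeps out all of $\{\theta \in \mdec(\mu): \theta\ \text{has density}\leq 1\}$ (up to $\mu$-a.e.\ modifications of $f_1$, which do not affect $\theta.P$ or $\theta.Q$); exchanging the two suprema then yields the claimed equality.

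For the consequence, apply the equality just proven to $R.P, R.Q \in \ma(\nu, \mu')$:
$$\rho(R.P, R.Q) = \sup_{\eta}\, \rho\bigl(\eta.(R.P),\, \eta.(R.Q)\bigr),$$
where $\eta$ runs over $\mdec(\nu)$ with density $\leq 1$. By associativity of the action recalled in \S\ref{subsubsec:composition}--\ref{subsubsec:action_des_transports}, $\eta.(R.P) = (\eta.R).P$ and similarly for $Q$. I will check that $\theta := \eta.R$ is admissible in the equality applied to $(P,Q)$. By the hypothesis on $R$, $\eta.R \in \mdec(\mu)$. Moreover, ``density $\leq 1$'' for $\eta$ means $\eta \leq \nu$ as positive measures; denoting $g$ the density of $\eta$ and $k_R$ a disintegration kernel of $R$ with respect to $\nu$, for every Borel $B'$:
$$(\eta.R)(B') = \int g(x)\,k_R(x, B')\,\dd\nu(x) \leq \int k_R(x, B')\,\dd\nu(x) = \mu(B'),$$
so $\eta.R$ has density $\leq 1$ with respect to $\mu$. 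Applying the first part to $(P,Q)$ with $\theta = \eta.R$ then gives $\rho((\eta.R).P, (\eta.R).Q) \leq \rho(P,Q)$, and taking the supremum over $\eta$ concludes.

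The step I expect to require the most care is the matching between decreasing $[0,1]$-valued functions $f_1$ on $\R$ and measures $\theta \in \mdec(\mu)$ with density bounded by $1$: the class $\mdec$ is defined only up to $\mu$-null modifications, so one has to argue (e.g.\ via the Lebesgue differentiation theorem, as is done in Remark \ref{rem:mdec}) that every such $\theta$ admits a decreasing $[0,1]$-valued pointwise representative, to identify the two suprema. Everything else is bookkeeping around Proposition \ref{pro:product}(a) and associativity of the composition of transport plans; crucially, the constraint ``density $\leq 1$'' is preserved when one pushes $\eta$ through $R$, which is exactly what makes the contraction $\rho(R.P, R.Q) \leq \rho(P,Q)$ come out cleanly.
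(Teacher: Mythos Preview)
Your proof is correct and follows essentially the same route as the paper: both arguments unwind Proposition~\ref{pro:product}(a) via the identity $\int f_1\otimes f_2\,\dd P=\int f_2\,\dd((f_1\mu).P)$, identify the correspondence $f_1\leftrightarrow\theta=f_1\mu$ between decreasing $[0,1]$-valued functions and elements of $\mdec(\mu)$ with density $\le1$, and for the contraction observe that $\eta\mapsto\eta.R$ maps the admissible set for $(R.P,R.Q)$ into that for $(P,Q)$. Your write-up is in fact a bit more careful than the paper's on two points it leaves implicit: the extension of $\rho$ and Proposition~\ref{pro:product}(a) to finite measures of equal mass, and the verification that $R$ preserves the bound ``density~$\le1$''.
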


\begin{proof}
For the first equality, as $\int f.g\, \dd P=((f\mu).P).g$, by Proposition \ref{pro:product}: $\rho(P,Q)=\sup_{f,g} |((f\mu).P).g-((f\mu).Q).g|$ and $\rho(\theta. P, \theta .Q)=\sup_g |(\theta.P).g-(\theta.Q).g|$. Now $\theta$ is as in the proposition if and only if $\theta=f\mu$ with $f:\R\to[0,1]$, decreasing. The result follows. Then if $R\in\ma(\nu,\mu)$ is as claimed, $\rho(RP,RQ)=\sup\{\rho(\theta. P, \theta .Q)|\,\theta=\bar\theta.R$ where $\bar\theta\in\mdec(\mu)$ and $\bar\theta$ has density bounded by 1$\}$. This set is included in that of the proposition, since the action of $R$ on $\bar\theta$ does not increase the maximum of its density.
\end{proof}

\subsection{Proof of Lemma \ref{coro} and remarks about it}\label{subsec:proof_coro}

We prove Lemma \ref{coro} on the existence of a process consisting exclusively of increasing paths. Our proof requires the use of $\stoinf$ and $\stosup$ introduced in \S\ref{subsec:stochastic_order}.

\begin{proof}[Proof of Lemma \ref{coro}]
Let $(\mu_t)_{t\in \R}$ be an increasing family of probability measures for $\leqs$ and $P$ be a Markov measure in $\ma((\mu_t)_{t\in \R})$ such that for every $S=\{s_1,\ldots,s_d\}$, the measure $(\pr^S)_\#P$ is concentrated on $\{(x_1,\ldots,\linebreak[1]x_d)\in \R^d:\,x_1\leq\ldots \leq x_d\}$.  Set $\mu_{t^-}\eqdef\stosup_{s<t}\mu_s$ and  $\mu_{t^+}\eqdef\stoinf_{s>t}\mu_s$, that are also the left and right limits of $(\mu_t)_t$ for the weak topology, see Remark \ref{rem:stosup_existe}. Since $(\mu_t)_{t\in \R}$ is increasing for $\leqs$, we have $\mu_{t^-}\leqs \mu_t \leqs \mu_{t^+}$ and $t\in\R$ is a discontinuity time of $(\mu_t)_{t\in \R}$ for the weak topology if and only if $\mu_{t^-}\neq\mu_{t^+}$. Such points are at most countably many. Indeed, the regions in $\R^2$ between the graphs of  $F[\mu_{t^-}]$ and $F[\mu_{t^+}]$, for the discontinuity times $t$, are disjoint and of positive Lebesgue measure, hence are at most countably many, by $\sigma$-additivity of the measure.

Let $C$ be a countable dense subset of $\R$ containing the discontinuity points. Introduce:
$$N=\{x\in \R^\R:\,\exists (s,t)\in C^2, s<t\text{ and }x(s)>x(t)\}
.$$
Being a countable union of $P$-null sets, $N$ is $P$-null. Now take $(X_t)_{t\in \R}$ with law $P$, e.g.,  take the canonical process $\Omega\eqdef\R^\R$ and $X_t=\pr^t:x\in \R^\R\mapsto x(t)$. We define $(\tilde X_t)_{t}$ as null functions on $N$, and as follows on $\Omega\setminus N$:\smallskip

-- for $t\in C$, $\tilde X_t(\omega)=X_t(\omega)$,\smallskip

-- for $t\notin C$, $\tilde X_t(\omega)=\lim_{s<t,\,s\in C}X_s(\omega)$.\smallskip

\noindent Hence for every $\omega\in \Omega$, the curve $t\in C\mapsto X_t(\omega)$ is increasing and, even better, $t\in \R\mapsto X_t(\omega)$ is increasing. We are left to prove that $X_t=\tilde X_t$ almost surely. This is clear for $t\in C$. For each $t\in\R\setminus C$, $\{\omega\in\Omega:\exists s\in C, s<t\ \text{and}\ X_s(\omega)>X_t(\omega)\}$ is a union of null sets, hence is null, so almost surely, $X_t\geqslant \sup_{s<t,\,s\in C}X_s=\tilde X_t$. Besides $X_s\to_{s<t,\,s\in C, s\to t}\tilde{X}_t$ almost surely and thus in law, so that $\law(\tilde X_t)=\mu_{t^-}$. Moreover $t$ is a continuity point of $(\mu_t)_t$, so that $\law(\tilde X_t)=\mu_{t}=\law(X_t)$. Thus, $X_t=\tilde{X}_t$ almost surely.
\end{proof}

\begin{rem}\maz \point\ If $(\mu_t)_t$ is moreover left-continuous for the weak topology, we can adapt the proof of Lemma \ref{coro} so that $s\mapsto \tilde X_s(\omega)$ is increasing and left continuous by using the formula:
$$\tilde X_t(\omega)=\lim_{s<t,\,s\in C}X_s(\omega)\ \text{if}\ \omega \in \Omega\setminus N\ \text{and}\  X_t(\omega)=0\ \text{otherwise}.\smallskip$$

\point\ If $(\mu_t)_t$ is this time moreover right-continuous, using a symmetric construction all the curves can be chosen c\`adl\`ag (right-continuous, with limit on the left at any point).\smallskip

\point\  However, for a continuous $\mu=(\mu_t)_t$, the curves $s\mapsto \tilde X_s(\omega)$ have not to be continuous. A simple example is: $t\in [0,1]\mapsto (1-t)\delta_0+t \delta_1$. But see \S\ref{sec:cont_eq} (adapt the proof of Theorem \ref{them:action}\ref{pt:lag}): if $\mu\in \mathcal{AC}_2$, the curves $s\mapsto \tilde X_s(\omega)$ are continuous.\smallskip

\point\ Carrying on with the similarity between $\leqs$ and $\leqc$ established in Theorem \ref{them:c} we mention that Kellerer's theorem has also been revisited under continuity assumptions, see \cite{Low,HiRoYo14,BHS}. In particular it has been proved that if $(\mu_t)_{t\in \R}$ is right-continuous the associated martingale can be defined in the space of c\`adl\`ag paths.
\end{rem}

\subsection{Some remarks following from Proposition \ref{pro:rho} ; proofs of Lemmas \ref{lem:closed} and \ref{lem:transitions_croissantes_limite}}\label{subsec:proof_lemmas}

\subsubsection{The remarks on Proposition \ref{pro:rho}; proof of Lemma \ref{lem:transitions_croissantes_limite}}

\begin{rem}\label{rem:preserver_mdec_ferme}($\{P\in\ma(\mu,\nu):P\text{ maps } \mdec(\mu)\text{ to }\mdec(\nu)\}$ is closed for the weak topology.) Any $\theta\in\mdec(\mu)$ is an increasing limit of positive combinations of characteristic functions $\one_{\op]-\infty,x]}$, so $P\in\ma(\mu,\nu)$ maps $\mdec(\mu)$ in $\mdec(\nu)$ if and only if it maps $\{\mu\lfloor_{\op]-\infty,x]}:\,x\in\R\}$ in it. Now take a sequence $(P_n)_n\in\ma(\mu,\nu)^\N$ of transport plans having this property and converging weakly to $P\in\ma(\mu,\nu)$. After Proposition \ref{pro:rho}, $\|F[P_n]-F[P]\|_\infty\rightarrow0$; in particular, for any $x\in\R$, $\|F[P_n](x,\,\cdot\,)-F[P](x,\,\cdot\,)\|_\infty\rightarrow0$, i.e.\ $\|F[\mu\lfloor_{\op]-\infty,x]}.P_n]-F[\mu\lfloor_{\op]-\infty,x]}.P]\|_\infty\rightarrow0$, i.e.\  $\mu\lfloor_{\op]-\infty,x]}.P_n$ converges weakly to $\mu\lfloor_{\op]-\infty,x]}.P$. By Remark \ref{rem:mdec}, $\mu\lfloor_{\op]-\infty,x]}.P\in\mdec(\nu)$, we are done.
\end{rem}

Now the little Lemma \ref{lem:transitions_croissantes_limite}, which we use several times, is immediate.
\begin{proof}[Proof of Lemma \ref{lem:transitions_croissantes_limite}]\label{proof:lem:transitions_croissantes_limite} Apply Remarks \ref{rem:preserver_mdec_ferme} and \ref{rem:stable}.
\end{proof}

\begin{rem}\label{rem:diese_contractant} Take $(H_i)_{i=1}^d$ increasing real functions, and $H=H_1\otimes\cdots\otimes H_d:\R^d\rightarrow\R^d$. Then $H_\#:\ma((\mu_i)_{i=1}^d )\rightarrow\ma((H_i(\mu_i))_{i=1}^d)$ is contracting for $\rho$. To check it, think that $(F[H_\#P]-F[H_\#Q])(x_1,\ldots,x_d)=(H_\#P\linebreak[2]-H_\#Q)(\prod_i\op]-\infty,x_i])$ equals a term of the type $(H_\#P-H_\#Q)(\prod_i\op]-\infty,y_i\rsemibracket)$, and use Proposition \ref{pro:rho}. Thus if $(P_n)_n$ converges to $P$ in $\ma(\mu_1,\ldots,\mu_d)$, $(H_\#P_n)_n$ converges to $H_\#P$ in $\ma(H_1(\mu_1),\ldots, H_d(\mu_d))$.
\end{rem}

The next remark is neither related to Proposition \ref{pro:rho} nor to Lemma \ref{lem:transitions_croissantes_limite} but is an analogue of Remark \ref{rem:diese_contractant} with $\leqs$ in place of $\rho$.

\begin{rem}\label{rem:leqlc_et_Gs}If $P,Q$ are in $\p(\R^d)$, the $(H_i)_{i=1}^d$ are increasing functions, and $H\eqdef H_1\otimes\ldots\otimes H_d$ then $H_\#:\p(\R^d)\rightarrow\p(\R^d)$ is increasing for $\leqlc$, i.e.\ $P\leqlc Q\Rightarrow H_\#P\leqlc H_\#Q$. To check it, think that $(F[H_\#P]-F[H_\#Q])(x_1,\ldots,x_d)=(H_\#P-H_\#Q)(\prod_i\op]-\infty,x_i])$ equals a term of the type $(H_\#P-H_\#Q)(\prod_i\op]-\infty,y_i\rsemibracket)$, and, for the indices $i$ such that ``$\rsemibracket$'' stands for ``$[$'', use that $\op]-\infty,x_i\clo[=\cup_{n}\op]-\infty,x_i-1/n]$.
\end{rem}

\begin{rem}\label{rem:multiplicationcontractante} Notice then that $\rho(T,T')=\rho(\trans T,{\trans T'})$ (see Definition \ref{defi:transpose} for $\trans T$). Thus, in Proposition \ref{pro:multiplicationcontractante}, if $R\in\ma(\mu',\nu)$ and if $R$ has increasing kernel (i.e., by Remark \ref{rem:stable}, $\trans\!R$ maps $\mdec(\nu)$ into $\mdec(\mu')$):
$$\rho(PR,QR)=\rho(\trans(PR),{\trans (QR)})=\rho(\trans\!R\,\trans\!P),{\trans\!R\,\trans\!Q)})\leqslant\rho(\trans\!P,\trans Q)=\rho(P,Q).$$
\end{rem}

\subsubsection{Proof of Lemma \ref{lem:closed}}\label{subsec:MaInProcess}
First we prove Lemma \ref{lem:pointwise}, then its consequence Proposition \ref{pro:continuity}, and finally Lemma \ref{lem:closed}.

\begin{lem}\label{lem:pointwise}
For every $n\in \N$, let $P_n\in \ma(\mu,\nu)$ have increasing kernel. Suppose moreover that $(P_n)_{n\in\N}$ converges to $P_0$. Let $h:[0,1]\to [0,1]$ be an increasing function and for every $n\in \N$, $\tilde{h}_n=P_{n}.h$. Then $(\tilde h_n)_{n\in\N}$ converges to $\tilde h_0$, $\mu$-almost surely.
\end{lem}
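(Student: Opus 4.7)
The plan is to pass from the uniform convergence $\|F[P_n] - F[P_0]\|_\infty \to 0$, provided by Proposition \ref{pro:rho}, to $\mu$-almost sure convergence of the $\tilde h_n$ in three stages: first, convergence of all integrals $\int_A \tilde h_n\,\dd\mu$ for Borel $A$; second, convergence in $\mu$-measure via Helly's theorem and identification of the limit with $\tilde h_0$; finally, an upgrade to $\mu$-almost sure convergence that exploits the monotonicity of the $\tilde h_n$.

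By Proposition \ref{prodef:transitions}\ref{pt:condexp} I fix for each $n$ an increasing representative $\tilde h_n : \R \to [0,1]$ of $P_n.h$. Using the layer-cake representation $h(y) = h(-\infty) + \int_0^{h(+\infty)-h(-\infty)} \one_{[a_t,+\infty\clo[}(y)\,\dd t$, where $a_t \eqdef \inf\{y : h(y) - h(-\infty) \geq t\}$, I write
$$\int_{\op]-\infty,x]}\tilde h_n\,\dd\mu = h(-\infty)\,F_\mu(x) + \int_0^{h(+\infty)-h(-\infty)} P_n\bigl(\op]-\infty,x]\times[a_t,+\infty\clo[\bigr)\,\dd t.$$
The uniform convergence of $F[P_n]$ gives uniform-in-$(x,t)$ convergence of the integrand, hence $\int_{\op]-\infty,x]}\tilde h_n\,\dd\mu \to \int_{\op]-\infty,x]}\tilde h_0\,\dd\mu$ uniformly in $x$; a monotone class argument extends this to $\int_A \tilde h_n\,\dd\mu \to \int_A \tilde h_0\,\dd\mu$ for every Borel $A$.

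Being uniformly bounded and increasing, the $\tilde h_n$ satisfy the following Helly-type property: by a diagonal extraction on a countable dense set augmented with the possible discontinuity points of the limit, any subsequence contains a further sub-subsequence $(\tilde h_{n_j})_j$ converging pointwise everywhere to some increasing $g : \R \to [0,1]$. Bounded convergence combined with the previous paragraph gives $\int_A g\,\dd\mu = \int_A \tilde h_0\,\dd\mu$ for every Borel $A$, so $g = \tilde h_0$ $\mu$-almost surely. Hence $\tilde h_n \to \tilde h_0$ in $\mu$-measure.

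The final step upgrades convergence in measure to convergence $\mu$-almost surely. At a continuity point $x_0$ of $\tilde h_0$ non-isolated in $\mathrm{supp}(\mu)$ from both sides, a sandwich argument works: choose $\delta$ with $\tilde h_0(x_0+\delta) - \tilde h_0(x_0-\delta) < \eps$, use convergence in measure to find, for large $n$, points $y_n \in [x_0-\delta,x_0)$ and $z_n \in (x_0,x_0+\delta]$ at which $|\tilde h_n - \tilde h_0| \leq \eps$, and bracket $\tilde h_n(x_0)$ between $\tilde h_n(y_n)$ and $\tilde h_n(z_n)$. At an atom $x_0$ of $\mu$ this sandwich fails and one argues directly: the difference $F[P_n](x_0,y) - F[P_n](x_0^-,y)$, combined with the uniform control on $F[P_n]$, yields pointwise convergence in $y$ of $k_{P_n}(x_0,\op]-\infty,y])$ to $k_{P_0}(x_0,\op]-\infty,y])$ and hence weak convergence $k_{P_n}(x_0,\cdot) \to k_{P_0}(x_0,\cdot)$; applying the layer cake once more together with dominated convergence in $t$ yields $\tilde h_n(x_0) = \int h\,\dd k_{P_n}(x_0,\cdot) \to \tilde h_0(x_0)$. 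The remaining exceptional $x$, namely the discontinuities of $\tilde h_0$ that are non-atoms of $\mu$ and the non-atom endpoints of gaps in $\mathrm{supp}(\mu)$, form a countable union of singletons of zero $\mu$-mass. The main obstacle is this last upgrade, specifically the atom case: convergence in $\mu$-measure cannot by itself reach an atom $x_0$, and one has to recover the weak convergence of the conditional kernels at $x_0$ directly from the uniform convergence of the joint cumulative distribution functions.
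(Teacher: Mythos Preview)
Your argument is correct, but it takes a noticeably more elaborate route than the paper's. The paper proceeds directly: from $\tilde\rho(P_n,P_0)\to0$ it gets $\int_{[a,b]}\tilde h_n\,\dd\mu\to\int_{[a,b]}\tilde h_0\,\dd\mu$ (essentially your Step~1), then identifies a single full-$\mu$-measure set $A$ on which, for \emph{every} $n$ simultaneously, $\tilde h_n(x)$ equals both $\mu\text{-}\mathrm{esssup}_{[0,x]}\tilde h_n$ and $\mu\text{-}\mathrm{essinf}_{[x,1]}\tilde h_n$; on $A$ a short contradiction using local averages $\frac{1}{\mu[x,y]}\int_{[x,y]}\tilde h_n\,\dd\mu$ finishes the job. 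This avoids your Helly step, your passage through convergence in $\mu$-measure, and your case-split between generic points and atoms. Your approach is more ``standard'' (Helly plus a sandwich upgrade), and your direct kernel computation at atoms---recovering $k_{P_n}(x_0,\cdot)\to k_{P_0}(x_0,\cdot)$ from the uniform convergence of $F[P_n]$ and its left limits---is a nice standalone observation; but it makes the proof roughly twice as long. Two minor imprecisions on your side: the monotone-class claim in Step~2 is stated more strongly than what you actually need or use (the identification $g=\tilde h_0$ $\mu$-a.s.\ requires only the intervals $\op]-\infty,x]$); and the assertion that the sandwich ``fails'' at an atom is too strong---it only fails at atoms that are isolated from one side in $\mathrm{supp}(\mu)$ or are discontinuity points of $\tilde h_0$---though your separate atom argument covers all atoms regardless, so no gap results.
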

\begin{proof}By the equivalence proven in Proposition \ref{prodef:transitions}, the sequence $(\tilde h_n)_{n\in \N}$ is increasing.

By Propositions \ref{pro:product}(b) and \ref{pro:rho}, $\tilde{\rho}(P_n,P_0)\to_{n\to\infty} 0$, i.e., for every increasing $g$ with values in $[0,1]$, $\int g(y)h(z) \dd P_n(y,z)\to \int g(y)h(z)\dd P_0(y,z)$. Hence:
\begin{equation}\label{eq:lem:pointwise}
\int g(x)\tilde{h}_n(x)\,\dd \mu(x)\to \int g(x)\tilde{h}_0(x)\,\dd \mu(x),
\end{equation}
by definition of $\tilde h_n$. This also holds if $g$ is the difference of two increasing functions, in particular $g=\one_{[a,b]}=\one_{[a,+\infty\clo[}-\one_{[b,+\infty\clo[}$.

Let $A$ be the set of the elements $x$ of $[0,1]$ such that $\mu[x,1]\mu[0,x]>0$ and for every $n\in \N$, $\mu\text{-}\mathrm{esssup}_{[0,x]}\tilde{h}_n=\mu\text{-}\mathrm{essinf}_{[x,1]}\tilde{h}_n=\tilde{h}_n(x)$. In case $\mu=\la$, since $\tilde h_n$ is increasing, hence has at most a countable number of discontinuity points, $\mu(A)$ is $1$. This also holds in the general case and is given by the increasing functions $\tilde{h}_n\circ G_\mu$, we leave the details to the reader. Now we take any $x\in A$ and prove $\tilde h_n(x)\rightarrow\tilde h_0(x)$. It is enough to prove $\limsup_{n}\tilde{h}_n(x)\leq \tilde{h}_0(x)$, since $\liminf_{n}\tilde{h}_n(x)\leq \tilde{h}_0(x)$ can be proved symmetrically.    Suppose, for contradiction:
$$\limsup_{n}\tilde{h}_n(x)\geq \tilde{h}_0(x)+\eps,\quad \text{for some $\eps>0$.}$$
As $\tilde{h}_{0}$ is increasing and $\tilde{h}_0(x)=\mu\text{-}\mathrm{essinf}_{[x,1]}\tilde{h}_0$ there exists $y>x$ such that $\mu[x,y]>0$ and:
$$\tilde{h}_0(x)\leq\frac1{\mu[x,y]} \int_{[x,y]}\tilde{h}_0\,\dd\mu\leq \tilde{h}_0(x)+\eps/2.$$
This contradicts the facts that $\frac1{\mu[x,y]} \int_{[x,y]}\tilde{h}_n\,\dd\mu\to_{n\to\infty}\frac1{\mu[x,y]} \int_{[x,y]}\tilde{h}_0\,\dd\mu$, obtained with $g=\one_{[x,y]}$ in \eqref{eq:lem:pointwise}, and that $\tilde h_n$ is increasing.
\end{proof}

\begin{pro}\label{pro:continuity}
Let $P_n$ tend to $P_0$ in $\ma(\mu_1,\ldots,\mu_d,\eta)$  and $P_n'$ tend to $P_0'$ in $\ma(\eta,\nu)$. Assume moreover that $P'_n$ has increasing kernel for every $n$. Then $P_n'\circ P_n$ tends to $P_0'\circ P_0$.
\end{pro}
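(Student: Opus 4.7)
Since $P'_n\circ P_n$ and $P'_0\circ P_0$ both belong to $\ma(\mu_1,\ldots,\mu_d,\eta,\nu)$, Proposition \ref{pro:rho} reduces the weak convergence $P'_n\circ P_n\to P'_0\circ P_0$ to the pointwise convergence of the cumulative distribution functions. So the plan is to fix an arbitrary point $(a_1,\ldots,a_d,b,c)\in\R^{d+2}$ and to prove that $F[P'_n\circ P_n](a_1,\ldots,a_d,b,c)\to F[P'_0\circ P_0](a_1,\ldots,a_d,b,c)$. Write $g(x_1,\ldots,x_d,y)\eqdef\one_{\op]-\infty,a_1]}(x_1)\cdots\one_{\op]-\infty,a_d]}(x_d)\one_{\op]-\infty,b]}(y)$ and $h_n(y)\eqdef k_{P'_n}(y,\op]-\infty,c])$. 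Using Definition \ref{defi:catenation}, the desired CDF reads
\[F[P'_n\circ P_n](a_1,\ldots,a_d,b,c)=\int g(x_1,\ldots,x_d,y)\,h_n(y)\,\dd P_n(x_1,\ldots,x_d,y).\]

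The main observation is twofold. First, because $P'_n$ has increasing kernel, Proposition \ref{prodef:transitions}\ref{pt:condexp} (applied to the increasing function $\one_{\op]c,+\infty\clo[}=1-\one_{\op]-\infty,c]}$) shows that $h_n$ may be chosen \emph{decreasing} and $[0,1]$-valued, $\eta$-a.s. Second, applying Lemma \ref{lem:pointwise} to the increasing function $\one_{\op]c,+\infty\clo[}$ and to the sequence $P'_n\to P'_0$ yields $h_n\to h_0$ $\eta$-almost surely. With this in hand the plan is the triangle decomposition
\[F[P'_n\circ P_n](\cdot)-F[P'_0\circ P_0](\cdot)=\Bigl(\!\int g h_n\,\dd P_n-\int g h_n\,\dd P_0\Bigr)+\Bigl(\!\int g h_n\,\dd P_0-\int g h_0\,\dd P_0\Bigr).\]

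For the first bracket, note that $gh_n$ is the tensor product of the $d+1$ decreasing $[0,1]$-valued functions $\one_{\op]-\infty,a_i]}$ (for $i\leq d$) and $y\mapsto\one_{\op]-\infty,b]}(y)h_n(y)$; hence by Proposition \ref{pro:product}(a) this bracket is bounded in absolute value by $\rho(P_n,P_0)$, which tends to $0$ since $P_n\to P_0$ (Proposition \ref{pro:rho}). For the second bracket, bounding $g$ by $1$ and using that the $y$-marginal of $P_0$ is $\eta$ gives
\[\Bigl|\int g(h_n-h_0)\,\dd P_0\Bigr|\leq \int|h_n(y)-h_0(y)|\,\dd\eta(y),\]
which tends to $0$ by dominated convergence, as $|h_n-h_0|\leq 1$ and $h_n\to h_0$ $\eta$-a.s.

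The delicate point is that one cannot handle the joint convergence $P_n\to P_0$ and $h_n\to h_0$ in a single step, because $h_n$ is only measurable (not continuous) and varies with $n$, so a direct appeal to weak convergence fails. The decomposition above is exactly what uncouples the two convergences: the uniformity in the first term is provided by $\rho(P_n,P_0)\to 0$ (crucially exploiting that $gh_n$ is a tensor product of \emph{decreasing} functions, which is where the increasing-kernel hypothesis on $P'_n$ is used), while the pointwise part of Lemma \ref{lem:pointwise} suffices to handle the second term against the fixed measure $P_0$. Combining both parts, the pointwise convergence of the CDFs at $(a_1,\ldots,a_d,b,c)$ follows, and Proposition \ref{pro:rho} concludes.
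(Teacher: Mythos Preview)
Your proof is correct and follows essentially the same strategy as the paper's: both express the catenation integral as $\int f(x)(g\tilde h_n)(y)\,\dd P_n$ with $\tilde h_n=P'_n.h$, split via the triangle inequality into a term controlled by $\rho(P_n,P_0)$ (using that $g\tilde h_n$ is a tensor of monotone $[0,1]$-valued functions, Proposition~\ref{pro:product}) and a term handled by Lemma~\ref{lem:pointwise} plus dominated convergence against the fixed $P_0$. The only cosmetic difference is that you work with indicator test functions and the distance $\rho$ (decreasing functions), whereas the paper phrases everything with general increasing test functions and $\tilde\rho$; by Proposition~\ref{pro:product} these are equivalent.
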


\begin{proof}
By Propositions \ref{pro:product}(b) and \ref{pro:rho}, we must show that the integral $\int f(x)g(y)h(z)\dd (P_n\circ P'_n)(x,y,z)$ tends to $\int f(x)g(y)h(z)\dd (P_0\circ P_0')(x,y,z)$, where $f((x_i)_{i=1}^d)=\prod_if_i(x_i)$ and $g$, $h$ and the $f_i$ are any increasing functions from $\R$ to $[0,1]$. For all $n\geqslant0$:
\begin{align*}
\int f(x)g(y)h(z)\dd (P_n\circ P'_n)(x,y,z)&=\int f(x)g(y)\int h(z) k_{P'_n}(y,\dd z)\dd P_n(x,y)\\
&=\int f(x)(g\tilde h_n)(y)\dd P_n,
\end{align*}
where $\tilde h_n=P_{n}.h$, by Definition \ref{defi:catenation} and \S\ref{subsubsec:action_des_transports}.

By Lemma \ref{lem:pointwise}, $(g\tilde{h}_n)_n$ pointwise converges to $g\tilde{h}_0$, almost surely. Hence by the dominated convergence theorem:
$$\left|\int f(x)(g\tilde h_n)(y)\dd P_0-\int f(x)(g\tilde h_0)(y)\dd P_0\right|\to0.$$
Moreover $\tilde{\rho}(P_n,P_0)$ tends to zero and for every $n$, $g\tilde h_n$ is an increasing functions taking values in  $[0,1]$. Hence, again by Propositions \ref{pro:product}(b) and \ref{pro:rho}:
$$\left|\int f(x)(g \tilde h_n)(y)\dd P_n-\int f(x)(g \tilde h_n)(y)\dd P_0\right|\to0.$$
Therefore with the triangle inequality:
$$\left|\int f(x)(g \tilde h_n)(y)\dd P_n-\int f(x)(g \tilde h_0)(y)\dd P_0\right|\to0,$$
which is exactly what we claimed.
\end{proof}

\begin{rem}[Counterexamples to Proposition \ref{pro:continuity}] {\bf(a)} Case where the mar\-gin\-als are not fixed. Take $d=1$, $\mu=\nu=\frac{1}{2}(\delta_{-1}+\delta_1)$ and $\eta=\frac{1}{2}(\delta_{-1/n}+\delta_{1/n})$ and its limit $\delta_0$ in the parameter $n$. We consider the quantile couplings $P_n=\frac{1}{2}(\delta_{-1,-1/n}+\delta_{1,1/n})\in\ma(\mu_0,\mu_1)$ and $P^{\prime}_n=\trans\!P_n\in\ma(\mu,\eta)$. They have increasing kernel but not fixed marginals. On the one hand $P_n\circ P^{\prime}_n=\frac{1}{2}(\delta_{-1,-1/n,-1}+\delta_{1,1/n,1})\underset{n\to\infty}{\longrightarrow}\frac{1}{2}(\delta_{-1,0,-1}+\delta_{1,0,1})$. On the other hand $(\lim_n(P_n)_{n})\circ(\lim_n(P^{\prime}_n)_{n})=\frac{1}{4}(\delta_{1,0,1}+\delta_{1,0,-1}+\delta_{-1,0,1}+\delta_{-1,0,-1})$. 

{\bf(b)} Case where the transitions are not increasing. See the example in \cite{Ke72} after the proof of Satz 14. Here the marginals are fixed but the transport plans neither have Lipschitz nor increasing kernel.
\end{rem}

We can now prove Lemma \ref{lem:closed}. It follows directly from Proposition \ref{pro:continuity}.

\begin{proof}[Proof of Lemma \ref{lem:closed}]\label{proof:lem:closed}
Consider a sequence $P^n_{1,2}\circ\cdots\circ P^n_{d,d+1}$ with limit $P\in\ma(\mu_1,\ldots,\mu_{d+1})$ and denote $\pr^{i,i+1}_{\#}P$ by $P_{i,i+i}$. Since $(\pr^{i,i+1})_\#$ is continuous, $P_{i,i+1}$ is the limit of $(P_{i,i+1}^n)_n$, hence $P_{i,i+1}\in\mathcal{I}_{i,i+1}$. Proposition \ref{pro:continuity} used by induction shows that $P=P_{1,2}\circ\cdots\circ P_{n,n+1}$.
\end{proof}

\section{Construction and characterization of the Markov-quantile process}\label{sec:construction}

{\bf Now \mb$(\mu_t)_{t\in \R}$ is a family  of probability measures on $\R$, $F_t$ denotes the cumulative distribution function $F_{\mu_t}$ of $\mu_t$ and $G_t$ its quantile function, see \S\ref{ss:minimality}.} In this section we build the Markov-quantile measure $\mq$ and prove Theorems A and B. Our proof of Theorems A--B is based on transport plans $L_{[s,t]}\in \ma(\la,\la)$, defined in \S\ref{subsec:little_t}, that are the 2-marginals of an important auxiliary process law called the quantile level measure $\lev\in\ma(\la_t)_{t\in\R}$, where each $\lambda_t$ is a copy indexed by $t$ of $\la=\la_{[0,1]}$. Here is the link between $\lev$ and $\mq$: $G_t$ maps $([0,1],\la_t)$ to $(\R\cup\{\pm \infty\},\mu_t)$; set $G=\otimes_{t\in\R}G_t$, so that $G_\#$ maps $\ma((\lambda_t)_{t\in\R})$ to $\ma((\mu_t)_{t\in\R})$. Then, as proven in the proof Theorem \ref{them:deuxieme_sec} p.\ \pageref{lieu_preuve_glev}:
\begin{equation}\label{eq:Glev}
G_\#\lev=\mq.
\end{equation}

\begin{rem}
Readers familiar with Mathematical Statistics may compare $L_{[s,t]}$ with the notion of \emph{copula} and \eqref{eq:Glev} with Sklar's theorem. In particular, the (bivariate) cumulative distribution function of $\lev^{s,t}=L_{[s,t]}$ is a copula associated with $\mq^{s,t}$. It is defined as the pointwise maximum of the family of copulas made of the cumulative distribution functions associated with $(L_{[s,t]\cap R})_{R\subset \R}$ where $R$ ranges over finite sets, see Definitions \ref{defi:geqlc} and \ref{def:lcsup}, Lemma \ref{lem:sto_order}, Proposition \ref{pro:rho} and Notation \ref{lem:existenceT}. Although the link is immediate we will not use further the terminology of Copula Theory.
\end{rem}

This section is divided in three. In \S\ref{subsec:little_t} we define the coupling $L_R$ for all $R\subset \R$, using the key monotonicity Lemma \ref{lem:mono2}. In \S\ref{subsec:preuve_premier} we define $\mq$ and prove Theorem \ref{them:a},  purely via  the 2-marginals $(\mq^{s,t})_{s<t}\eqdef((G_s\otimes G_t)_\#L_{\op]s,t\clo[})_{s<t}=((G_s\otimes G_t)_\#L_{[s,t]})_{s<t}$, i.e.\  without introducing $\lev$. Finally in \S\ref{subsec:preuve_deuxieme} we prove Theorem \ref{them:b}, i.e.\ a refinement of \eqref{eq:Glev} and the approximation of $\lev$ and $\mq$ by sequences $(\lev_{R_n})_n$ and $(\como_{[R_n]})_n$, respectively.

\subsection{Transitions kernels to and from the space of quantiles \mb$[0,1]$\mn}\label{subsec:little_t}~
{\small In this paper} we will need to consider the quantile couplings of the measures $\mu_t$ with the reference measure $\la=\Lg$.

\begin{notation}[$q_r$, $k_r$, $\trans k_r$] \label{notation:kernels} For all $r\in R$ we set $q_r= \como(\la,\mu_r)$; thus $\trans q_r$ (see Definition \ref{defi:transpose}) is $\como(\mu_r, \lambda)$. Those couplings admit the respective disintegration kernels:
$$k_r:(\alpha,B)\mapsto \delta_{G_{r}(\alpha)}(B),$$
\begin{align*}
\trans{k}_r(x,\espcdot)=
\begin{cases}
\delta_{F_r(x)}&\text{if }\mu_r(x)=0\\
(\mu_r(x))^{-1}\la\lfloor_{]F_r(x^-),F_r(x)[}&\text{if }\mu_r(x)>0.\\
\end{cases}
\end{align*}
\end{notation}

\begin{rem}\maz\label{rem:finesse}\point\label{p1:rem:finesse} Notice that $\trans{q}_s . q_t=\join(\mu_s,\trans{k}_s . k_t)=\como^{s,t}$, where $\como^{s,t}=\como(\mu_s,\mu_t)$, see Definition \ref{defi:quantile_process}. Indeed, $\trans{q}_s . q_t=\trans{q}_s .\id_{\la,2}. q_t$, then apply \ref{p2:rem:finesse} below,
which will also be useful farther.

\point\label{p2:rem:finesse} If some ordered pair $(U,V)$ of variables has law $T\in \ma(\la,\la)$, then $\ma(\mu_s,\mu_t)\ni\trans{q}_s .T. q_t=\law(G_s(U),G_t(V))=(G_s\otimes G_t)_\#T$. Indeed, $\trans{q}_s .T. q_t=\pr^{1,4}_\#(\trans{q}_s \circ T\circ q_t)$ and the 4-times process $(G_{\mu_s}(U),U,V,G_{\mu_t}(V))$ has law $\trans{q}_s \circ T\circ q_t$ and is Markov, since the $\sigma$-fields spanned by $U$ and $\{U,G_{\mu_s}(U)\}$ are the same.
\point\label{p3:rem:finesse} From \ref{p1:rem:finesse} we get $\trans q_r.q_r=\id_{\mu_r,2}$, so that $\trans q_r.q_r.\trans q_r=\trans q_r$ and $q_r.\trans q_r.q_r=q_r$. {\em However}, $q_r. \trans q_r\neq \id_{\la,2}$. Indeed, $k_r.\trans{k}_r$  maps any quantile level $\alpha\in \op]0,1\clo[$ on itself {\em except when} $G_r(\alpha)$ is an atom of $\mu_r$. Actually, $\mu_r$-almost surely:
\begin{align*}
k_r. \trans{k}_r(\alpha)=
\begin{cases}
\delta_\alpha&\text{if }\mu_r(G_{r}(\alpha))=0\\
(\alpha^+-\alpha^-)^{-1}\la|_{]\alpha^-,\alpha^+[}&\text{if }\alpha\in]\alpha^-,\alpha^+[,
\end{cases}
\end{align*}
where $]\alpha^-,\alpha^+[$ denotes any set $A_{r,x}$ as follows.
\end{rem}

\begin{notation}[Atomic levels, $\ell_r$]\label{nota:Asx}\maz\point\label{p:Asx} We denote by $A_{r,x}$ the interval $\op]F_r(x^-),\linebreak[1]F_r(x)\clo[$ of quantile levels merged by $G_r$ on some atom $x$ of $\mu_r$, and by $A_r$  the set $\bigcup_{\mu_r(x)>0}A_{r,x}\subset [0,1]$ of ``atomic levels'' of $\mu_r$.

\point\label{p:l_R_fini} We denote $k_r. \trans{k}_r$ described in Remark \ref{rem:finesse}\ref{p3:rem:finesse} by $\ell_r$.
\end{notation}

\begin{rem}\label{rem:action_l}Measures $\theta\ll \la$ are transported by $\ell_r$ as follows: $\theta.\ell_r$ coincides with $\theta$ on $\op]0,1\clo[\setminus A_r$, and on each $A_{r,x}$ it has constant density and mass $\theta(A_{r,x})$, i.e.\ equals $(\alpha^+-\alpha^-)^{-1}\theta(A_{r,x})$. Equivalently, $F[\theta. \ell_r]$ is continuous, equal to $F[\theta]$ on $\op]0,1\clo[\setminus A_{r,x}$, and affine on each connected component $A_{r,x}$ of $A_r$.
\end{rem}

The product $\como^{s,r_1}.\como^{r_1,r_2}.\cdots.\como^{r_{m-1},r_m}.\como^{r_m,t}$ appearing in Theorem \ref{them:a} is more deeply analyzed in Theorem \ref{them:b}. Its kernel reads: 
\begin{align}
&(\trans k_s.k_{r_1}).(\trans k_{r_1}.k_{r_2}).\cdots.(\trans k_{r_{m-1}}.k_{r_m}).(\trans k_{r_m}.k_t)\notag\\
=\ & \trans k_s.(k_{r_1}.\trans k_{r_1}).\cdots.(k_{r_m}.\trans k_{r_m}).k_t\notag\\
=\ &\trans k_s. \ell_{r_1}.\cdots.\ell_{r_m}.k_t\label{eq:composee_l}
\end{align}
In Remark \ref{rem:comp_mini}, \eqref{eq:composee_l} is further commented and reexpressed for transports in place of kernels. For now, it leads to introduce the following kernel.

\begin{notation}\label{notation:Ts}
Take $R=\{r_1,\ldots,r_m\}\subset\R$. We denote the kernel $\ell_{r_1}.\ell_{r_2}.\linebreak[1]\cdots.\linebreak[1]\ell_{r_m}$ from $\op]0,1\clo[$ to itself by $\ell_R$, and $\join(\la;\ell_R)\in\ma(\la,\la)$ by $L_R$. If $R=\varnothing$, $\ell_\varnothing$ is the identity kernel and $L_\varnothing$ the identity transport.
\end{notation}

Notice that $\ell_{\{r\}}=\ell_r$ and that for any $R$, $\la.\ell_R=\la$. Moreover $\ell_R$ only depends on $(A_r)_{r\in R}$. The following lemma is particularly simple.

\begin{lem}\label{lem:mono1}\maz
Let $R\subset \R$ be a finite set and $\mu,\,\nu$ be in $\m(\la)$. 

\begin{enumerate}[label=\bgroup\bf(\alph*)\egroup]
\item[\point\label{item:mono1a}] If $\mu\leqs \nu$ then $\mu .\ell_R\leqs \nu. \ell_R$, i.e. $\join(\la; \ell_R)$ has increasing kernel.
\item[\point\label{item:mono1b}] If $\mu$ is in $\mdec(\la)$, then so is $\mu .\ell_R$, and $\mu\leqs \mu. \ell_R$.
\end{enumerate}
\end{lem}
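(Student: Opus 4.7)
The whole lemma reduces to the case $R=\{r\}$ by induction on $\#R$: writing $\ell_R=\ell_{r_1}.\ell_{\{r_2,\dots,r_m\}}$, part \ref{item:mono1a} is obtained by applying the single-point case to $(\mu,\nu)$ and then the inductive hypothesis to $(\mu.\ell_{r_1},\nu.\ell_{r_1})$; for part \ref{item:mono1b}, the single-point case gives $\mu.\ell_{r_1}\in\mdec(\la)$ and $\mu\leqs\mu.\ell_{r_1}$, then the inductive hypothesis applied to $\mu.\ell_{r_1}$ gives $\mu.\ell_R\in\mdec(\la)$ and $\mu.\ell_{r_1}\leqs \mu.\ell_R$; transitivity of $\leqs$ closes the argument.

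\textbf{The single-point case, \ref{item:mono1a}.} The idea is to read both statements off the explicit description of $F[\theta.\ell_r]$ recorded in Remark \ref{rem:action_l}: $F[\theta.\ell_r]$ is continuous, coincides with $F[\theta]$ on $\op]0,1\clo[\setminus A_r$, and is affine on each connected component $A_{r,x}=\op]\alpha^-,\alpha^+\clo[$ of $A_r$, the values at the endpoints being those of $F[\theta]$ (which is already continuous, since $\theta\ll \la$). Given $\mu\leqs\nu$, i.e.\ $F_\mu\geq F_\nu$, the same inequality holds outside $A_r$ for $\mu.\ell_r$ and $\nu.\ell_r$. On each $A_{r,x}$, $F[\mu.\ell_r]$ and $F[\nu.\ell_r]$ are the linear interpolations between the endpoint values $F_\mu(\alpha^\pm)\geq F_\nu(\alpha^\pm)$, so $F[\mu.\ell_r]\geq F[\nu.\ell_r]$ there as well. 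This yields $\mu.\ell_r\leqs \nu.\ell_r$, and the equivalent statement about $\join(\la;\ell_R)$ having increasing kernel follows from the definition (Proposition/Definition \ref{prodef:transitions}\ref{item:transitions_croissantes_application_croissante}).

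\textbf{The single-point case, \ref{item:mono1b}.} Fix a decreasing representative $f$ of the density of $\mu$. Outside $A_r$, the density of $\mu.\ell_r$ is still $f$. On each $A_{r,x}=\op]\alpha^-,\alpha^+\clo[$, $\mu.\ell_r$ has the constant density $\frac{1}{\alpha^+-\alpha^-}\int_{\alpha^-}^{\alpha^+}\!f$, which is sandwiched between $f(\alpha^+)$ and $f(\alpha^-)$. Since the intervals $A_{r,x}$ are pairwise disjoint, this replacement preserves the decreasing character of the density (the constant value on each $A_{r,x}$ is $\leq$ the values of $f$ just to the left and $\geq$ the values just to the right), and boundedness is preserved too. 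Hence $\mu.\ell_r\in\mdec(\la)$. The inequality $\mu\leqs \mu.\ell_r$ then amounts to $F_\mu\geq F[\mu.\ell_r]$: both functions agree off $A_r$, and on each $A_{r,x}$ the former is concave (as the integral of the decreasing $f$) while the latter is its chord, so $F_\mu\geq F[\mu.\ell_r]$ there.

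\textbf{Main obstacle.} There is essentially none: once Remark \ref{rem:action_l} is in hand, both parts are routine. The only mild care needed is (i) to choose pointwise decreasing representatives of the densities so that the one-sided limits $f(\alpha^\pm)$ make sense and control the constant values on each $A_{r,x}$, and (ii) to note that $\mu,\nu\in\m(\la)$ implies that $F_\mu$ and $F_\nu$ are continuous, so the endpoint values used in the linear interpolation are unambiguous.
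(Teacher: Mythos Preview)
Your proof is correct and follows essentially the same approach as the paper: both reduce to the single-point case and then use the explicit description of $F[\theta.\ell_r]$ from Remark \ref{rem:action_l}, with the concave-versus-chord argument for the inequality $\mu\leqs\mu.\ell_r$ in \ref{item:mono1b}. You are simply more explicit than the paper, which dispatches \ref{item:mono1a} and the first half of \ref{item:mono1b} with ``one easily checks that $\ell_r$ is increasing and stabilizes $\mdec$'' and leaves the induction over $\#R$ implicit.
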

\begin{proof} For $r\in \R$, one easily checks that $\ell_r$ is increasing and stabilizes $\mdec$; (a) and the first point of (b) follow. If $\mu\in \mdec(\la)$, to see that $\mu\leqs \mu. \ell_R$, look at the cumulative distribution functions. They coincide off the components of $A_r$. Now on each of those, $F_{\mu. \ell_R}$ is affine whereas $F_\mu$ is concave since $\mu$ has decreasing density, so necessarily  $F_\mu\geqslant F_{\mu. \ell_R}$.
\end{proof}

\begin{figure}
\begin{center}
\def\svgwidth{\columnwidth}
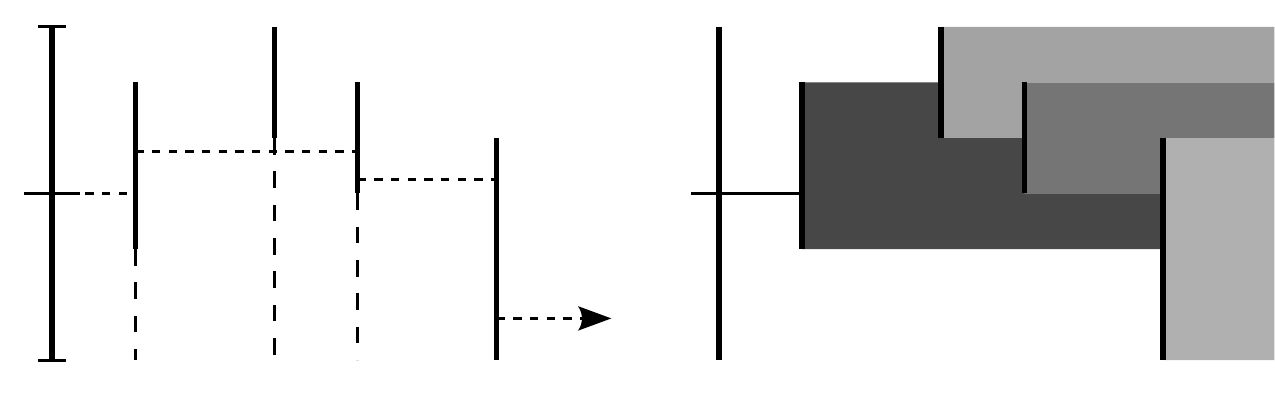
\caption{Composition of kernels $\ell_r$}\label{figure} 
\end{center}
\end{figure}

We add a remark, linked with Figures \ref{figure}--\ref{figure3}, on the principle of Theorem \ref{them:a}'s proof. A reader only looking for the formal proof itself may skip it.

\begin{rem}\label{rem:qualitative}We will not (directly) obtain the couplings $\mq^{s,t}$ as a limit of products $\como^{s,t}_{[R_n]}$, for some finite sets $R_n$ with dense union, as suggested in \S\ref{subsec:intro_quantilemarkovien} p.\ \pageref{page:tentative_limite} ---to show this does not work. We aim at obtaining $\mq^{s,t}$ as a {\em supremum}, of the set $\{\como^{s,t}_{[R]} : R\ \text{finite and } R\subset]s,t[\}$ (see Theorem \ref{them:a}\ref{item:limite_de_compose} for the notation), and actually we do it on the space of quantile levels, i.e.\ we look for a supremum of $\{\ell_R : R\ \text{finite and } R\subset]s,t[\}$. The question is to find the adequate quantity, or order relation, for which a supremum (and hopefully then a maximum) shall be sought.

First, Figure \ref{figure} makes us observe how kernels of the type $\ell_R$ act on measures of $\p(\op]0,1\clo[)$. It displays the action of $\ell_{R}$ with $R=\{r_1,r_2,r_3,r_4\}$ on some Dirac measure $\delta$. The vertical segment on the left is the space $\op]0,1\clo[$ of quantile levels. We suppose that each $\mu_{r_i}$ has a single atom and draw vertically, at abscissa $r_i$, the interval $A_{r_i}$ (see Notation \ref{nota:Asx}\ref{p:Asx}). The drawing is in the case where $\delta=\delta_{x}$ with $x\in A_{r_1}$. Then, see Remark \ref{rem:action_l}: $\ell_{\{r_1\}}$ maps $\delta$ on the uniform probability measure on $A_{r_1}$; in turn, $\ell_{\{r_2\}}$ leaves the latter unchanged outside of $A_{r_2}$ and makes it uniform on $A_{r_2}$, {\em etc.} The first drawing shows a ``possible trajectory of an element of mass at $x$'' transported by the discrete Markov chain with transition kernels $(\ell_{r_i})_{i=1}^4$. Since we take $x \in A_{r_1}$, it is displaced by $\ell_{r_1}$ to $x '$, picked uniformly at random in $A_{r_1}$. In case $x '\not\in A_{r_2}$, as in the figure, it is unchanged by $\ell_{r_2}$; then in case $x '\in A_{r_3}$ (figure), it is displaced by $\ell_{r_3}$ to a random $x ''\in A_{r_3}$, and finally, in case $x ''\in A_{r_4}$, displaced by $\ell_{r_4}$ to a random $x '''\in A_{r_4}$. The second drawing shows the successive measures $\delta_x $, $\delta_x .\ell_{r_1}$, $\delta_x .\ell_{r_1}.\ell_{r_2}$ {\em etc.}, the level of grey being proportional to the value of their density.

So each $\ell_{r_i}$ ``spreads'' a little more the mass of $\delta.\ell_{r_1}.\cdots.\ell_{r_{i-1}}$, replacing it by its mean (measure of constant density) on each connected component of $A_{r_i}$. If $\theta\ll\la$, this averaging process lowers the total variation of the density at each step: at most, you get the measure with constant density one, i.e.\ $\la$ itself, on which all the transports $\ell_{R}$ act trivially. Thus a natural idea is to consider that, if ${R'}\subsetneq R$ are finite and $\theta\ll\la$, the density of $\theta.\ell_{R}$ will be closer to $\one_{\op]0,1\clo[}$, for some adequate distance, than that of $\theta.\ell_{R'}$.

Unfortunately this is the case if $R'=R\cap\op]-\infty,t]$ for some $t$ but not in general. Figure 2 shows, from top to bottom, the Dirac measure $\delta_x$ and the graph of the densities of:\smallskip

-- $\delta_x .\ell_{r_1}$, $\delta_x .\ell_{r_1}.\ell_{r_2}$, $\delta_x .\ell_{r_1}.\ell_{r_2}.\ell_{r_3}$, and $\delta_x .\ell_{r_1}.\ell_{r_2}.\ell_{r_3}.\ell_{r_4}$ (on the left),\smallskip

-- $\delta_x .\ell_{r_1}$, $\delta_x .\ell_{r_1}.\ell_{r_2}$, and $\delta_x .\ell_{r_1}.\ell_{r_2}.\ell_{r_4}$ (on the right),\smallskip

\noindent in the case $x=\frac12$, $A_{r_1}=\op]\frac13,\frac56\clo[$, $A_{r_2}=\op]\frac23,1\clo[$, $A_{r_3}=\op]\frac12,\frac56\clo[$, $A_{r_4}=\op]0,\frac23\clo[$. Then $\delta.\ell_{\{r_1,r_2,r_4\}}=\la$, i.e.\ has exactly density $\one_{\op]0,1\clo[}$, whereas $\delta.\ell_{R}\neq\la$.

A remedy is to look the kernels $\ell_R$ act {\em on measures of density $\frac1x\one_{\op]0,x\clo[}$}. For the latter, and more generally any element of 
$\pdec(\op]0,1\clo[)$, which is stable by the action of the couplings $\ell_R$, the idea above works, with the stochastic order. This is Lemmas \ref{lem:mono2} and \ref{lem:existenceT} below; see also Remark \ref{rem:difference}. Figure \ref{figure3} gives the example of the kernels $\ell_{r_i}$ of Figure \ref{figure2} acting on the measure $\nu$ of density $\frac1x\one_{\op]0,x\clo[}$ with $x=\frac12$: one gets $\nu.\ell_{r_1}.\ell_{r_2}.\ell_{r_3}.\ell_{r_4}\geqs\nu.\ell_{r_1}.\ell_{r_2}.\ell_{r_4}$.
\end{rem}

\begin{figure}
\begin{center}
\def\svgwidth{\columnwidth}
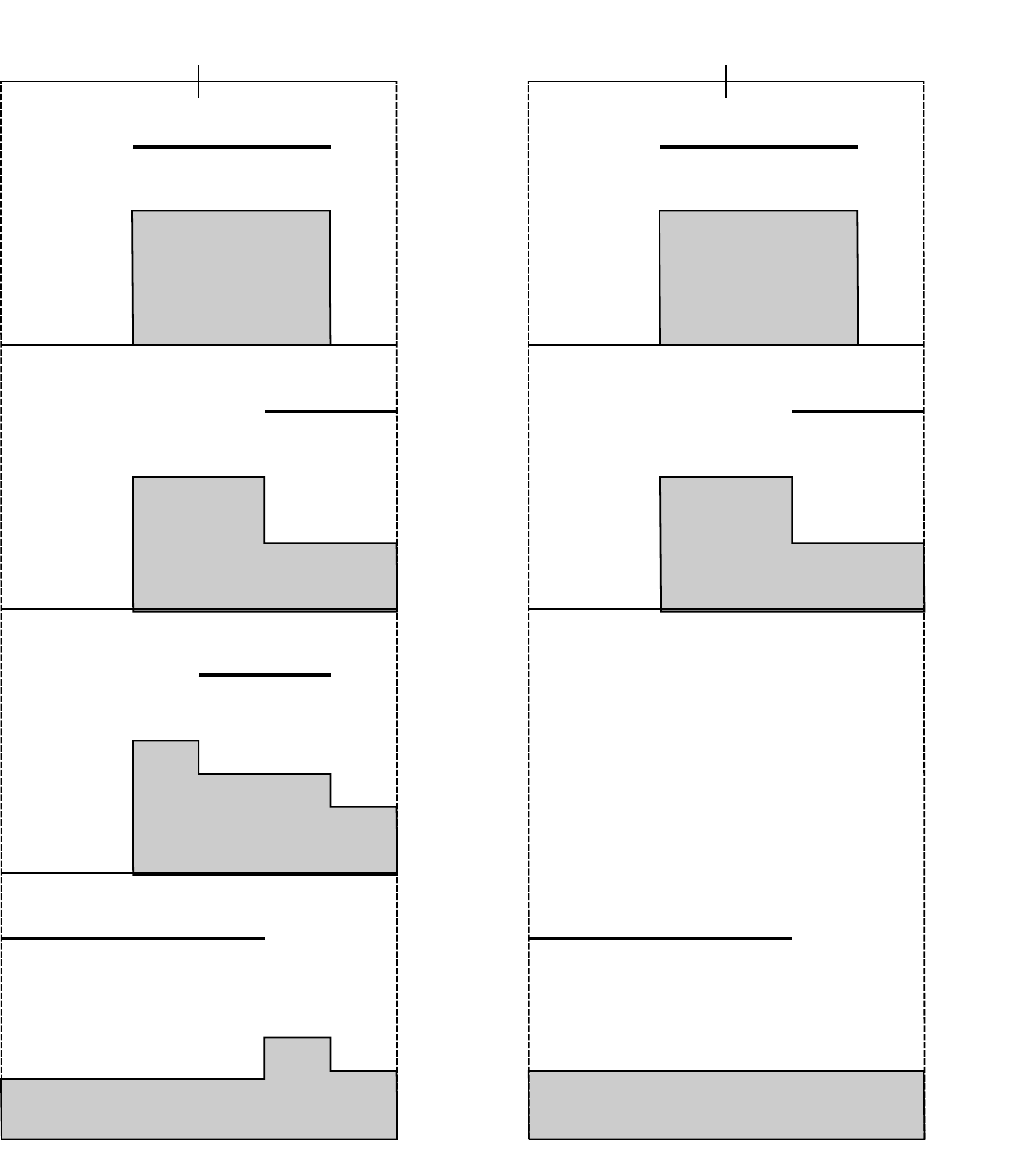
\caption{Composition of kernels $\ell_r$, acting on some Dirac Measure $\delta_x$. On each column, the horizontal interval is the space $\op]0,1\clo[$ of quantile levels. From top to bottom, the horizontal bars represent successive atomic intervals $A_{r_i}$ and, below each one, the density of $\delta_x$ transported by the composition of the successive corresponding kernels $\ell_{r_1}$, \ldots, $\ell_{r_i}$. On the right, $A_{r_3}$ is omitted.}\label{figure2} 
\end{center}
\end{figure}

\begin{figure}
\begin{center}
\def\svgwidth{\columnwidth}
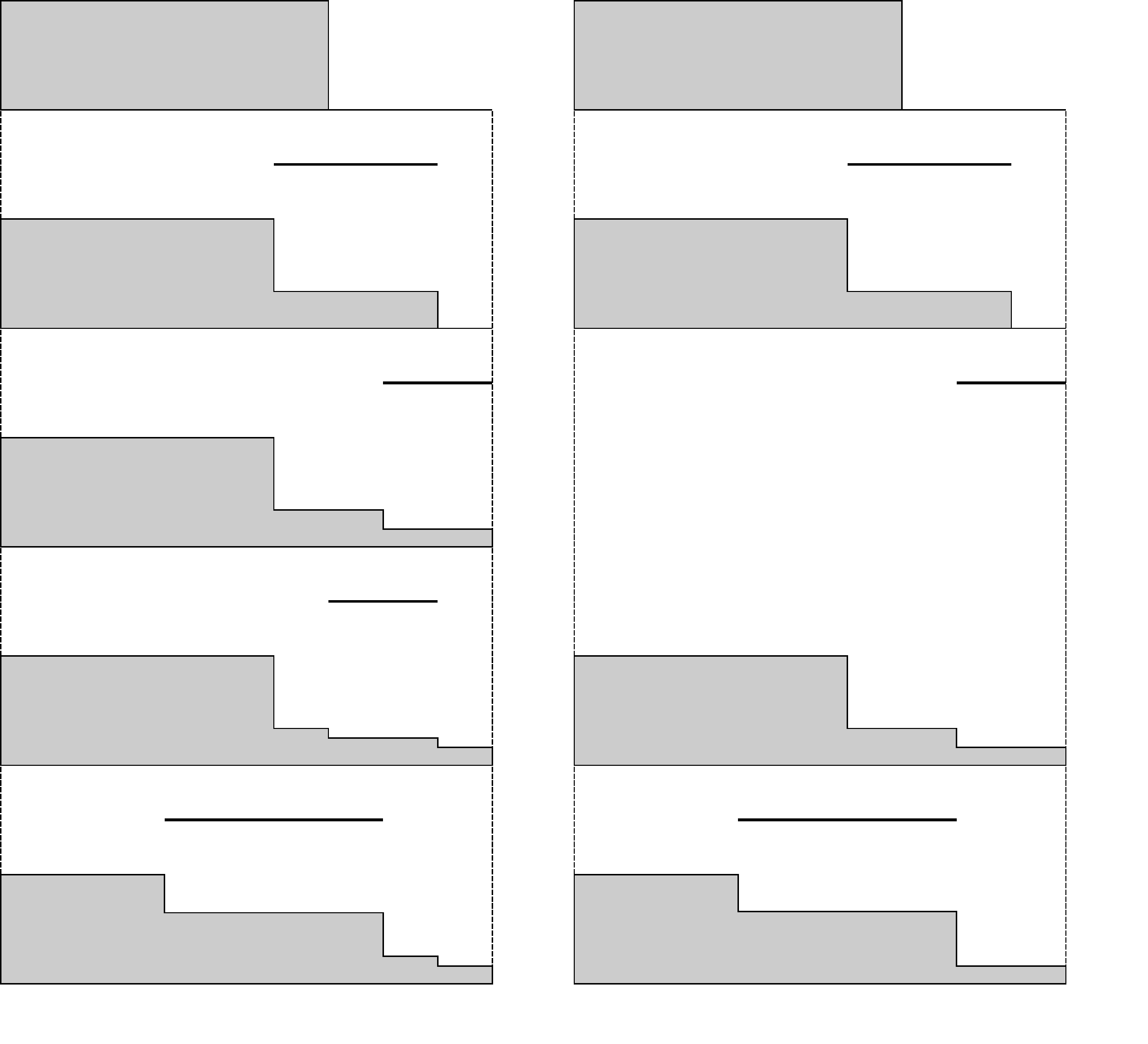
\caption{The kernels $\ell_r$ of Figure \ref{figure2}, acting on the measure of density $\frac1x\one_{\op]0,x\clo[}$. The global setup of this figure is the same as that of Figure \ref{figure2}.}\label{figure3} 
\end{center}
\end{figure}

Though simple, the next lemma is a key of our construction of $\mq$.
\begin{lem}\label{lem:mono2}
Let $R\subset R'$ be two finite subsets of\/ $\R$ and $\mu\in\mdec(\la)$. Then $\mu .\ell_R\leqs \mu .\ell_{R'}$.
\end{lem}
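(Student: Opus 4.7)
The plan is to reduce to the case $R'=R\cup\{r\}$ with $r\notin R$ by induction on $\#(R'\setminus R)$, and then to exploit the two halves of Lemma~\ref{lem:mono1} in succession, using that $\mdec(\la)$ is stable under the action of the $\ell_r$'s.

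\medskip

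First I would note that it suffices, by a straightforward finite induction, to handle the case in which $R'=R\cup\{r\}$ for a single $r\in\R\setminus R$. Write $R=\{r_1,\dots,r_m\}$ (with $r_1<\dots<r_m$ by Convention~\ref{conv:croissant}) and let $i\in\{0,\dots,m\}$ be the unique index with $r_i<r<r_{i+1}$ (where $r_0=-\infty$, $r_{m+1}=+\infty$). Set
$$S\eqdef\{r_1,\dots,r_i\}\AND S'\eqdef\{r_{i+1},\dots,r_m\},$$
so that $\ell_R=\ell_S.\ell_{S'}$ and $\ell_{R'}=\ell_S.\ell_r.\ell_{S'}$.

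\medskip

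Then I would set $\theta\eqdef \mu.\ell_S$. Since $\mu\in\mdec(\la)$, Lemma~\ref{lem:mono1}\ref{item:mono1b} applied to $S$ gives $\theta\in\mdec(\la)$. Applying Lemma~\ref{lem:mono1}\ref{item:mono1b} a second time, now with $R=\{r\}$ and to the measure $\theta\in\mdec(\la)$, yields
$$\theta\leqs \theta.\ell_r.$$
Finally, applying Lemma~\ref{lem:mono1}\ref{item:mono1a} (monotonicity for $\leqs$) with $R=S'$, the stochastic inequality above is preserved under the action of $\ell_{S'}$:
$$\mu.\ell_R=\theta.\ell_{S'}\leqs \theta.\ell_r.\ell_{S'}=\mu.\ell_{R'}.$$
This closes the induction step and hence the proof.

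\medskip

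\textbf{Main obstacle.} The only subtle point is that $\ell_S$ and $\ell_{S'}$ do not generally commute with $\ell_r$, so one cannot just ``pull $\ell_r$ out''; what saves the day is the combined use of \ref{item:mono1a} and \ref{item:mono1b} of Lemma~\ref{lem:mono1}, together with the fact that $\mdec(\la)$ is stable under every $\ell_r$, which lets us legitimately apply \ref{item:mono1b} to the intermediate measure $\theta=\mu.\ell_S$ rather than only to $\mu$.
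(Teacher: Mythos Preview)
Your proof is correct and follows essentially the same approach as the paper's: reduce by induction to the case $R'=R\cup\{r'\}$, split $R$ at the position of $r'$, use Lemma~\ref{lem:mono1}\ref{item:mono1b} to get $\mu_k\eqdef\mu.\ell_S\in\mdec(\la)$ and $\mu_k\leqs\mu_k.\ell_{r'}$, then apply Lemma~\ref{lem:mono1}\ref{item:mono1a} with the tail $\ell_{S'}$. The paper's argument is identical in structure and detail.
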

\begin{proof}
Using an induction on the cardinal difference, it is enough to prove this if $R'$ has one more element than $R$, say $r'$. We order it with the elements $r_i$ of $R$: $r_1<\cdots<r_k<r'<r_{k+1}<\cdots<r_m$. By Lemma \ref{lem:mono1}(b), if $\mu$ is in $\mdec(\la)$, so is $\mu_k\eqdef\mu. \ell_{r_1}.\cdots. \ell_{r_k}$ and $\mu_k\leqs \mu_k .\ell_{r'}$. We apply $\ell_{r_{k+1}}.\cdots. \ell_{r_m}$ to each term of this inequality. Lemma \ref{lem:mono1}(a) concludes.
\end{proof}

\noindent The choice of the suitable set $\mdec$ in our monotonicity Lemma \ref{lem:mono2} (see also Remark \ref{rem:qualitative}) enables us to extend the definition of $L_R$ to infinite sets $R$. Recall that $\leqlc$ is interpreted in terms of $\leqs$ in Remark \ref{rem:q_is_mini}.

\begin{lemnotation}\label{lem:existenceT}
\maz
\point\label{p1:lem:existenceT} For all finite subsets $R$, $R^{\prime}$ of\/ $\R$, $L_R\leqlc \la\otimes\la$, and $R'\subset R$ $\Rightarrow$ $L_{R'}\leqlc L_{R}$.\smallskip

\point\label{p2:def_L_R} For any $R\subset\R$, $\lcsup\{L_{R'}:\,R'\subset R$ and $R'$ finite$\}$ exists. We denote it by $L_R$ (which is consistent with Notation \ref{notation:Ts} when $R$ is finite). \smallskip

\point\label{p3:def_L_R} For any $R\subset\R$, there is a nested sequence $(R_n)_n$ of finite subsets of $R$ such that $(L_{R_n})_n$ converges weakly to $L_R$; if $(R_n)_n$ has this property, so has all sequence $(R'_n)_n$ such that $R'_n\supset R_n$. 
\end{lemnotation}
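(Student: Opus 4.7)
The overall strategy is to reduce questions about the lower orthant order on $\ma(\la,\la)$ to questions about the stochastic order on $\p([0,1])$ via the test measures $\theta_x \eqdef \la\lfloor_{\op]0,x]}\in\mdec(\la)$, $x\in[0,1]$, and then to invoke the monotonicity of the action of $\ell_R$ on $\mdec(\la)$ provided by Lemmas \ref{lem:mono1} and \ref{lem:mono2}.

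For point \ref{p1:lem:existenceT}, I will first observe that for any $T\in\ma(\la,\la)$ with kernel $k$, Fubini gives $F[T](x,y)=(\theta_x.k)(\op]0,y])=F[\theta_x.T](y)$. Hence, by Reminder \ref{remind:sto}, $L_{R'}\leqlc L_R$ is equivalent to $\theta_x.\ell_{R'}\leqs \theta_x.\ell_R$ for every $x\in[0,1]$. Since each $\theta_x$ belongs to $\mdec(\la)$, Lemma \ref{lem:mono2} yields $\theta_x.\ell_{R'}\leqs\theta_x.\ell_R$ as soon as $R'\subset R$ are finite, giving $L_{R'}\leqlc L_R$. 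For the bound $L_R\leqlc \la\otimes\la$, by Lemma \ref{lem:mono1}\ref{item:mono1b}, $\theta_x.\ell_R\in\mdec(\la)$ and has mass $x$; I will then check directly that any $\eta\in\mdec(\la)$ of mass $x$ satisfies $\eta\leqs x\la$, by noting that its density $f$ is decreasing with $\int_0^1 f=x$, so $g\eqdef f-x$ is decreasing with $\int_0^1 g=0$, hence $\int_0^y g\geqslant 0$ for all $y\in[0,1]$. Since $\theta_x.(\la\otimes\la)=x\la$, this gives the conclusion.

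For point \ref{p2:def_L_R}, the family $\mathcal{F}_R\eqdef\{L_{R'}:R'\subset R\ \text{finite}\}\subset\ma(\la,\la)$ is bounded above by $\la\otimes\la$ (by \ref{p1:lem:existenceT}) and directed: for any $R'_1,R'_2\subset R$ finite, $R'_1\cup R'_2\subset R$ is finite and $L_{R'_1\cup R'_2}\geqlc L_{R'_i}$ by \ref{p1:lem:existenceT}. Therefore Lemma \ref{lem:sto_order}\ref{item:treillis} applies and $\lcsup \mathcal{F}_R$ exists; moreover there is an increasing sequence $(L_{\sigma_n})_n$ in $\mathcal{F}_R$ converging weakly to it.

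For point \ref{p3:def_L_R}, I will turn the increasing sequence $(\sigma_n)_n$ provided above into a nested one by a standard squeezing argument. Set $R_n\eqdef\sigma_1\cup\cdots\cup\sigma_n$, which is nested and finite with $\sigma_n\subset R_n\subset R$. By \ref{p1:lem:existenceT} and \ref{p2:def_L_R},
\[F[L_R]\leqslant F[L_{R_n}]\leqslant F[L_{\sigma_n}].\]
Since $L_{\sigma_n}\to L_R$ weakly, Proposition \ref{pro:rho} yields $F[L_{\sigma_n}]\to F[L_R]$ pointwise (and uniformly); the squeeze gives $F[L_{R_n}]\to F[L_R]$ pointwise, so, by Proposition \ref{pro:rho} again, $L_{R_n}\to L_R$ weakly. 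The same squeeze, applied to any finite $R_n\subset R'_n\subset R$, shows that if $(R_n)_n$ works then so does $(R'_n)_n$: indeed $F[L_R]\leqslant F[L_{R'_n}]\leqslant F[L_{R_n}]\to F[L_R]$.

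The main subtlety is the definitional step in \ref{p1:lem:existenceT}, namely recognizing that lower orthant comparison on $\ma(\la,\la)$ is captured by the action of the couplings on the single family $(\theta_x)_{x\in[0,1]}\subset\mdec(\la)$; once this is in place, Lemmas \ref{lem:mono1} and \ref{lem:mono2} deliver all the monotonicity needed, and points \ref{p2:def_L_R} and \ref{p3:def_L_R} follow from Lemma \ref{lem:sto_order} and the squeeze principle for pointwise convergence of cumulative distribution functions.
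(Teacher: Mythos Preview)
Your proof is correct and follows essentially the same route as the paper: the key reduction $F[L_R](x,y)=F[\theta_x.\ell_R](y)$ together with Lemmas \ref{lem:mono1}--\ref{lem:mono2} for \ref{p1:lem:existenceT}, then Lemma \ref{lem:sto_order}\ref{item:treillis} for \ref{p2:def_L_R}--\ref{p3:def_L_R}. Your treatment of \ref{p3:def_L_R} is slightly more explicit than the paper's (you spell out the passage from the $\leqlc$-increasing sequence $(L_{\sigma_n})_n$ to the nested $(R_n)_n$ via $R_n=\sigma_1\cup\cdots\cup\sigma_n$ and the squeeze $F[L_R]\leqslant F[L_{R_n}]\leqslant F[L_{\sigma_n}]$), but this is exactly what the paper's terse ``follows from (a) and the interpretation of both $\rho$ and $\leqs$ with cumulative distribution functions'' is pointing at.
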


\begin{proof} Let us prove (a). For all $(x,y)\in[0,1]^2$, $F[L_R](x,y)=(\la\lfloor_{[0,x]}).L_R([0,y])$. After Lemma \ref{lem:mono1}\ref{item:mono1b}, $(\la\lfloor_{[0,x]}).L_R\in\mdec$ so $f:y\mapsto\frac1y((\la\lfloor_{[0,x]}).L_R)([0,y])$ is decreasing, thus $f(y)\geq f(1)=x$. Hence: $\forall y\in[0,1], ((\la\lfloor_{[0,x]}).L_R)([0,y])\geqslant xy=F[\la\otimes\la](x,y)$, i.e.\  $L_R\leqlc \la\otimes\la$. Now if $R'\subset R$ for every $x$ we can apply Lemma \ref{lem:mono2} to $\la\lfloor_{[0,x]}\in\mdec$. This gives $F[L_{R}](x,y)=(\la\lfloor_{[0,x]}).L_{R}([0,y])\geqslant(\la\lfloor_{[0,x]}).L_{R'}([0,y])=F[L_{R'}](x,y)$, which is the expected relation for $\leqlc$. Then, (b) and (c) follow from criterion \ref{item:treillis} of Lemma \ref{lem:sto_order}. Indeed, by (a), ${\cal S}=\{L_{R'}:\,R'\subset R$ and $R'$ finite$\}$ is bounded from above by $\la\otimes\la$, and if $\{L_{R'_1},L_{R'_2}\}\subset{\cal S}$, then $L_{R'_1\cup R'_2}\in{\cal S}$ and $L_{R'_1\cup R'_2}\geqlc L_{R'_i}$ for $i=1,2$. Finally, the assertion about $(R'_n)_n$ follows from (a) and the interpretation of both $\rho$ and $\leqs$ with cumulative distribution functions.\end{proof}

\begin{notation}\label{nota:kernel_tS}For all $R\subset \R$, we denote by $\ell_R$ the kernel associated with $L_R$. This is again consistent with Notation \ref{nota:Asx}\ref{p:l_R_fini} when $R$ is finite.
\end{notation}

\begin{rem}For every $\alpha\in \op]0,1\clo[$, due to the definition of $\leqlc$,  $\la\lfloor_{[0,\alpha]}.\ell_R=\stosup_{T\subset R,\ T\text{ finite}} \la\lfloor_{[0,\alpha]}.\ell_T$. Indeed, Lemma \ref{lem:existenceT}(c) actually proves that there exists a nested sequence $(R_n)_n$ of finite subsets of $\R$ such that $F[L_{R_n}]$ pointwise converges to $F[L_R]$.  Therefore $F[\la\lfloor_{[0,\alpha]}.\ell_{R_n}](\cdot)=F[R_n](\alpha,\cdot)$ pointwise converges to $F[\la\lfloor_{[0,\alpha]}.\ell_{R}]$. Moreover $F[\la\lfloor_{[0,\alpha]}.\ell_{T}]\geq F[\la\lfloor_{[0,\alpha]}.\ell_{R}]$ for every finite $T\subset R$. These two facts give the remark.
\end{rem} 

\begin{rem}[complement to Remark \ref{rem:qualitative}]\label{rem:difference} Lemma \ref{lem:existenceT} means that, for $R\subset \R$ and all interval $\op]a,b\clo[\subset\op]0,1\clo[$, $\bigl(\la\lfloor_{\op]a,b\clo[}\bigr).L_{R}$ is not obtained as a supremum, but as the difference of two: $\bigl(\la\lfloor_{[0,\alpha]}\bigr).L_{R}=\stosup\bigl\{\bigl(\la\lfloor_{[0,\alpha]}\bigr).L_{R'}:R'\subset R\ \text{and $R'$ finite}\bigr\} - \stosup\bigl\{\bigl(\la\lfloor_{[0,\alpha]}\bigr).L_{R'}:R'\subset R\ \text{and $R'$ finite}\bigr\}$.
\end{rem}

The following result is crucial to define processes on $(\op]0,1\clo[,\la)$ with Corollary \ref{cor:consist},     as is done in particular  in Definition \ref{defi:lev} for $\lev_R$ and $\lev$.
\begin{pro}\label{pro:compo}If $R$ and $R'$ are subsets of $\R$ such that $r\leqslant r'$ for all $(r,r')\in R\times R'$, then
: $L_{R\cup R'}=L_{R}.L_{R'}$.
In particular, for $s<t<u$, $L_{[s,u]}=L_{[s,t]}.L_{[t,u]}$.
\end{pro}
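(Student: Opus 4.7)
I would first handle the finite case by a direct manipulation of the kernels $\ell_r$, and then pass to the general case by approximation, combining Lemma \ref{lem:existenceT}\ref{p3:def_L_R} with the continuity statement Lemma \ref{lem:closed}.

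\emph{Finite case.} Write $R=\{r_1,\ldots,r_m\}$ and $R'=\{r_1',\ldots,r_n'\}$ with Convention \ref{conv:croissant}. The hypothesis forces $r_m\leq r_1'$, so $\#(R\cap R')\in\{0,1\}$. If $R\cap R'=\varnothing$, concatenating the two enumerations produces the ordered enumeration of $R\cup R'$, whence $\ell_{R\cup R'}=\ell_R.\ell_{R'}$ by Notation \ref{notation:Ts}. If $R\cap R'=\{r_m\}=\{r_1'\}$, the product $\ell_R.\ell_{R'}$ contains $\ell_{r_m}.\ell_{r_m}$; by Remark \ref{rem:finesse}\ref{p3:rem:finesse} one has $\trans k_{r_m}.k_{r_m}=\idk$, whence $\ell_{r_m}^2=k_{r_m}.(\trans k_{r_m}.k_{r_m}).\trans k_{r_m}=k_{r_m}.\trans k_{r_m}=\ell_{r_m}$, so the repetition collapses and again $\ell_R.\ell_{R'}=\ell_{R\cup R'}$. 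Since $\la.\ell_R=\la$, applying $\join(\la,\cdot)$ gives $L_R.L_{R'}=\join(\la,\ell_R.\ell_{R'})=L_{R\cup R'}$.

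\emph{General case.} Using Lemma \ref{lem:existenceT}\ref{p3:def_L_R} three times, fix nested sequences of finite sets $(A_n)_n\subset R$, $(B_n)_n\subset R'$ and $(S_n)_n\subset R\cup R'$ with $L_{A_n}\to L_R$, $L_{B_n}\to L_{R'}$ and $L_{S_n}\to L_{R\cup R'}$. Set $T_n\eqdef A_n\cup(S_n\cap R)\subset R$ and $T_n'\eqdef B_n\cup(S_n\cap R')\subset R'$; the two sequences are nested, as is $(T_n\cup T_n')\subset R\cup R'$. Because $T_n\supset A_n$, $T_n'\supset B_n$ and $T_n\cup T_n'\supset S_n$, the enlargement-stability part of Lemma \ref{lem:existenceT}\ref{p3:def_L_R} gives $L_{T_n}\to L_R$, $L_{T_n'}\to L_{R'}$, and $L_{T_n\cup T_n'}\to L_{R\cup R'}$ simultaneously. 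The order condition is inherited by $T_n\times T_n'$, so by the finite case $L_{T_n\cup T_n'}=L_{T_n}.L_{T_n'}$ for every $n$. By Lemma \ref{lem:mono1}\ref{item:mono1a} each $L_{T_n'}$ has increasing kernel, so Lemma \ref{lem:closed} applied to the catenation yields $L_{T_n}\circ L_{T_n'}\to L_R\circ L_{R'}$; pushing forward by the continuous projection $\pr^{1,3}$ gives $L_{T_n}.L_{T_n'}\to L_R.L_{R'}$. Uniqueness of the weak limit yields $L_{R\cup R'}=L_R.L_{R'}$. The particular case $R=[s,t]$, $R'=[t,u]$ follows since $[s,t]\cup[t,u]=[s,u]$.

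\emph{Main obstacle.} The sensitive point is the coordination of the three approximating sequences. A naive choice of independent sequences $(A_n)\subset R$ and $(B_n)\subset R'$ would only yield $L_{A_n}.L_{B_n}=L_{A_n\cup B_n}\to L_R.L_{R'}$ via Lemma \ref{lem:closed}, but there is no intrinsic reason for the nested sequence $(A_n\cup B_n)$ of finite subsets of $R\cup R'$ to be large enough to converge to $L_{R\cup R'}$. The trick is to enlarge $A_n$ by $S_n\cap R$ and $B_n$ by $S_n\cap R'$: the enlargement clause of Lemma \ref{lem:existenceT}\ref{p3:def_L_R} preserves convergence on each of the three sides at once, and this is precisely what aligns the limits obtained from the left-hand side and the right-hand side of the identity.
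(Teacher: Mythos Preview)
Your proof is correct and follows the same overall strategy as the paper: settle the finite case via the idempotence of $\ell_r$, then pass to the limit through nested finite approximations. Two differences are worth noting.

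First, you make the coordination of the three approximating sequences fully explicit via the enlargement $T_n=A_n\cup(S_n\cap R)$, $T_n'=B_n\cup(S_n\cap R')$, whereas the paper simply asserts (appealing to Lemma~\ref{lem:existenceT}(b)--(c)) that one can pick $(R_n)\subset R$ and $(R'_n)\subset R'$ with $L_{R_n}\to L_R$, $L_{R'_n}\to L_{R'}$ and $L_{R_n\cup R'_n}\to L_{R\cup R'}$ simultaneously. Your version is the honest unpacking of that assertion.

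Second, and more substantively, the paper does \emph{not} invoke Lemma~\ref{lem:closed} to pass to the limit. Instead it uses the $\rho$-contraction properties of composition directly (Proposition~\ref{pro:multiplicationcontractante} and Remark~\ref{rem:multiplicationcontractante}):
\[
\rho(L_R.L_{R'},L_{R_n}.L_{R'_n})\leq \rho(L_{R'},L_{R'_n})+\rho(L_R,L_{R_n}),
\]
which with $\rho(L_{R_n\cup R'_n},L_{R\cup R'})\to 0$ gives the result. Your route via Lemma~\ref{lem:closed} (continuity of $\circ$) followed by projection $\pr^{1,3}$ is perfectly valid ---Lemma~\ref{lem:closed} is already proved by this point--- but it is a somewhat heavier tool: its own proof (Proposition~\ref{pro:continuity}) ultimately rests on the same $\rho$-machinery. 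The paper's direct metric estimate is more self-contained here, while your approach has the virtue of making the argument look like a special case of the general continuity principle for increasing kernels.
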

\begin{proof} Using Lemma \ref{lem:existenceT} (b) and (c) we find sequences $(R_n)_n$ and $(R'_n)_n$ of finite subsets of $R$ and $R'$ respectively, such that $L_{R_n}$ converges weakly to $L_R$, $L_{R'_n}$ to $L_{R'}$, and $L_{R_n\cup R'_n}$ to $L_{R\cup R'}$. Besides, since $r\leqslant r'$ for all $(r,r')\in R\times R'$, since the $R_n$ are finite and since $L_{\{r\}}$ is idempotent (so that in case $R\cap R'\neq\varnothing$ and $R_n\cap R'_n=R\cap R'=\{r\}$, a repetition of $L_{\{r\}}$ does not matter), $L_{R_n\cup R'_n}=L_{R_n}.L_{R'_n}$. Then, using Proposition \ref{pro:rho} and the distance $\rho$ introduced in it:
\begin{align*}
\rho(L_R.L_{R'},L_{R\cup R'})&\leqslant\rho(L_R.L_{R'},L_R.L_{R'_n})+\rho(L_R.L_{R'_n},L_{R_n}.L_{R'_n})\\
&\qquad\qquad\qquad\qquad\qquad\qquad\qquad+\rho(L_{R_n\cup R'_n},L_{R\cup R'})\\
&\leqslant\rho(L_{R'},L_{R'_n})+\rho(L_R,L_{R_n})+\rho(L_{R_n\cup R'_n},L_{R\cup R'})
\end{align*}
by Proposition \ref{pro:multiplicationcontractante} and Remark \ref{rem:multiplicationcontractante}, since $\trans{L_{R'_n}}$ also preserves $\mdec(\la)$. All terms tend to zero when $n$ tends to infinity. The wanted equality follows.
\end{proof}

\subsection{The Markov-quantile process $\mq$; proof of Theorem \ref{them:a}}\label{subsec:preuve_premier}

\begin{reminder} For $(s,t)\in\R^2$ with $t>s$, and $R=\{r_1,\ldots,r_m\}\subset[s,t]$, $\como^{s,t}_{[R]}$ denotes the coupling $\como(\mu_{s},\mu_{r_1}).\linebreak[1]\como(\mu_{r_2},\mu_{r_3}).\linebreak[1]\cdots.\linebreak[1]\como(\mu_{r_{m}},\mu_{t})\in\ma(\mu_s,\linebreak[1]\mu_t)$, see Theorem \ref{them:a}\ref{item:limite_de_compose} p.\ \pageref{them:a}.
\end{reminder}

\maz
\begin{prodefinition}\label{pro:comp_conv} {\bf\point\label{aaa}} The set $\{\como^{s,t}_{[R]}\,|\,R\subset \R\ \text{and}\ \sharp R<\infty\}$ has a lower orthant supremum and there is a nested sequence $(R_n)_{n\in \N}$ of finite sets such that $\como^{s,t}_{[R_n]}$ tends to it. We denote it by $\mu_{s,t}$.\smallskip

{\bf\point\label{bbb}} The family $(\mu_{s,t})_{s<t}$ is consistent in the sense of Definition \ref{defi:consist}, giving rise to a Markov measure $\mq\in\ma((\mu_t)_{t\in\R})$, that we call the \emph{Markov-quantile measure} attached to $(\mu_t)_{t\in\R}$.\smallskip

{\bf\point\label{ccc}} For all $s$ and $t>s$,  $\mq^{s,t}=(G_s\otimes G_t)_\#L_{]s,t[}=(G_s\otimes G_t)_\#L_{[s,t]}$.
\end{prodefinition}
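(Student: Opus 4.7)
My plan is to reduce everything to a single identification: for every finite $R\subset\op]s,t\clo[$,
$$\como^{s,t}_{[R]} = (G_s\otimes G_t)_\# L_R.$$
This follows directly from \eqref{eq:composee_l}, which asserts that the kernel of $\como^{s,t}_{[R]}$ is $\trans k_s.\ell_R.k_t$, combined with Remark~\ref{rem:finesse}\ref{p2:rem:finesse}, which says that for $T=L_R=\join(\la,\ell_R)$ one has $\trans q_s.T.q_t=(G_s\otimes G_t)_\# T$. Including or excluding the endpoints $s$, $t$ in $R$ changes nothing, because by Remark~\ref{rem:finesse}\ref{p3:rem:finesse} one has $\trans k_s.\ell_s=\trans k_s$ and $\ell_t.k_t=k_t$; this is also what will yield the second equality in~\ref{ccc}. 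Hence a general finite $R\subset\R$ may be replaced by $R\cap\op]s,t\clo[$ in the set of \ref{aaa}.

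For \ref{aaa}, I then combine this with Lemma~\ref{lem:existenceT}: by (a), $\{L_R : R\subset\op]s,t\clo[,\ \#R<\infty\}$ is upward-directed and bounded above by $\la\otimes\la$; by (c), a nested sequence $(R_n)_n$ of finite subsets of $\op]s,t\clo[$ satisfies $L_{R_n}\to L_{\op]s,t\clo[}$ weakly. Applying $(G_s\otimes G_t)_\#$, which is continuous for the weak topology (Remark~\ref{rem:diese_contractant}) and increasing for $\leqlc$ (Remark~\ref{rem:leqlc_et_Gs}), I obtain weak convergence $\como^{s,t}_{[R_n]}\to(G_s\otimes G_t)_\# L_{\op]s,t\clo[}$. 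The latter measure dominates every $\como^{s,t}_{[R]}$ in the sense of $\leqlc$: for any finite $R$, Lemma~\ref{lem:existenceT}\ref{p1:lem:existenceT} gives $L_R\leqlc L_{R\cup R_n}\leqlc L_{\op]s,t\clo[}$, which is preserved by $(G_s\otimes G_t)_\#$; and any upper bound must dominate the weak limit. Thus $\mu_{s,t}\eqdef(G_s\otimes G_t)_\# L_{\op]s,t\clo[}$ is the $\leqlc$-supremum, approximated by the nested $(\como^{s,t}_{[R_n]})_n$. As noted above, replacing $\op]s,t\clo[$ by $[s,t]$ gives the same measure, settling \ref{ccc}.

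For \ref{bbb}, it remains to verify the consistency $\mu_{s,u}=\mu_{s,t}.\mu_{t,u}$ for $s<t<u$; Corollary~\ref{cor:consist} will then supply the unique Markov measure $\mq\in\ma((\mu_t)_{t\in\R})$ with 2-marginals $\mu_{s,t}$ (its 1-marginals being automatically $\mu_r$ as first marginals of $\mu_{r,r'}$). Applying Proposition~\ref{pro:compo} twice (once with $R=\op]s,t\clo[$, $R'=\{t\}$, then with $R=\op]s,t]$, $R'=\op]t,u\clo[$, using $r\leq t<r'$ in the second step), I get $L_{\op]s,u\clo[}=L_{\op]s,t\clo[}.L_{\{t\}}.L_{\op]t,u\clo[}$, so at the kernel level
$$\trans k_s.\ell_{\op]s,u\clo[}.k_u=\trans k_s.\ell_{\op]s,t\clo[}.\ell_t.\ell_{\op]t,u\clo[}.k_u.$$
Writing $\ell_t=k_t.\trans k_t$ (by definition of $\ell_t$) and regrouping gives $\trans k_s.\ell_{\op]s,t\clo[}.k_t\,.\,\trans k_t.\ell_{\op]t,u\clo[}.k_u$, which is the kernel of $\mu_{s,t}.\mu_{t,u}$. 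Since both transport plans share the first marginal $\mu_s$, we deduce $\mu_{s,u}=\mu_{s,t}.\mu_{t,u}$.

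The main technical obstacle is managing the endpoints $s$ and $t$ in the compositions, where one must pass between kernels $k_r$, $\trans k_r$ and $\ell_r=k_r.\trans k_r$ and use the non-obvious idempotency identities of Remark~\ref{rem:finesse}\ref{p3:rem:finesse}. Once these are deployed, the identification $\como^{s,t}_{[R]}=(G_s\otimes G_t)_\# L_R$ transfers all the work on $\op]0,1\clo[$ developed in \S\ref{subsec:little_t} (monotonicity, existence of the supremum $L_R$, semigroup property of Proposition~\ref{pro:compo}) to the statement about $(\mu_{s,t})_{s<t}$, making the present proposition essentially a pushforward of those results by $G$.
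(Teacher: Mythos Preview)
Your proof is correct and follows essentially the same approach as the paper's: identify $\como^{s,t}_{[R]}=(G_s\otimes G_t)_\# L_R$ (which the paper isolates as Remark~\ref{rem:comp_mini}), then transfer the monotonicity and supremum results for $L_R$ via the pushforward, and finally derive consistency from Proposition~\ref{pro:compo} together with $q_t.\trans q_t=L_{\{t\}}$. The only cosmetic differences are that you work at the kernel level for~\ref{bbb} while the paper stays at the transport level, and your phrase ``any upper bound must dominate the weak limit'' is slightly terse---the paper spells out explicitly that one passes to the limit in the inequality $F[M]\leq F[\como^{s,t}_{[R_n]}]$ using pointwise convergence of the cumulative distribution functions.
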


To show Proposition \ref{pro:comp_conv} we first state the following crucial relation between $\como^{s,t}_{[R]}\in \ma(\mu_s,\mu_t)$ and $L_R\in \ma(\la,\la)$ definined in \S\ref{subsec:little_t}.

\begin{rem}\label{rem:comp_mini} Take any finite subset $R$ of $\R$ and $(s,t)$ with $s<t$, then:
$$\como^{s,t}_{[R]}=(G_s\otimes G_t)_\#L_{]s,t[\cap R}=(G_s\otimes G_t)_\#L_{[s,t]\cap R}.$$
Indeed, $\como^{s,t}=\trans q_s.q_t=\law(G_s,G_t)$, see Remark \ref{rem:finesse}\ref{p1:rem:finesse}. It also equals $(G_s\otimes G_t)_\#(\Id_2)$, where $\Id_2=\id_{\la,2}$ is the identity transport form $\la$ to itself, see Notation \ref{notation:transport_id}, since by Remark \ref{rem:finesse}\ref{p2:rem:finesse}, $\como(\mu_s,\mu_t)=\trans q_s . q_t=\trans q_s. \id_2 . q_t=(G_s\otimes G_t)_\#(\Id_2)$. More generally, for any $\{r_1,\ldots,r_m\}\subset\op]s,t\clo[$:
\begin{align*}
&\como(\mu_{s},\mu_{r_1}).\cdots.\como(\mu_{r_m},\mu_{t})\\
=&\trans q_{s}.q_{r_1}.\trans q_{r_1}.q_{r_2}.\cdots.\trans q_{r_{m-1}}.q_{r_{m}}.\trans q_{r_{m}}.q_{t}\\
=&(G_{s}\otimes G_{t})_\#(q_{r_1}.\trans q_{r_1}.q_{r_2}.\cdots.\trans q_{r_{m-1}}.q_{r_{m}}.\trans q_{r_{m}})\\
&\qquad\qquad\qquad\qquad\text{(notice that this writing involves neither $\trans q_{s}$ nor $q_{t}$)}\\
=&(G_{s}\otimes G_t)_\#L_{\{r_1,\ldots,r_m\}}.
\end{align*}

 Besides recall that $q_s.\trans q_s.q_s=q_s$ and $\trans q_t.q_t.\trans q_t=\trans q_t$, so that:
\begin{align*}
\como(\mu_{s},\mu_{r_1}).\cdots.\como(\mu_{r_m},\mu_{t})
&=(G_{s}\otimes G_{t})_\#(q_{s}.\trans q_{s}.q_{r_1}.\cdots.\trans q_{r_m}.q_{t}.\trans q_{t})\\
&=(G_{s}\otimes G_t)_\#L_{\{s,r_1,\ldots,r_m,t\}}.
\end{align*}
\end{rem}

\begin{proof}[Proof of Proposition \ref{pro:comp_conv}] We have only to gather our results. We set ${\cal S}\eqdef\{R\subset \op]s,t\clo[:\,R$ is finite$\}$. By Remark \ref{rem:leqlc_et_Gs} and Lemma \ref{lem:existenceT}\ref{p1:lem:existenceT}, for all $R\in{\cal S}$, $(G_s\otimes G_t)_\#L_R\leqlc (G_s\otimes G_t)_\#(\la\otimes \la)$, and $R'\subset R$ $\Rightarrow$ $(G_s\otimes G_t)_\#L_{R'}\leqlc (G_s\otimes G_t)_\#L_{R}$. Thus, by Lemma \ref{lem:sto_order}\ref{item:treillis}, $\lcsup_{R'\in{\cal S}}(G_s\otimes G_t)_\#L_{R'}$ exists and is the limit of some sequence $((G_s\otimes G_t)_\#L_{R_n})_n$. Hence the limit in \ref{p1:lem:existenceT}  is given by Remark \ref{rem:comp_mini}:
$$\como^{s,r_1}.\como^{r_1,r_2}.\cdots.\como^{r_{m(n)-1},r_{m(n)}}.\como^{r_{m(n)},t}=(G_s\otimes G_t)_\#L_{R_n}.$$

We prove now the first part of (c), i.e.\ $\mu_{s,t}=(G_s\otimes G_t)_\#L_{\op]s,t\clo[}$ and, at the same time, that the sequence $(R_n)_n$ in (a) can be chosen to be nested. Let $(R_n)_n$ be a nested sequence of $\mathcal{S}$ such that $L_{R_n}\rightarrow L_{]s,t[}$, i.e.\ $F[L_{R_n}]$ pointwise converges to $F[L_{]s,t[}]$. If $M\in \ma(\mu_s,\mu_t)$ satisfies $(G_s\otimes G_t)_\#L_{R'}\leqlc M$ for every $T\in {\cal S}$, this  also holds for $R'=R_n$ for all $n$. Now by Remark \ref{rem:diese_contractant}, $(G_s\otimes G_t)_\#L_{R_n}\to (G_s\otimes G_t)_\#L_{]s,t[}$. Therefore, going to the limits at the level of the cumulative distribution functions we get $(G_s\otimes G_t)_\#L_{]s,t[}\leqlc M$. Then Remark \ref{rem:leqlc_et_Gs} gives $(G_s\otimes G_t)_\#L_{R'}\leqlc(G_s\otimes G_t)_\#L_{]s,t[}$, hence:
$$(G_s\otimes G_t)_\#L_{]s,t[}=\lcsup_{R'\subset\R\ \text{and $R'$ finite}}\bigl\{\como^{s,t}_{[R']}=(G_s\otimes G_t)_\#L_{R'}\bigr\}.$$

Corollary \ref{cor:consist} gives (b). Indeed, Proposition \ref{pro:compo} on the composition of transports $L_R$ gives the consistency of  $(\mu_{s,t})_{s,t}$ (see Definition \ref{defi:consist}): $(G_s\otimes G_t)_\#L_{\op]s,t\clo[}=\trans{q}_s .L_{\op]s,t\clo[}.q_t$ and $(G_t\otimes G_u)_\#L_{]t,u[}=\trans{q}_t L_{]t,u[}q_u$. Since $q_t.\trans{q}_t=L_{\{t\}}$, $\mu_{s,u}=\mu_{s,t}.\mu_{t,u}$.

For the second equality of (c), proceed as at the end of Remark \ref{rem:comp_mini}.
\end{proof}

We now prove Theorem \ref{them:a}.

\begin{proof}[Proof of Theorem \ref{them:a}] {\bf (a)} Recall that $\mq$ is Markov and defined in Definition \ref{pro:comp_conv}. By construction, $\mq\in\ma((\mu_t)_{t\in\R})$ and satisfies \ref{item:limite_de_compose}. Then $\mq$ satisfies \ref{item:transitions}, i.e.\ has increasing kernel as quantile couplings have, see Remark \ref{rem:quantile_transitions_croissantes}, and since this property is stable by composition and weak limit, see Remarks \ref{rem:transitions_croissantes_composee} and Lemma \ref{lem:transitions_croissantes_limite}. The last claim of \ref{item:minimal} reads:
\begin{align*}
&\law(X_{t}|\,X_{s}\leqslant x)\\
=\ &\mathrm{stoinf}\{\law(Y_{t}|\,Y_{s}\leqslant x):\,\law(Y)\in\ma(\mu)\ \text{satisfies \ref{item:markov} and \ref{item:transitions}}\}.
\end{align*}
where $(X_t)_{t\in \R}$ has law $\mq$. An alternative writing is that for all $P=\law(Y)$ as above and all $s<t$, $F[\mq^{s,t}]\geqslant F[P^{s,t}]$, i.e.\ $\mq^{s,t}\leqlc P^{s,t}$. To show it, it is sufficient to show that for any strictly increasing $m$-tuple $(r_i)_{i=1}^m$:
\begin{equation}\label{eq:preuve_premier_theoreme_b}
\como^{r_1,r_2}.\cdots.\como^{r_{m-1},r_m}\leqlc P^{r_1,r_m}.
\end{equation}
Indeed $\mq^{s,t}=\lcsup \{\como^{r_1,r_2}.\cdots.\como^{r_{m-1},r_m}:\,s= r_1<\ldots<r_m= t\}$, by definition of $P$ in Proposition \ref{pro:comp_conv}.
We write (\ref{eq:preuve_premier_theoreme_b}) in the following equivalent form, in which we $\bar r_i$ stands for $r_{m+1-i}$: 
\begin{equation}\label{eq:preuve_premier_theoreme_c}
\underbrace{\trans\como^{\bar r_2,\bar r_1}.\cdots.\trans\como^{\bar r_{m},\bar r_{m-1}}}_{\text{denoted by $C$ below}}\leqlc \trans P^{\bar r_m,\bar r_1}.\tag{H$_m$}
\end{equation}
For $A,B\in\ma(\mu,\nu)$, from the definitions, $A\leqlc B$ is equivalent to $\forall x,\mu\lfloor_{\op]-\infty,x]}.A\leqs\mu\lfloor_{\op]-\infty,x]}.B$. Notice also that $\trans\como_{\bar r_i,\bar r_j}=\como_{\bar r_j,\bar r_i}$. We prove (\ref{eq:preuve_premier_theoreme_c}) by induction on $m$. (H$_2$) is true by definition of $\como$. Suppose (H$_m$). Take $x\in\R$. Then (see the justifications below):
\begin{align}
\mu_{\bar r_1}\lfloor_{\op]-\infty,x]}.\como^{\bar r_1,\bar r_2}.\cdots.\como^{\bar r_{m},\bar r_{m+1}}&=\mu_{\bar r_1}\lfloor_{\op]-\infty,x]}.C.\como^{\bar r_{k},\bar r_{m+1}}\nonumber\\
&\leqs\mu_{\bar r_1}\lfloor_{\op]-\infty,x]}.\trans P^{\bar r_m,\bar r_1}.\como^{\bar r_{m},\bar r_{m+1}}\label{eq:preuve_premier_theoreme_d}\\
&\leqs \mu_{\bar r_1}\lfloor_{\op]-\infty,x]}.\trans P^{\bar r_m,\bar r_1}.\trans P^{\bar r_{m+1},\bar r_m}\label{eq:preuve_premier_theoreme_e}\\
&\leqs \mu_{\bar r_1}\lfloor_{\op]-\infty,x]}.\trans P^{\bar r_{m+1},\bar r_1}\label{eq:preuve_premier_theoreme_f}.
\end{align}
Above, (\ref{eq:preuve_premier_theoreme_d}) holds since $\mu_{\bar r_1}\lfloor_{\op]-\infty,x]}.C\leqs\mu_{\bar r_1}\lfloor_{\op]-\infty,x]}.\trans P^{\bar r_m,\bar r_1}$ by (H$_m$) and since $\como_{\bar r_{m},\bar r_{m+1}}$ has increasing kernel, i.e.\ respects $\leqs$. For (\ref{eq:preuve_premier_theoreme_e}), since $P^{\bar r_1,\bar r_m}$ has increasing kernel, $\trans P^{\bar r_1,\bar r_m}$ maps $\mdec(\mu_{\bar r_m})$ on $\mdec(\mu_{\bar r_1})$ by Remark \ref{rem:stable}, so $\mu_{\bar r_1}\lfloor_{\op]-\infty,x]}.\trans P^{\bar r_m,\bar r_1}\in\mdec(\mu_{\bar r_m})$, hence it is an increasing limit of positive combinations of measures of the type $\mu_{\bar r_m}\lfloor_{\op]-\infty,y]}$. Then, for those measures, $\mu_{\bar r_m}\lfloor_{\op]-\infty,y]}.\como^{\bar r_m,\bar r_{m+1}}\leqs\mu_{\bar r_m}\lfloor_{\op]-\infty,y]}.\trans P^{\bar r_{m+1},\bar r_m}$ by definition of $\como$. Finally $(Y_t)_t$ is Markov, which gives (\ref{eq:preuve_premier_theoreme_f}), i.e.\  (H$_{m+1}$). We are done. \medskip

{\bf (b)} By Remark \ref{rem:quantile_up}, $(\mu_t)_{t\in\R}$ is increasing for $\leqs$ if and only if every $\como^{s,t}$ is an increasing coupling, i.e.\ concentrated on $\{(x,y):\,x\leqslant y\}$. This implies the same for their products and the limits of those, so for $\mq$.  Then apply Lemma \ref{coro}.
\end{proof}

\subsection{Proof of Theorem \ref{them:b}: convergence of \mb$\como_{[R_n]}$ to $\mq$\mn}\label{subsec:preuve_deuxieme} For all finite subset $R$ of $\R$, $P\in\ma((\mu_t)_{t\in\R})$ and $(s,t)$ with $s<t$ we introduced the couplings $P_{[R]}^{s,t}\in\ma(\mu_s,\mu_t)$ in Theorem \ref{them:sous_n} ---and actually in Theorem \ref{them:a}\ref{item:limite_de_compose} p.\ \pageref{them:a} in the case $P=\como$. We used them in \S\ref{subsec:preuve_premier}. Now we introduce the measure $P_{[R]}\in\ma((\mu_t)_{t})$ that was announced in \S\ref{subsec:intro_quantilemarkovien} in Notation \ref{pro:markovinise_fini1}. The notation is consistent, i.e.\ for all $s<t$, $\pr^{s,t}_\#P_{[R]}$ is the previously defined $P_{[R]}^{s,t}$. Then we prove Theorem \ref{them:b}, which means than we implement the tentative program introduced p.\ \pageref{page:tentative_limite} sq.\ in \S\ref{subsec:intro_quantilemarkovien}, in a way that avoids the problems explained there.

\begin{defi}\label{defi:quantile_discretement_markovien} If $M\in\ma((\mu_t)_t)$ and if $R=\{r_1,\ldots,r_m\}\subset\R$ we denote by $M_{[R]}\in\ma((\mu_t)_{\in\R})$ the measure \emph{$M$ made Markov at the points of $R$} defined by the data of its finite marginals $(\pr^{S})_\# M_{[R]}$, for all finite $S$ containing $R$, as follows.
$$(\pr^{S})_\# M_{[R]}= M^{s^0_1,\ldots, s^0_{n_0},r_1}\circ M^{r_1,s^1_{1},\ldots,s^1_{n_1},r_2}\circ\ldots\circ M^{r_m,s^m_{1},\ldots,s^m_{n_m}}.$$
Where $S=\{s^0_1,\ldots, s^0_{n_0},r_1,s^1_{1},\ldots,s^1_{n_1},r_2,\ldots,r_m,s^m_{1},\ldots,s^m_{n_m}\}$ and where the first or last term disappears if $n_0$ or $n_m$ is null, respectively. These marginals are consistent in the sense of Definition \ref{defi:consist}. So by Proposition \ref{pro:inductive_limit} this defines  $M_{[R]}$. We also commit an abuse of language: $M_{[R]}$ is rather the ``law of a process $X$ of law $M$, made Markov at the points of $R$''.
\end{defi}

By Proposition \ref{pro:compo}, for any $R\subset\R$, $(L_{R\cap[s,t]})_{s<t}\in \ma(\la,\la)$ is also a consistent family, thus again Proposition \ref{pro:inductive_limit} enables us to define the following processes on the set of quantile levels.

\begin{defi}\label{defi:lev}
For every $R\subset\R$ we denote by $\lev_R\in\ma((\la_t)_{t\in\R})$ ($\la_t$ denotes $\la$ at each $t$) the Markov process with 2-marginals $\lev_R^{s,t}=L_{R\cap[s,t]}$. We call $\lev_\R$ the \emph{level process} attached to $(\mu_t)_{t\in \R}$ and denote it by $\lev$.
\end{defi}

\begin{rem}\label{rem:mini_et_Q}In this subsection, using Definition \ref{defi:quantile_discretement_markovien} with $M=\como$ we obtain measures $\como_{[R]}$ linked with $\lev_R$ as follows. After Remark \ref{rem:comp_mini}, $\como_{[R]}^{s,t}=(G_s\otimes G_t)_\#L_{[s,t]\cap R}$, thus, with $G=(\otimes_{t\in \R} G_{t})$, the measures $G_\#\lev_R$ and $\como_{[R]}$ have the same 2-marginals. They are actually equal, as we prove in the ``claim'' at the beginning of the proof of Theorem \ref{them:deuxieme_sec}\ref{item:them:deuxieme_b} p.\ \pageref{proof:them:deuxieme}.
\end{rem}

The goal of the remaining part of this section is to prove the following statement that  is a more precise and technical version of Theorem \ref{them:b}. After some preparation its part \ref{item:them:deuxieme_a} is proved p.\ \pageref{proof:themBa}. Its parts \ref{item:them:deuxieme_b} and \ref{item:them:deuxieme_c} are proved p.\ \pageref{proof:them:deuxieme} after some more auxiliary results. In the statement below, see Notation \ref{lem:existenceT}\ref{p2:def_L_R} for $L_\ast$, Definition \ref{defi:atomic} for ``atomic times'' and Definition \ref{defi:essentiel}  for ``essential atomic'' intervals or times.

\begin{them}\label{them:deuxieme_sec}\maz Let $(\mu_t)_{t\in \R}$ be a family of probability measures on $\R$. In points \ref{item:them:deuxieme_b} and \ref{item:them:deuxieme_c}, $(R_n)_n$ stands for a nested sequence of finite subsets of\/ $\R$ and $R$ for $\cup_nR_n$.\smallskip

\point\label{item:them:deuxieme_a} There is a countable set $R\subset\R$ satisfying:
\begin{gather}\label{eq:convergence_doug}
\text{for all $(s,t)$ with $s<t$,}\ L_{R\cap\op]s,t\clo[}=L_{\op]s,t\clo[}.\smallskip
\end{gather}

\point\label{item:them:deuxieme_b} If $R$ satisfies \eqref{eq:convergence_doug} then (see Reminder \ref{reminder:weak_convergence} for the weak convergence):
\begin{equation}\label{eq:cv}
\text{$(\como_{[R_n]})_n$ converges weakly to $\mq$.}\smallskip
\end{equation}

\point\label{item:them:deuxieme_c} Conversely, if $(R_n)_n$ satisfies \eqref{eq:cv}, then:
\begin{enumerate}
\item[\souspoint\label{item:ordre_facultatif}] For any nested finite sets $(R'_n)_n$ such that $R=\cup_nR'_n$, $(\como_{[R'_n]})_n\rightarrow_{n\rightarrow\infty}\mq$. In other words,  \eqref{eq:cv} is a property of $R$. Moreover, \eqref{eq:cv} is also satisfied by any countable $R''\supset R$. 
\item[\souspoint\label{item:inatomique_facultatif}] Let $E\subset R$ be the set of non-atomic times of $R$, then $R\setminus E$ satisfies \eqref{eq:cv}. Moreover, for any finite set $E'$ of non-essential atomic times, there is a set $R'$ satisfying \eqref{eq:cv} and such that $R'\cap E'=\varnothing$.
\item[\souspoint\label{item:essentiel_necessaire}] The set $R$ meets each essential atomic interval of $(\mu_t)_t$, hence in particular, it contains all its essential atomic times (which are at most countably many, by Proposition \ref{pro:denombrable}).
\end{enumerate}
\end{them}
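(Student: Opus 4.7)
For part \ref{item:them:deuxieme_a}, my plan is to take a countable union of the sequences provided by Lemma \ref{lem:existenceT}\ref{p3:def_L_R}. I will enumerate the rational intervals $]p,q[$ (with $p<q$ in $\Q$) and, for each, select a nested sequence $(R_n^{p,q})_n$ of finite subsets of $]p,q[$ with $L_{R_n^{p,q}} \to L_{]p,q[}$. The countable set $R \eqdef \bigcup_{p,q,n} R_n^{p,q}$ will satisfy \eqref{eq:convergence_doug}: given $s<t$ and any finite $S \subset\, ]s,t[$, I squeeze $S$ into a rational subinterval $]p,q[\,\subset\,]s,t[$, so that $F[L_{]p,q[}] \leqslant F[L_S]$; since each $R_n^{p,q} \subset R \cap\, ]s,t[$, the $\leqlc$-monotonicity of Lemma \ref{lem:existenceT}\ref{p1:lem:existenceT} and pointwise convergence of cumulative distribution functions give $F[L_{R \cap ]s,t[}] \leqslant F[L_{]p,q[}] \leqslant F[L_S]$; taking the infimum over $S$ and combining with the trivial reverse inequality yields $L_{R \cap ]s,t[} = L_{]s,t[}$.

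For part \ref{item:them:deuxieme_b}, I will first establish the key identity $G_\# \lev_R = \como_{[R]}$ alluded to in Remark \ref{rem:mini_et_Q}. Both sides are determined by their finite marginals: $\lev_R$ by its $2$-marginals $L_{R \cap [s,t]}$ via Markovness (Corollary \ref{cor:consist}), and $\como_{[R]}$ by the catenation formulas of Definition \ref{defi:quantile_discretement_markovien}. By Remark \ref{rem:comp_mini} the $2$-marginals match after pushforward by $G$, and a routine check using the idempotencies $q_r. \trans q_r.q_r = q_r$ and $\trans q_r.q_r. \trans q_r = \trans q_r$ of Remark \ref{rem:finesse}\ref{p3:rem:finesse} shows the higher-dimensional marginals match as well. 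Specializing to $R = \R$, together with Proposition \ref{pro:comp_conv}\ref{ccc}, yields $G_\# \lev = \mq$, which is \eqref{eq:Glev}. The weak convergence $\como_{[R_n]} \to \mq$ then reduces, via continuity of $G_\#$ (Remark \ref{rem:diese_contractant}) and Reminder \ref{reminder:weak_convergence}, to finite-dimensional convergence $\lev_{R_n} \to \lev$. Markovness of both sides reduces this in turn to $2$-marginal convergence $L_{R_n \cap [s,t]} \to L_{[s,t]}$: Lemma \ref{lem:existenceT}\ref{p3:def_L_R} provides a nested $(\tilde R_k)_k$ in $R \cap [s,t]$ with $L_{\tilde R_k} \to L_{R \cap [s,t]} = L_{]s,t[}$ (using \eqref{eq:convergence_doug}); since $R = \bigcup_n R_n$, each finite $\tilde R_k$ is eventually contained in $R_n$, so $\leqlc$-monotonicity forces $L_{R_n \cap [s,t]} \to L_{[s,t]}$. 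Higher-dimensional marginals follow from Lemma \ref{lem:closed} applied to the Markov catenations, every $L_\bullet$ having increasing kernel by Lemma \ref{lem:mono1}\ref{item:mono1a}.

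Part \ref{item:them:deuxieme_c} is then a matter of unpacking. Sub-claim \ref{item:ordre_facultatif} is immediate from the proof of (b), which used only the union $R$ and \eqref{eq:convergence_doug}; for $R'' \supset R$ countable, $L_{R'' \cap ]s,t[}$ is sandwiched between $L_{R \cap ]s,t[} = L_{]s,t[}$ and $L_{]s,t[}$. Sub-claim \ref{item:inatomique_facultatif}: non-atomic $r$ give $\ell_r = \idk$, hence are deletable from $R$ without effect on any $L_\bullet$; the extension to non-essential atomic times uses that, by Definition \ref{defi:essentiel}, such times do not contribute to the orthant supremum defining $L_{]s,t[}$. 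Sub-claim \ref{item:essentiel_necessaire} follows by contrapositive: missing an essential atomic interval forces $L_{R \cap ]s,t[}$ strictly below $L_{]s,t[}$ on some $]s,t[$ covering that interval. The main obstacle throughout will be the identity $G_\# \lev_R = \como_{[R]}$: one must carefully reconcile the full Markovness of $\lev_R$ with the ``Markov only at $R$'' structure of $\como_{[R]}$, which works because $\lev_R$ collapses to a deterministic process in the level variable on each connected component of $\R \setminus R$ where $\lev_R^{s,t} = L_\varnothing = \id_{2,\la}$.
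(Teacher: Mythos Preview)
Your treatment of \ref{item:them:deuxieme_a} is essentially the same as the paper's (via Lemma~\ref{lem:convergence_couples}) and is correct.

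The main gap is in \ref{item:them:deuxieme_b}. You reduce $\como_{[R_n]}\to\mq$ to the convergence $\lev_{R_n}\to\lev$, but this convergence is not established and can fail. Indeed $\lev_{R_n}^{s,t}=L_{R_n\cap[s,t]}$ tends (Lemma~\ref{lem:croissance_lc}) to $L_{R\cap[s,t]}$, while $\lev^{s,t}=L_{[s,t]}$. Condition~\eqref{eq:convergence_doug} only gives $L_{R\cap\op]s,t\clo[}=L_{\op]s,t\clo[}$, and the closed-interval version $L_{R\cap[s,t]}=L_{[s,t]}$ need not hold when $s$ or $t$ is an atomic time outside $R$ (your line ``$L_{R\cap[s,t]}=L_{\op]s,t\clo[}$'' is not what \eqref{eq:convergence_doug} says). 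The paper therefore introduces the auxiliary Markov law $\widetilde\lev$ with $2$-marginals $L_{R\cap[s,t]}$ (the true limit of $\lev_{R_n}$) and proves $G_\#\widetilde\lev=\mq$. That step is the heart of the matter: $G_\#$ does not preserve Markovness in general, and one needs Lemma~\ref{lem:distributivite} together with the fact that any $s\notin R$ is non-essential (Proposition~\ref{pro:denombrable}) to show that $G_\#$ commutes with the relevant catenations. Your ``routine check via idempotencies'' handles only the finite-$R$ identity $G_\#\lev_R=\como_{[R]}$; it does not yield $G_\#\widetilde\lev=\mq$, and ``specializing to $R=\R$'' is meaningless since $\como_{[\R]}$ is undefined.

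Part \ref{item:them:deuxieme_c} also has gaps. Throughout \ref{item:ordre_facultatif} you invoke \eqref{eq:convergence_doug}, but the hypothesis in \ref{item:them:deuxieme_c} is only \eqref{eq:cv}, which is strictly weaker (see the remark following the theorem). The paper proves \ref{item:ordre_facultatif} by combining Lemma~\ref{lem:croissance_lc} with the minimality property \ref{item:minimal} of Theorem~\ref{them:a}, not by recycling \ref{item:them:deuxieme_b}. For \ref{item:essentiel_necessaire}, a strict inequality at the level of $L$ is not yet a contradiction with \eqref{eq:cv}: one must push it down to $\mq^{s,s'}$ via $G_s\otimes G_{s'}$, and this requires dealing with the case where the witnessing point $(a,a')$ lies in an atomic-level interval $A_s$ or $A_{s'}$, which the paper handles explicitly.
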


\begin{rem}\maz\point\ We see no simple condition on $R$ that, added to Condition \ref{item:them:deuxieme_c}\ref{item:essentiel_necessaire} above, is necessary and sufficient to ensure $\como_{[R_n]}\to\mq$ in Theorem \ref{them:deuxieme_sec}. For instance, density in the union of the atomic times is neither necessary, see Example \ref{ex:intervalle_essentiel}, nor sufficient: take, e.g.,  $\mu_t=\la\lfloor_{[0,\frac12]}+\frac12\delta_1$ if $t\in\Q$ and $\mu_t=\frac12\delta_0+\la\lfloor_{[\frac12,1]}$ otherwise, then there is no essential atomic interval, all time is atomic, and $R$ suits if and only if $R\cap\Q$ is dense in $\Q$ and $R\cap(\R\setminus\Q)$ is dense in $\R\setminus\Q$, so that {\em any} set dense in $\R$ is not suitable. One may think to ``$R$ is the projection on $\R$ of a set dense in the set of atomic levels $A=\cup_{t\in\R}\big(\{t\}\times A_t\bigr)\subset\R\times [0,1]$'' (see Notation \ref{nota:Asx}) as a condition at least sufficient, but it is not. Take $\mu_t=\frac12\delta_0+\frac12\delta_1$ if $t\in\Q$, otherwise $\mu_t=\delta_0$, then $E=(\Q\cap([0,1]\setminus\{\frac12\}))\times\Q$  is dense in $A$ but $R=\pr_{\R}(E)$ does not suit.

\point\ In point \ref{item:them:deuxieme_c}\ref{item:inatomique_facultatif} of Theorem \ref{them:deuxieme_sec}, any non-essential atomic time $t$ may be avoided by a set $R$ satisfying \eqref{eq:cv}, but it is not true that if $R$ satisfies \eqref{eq:cv}, any such $t\in R$ may be removed from $R$ without making \eqref{eq:cv} false: see Example \ref{ex:intervalle_essentiel} where $R$ consists of a single non-essential atomic time.

\point\ Condition \eqref{eq:convergence_doug} implies \eqref{eq:cv}, but notice that it is not necessary. Take, e.g.,  $\mu_t=\la\lfloor_{[0,1]}$ for $t<0$ and $\mu_t=\delta_0$ otherwise. Then $R$ satisfies  \eqref{eq:convergence_doug} if and only if $R\cap \R^+\neq\varnothing$, but $\como$ is Markov, so that \eqref{eq:cv} is true with $R=\varnothing$.
\end{rem}

\begin{lem}\label{lem:croissance_lc} Let $\Tt$ denote a totally ordered set of indices (in practice, $\Tt=\R^+$ or $\Tt=\N$). If $(R_\tau)_{\tau\in \Tt}$ is a family of subsets of\/ $\R$, increasing for the inclusion, setting $R=\cup_\tau R_\tau$, $(L_{R_\tau})_{\tau\in\Tt}$ is increasing for $\leqlc$ and tends weakly to $L_{R}$ when $\tau$ tends to infinity.
\end{lem}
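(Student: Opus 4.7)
My plan is to prove the monotonicity first via the characterization of $L_R$ as a lower orthant supremum over finite subsets, then deduce the convergence by extracting a cofinal sequence and identifying its limit with $L_R$, and finally pass from the sequential statement to the net statement via pointwise monotonicity of cumulative distribution functions.

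\textbf{Step 1 (Monotonicity).} For $\tau \leq \tau'$ in $\Tt$, we have $R_\tau \subset R_{\tau'}$. By Notation \ref{lem:existenceT}\ref{p2:def_L_R}, $L_{R_\tau} = \lcsup\{L_{R'} : R' \subset R_\tau,\ \#R' < \infty\}$. For any such $R'$, we also have $R' \subset R_{\tau'}$ finite, so Lemma \ref{lem:existenceT}\ref{p1:lem:existenceT} applied with $R_{\tau'}$ in place of $R$ (through its definition as a lower orthant supremum) gives $L_{R'} \leqlc L_{R_{\tau'}}$. Taking the $\lcsup$ over all such $R'$ yields $L_{R_\tau} \leqlc L_{R_{\tau'}}$. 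The same argument with $R$ in place of $R_{\tau'}$ gives $L_{R_\tau} \leqlc L_R$ for every $\tau$.

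\textbf{Step 2 (Convergence along a cofinal sequence).} Pick an increasing sequence $(\tau_n)_{n\in\N}$ cofinal in $\Tt$, which exists in the cases of interest ($\Tt = \N$ or $\Tt = \R^+$); since the $R_\tau$ are nested, $R = \bigcup_n R_{\tau_n}$. By Step 1, $(L_{R_{\tau_n}})_n$ is increasing for $\leqlc$ and bounded above by $\la \otimes \la$, hence by Lemma \ref{lem:sto_order}\ref{item:suite_croissante_majoree} it has a lower orthant supremum $L_\infty$ and converges weakly to it. I claim $L_\infty = L_R$: the inequality $L_\infty \leqlc L_R$ is immediate from Step 1. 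For the reverse, any finite $R' \subset R$ is contained in some $R_{\tau_n}$ by finiteness of $R'$, so $L_{R'} \leqlc L_{R_{\tau_n}} \leqlc L_\infty$. Taking the $\lcsup$ over all finite $R' \subset R$, which by Notation \ref{lem:existenceT}\ref{p2:def_L_R} equals $L_R$, yields $L_R \leqlc L_\infty$.

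\textbf{Step 3 (From sequence to net).} Since the sequential limit $L_R$ does not depend on the choice of cofinal sequence, the whole monotone net $(L_{R_\tau})_\tau$ converges to $L_R$. Concretely, by Step 1 the CDFs $F_{L_{R_\tau}}$ decrease pointwise as $\tau$ grows and are bounded below by $F_{L_R}$; for any cofinal sequence $(\tau_n)_n$ they converge pointwise to $F_{L_R}$ by Step 2 and Proposition \ref{pro:rho}; hence the full net $F_{L_{R_\tau}}$ decreases pointwise to $F_{L_R}$, which is equivalent to weak convergence $L_{R_\tau} \to L_R$ by Proposition \ref{pro:rho} again.

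The only subtle point is the identification $L_\infty = L_R$ in Step 2, which relies on the elementary but crucial fact that a finite subset of $\bigcup_n R_{\tau_n}$ lies in some single $R_{\tau_n}$; everything else is a direct application of the already-established lower orthant supremum machinery.
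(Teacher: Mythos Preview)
Your proof is correct and shares the same core observation as the paper's: any finite $R'\subset R$ is eventually contained in some $R_{\tau_0}$, which (combined with the definition of $L_R$ as an $\lcsup$ over finite subsets) is what pins down the limit.

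The execution differs slightly. The paper avoids your Steps 2--3 entirely by first noting a ``squeeze'' inequality for $\rho$: if $A\leqlc A'\leqlc A''$ all lie in the same $\ma(\mu,\nu)$, then $\rho(A',A'')\leqslant\rho(A,A'')$ (immediate from $F_{A}\geqslant F_{A'}\geqslant F_{A''}$). Given $\varepsilon>0$, Lemma \ref{lem:existenceT}\ref{p3:def_L_R} provides a finite $R'\subset R$ with $\rho(L_{R'},L_R)\leqslant\varepsilon$; then for any $\tau\geqslant\tau_0$ with $R_{\tau_0}\supset R'$, the chain $L_{R'}\leqlc L_{R_\tau}\leqlc L_R$ and the squeeze give $\rho(L_{R_\tau},L_R)\leqslant\varepsilon$ directly. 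This one-step argument handles arbitrary totally ordered $\Tt$ without invoking a cofinal sequence, whereas your route relies on the existence of such a sequence (fine for $\N$ and $\R^+$, as you note) and on a sequence-to-net passage in Step 3. Your argument buys a cleaner separation into ``identify the $\lcsup$'' and ``upgrade to convergence'', while the paper's buys brevity and full generality in $\Tt$.
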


\begin{proof} It rests on the following remark. By definition of $\leqlc$ in Definition \ref{defi:geqlc} and of $\rho$ in Proposition \ref{pro:rho}, if $A$, $A'$, $A''$ are measures of $\p(\R^d)$ with the same marginals and $A\leqlc A'\leqlc A''$, then $\rho(A',A'')\leqslant\rho(A,A'')$. Now we prove the lemma. By the definition of $L_{R_\tau}$ in Lemma \ref{lem:existenceT}\ref{p2:def_L_R} the sequence $(L_{R_\tau})_{\tau\in\Tt}$ is increasing, be the sets $R_\tau$ finite or not. By Lemma \ref{lem:existenceT}(c), and since $\rho$ metrizes the weak topology (Proposition \ref{pro:rho}), for any $\eps>0$ we find a finite $R'\subset R$ such that $\rho(L_R,L_{R'})\leqslant\eps$. Since $R=\cup_\tau R_\tau$ there is a $\tau_0$ such that $R_{\tau_0}\supset R'$. Then: $\tau\geqslant \tau_0\Rightarrow L_{R'}\leqlc L_{R_\tau}\leqlc L_{R}\Rightarrow \rho(L_{R'},L_{R_\tau})\leqslant\eps$ by the remark.
\end{proof}

\begin{lem}\label{lem:convergence_couples}
There is a nested sequence $(R_n)_{n\in\N}$ of finite subsets of $\R$ such that $s<t$ $\Rightarrow$ $L_{R_n\cap\op]s,t\clo[}\rightarrow L_{\op]s,t\clo[}$
\end{lem}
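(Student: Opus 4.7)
The plan is a diagonal extraction over a countable dense family of intervals, combined with an $\leqlc$-sandwiching argument to pass from rational intervals to arbitrary ones.

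First I would enumerate the pairs $(a_k,b_k)_{k\in\N}$ with $a_k,b_k\in\Q$ and $a_k<b_k$. For each $k$, Lemma \ref{lem:existenceT}\ref{p3:def_L_R} applied to $R=\op]a_k,b_k\clo[$ furnishes a nested sequence $(R_n^{(k)})_n$ of finite subsets of $\op]a_k,b_k\clo[$ such that $L_{R_n^{(k)}}\to L_{\op]a_k,b_k\clo[}$ weakly. Set $R_n\eqdef\bigcup_{k\leq n}R_n^{(k)}$, which is finite as a finite union of finite sets, and nested in $n$ because each $(R_n^{(k)})_n$ is. For any fixed $k$ and any $n\geq k$,
\[
R_n^{(k)}\subset R_n\cap\op]a_k,b_k\clo[\subset\op]a_k,b_k\clo[,
\]
so by the last clause of Lemma \ref{lem:existenceT}\ref{p3:def_L_R} (stability of the convergence when the approximating sequence is enlarged inside $R$), $L_{R_n\cap\op]a_k,b_k\clo[}\to L_{\op]a_k,b_k\clo[}$ as $n\to\infty$. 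Thus the desired convergence is settled on every open interval with rational endpoints.

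Second, to extend to an arbitrary $s<t$, pick rationals $s_j\downarrow s$ and $t_j\uparrow t$ with $s<s_j<t_j<t$. The open intervals $\op]s_j,t_j\clo[$ form an increasing family whose union is $\op]s,t\clo[$, so Lemma \ref{lem:croissance_lc} yields $L_{\op]s_j,t_j\clo[}\to L_{\op]s,t\clo[}$. All transports at stake lie in $\ma(\la,\la)$ and, by Lemma \ref{lem:existenceT}\ref{p1:lem:existenceT}, are $\leqlc$-sandwiched:
\[
L_{R_n\cap\op]s_j,t_j\clo[}\leqlc L_{R_n\cap\op]s,t\clo[}\leqlc L_{\op]s,t\clo[}.
\]
The key observation at the very beginning of the proof of Lemma \ref{lem:croissance_lc}, namely that $A\leqlc A'\leqlc A''$ with common marginals implies $\rho(A',A'')\leq\rho(A,A'')$, combined with the triangle inequality gives
\[
\rho\bigl(L_{R_n\cap\op]s,t\clo[},L_{\op]s,t\clo[}\bigr)\leq \rho\bigl(L_{R_n\cap\op]s_j,t_j\clo[},L_{\op]s_j,t_j\clo[}\bigr)+\rho\bigl(L_{\op]s_j,t_j\clo[},L_{\op]s,t\clo[}\bigr).
\]
Given $\eps>0$, one chooses $j$ so that the second summand is below $\eps/2$, and then, since $(s_j,t_j)$ appears among the enumerated rational pairs, takes $n$ large enough for the first summand to be below $\eps/2$. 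Since $\rho$ metricizes the weak topology on $\ma(\la,\la)$ (Proposition \ref{pro:rho}), this proves $L_{R_n\cap\op]s,t\clo[}\to L_{\op]s,t\clo[}$, as required.

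The only non-routine point is that a naive approach producing, for each pair $(s,t)$ separately, a sequence converging to $L_{\op]s,t\clo[}$ would involve uncountably many extractions; the whole gain is that the monotonicity of $R\mapsto L_R$ for $\leqlc$ lets us reduce the problem to countably many rational intervals, after which arbitrary intervals are controlled by the sandwich above without any further extraction.
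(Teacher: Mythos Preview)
Your proof is correct and follows essentially the same strategy as the paper: set up a countable dense family of open intervals, build the finite sets $R_n$ diagonally so that convergence holds on each interval of this family, and then pass to an arbitrary $\op]s,t\clo[$ via the $\leqlc$-sandwich $L_{R_n\cap\op]s_j,t_j\clo[}\leqlc L_{R_n\cap\op]s,t\clo[}\leqlc L_{\op]s,t\clo[}$ together with Lemma~\ref{lem:croissance_lc}. Your diagonal construction $R_n=\bigcup_{k\le n}R_n^{(k)}$, with each $(R_n^{(k)})_n$ nested, makes the nestedness of $(R_n)_n$ immediate, which is arguably a bit tidier than the paper's version.
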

\begin{proof} Let $((u_k,u'_k))_{k\in\N}$ be a dense sequence in $\{(x,y)\in\R^2:\,x< y\}$ and for every $n\geq 1$, $(R_k^n)_{k\in\N}$ a nested sequence of finite subsets of $\R$ such that  $\rho(L_{R_k^n\cap\op]u_k,u'_k\clo[},L_{\op]u_k,u'_k\clo[})\leqslant\frac1n$ for every $k\in \N$. Denote $\cup_{k=1}^n R^n_k$ by $R_n$. Then for any $s$ and $t>s$, $L_{R_n\cap\op]s,t\clo[}\rightarrow_{n\rightarrow\infty}L_{\op]s,t\clo[}$. Let us prove it. Fix $\eps>0$. By Lemma \ref{lem:croissance_lc} (use a subsequence $(k_n)_n$ such that $(\op]u_{k_n},u'_{k_n}\clo[)_n$ is an exhaustion of $\op]s,t\clo[$), there exists $k$ such that $\rho(L_{\op]s,t\clo[},L_{\op]u_k,u'_k\clo[})<\eps/2$. Then $L_{\op]u_k,u_k'\clo[\cap R_n}\leqlc L' \leqlc L_{\op]s,t\clo[}$ where $L'$ stands for $L_{\op]s,t\clo[\cap R_n}$ or $L_{\op]u_k,u_k'\clo[}$ for any $k$. For $n\geq\max\{k,2/\eps\}$:
$$\rho(L_{\op]s,t\clo[},L_{\op]u_k,u'_k\clo[\cap R_n})\leq \rho(L_{\op]s,t\clo[},L_{\op]u_k,u'_k\clo[})+ \rho(L_{\op]u_k,u'_k\clo[},L_{\op]u_k,u'_k\clo[\cap R_n})<\eps.$$
Therefore with $L'=L_{\op]s,t\clo[\cap R_n}$ we obtain the desired convergence to $L_{\op]s,t\clo[}$.
\end{proof}

\begin{defi}\label{defi:essentiel}Let $I$ be an interval. If, for some interval $J\supset I$ such that $J\setminus I$ is disconnected (then for all such smaller $J'$), $L_{J}\neq L_{J\setminus I}$, we call $I$ an \emph{essential atomic interval of $(\mu_t)_{t\in\R}$}. If $I=\{t\}$ is essential, we call $t$ an \emph{essential atomic time of $(\mu_t)_{t\in\R}$}.
\end{defi}

To check the parenthesis in the definition, suppose that $L_{J'}= L_{J'\setminus I}$ and get $L_{J}= L_{J\setminus I}$ by Proposition \ref{pro:compo}.

\begin{pro}\label{pro:denombrable}If a nested sequence $(R_n)_n$ is as in Lemma \ref{lem:convergence_couples} then $\cup_nR_n$ contains all the essential atomic times of $(\mu_t)_t$. In particular, those times are at most countably many.
\end{pro}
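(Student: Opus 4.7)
The plan is a short argument based on the monotonicity of $L_R$ in $R$ (Lemma \ref{lem:existenceT}\ref{p1:lem:existenceT}) together with Definition \ref{defi:essentiel} of an essential atomic time.

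First, fix a nested sequence $(R_n)_n$ satisfying the conclusion of Lemma \ref{lem:convergence_couples}, and set $R=\cup_n R_n$. Let $t$ be an essential atomic time; I want to show $t\in R$. Pick $s<t<u$ small enough so that Definition \ref{defi:essentiel} applies with $I=\{t\}$ and $J=\op]s,u\clo[$, i.e.\ $L_{\op]s,u\clo[}\neq L_{\op]s,u\clo[\setminus\{t\}}$. Assume, by contradiction, that $t\notin R$. Then for every $n$, $R_n\cap \op]s,u\clo[\subset \op]s,u\clo[\setminus\{t\}$, so by the monotonicity property in Lemma \ref{lem:existenceT}\ref{p1:lem:existenceT}
\[L_{R_n\cap\op]s,u\clo[}\leqlc L_{\op]s,u\clo[\setminus\{t\}}\leqlc L_{\op]s,u\clo[}.\]
By the choice of $(R_n)_n$ the left-hand side converges weakly to $L_{\op]s,u\clo[}$, hence pointwise convergence of the cumulative distribution functions and the characterization of $\leqlc$ in Definition \ref{defi:geqlc} force $L_{\op]s,u\clo[}=L_{\op]s,u\clo[\setminus\{t\}}$, contradicting the choice of $(s,u)$. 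Therefore $t\in R$.

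For the ``in particular'' clause, the existence of at least one admissible sequence $(R_n)_n$ is guaranteed by Lemma \ref{lem:convergence_couples}; for such a sequence $R=\cup_n R_n$ is a countable union of finite sets, hence countable, and we have just shown it contains every essential atomic time.

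The only slightly delicate step is the passage to the limit: one must be sure that the weak limit of the measures $L_{R_n\cap\op]s,u\clo[}$ cannot lie strictly above $L_{\op]s,u\clo[\setminus\{t\}}$ for $\leqlc$, which is exactly the content of the pointwise convergence of cumulative distribution functions together with the equivalence between $\leqlc$ and pointwise inequality of these functions. Everything else is immediate.
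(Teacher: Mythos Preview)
Your argument is correct and, in fact, more economical than the paper's. Both proofs proceed by contradiction, assuming $t\notin\cup_n R_n$ and showing that then $L_{R_n\cap\op]s,u\clo[}$ cannot converge to $L_{\op]s,u\clo[}$. The paper splits $L_{R_n\cap\op]s,u\clo[}$ as the product $L_{R_n\cap\op]s,t\clo[}.L_{R_n\cap\op]t,u\clo[}$ via Proposition~\ref{pro:compo}, and then invokes the continuity of composition for increasing kernels (Proposition~\ref{pro:continuity}) to identify the limit as $L_{\op]s,t\clo[}.L_{\op]t,u\clo[}=L_{\op]s,u\clo[\setminus\{t\}}$. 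Your squeeze argument bypasses both of those results entirely, using only the monotonicity of $R\mapsto L_R$ for $\leqlc$ and the characterization of $\leqlc$ by cumulative distribution functions. What the paper's route buys is that it explicitly identifies the limit of $L_{R_n\cap\op]s,u\clo[}$ as $L_{\op]s,u\clo[\setminus\{t\}}$, which is conceptually informative; your route shows more cheaply that the limit cannot be $L_{\op]s,u\clo[}$, which is all that is needed here.

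One small imprecision: the inequality $L_{R_n\cap\op]s,u\clo[}\leqlc L_{\op]s,u\clo[\setminus\{t\}}$ is not literally an instance of Lemma~\ref{lem:existenceT}\ref{p1:lem:existenceT}, which is stated for \emph{finite} $R,R'$. It follows instead from part~\ref{p2:def_L_R} of the same lemma, since $L_{\op]s,u\clo[\setminus\{t\}}$ is by definition the $\lcsup$ over its finite subsets and hence an upper bound for each $L_{R_n\cap\op]s,u\clo[}$. The same remark applies to $L_{\op]s,u\clo[\setminus\{t\}}\leqlc L_{\op]s,u\clo[}$. This does not affect the validity of the proof.
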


\begin{proof}We show a contrapositive result. Suppose that $t$ is an essential atomic time, that $s<t$ and $s'>t$ are such that $L_{\op]s,s'\clo[\setminus\{t\}}\neq L_{\op]s,s'\clo[}$, and that $(R_n)_n$ is a nested sequence of finite sets such that $t\not\in\cup_nR_n$, that $L_{R_n\cap\op]s,t\clo[}\to L_{\op]s,t\clo[}$ and that $L_{R_n\cap\op]t,s'\clo[}\to L_{\op]t,s'\clo[}$. Then:
\begin{align*}
L_{R_n\cap\op]s,s'\clo[}&=L_{R_n\cap\op]s,t\clo[}.L_{R_n\cap\op]s,t\clo[}\ \ \text{by Proposition \ref{pro:compo}}\\
&\to L_{\op]s,t\clo[}.L_{\op]t,s'\clo[}\ \ \text{by assumption and by Proposition \ref{pro:continuity}}\\
&=L_{\op]s,s'\clo[\setminus\{t\}}\neq L_{\op]s,s'\clo[}\ \ \text{by Proposition \ref{pro:compo} and the assumption}.
\end{align*}
Therefore, $(R_n)_n$ cannot be as in Lemma \ref{lem:convergence_couples}.
\end{proof}

\begin{rem}\label{rem:intervalles_essentiels} Be careful that the property to be an essential atomic interval is true in general neither for a union of two such (intersecting) intervals, nor for their intersection, nor for an interval containing a such interval or included in it.
\end{rem}

Remark \ref{rem:essentiel_divers} and Example \ref{ex:intervalle_essentiel} are qualitative comments on the notions introduced in Definition \ref{defi:essentiel}. They are independent of the proof of Theorem \ref{them:deuxieme_sec} and may be skipped in a first reading.

\begin{rem}\label{rem:essentiel_divers}\maz\point\ 
Essential atomic times are of course atomic. Indeed, else, $L_{\{t\}}$ is the identity transport so $L_I.L_{\{t\}}.L_{I'}=L_I.L_{I'}$ for all intervals with $\sup I\leqslant t \leqslant \inf I'$ and $t$ cannot be essential.

\point\label{item:suffisante} Suppose, to simplify, that some $\mu\in\p(\R)$ has exactly one atom $x$ and consider a family $(\mu_t)_{t\in\R}$ such that $\mu=\mu_0$. An obvious sufficient condition for $0$ to be unessential is to chose $\mu_t$ such that for a certain sequence $t_n\to0$, $\mu_{t_n}$ has an atom $x_n$ such that $A_{t_n,x_n}\supset A_{t,x}$ (see Notation \ref{nota:Asx}), or even  only $\forall \varepsilon>0$, $\exists n_0:$ $n\geqslant n_0\Rightarrow A_{t_n,x_n}+\op]-\varepsilon,\varepsilon\clo[\supset A_{t,x}$: ``atoms merging the same levels of quantile as $x$ merges at $t=0$, accumulate on $0$''. Indeed, taking possibly a subsequence, we may suppose that $(t_n)_n$ tends to zero from the right or the left (say, from the right). For any $s>t$, the function $x\mapsto k_{L_{[t_n,s\clo[}}(x,\espcdot)$ (when it is defined i.e.\ $t_n<s$) is constant on $A_{t_n,x_n}$, hence since $L_{\op]0,s\clo[}=\lim L_{[t_n,s\clo[}$ (see Lemma \ref{lem:croissance_lc}) $x\mapsto k_{L_{]0,s\clo[}}(x,\espcdot)$ is constant on $A_{t,x}$ (use Proposition \ref{pro:rho}). Considering $k_{L_{\{0\}}}$ described in Remark \ref{rem:finesse}\ref{p3:rem:finesse}, the composition formula in \S\ref{ss:compos} yields $L_{\op]0,s\clo[}=L_{\{0\}}.L_{\op]0,s\clo[}$; hence $L_{\op]s',s\clo[\setminus\{t\}}=L_{\op]s',s\clo[}$ for all $s'<t$.

If $\mu_0$ has several atoms $(x_i)_{i\in{\cal I}}$, a similar statement can be shown, the condition being that each of the intervals $A_{0,x_i}$ has the property above.

\point\label{item:necessaire} A necessary condition for an atomic time $t$ to be unessential is immediate: $\{t\}\times A_{t}$ shall be included in $E=\overline{A_{\op]-\infty,t\clo[}}\cap\overline{A_{\op]t,+\infty[}}$, where $A_{I}\eqdef\cup_{r\in I}\bigl(\{r\}\times A_{r}\bigr)$. There is some nonempty open interval $J$ such that $\bar{J}\subset A_t\setminus E$. Then for $\varepsilon$ small enough, $L_{\op]t-\eps,t+\eps\clo[}$ restricted to $J^2$ is the identity transport, which prevents $t$ from being essential (see Remark \ref{rem:essentiel_divers_bis}).

\point\ The condition of point \ref{item:suffisante} is not necessary, nor that of point \ref{item:necessaire} sufficient. For instance take $\mu_0=\delta_0$ and for $t\neq0$, $(\mu_t)_t=a(t)\delta_0+(1-a(t))\delta_1$. If $a$ has unbounded total variation on any interval $\op]0,r\clo[$, then any $L_{\op]0,r\clo[}$ is the uniform measure on $[0,1]^2$, see Example \ref{ex:two_atoms}, thus $0$ is not essential (in fact, even not right-essential in the sense given in Remark \ref{rem:essentiel_divers_bis}). On the contrary if $a$ has bounded total variation, none of the  measures $L_{\op]0,r\clo[}$ or $L_{\op]r,0\clo[}$ is uniform, hence $0$ is essential (see Remark \ref{rem:essentiel_divers_bis}).
\end{rem}

\begin{ex}\label{ex:intervalle_essentiel}A family $(\mu_t)_{t\in\R}$ may have an essential atomic interval and no essential atomic time. Let $I$ be any interval (but not a singleton, for our purpose) and take, e.g.,  $\mu_t=\delta_0$ for $t\in I$ and $\mu_t=\la\lfloor_{[0,1]}$ otherwise, then $I$ is the only essential atomic interval of $(\mu_t)_t$. Here, $\como_{[\{t_0\}]}=\mq$ for any $t_0\in I$. 
\end{ex}

We can now prove Theorem \ref{them:deuxieme_sec}\ref{item:them:deuxieme_a}.\medskip

\noindent{\em Proof of Theorem \ref{them:deuxieme_sec}\ref{item:them:deuxieme_a}.}\label{proof:themBa} Take $(R_n)_n$ a sequence as given by Lemma \ref{lem:convergence_couples}. Then, for any $s$ and $t>s$:
\begin{align*}
L_{R\cap\op]s,t\clo[}&=\lcsup\{L_{R_n\cap\op]s,t\clo[}\}\ \ \text{by Lemma \ref{lem:croissance_lc}}\\
&\geqlc\lcsup\{L_{R'_n\cap\op]s,t\clo[}\}\ \ \text{as $R'_n\subset R_n$ and by Lemma \ref{lem:existenceT}\ref{p1:lem:existenceT}}\\
&=\lim_{n\to\infty}L_{R'_n\cap\op]s,t\clo[}=L_{\op]s,t\clo[}\ \ \text{\begin{tabular}[t]{l}by Lemma \ref{lem:croissance_lc} and\\ by property of the sets $R'_n$.\end{tabular}}
\end{align*}
But by definition, $L_{R\cap\op]s,t\clo[}\leqlc L_{\op]s,t\clo[}$, thus $L_{R\cap\op]s,t\clo[}= L_{\op]s,t\clo[}$.
\qed\medskip

To provide a clear proof of Theorem \ref{them:deuxieme_sec}\ref{item:them:deuxieme_b} we introduce an auxiliary notion in Definition \ref{defi:fits} below. 
Suppose that some $P\in \ma(\mu,\mu_1,\ldots,\mu_k)$ is disintegrated as $P=\join(\mu,k_{P})$ and that $g:\R\to \R$ and $h:\R^k\to \R^k$ are measurable maps. May we disintegrate $(g\otimes h)_\#P$? In case $g$ is into one easily checks $(g\otimes \id)_\#P=\join(g_\#\mu,k^g_P)$ where $k^g_P$ is defined by $k^g_P(y,\cdot)= k^g_P(g^{-1}(y),\cdot)$, $g_\#\mu$-almost surely. Otherwise, the next notion and lemma will enable us to obtain a similar disintegration, and associated properties.

\begin{defi}\label{defi:fits}
We say that $g:\R\to \R$ \emph{fits} $P\in \ma(\mu,\mu_1,\ldots,\mu_k)$ if there exists a kernel $k^g_P$ such that $k^g_P(g(x),\cdot)=k_{P}(x,\cdot)$, $\mu$-almost surely.
\end{defi}

\begin{rem}\label{rem:random_from_center}\maz \point\label{item:rem:random_from_center_a} If $g$ fits $P=\join(\mu,k)$ and $h$ is a measurable map from $\R^k$ into itself we can disintegrate $(g\otimes h)_\#P$ as $\join(g_\#\mu,h_\#k^g_P)$
where $h_\#k^g_P(y,\cdot)= k^g_P(y,h^{-1}(\cdot))$.

\point\label{item:rallonge} If $g$ fits  $P$, it also fits $P.P'$ and $P\circ P'$.

\point\label{item:rem:random_from_center_commute_avec_circ} If $g$ fits  $P\in \ma(\mu,\nu_1,\ldots,\nu_k)$ or $Q\in\ma(\mu,\nu'_1,\ldots,\nu'_{k'})$, and if $f:\R^k\rightarrow\R^k$ and $h:\R^{k'}\rightarrow\R^{k'}$ are measurable maps, then:
$$(f\otimes g \otimes h)_\#(\trans P\circ Q)=(f\otimes g)_\#\trans P\circ (g \otimes h)_\#Q.$$

The proofs are direct, using the definitions of a kernel, composition and catenation in Subsection \ref{subsec:caten}. Notice that, in case $\mu=\la\lfloor_{[0,1]}$ and $g$ is a quantile function (which is the only case in which we will use the remark), point \ref{item:rem:random_from_center_commute_avec_circ} is a particular case of Lemma \ref{lem:distributivite}\ref{item:lem:distributiviteB} below. In the language of this lemma, ``$g$ fits $\trans P$ or $Q$'' means that $\trans P.L=\trans P$ or $L.Q=Q$, which both imply in particular that $\trans P.L.Q=\trans P.Q$
\end{rem}

We will need the following little technical result.

\begin{lem}\label{lem:int_fg}Take $a<b$ in $\R$ and $f$ and $g$ two positive bounded increasing functions from $\op]a,b\clo[$ to $\R$. Then $\frac1{b-a}\int_a^b fg\,\dd\la\geq \Bigl(\frac1{b-a}\int_a^b f\dd\la\Bigr)\Bigl(\frac1{b-a}\int_a^b g\dd\la\Bigr)$, with equality if and only if $f$ or $g$ is constant.
\end{lem}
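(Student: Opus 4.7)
The plan is to invoke the classical Chebyshev trick: since $f$ and $g$ are both increasing, the quantity $(f(x)-f(y))(g(x)-g(y))$ is pointwise non-negative on $\op]a,b\clo[^2$. Integrating this against $\la\otimes\la$ over $[a,b]^2$, expanding by Fubini (all integrals are finite thanks to boundedness), and rearranging yields
\[
2(b-a)\int_a^b fg\,\dd\la - 2\Bigl(\int_a^b f\,\dd\la\Bigr)\Bigl(\int_a^b g\,\dd\la\Bigr) \ge 0,
\]
which after division by $2(b-a)^2$ is the stated inequality.

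For the equality case, equality forces the non-negative integrand $(f(x)-f(y))(g(x)-g(y))$ to vanish $(\la\otimes\la)$-a.e. Assuming $f$ is not a.e.\ constant, the plan is to deduce that $g$ is. Monotonicity of $f$ provides two sub-intervals $I_1=\op]a,c_1\clo[$ and $I_2=\op]c_2,b\clo[$, both of positive Lebesgue measure and with $\sup_{I_1}f<\inf_{I_2}f$: pick a value $t$ strictly between $\inf f$ and $\sup f$ and use $c_1=\sup\{x:f(x)<t\}$ together with a companion $t'\in(t,\sup f)$ for the other side. On $I_1\times I_2$ the first factor is strictly negative, so the a.e.\ vanishing forces $g(x)=g(y)$ for a.e.\ $(x,y)\in I_1\times I_2$.

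Fubini, applied in both variables, then yields $y_*\in I_2$ and $x_*\in I_1$ such that $g\equiv g(y_*)$ a.e.\ on $I_1$ and $g\equiv g(x_*)$ a.e.\ on $I_2$; evaluating both a.e.\ identities shows $g(x_*)=g(y_*)$. Monotonicity of $g$ then sandwiches its value on the intermediate part $[c_1,c_2]$ between this common constant and itself, so $g$ is $\la$-a.e.\ constant on all of $\op]a,b\clo[$. The converse implication, that $f$ or $g$ constant yields equality, is immediate from the formula.

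The only mildly delicate step is the equality case, specifically the double use of Fubini to extract the constant value of $g$ on $I_1$ and $I_2$ and the appeal to monotonicity to fill in the gap $[c_1,c_2]$; but since monotone functions have at most countably many discontinuities, ``a.e.\ constant'' and ``constant'' coincide here up to a $\la$-null set, so no genuine obstacle arises.
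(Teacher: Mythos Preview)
Your proof is correct and takes a genuinely different route from the paper's. You use the classical Chebyshev symmetrisation trick: integrate the pointwise non-negative quantity $(f(x)-f(y))(g(x)-g(y))$ over $]a,b[^2$ and expand. The paper instead argues via stochastic ordering: it compares the two measures $\mu=f\la$ and $\nu=\bigl(\tfrac{1}{b-a}\int f\,\dd\la\bigr)\la$ on $]a,b[$, checks $\nu\leqs\mu$ directly from the fact that $f$ is increasing (so the running average $\tfrac{1}{c-a}\int_a^c f$ is below the global average), and then uses that integrating the increasing function $g$ preserves this order. This fits the paper's overall theme, where $\leqs$ is the central tool; your argument is more self-contained and symmetric in $f$ and $g$.

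For the equality case the two arguments also diverge: the paper reduces to the test functions $g_c=\one_{[c,b]}$ and observes that equality in $\int g_c\,\dd\mu=\int g_c\,\dd\nu$ forces $c\in\{a,b\}$ when $f$ is non-constant, then passes to limits; you instead read off the a.e.\ vanishing of $(f(x)-f(y))(g(x)-g(y))$ and localise. Both work. One small comment: your closing remark that ``a.e.\ constant'' and ``constant'' agree because monotone functions have countably many discontinuities is not quite the right justification---what you actually need (and implicitly use) is that if a nondecreasing $g$ satisfies $g(x_0)\neq v$ at some point, then $g\neq v$ on a whole half-interval of positive measure. That is a direct consequence of monotonicity, independent of the discontinuity count.
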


\begin{proof} The two measures $\mu=f\la$ and $\nu=\frac1{b-a}\int_a^b f\,\dd\la$ have the same mass $\int_a^b f\,\dd\la$. Besides, $\nu\leqs\mu$; indeed, for any $c\in\op]a,b\clo[$, since $f$ is increasing:
$$\frac1{c-a}\mu(\op]-\infty,c])=\frac1{c-a}\int_{a}^c f\,\dd\la\leq\frac1{b-a}\int_{a}^b f\,\dd\la=\frac1{c-a}\nu(\op]-\infty,c]).$$
Hence, since $g$ is  increasing, $\int g\,\dd\mu\geq\int g\,\dd\nu$, equivalent to the wished inequality.

Plainly, equality occurs if $f$ or $g$ is constant. If $f$ is not constant, we in fact proved that equality in $\int g_c \dd \mu\leq \int g_c \dd \nu$ holds for $g_c=\one_{[c,b]}$ if and only if $c\in\{a,b\}$. This remains true for positive combinations of functions $g_c$ and, being a little careful, for limits of them.
\end{proof}

\begin{lem}\maz\label{lem:distributivite} Take $\mu\in\p(\R)$, denote by $g=G_\mu$ its quantile function and $F=F_\mu$ its cumulative distribution function.  Take $L=\como(\la,\mu).\como(\mu,\la)$ and, similarly as in Notation \ref{nota:Asx}, let $(b_i)_{i\in{\mathcal{I}}}$ be the atoms of $\mu$ and for each $i$, $A_i$ be the interval $\op]F(b_i^-),F(b_i)\clo[$ of quantile levels merged by $g$ on $b_i$. Finally set $A=\cup_{i\in{\cal I}}A_i$. Moreover, take $((\nu_i)_i,(\nu'_i)_i)\in\p(\R)^{k+k'}$, $P\in \ma(\la\lfloor_{[0,1]},\nu_1\otimes\ldots\otimes\nu_k)$ and $Q\in\ma(\la_{[0,1]},\nu'_1\otimes\ldots\otimes\nu'_{k'})$. Suppose that $P$ and $Q$ have increasing kernel (for $\leqlc$ in place of $\leqs$ if $k\geq 2$ or $k'\geq2$). Then:\smallskip

\point\label{item:lem:distributiviteA}  $\trans P.L.Q$ equals $\trans P.Q$ if and only if for each $i\in{\mathcal I}$, at least one of the two kernel functions $x\mapsto k_P(x,\ \cdot\ )$ or $x\mapsto k_Q(x,\ \cdot\ )$ is constant on $A_i$,\smallskip

\point\label{item:lem:distributiviteB} For any measurable maps $f:\R^k\rightarrow\R^k$ and $h:\R^{k'}\rightarrow\R^{k'}$, $\trans P.L.Q=\trans P.Q\ \Rightarrow\ (f\otimes g \otimes h)_\#(\trans P\circ Q)=(f\otimes g)_\#\trans P\circ (g \otimes h)_\#Q.$
\end{lem}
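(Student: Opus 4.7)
The plan is to reduce both parts to a single explicit computation of the signed measure $\trans P.Q-\trans P.L.Q$ on cumulative rectangles, and to read off both the sign and the equality case from Lemma \ref{lem:int_fg}. The starting point is Remark \ref{rem:finesse}\ref{p3:rem:finesse}: $L=q_\mu.\trans q_\mu$ has kernel $\ell_\mu$ equal to the identity off $A$ and equal to the uniform probability on $A_i$ at any level $\alpha\in A_i$. Using the catenation formula \eqref{eq:catenation3} with $\la$ in the middle, I would obtain, for any measurable $B\subset\R^k$ and $D\subset\R^{k'}$,
\[
(\trans P.Q-\trans P.L.Q)(B\times D)\;=\;\sum_{i\in{\cal I}}\Delta_i(B,D),
\]
where
\[
\Delta_i(B,D)\eqdef\int_{A_i}\!\!fg\,\dd\la-\frac{1}{\la(A_i)}\Bigl(\int_{A_i}\!\!f\dd\la\Bigr)\Bigl(\int_{A_i}\!\!g\dd\la\Bigr),\qquad f=k_P(\cdot,B),\ g=k_Q(\cdot,D).
\]
For $B=\prod_j\op]-\infty,y_j]$ and $D=\prod_j\op]-\infty,z_j]$, the assumption that $P$ and $Q$ have increasing kernel makes $f$ and $g$ decreasing on $A_i$ (via $\leqlc$ when $k\geq 2$ or $k'\geq 2$, via $\leqs$ otherwise). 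Lemma \ref{lem:int_fg}, applied to suitable constants minus $f$ and $g$, then gives $\Delta_i(B,D)\geq 0$, with equality iff $f$ or $g$ is constant on $A_i$.

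For part \ref{item:lem:distributiviteA}, the implication ``$\Leftarrow$'' is immediate: constancy of one of $k_P,k_Q$ on each $A_i$ makes every $\Delta_i$ vanish identically, and a $\pi$-$\lambda$ argument extends this equality from cumulative rectangles to all measurable rectangles. For ``$\Rightarrow$'', the pointwise non-negativity of each $\Delta_i(B,D)$ forces, under the hypothesis, $\Delta_i(B,D)=0$ \emph{individually} for every cumulative $(B,D)$ and every $i$. I would then argue by contradiction: if for some $i$ both $\alpha\mapsto k_P(\alpha,\cdot)$ and $\alpha\mapsto k_Q(\alpha,\cdot)$ were non-constant on $A_i$ as $\p(\R^k)$- and $\p(\R^{k'})$-valued maps, then, since cumulative events separate probability measures on Euclidean space, one could find cumulative $B_0,D_0$ such that $k_P(\cdot,B_0)$ and $k_Q(\cdot,D_0)$ are \emph{simultaneously} non-constant on $A_i$, contradicting the equality case of Lemma \ref{lem:int_fg}.

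For part \ref{item:lem:distributiviteB} I would compute both sides on product rectangles $B\times C\times D$ by the catenation formula. The left-hand side equals $\int_{g^{-1}(C)}k_P(\alpha,f^{-1}(B))\,k_Q(\alpha,h^{-1}(D))\dd\la(\alpha)$. For the right-hand side, the kernels $K_1$ and $K_2$ of $(f\otimes g)_\#\trans P\in\ma(\mu,f_\#(\nu_1\otimes\cdots\otimes\nu_k))$ and $(g\otimes h)_\#Q\in\ma(\mu,h_\#(\nu'_1\otimes\cdots\otimes\nu'_{k'}))$ are obtained by conditioning on $g(\alpha)=x$: for $\mu$-a.e.\ non-atomic $x$ they equal $f_\#k_P(g^{-1}(x),\cdot)$ and $h_\#k_Q(g^{-1}(x),\cdot)$, while at each atom $b_i$ they are the uniform averages over $A_i$ of the corresponding pushforwards. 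Splitting $\int_C K_1(x,B)K_2(x,D)\dd\mu(x)$ into its non-atomic and atomic parts, the non-atomic part matches the corresponding part of the left-hand side by the change of variables $x=g(\alpha)$ on $[0,1]\setminus A$; the contribution at each $b_i\in C$ to ``right minus left'' equals precisely $-\Delta_i(f^{-1}(B),h^{-1}(D))$, which vanishes by the hypothesis $\trans P.L.Q=\trans P.Q$ and part \ref{item:lem:distributiviteA}.

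The main obstacle I anticipate is the bilinear upgrade in the ``$\Rightarrow$'' direction of \ref{item:lem:distributiviteA}: Lemma \ref{lem:int_fg} only yields a per-$(B,D)$ statement ``$k_P(\cdot,B)$ or $k_Q(\cdot,D)$ is constant on $A_i$'', and I must lift it to the uniform statement ``$\alpha\mapsto k_P(\alpha,\cdot)$ is constant on $A_i$, or $\alpha\mapsto k_Q(\alpha,\cdot)$ is''. The contradiction argument above is short but crucially relies on cumulative events separating probability measures, which is exactly where both the product structure of $\nu_1\otimes\cdots\otimes\nu_k$ and the choice of $\leqlc$ in the hypothesis become essential.
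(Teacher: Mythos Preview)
Your proposal is correct and follows essentially the same route as the paper: both compute $\trans P.Q-\trans P.L.Q$ on cumulative rectangles via the catenation formula \eqref{eq:catenation3}, obtain the atom-by-atom decomposition $\sum_i\Delta_i$, and invoke Lemma \ref{lem:int_fg} for the sign and equality case; part \ref{item:lem:distributiviteB} is then the same splitting of the catenation integral into its non-atomic and atomic contributions. You are in fact more explicit than the paper on two points it leaves terse: first, the monotonicity of $k_P(\cdot,B)$ and $k_Q(\cdot,D)$ on cumulative rectangles is \emph{decreasing} (the paper writes ``increasing'', a harmless slip since Lemma \ref{lem:int_fg} applies equally to two decreasing functions); second, the ``bilinear upgrade'' in the $\Rightarrow$ direction of \ref{item:lem:distributiviteA}, which the paper compresses into ``This, holding for any $B_1$ and $B_3$, means point \ref{item:lem:distributiviteA}'', is exactly your contradiction argument via cumulative events separating probability measures.
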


\begin{proof} We recall particularly here that we have throughout in mind the analogy between products of transport plans, or of their kernels, and products of matrices, hinted at in Remark \ref{rem:matrix}. In the following, $B_1\subset\R^k$ and $B_3\subset\R^{k'}$ stand for any sets of the type $\prod_j\op]-\infty,d_j]$ and $B_2$ for any interval $\op]-\infty,b]$; $(a,b,c)$ stand for variables in the target of $(f,g,h)$, and $(x,y,z)$ for variables in their sources. The coupling $L$, equal to $\trans q_r.q_r$ (with $\mu=\mu_r$) introduced in Remark \ref{rem:finesse}, is described in this remark. It is such that $\trans\!P.L.Q=(\trans\!P.L).(L.Q)=\trans((g\otimes \Id_{k})_\#P).((g\otimes \Id_{k'})_\#Q)$. In turn, for any measurable functions $f:\R^k\to\R^k$ and $h:\R^{k'}\to\R^{k'}$:
\begin{align}
&\int_{B_2}k_{(g\otimes f)_\#P}(b,B_1).k_{(g \otimes h)_\#Q}(b,B_3)\,\dd\mu(b)\notag\\
=\ &\sum_{i\in\mathcal{I},\,b_i\in B_2}\mu(b_i)[k_{(g\otimes f)_\#P}(b_i,B_1).k_{(g\otimes h)_\#Q}(b_i,B_3)]\notag\\
&+\int_{B_2\setminus\cup_i\{b_i\}}k_{P}(g^{-1}(b),f^{-1}(B_1)).k_{Q}(g^{-1}(b),h^{-1}(B_3))\,\dd\mu(b)\notag\\
&\qquad\qquad\text{as $g$ is injective outside of $A$}\notag\\
&=\sum_{i\in \mathcal{I},\,b_i\in B_2}\frac1{\la(A_i)}\int_{A_i}k_{P}(y,f^{-1}(B_1))\,\dd\la(y)\int_{A_i}k_{Q}(y,h^{-1}(B_3))\,\dd\la(y)\notag\\
&+\int_{g^{-1}(B_2)\setminus A}k_{P}(y,f^{-1}(B_1)).k_{Q}(y,h^{-1}(B_3))\,\dd\la(y)\label{eq:prelim_distributivite}
\end{align}
Then, using for instance the expression \eqref{eq:catenation3} given in Definition \ref{defi:catenation} of the catenation, and the fact that for any transport plans $R$ and $R'$, $R.R'=\pr^{1,3}_\#(R\circ R')$ we get:
\begin{align*}
&(\trans P.Q-\trans((g\otimes \Id)_\#P).(g\otimes \Id)_\#Q)(B_1\times B_3)\\
=\ &\int_{[0,1]}k_{P}(y,B_1)k_Q(y,B_3)\dd y-\int_{\R}k_{(g\otimes \Id)_\#P}(b,B_1)k_{(g \otimes {\rm Id})_\#Q}(b,B_3)\dd\mu(b)\\
=\ &\sum_{i\in{\cal I}}\int_{A_i}k_{P}(y,B_1)k_{Q}(y,B_3)\dd y-\frac1{\la(A_i)}\int_{A_i}k_{P}(y,B_1)\dd y\int_{A_i}k_{Q}(y,B_3)\dd y
\end{align*}
by \eqref{eq:prelim_distributivite} with $B_2=\R$, $f=\Id$, $h=\Id$ ---notice in particular that the two terms of the difference, viewed as integrals on $[0,1]$, differ possibly only on $A\subset[0,1]$. Now, after Lemma \ref{lem:int_fg} applied on each $A_i$ with $f_i:y\mapsto k_{P}(y,B_1)$ and $g_i:y\mapsto k_{Q}(y,B_3)$, which are increasing since $P$ and $Q$ have increasing kernel, each term of the sum is non-negative. Therefore, equality holds if and only if each vanishes, which again by Lemma \ref{lem:int_fg} means that for each $i$, $f_i$ or $g_i$ is constant. This, holding for any $B_1$ and $B_3$, means point \ref{item:lem:distributiviteA}. For point \ref{item:lem:distributiviteB}:

\begin{align*}
&\trans((g\otimes f)_\# P)\circ ((g \otimes h)_\#Q)(B_1\times B_2\times B_3)\\
=\ &\int_{B_2} k_{(f\otimes g)_\#P}(b,B_1)k_{(g\otimes h)_\#Q}(b,B_3)\,\dd \mu(b)\quad\text{by \eqref{eq:catenation3}}\\
=\ &\int_{g^{-1}(B_2)\setminus A} k_{P}(y,f^{-1}(B_1))k_{Q}(y,h^{-1}(B_3))\dd y\\
&\!\!\!\!+\sum_{i\in \mathcal{I},\,b_i\in B_2}\!\!\frac1{\la(A_i)}\int_{A_i}k_{P}(y,f^{-1}(B_1))\dd y\int_{A_i}k_{Q}(y,h^{-1}(B_3))\dd y\quad\text{by \eqref{eq:prelim_distributivite},}\\
=\ &\int_{g^{-1}(B_2)\setminus A} k_{P}(y,f^{-1}(B_1))k_{Q}(y,h^{-1}(B_3))\dd y\\
&\quad+\sum_{i\in \mathcal{I},\,b_i\in B_2}\!\int_{A_i}k_{P}(y,f^{-1}(B_1))k_{Q}(y,h^{-1}(B_3))\dd y\quad\text{as on each $A_i$,}\\
&\qquad\text{by \ref{item:lem:distributiviteA}, $k_{P}(\,.\,,f^{-1}(B_1))$ or $k_{Q}(\,.\,,h^{-1}(B_3))$ is a constant function}\\
=\ &\int_{g^{-1}(B_2)} k_{P}(y,f^{-1}(B_1))k_{Q}(y,h^{-1}(B_3))\dd y\\
=\ &(f\otimes g \otimes h)_\#(\trans P\circ Q)(B_1\times B_2\times B_3).\qedhere
\end{align*}
\end{proof}

\begin{rem}\label{rem:essentiel_divers_bis}From Lemma \ref{lem:distributivite}\ref{item:lem:distributiviteA}, we see that $t$ is an essential atomic time if and only if $\mu_t$ has at least one ``essential atom'' $x$ in the following sense:\smallskip

-- $x$ is a ``left-essential'' atom, in the sense that for some $\eps>0$, if $s\in\op]t-\eps,t\clo[$, the kernel $x\mapsto \trans\ell_{\op]s,t\clo[}(x,\espcdot)$ (see Notation \ref{nota:Asx}) is not constant on $A_{t,x}$,\smallskip

-- and $x$ is a ``right-essential'' atom, in the sense that  for some $\eps>0$,  if $s\in\op]t,t+\eps\clo[$, the kernel $x\mapsto\ell_{\op]t,s\clo[}(x,\espcdot)$ is not constant on $A_{t,x}$.\smallskip

\noindent Calling ``left-essential'' an atomic time $t$ such that $\mu_t$ has at least one left essential atom (and symmetrically ``right-essential'' atomic times), left- or right-essential atomic times must not be essential. However, left- or right-essential atomic times are also at most countably many. We provide a sketch of proof of that, written for right-essential times: show that any such time is a discontinuity point of some function $\varphi_s:\op]-\infty,s]\ni u\mapsto L_{[u,s]}$. Now if some $\varphi_s$ is discontinuous at $t$, all $\varphi_{s'}$ are, for $s'\in\op]t,s]$ (use Remark \ref{rem:multiplicationcontractante}). Hence, the union of the sets of discontinuity points of the functions $\varphi_s$, for $s\in\R$, is the same as their union for $s\in\Q$. Finally the claim below, left to the reader, implies that for each $\varphi_s$, this set is at most countable, which gives the result.\smallskip

\noindent{\em Claim.} If $\psi:u\mapsto M_u\in \ma(\la,\la)$ is increasing or decreasing for $\leqlc$, it has at most countably many (weak) discontinuity points.
\end{rem}

Now we can prove the end of Theorem \ref{them:deuxieme_sec}, i.e.\ its parts \ref{item:them:deuxieme_b} and \ref{item:them:deuxieme_c}.
\begin{proof}[Proof of Theorem \ref{them:deuxieme_sec}\ref{item:them:deuxieme_b}-\ref{item:them:deuxieme_c}]\label{proof:them:deuxieme}

Let us prove point \ref{item:them:deuxieme_b}. Take $R\subset\R$ and $\lev_R$ and $\lev$ in $\ma((\la_t)_{t\in\R})$ given by Definition \ref{defi:lev}. We will need the following claim, that extends Remark \ref{rem:mini_et_Q}, and relies eventually on Remark \ref{rem:finesse}\ref{p2:rem:finesse}:\medskip

\noindent{\em Claim.} If $R$ is finite, $G_\#\lev_{R}=\como_{[R]}$, where $G=(\otimes_{t\in\R} G_t)$.\medskip

\noindent Let us prove it. Take $R\subset\R$ finite and $n$ its cardinal. We must prove that for any finite $S=\cup_{i=1}^N\{s_i\}$, $G_\#\lev_R^{s_1,\ldots,s_N}=\como_{[R]}^{s_1,\ldots,s_N}$ where, by an abuse of notation we will often make use of, $G$ stands for $\otimes_{i=1}^N G_{s_i}$. It suffices to prove it  in the case $S\supset R$, which we suppose now. We introduce the cardinals $\ell_0,\ldots,\ell_n$ of the subsets of $S\setminus R$ situated between to consecutive elements $r_k$ and $r_{k+1}$ of $R\cup\{\pm\infty\}$. We reindex those subsets as $\{s_1^k,\ldots,s_{\ell_k}^k\}$. Since $\lev_R$ is Markov, $(\pr^S)_\#\lev_R=B_0\circ\cdots\circ B_m$, where:
\begin{align*}
B_k=L_{r_{k}}\circ\id_{\ell_{k}}\circ L_{r_{k+1}}=
\begin{cases}
\id_{\ell_0}\circ L_{r_1}&\text{if }k=0\\
L_{r_m}\circ \id_{\ell_m}&\text{if }k=m\\
L_{[r_k,r_{k+1}]}=L_{r_{k}}. L_{r_{k+1}}&\text{if }\ell_k=0\\
L_{[r_k,r_{k+1}]}=L_{r_{k}}\circ L_{r_{k+1}}&\text{if }\ell_k=1\\
L_{r_{k}}\circ\underbrace{\id_2\circ\cdots\circ\id_2}_{\ell_k-1}\circ L_{r_{k+1}}&\begin{nnarray}[t]{l}\text{otherwise,}\\\text{\quad i.e.\ if }\ell_k\geq 2,\end{nnarray}
\end{cases}
\end{align*}
hence we must prove that: $G_\#(B_1\circ\cdots\circ B_m)=\como_{[R]}^{s_1,\ldots,s_N}$. For all $r\in R$, the quantile function $G_r=G_{\mu_r}$ fits $q_r=\join(\mu_r,x\mapsto \delta_{G_r(x)})$, so we may take $k^{G_r}_{q_r}:x\mapsto \delta_x$ as given by Definition \ref{defi:fits}. Now, by Remark \ref{rem:random_from_center}\ref{item:rallonge}, it also fits $L_{\{r\}}=\trans L_{\{r\}}=q_r.\trans{q_r}$. Therefore, by Remark \ref{rem:random_from_center} \ref{item:rallonge}  and \ref{item:rem:random_from_center_commute_avec_circ}, $G_\#(B_1\circ\cdots\circ B_m)=B'_1\circ\ldots\circ B'_m$, where:
\begin{align*}
&B'_k=G_\# B_k=\\
&\begin{cases}
(\otimes_{s\leqslant r_1} G_{s})_\#\id_{\ell_0}\circ L_{r_1}&\text{if }k=0\\
(\otimes_{s\geqslant r_m} G_{s})_\#L_{r_m}\circ \id_{\ell_m}&\text{if }k=m\\
(G_{r_k}\otimes G_{r_{k+1}})_\#(L_{r_{k}}.L_{r_{k+1}})&\text{if }\ell_k=0\\
(G_{r_k}\otimes G_{s_1^k}\otimes G_{r_{k+1}})_\#(L_{r_{k}}\circ L_{r_{k+1}})&\text{if }\ell_k=1\\
(\otimes_{r_k\leqslant s\leqslant r_{k+1}}G_s)_\#(L_{r_{k}}\circ\id_{\ell_k}\circ L_{r_{k+1}})&\text{otherwise, i.e. }\ell_k\geq 2.
\end{cases}
\end{align*}

\noindent Proving that $B'_k=\como(\mu_{r_k},\mu_{s^k_1},\ldots,\mu_{s^k_{\ell_k-1}},\mu_{r_{k+1}})$ will now prove the claim. For simplicity we assume $k\notin \{0,m\}$ and $\ell_k\geq 1$ but the other cases, which are simpler,  can be proved similarly. Observe that:\smallskip

-- by definition of $\como$ and $G$, $\como(\mu_{r_k},\mu_{s_1^k},\ldots,\mu_{s^k_{\ell_k}},\mu_{r_{k+1}})=\law((G_{r_k}(U),\linebreak[1]\ldots,\linebreak[1]G_{r_{k+1}}(U))$ where $U$ is a variable of law $\la$ on $[0,1]$,\smallskip

-- $(\otimes_{r_k\leqslant s\leqslant r_{k+1}}G_s)_\#(L_{r_{k}}\circ\id_{\ell_k}\circ L_{r_{k+1}})$ is the law of $(G_{r_k}(U_1),\linebreak[1]G_{s^k_1}(U_2),\linebreak[1]\ldots,\linebreak[1]G_{s^k_{\ell_k}}(U_2),\linebreak[1]G_{r_{k+1}}(U_3))$ where $\law(U_1,U_2,\ldots,U_2,U_3)=B_k$. In particular, $\law(U_i)=\la$ for $i=1,2,3$, $\law(U_1,U_2)=L_{r_k}$, and $\law(U_2,U_3)=L_{r_{k+1}}$.\smallskip

\noindent So only the first and last variables may differ. Now, notice that by Remark \ref{rem:finesse}\ref{p2:rem:finesse} applied to $T=L_{\{r\}}=q_r.\trans q_r$, if two variables $(U,V)$ satisfy $\law(U,V)=L_{\{r\}}$, the law of $(G_{r}(U),G_r(V))$ is $\trans q_r.L_{\{r\}}.q_r=\trans q_r.q_r.\trans q_r.q_r=\trans q_r.q_r=\id_{\mu_r,2}$, therefore $G_{r}(U)=G_r(V)$ almost surely. Using this with $r={r_k}$ and $(U,V)=(U_1,U_2)$, respectively $r=r_{k+1}$ and $(U,V)=(U_2,U_3)$, we get respectively $G_{r_k}(U_1)=G_{r_k}(U_2)$ and $G_{r_{k+1}}(U_2)=G_{r_{k+1}}(U_3)$ almost surely. The claim is proven.\medskip

Now take $R\subset \R$ satisfying \eqref{eq:convergence_doug} and $(R_n)_n$ a nested exhaustion of it by finite sets. Let $\widetilde\lev\in\ma((\la_t)_{t\in\R})$ be the Markov process having $\widetilde\lev^{s,t}= \linebreak[1]L_{R\cap[s,t]}$ as 2-marginals. Recall that $L_{R\cap[s,t]}=L_{R\cap \{s\}}.L_{R\cap\op]s,t\clo[}.L_{R\cap \{t\}}=L_{R\cap \{s\}}.L_{\op]s,t\clo[}.L_{R\cap \{t\}}=\lim_{n\rightarrow\infty}\lev_{R_n\cap[s,t]}=\lim_{n\rightarrow\infty}\lev_{R_n}^{s,t}$. It exists by Corollary \ref{cor:consist}, whose assumption is satisfied by Proposition \ref{pro:compo}. All those transport plans
have increasing kernel, hence by Lemma \ref{lem:closed}\label{appel_lemme_closed}, for every $S=\cup_{i=1}^k\{s_i\}$:
\begin{align}
\lev_{R_n}^{s_1,\ldots,s_k}&=\lev_{R_n}^{s_1,s_2}\circ\cdots\circ \lev_{R_n}^{s_{k-1},s_k}\label{eq:lev_egal_catenation}\\
&\to\widetilde \lev^{s_1,s_2}\circ\cdots\circ \widetilde\lev^{s_{k-1},s_k}=\widetilde\lev^{s_1,\ldots,s_k}.\notag
\end{align}
Thus $\lev_{R_n}$ converges weakly to $\widetilde\lev$, and then by Remark \ref{rem:diese_contractant}, $G_\# \lev_{R_n}\to G_\# \widetilde\lev$. We are left with the tasks to prove $G_\# \lev_{R_n}=\como_{[R_n]}$ and $G_\# \widetilde\lev=\mq$. The former is our claim above. Let us prove the latter.\smallskip

\noindent{\em Note}:\label{lieu_preuve_glev} At the beginning of \S\ref{sec:construction} we announced \eqref{eq:Glev}, i.e.\ $G_\#\lev=\mq$. In fact, we prove $G_\# \widetilde\lev=\mq$, which is a bit more difficult. To get \eqref{eq:Glev} the same arguments work, the final reasoning with Lemma \ref{lem:distributivite}\ref{item:lem:distributiviteB} being replaced by a direct use of Remark \ref{rem:random_from_center}\ref{item:rem:random_from_center_commute_avec_circ}, as for all $\{r_1,\ldots,r_\ell\}$, $G_{r_\ell}$ fits $\lev^{r_1,\ldots,r_\ell}$.\smallskip

\noindent We recall that $\mq$ was defined as the unique Markov law with the same marginals of dimension 2 as $G_\# \lev$. But by \eqref{eq:convergence_doug} and Proposition \ref{pro:comp_conv}\ref{ccc}, the 2-marginals of $G_\#\widetilde\lev$ and $G_\#\lev$ are equal. Hence it is sufficient to prove that
$G_\# \widetilde \lev$ is Markov, i.e.\ that for all $(s_1,\ldots,s_k)$, $(G_\#\widetilde\lev)^{s_1,\ldots,s_k}=(G_\#\widetilde\lev)^{s_1,s_2}\circ\ldots\circ(G_\#\widetilde\lev)^{s_{1-1},s_k}$. Since $\widetilde\lev$ is Markov, $(\widetilde\lev)^{s_{2},\ldots,s_{k}}=\widetilde\lev^{s_{2},s_{3}}\circ\ldots\circ\widetilde\lev^{s_{k-1},s_{k}}$; besides, notice the following fact, that we will prove a bit below:\smallskip

\noindent{\em Fact.} For all $(s_i)_{i=1}^k\in\R^k$, $\lev^{s_1,\ldots,s_k}$ viewed as a transport plan from $(\R,\la)$ to $(\R^{k-1},\la^{\otimes k-1})$ has increasing kernel (for the order $\leqlc$ instead of $\leqs$).\smallskip

\noindent Hence we may conclude by using $k-1$ times Lemma \ref{lem:distributivite}\ref{item:lem:distributiviteB}. Let us check the first step. The measures $\trans(\widetilde\lev^{s_1,s_2})$ and $\widetilde\lev^{s_2,\ldots,s_k}$ have increasing kernel so that we have only to show that $\widetilde\lev^{s_1,s_2}.\widetilde\lev^{s_{2},s_3}=\widetilde{\lev}^{s_1,s_{2}}.L_{\{s_{2}\}}.\widetilde{\lev}^{s_{2},s_3}$, i.e., by definition of $\widetilde\lev$, that $L_{R\cap[s_1,s_2]}.L_{R\cap[s_2,s_3]}=L_{R\cap[s_1,s_2]}.L_{s_2}.L_{R\cap[s_2,s_3]}$. This amounts to check that:
$$\left\{\begin{array}{ll}L_{\semi s_1,s_2\clo[}.L_{\op]s_2,s_3\semi}=L_{\semi s_1,s_2\clo[}.L_{s_2}.L_{\op]s_2,s_3\semi}&\ \text{if}\ s_2\not\in R\\  L_{\semi s_1,s_2]}.L_{[s_2,s_3\semi}=L_{\semi s_1,s_2]}.L_{s_2}.L_{[s_2,s_3\semi}&\ \text{otherwise.}\end{array}\right.$$
The second point is true by Proposition \ref{pro:compo}, and if $s_2\not\in R$, by Proposition \ref{pro:denombrable}, $s_2$ is not an essential atomic time, which is the wanted equality. We finally must prove the fact stated above. Actually it is true for any catenation $P_1\circ\ldots\circ P_k$ of couplings $P_i$ from $\R$ to $\R$ with increasing kernel —and $\lev^{s_1,\ldots,s_k}$ is of this type, see \eqref{eq:lev_egal_catenation}. We check this for $k=2$; the same argument, applied by induction, gives the general case. Take $B_2$ and $B_3$ two intervals of the type $\op]-\infty,a]$ and $x\leqslant x'$; we must show that: $k_{P_1\circ P_2}(x,B_2\times B_3)\geqslant k_{P_1\circ P_2}(x',B_2\times B_3)$. As $P_2$ has increasing kernel, the function $k_{P_2}(\espcdot,B_3)$ is decreasing (thus also $\one_{B_2}k_{P_2}(\espcdot,B_3)$). As $P_1$ has increasing kernel, $k_{P_1}(x,\espcdot)\leqs k_{P_1}(x',\espcdot)$. Thus:
$$k_{P_1\circ P_2}(x,B_2\times B_3)\!=\!\int k_{P_1}(x,\dd y) \one_{B_2}k_{P_2}(y,B_3)\!\geqslant\! \int k_{P_1}(x',\dd y) \one_{B_2}k_{P_2}(y,B_3),$$
the wanted result. This proves part \ref{item:them:deuxieme_b} of the theorem.

Now we prove part \ref{item:them:deuxieme_c}. For point \ref{item:ordre_facultatif}, each 2-margin $\como_{[R_n]}^{s,t}$ and $\como_{[R'_n]}^{s,t}$ tends respectively to $\lcsup_n\{\como_{[R_n]}^{s,t}\}$ and $\lcsup_n\{\como_{[R'_n]}^{s,t}\}$, which are equal by Lemma \ref{lem:croissance_lc} since $\cup_nR_n=\cup_nR'_n$. Besides, if $R''\supset R$, taking a nested sequence $(R''_n)_n$ of finite sets such that $\cup_n R''_n=R''$ and $R''_n\supset R_n$ for all $n$, we get that, for any $s$ and $t>s$, $\lim_n\como_{[R''_n]}^{s,t}\geqlc \lim_n\como_{[R_n]}^{s,t}=\mq^{s,t}$, but by the minimality property of Theorem \ref{them:a}\ref{item:minimal}, for all $n$, $\mq^{s,t}\geqlc\como_{[R''_n]}^{s,t}$. Therefore, $\lim_n\como_{[R''_n]}^{s,t}=\mq^{s,t}$ and the result follows. For \ref{item:inatomique_facultatif}, it is sufficient to show that for any finite subsets $R$ and $E$ of $\R$ with $\mu_t$ diffuse for all $t\in E$, $\como_{[R\cup E]}^{s,t}=\como_{[R]}^{s,t}$. This follows plainly from the definitions. After Definition \ref{defi:quantile_discretement_markovien}  it amounts to show that for all $t\in E$, all $\{s_1,\ldots,s_k\}\subset\op]-\infty,t\clo[$ and all $\{s'_1,\ldots,s'_{k'}\}\subset\op]t,+\infty\clo[$, $\como^{s_1,\ldots,s_k,t}\circ\como^{t,s'_1,\ldots,s'_{k'}}=\como^{s_1,\ldots,s_{k},t,s'_1,\ldots,s'_{k'}}$. This comes from a trivial case of Remark \ref{rem:random_from_center}\ref{item:rem:random_from_center_commute_avec_circ} applied with all the measures equal to $\la_{[0,1]}$, $P=\trans\Id_{\la,k+1}$ (i.e.\ $P=\Id_{\la,k+1}$), $Q=\Id_{\la,k'+1}$ and $(f,g,h)=(\otimes_{i=1}^k G_{s_i},G_t,\otimes_{i=1}^{k'} G_{s'_i})$. Indeed, as $\mu_t$ is diffuse, $G_t$ is injective, hence trivially fits $P$ and $Q$. Then:
\begin{align*}
&\como^{s_1,\ldots,s_k,t}\circ\como^{t,s'_1,\ldots,s'_{k'}}\\
=\ &(\otimes_{i=1}^k G_{s_i}\otimes G_t)_\#\Id_{\la,k+1}\circ(G_t\otimes (\otimes_{i=1}^{k'} G_{s'_i}))_\#\Id_{\la,k'+1}\ \ \text{by def.\ of $\como$}\\
=\ &((\otimes_{i=1}^k G_{s_i})\otimes G_t\otimes (\otimes_{i=1}^{k'} G_{s'_i}))_\#\Id_{\la,k+1}\circ\Id_{\la,k'+1}\ \ \text{by the remark}\\
=\ &((\otimes_{i=1}^k G_{s_i})\otimes G_t \otimes (\otimes_{i=1}^{k'} G_{s'_i}))_\#\Id_{\la,k+k'+1}\\
=\ &\como^{s_1,\ldots,s_k,t,s'_1,\ldots,s'_{k'}}.
\end{align*}
To alleviate the writing, we prove the rest of \ref{item:inatomique_facultatif}, with $\# E'=1$ (the general proof is alike). Take $R$ given by point \ref{item:them:deuxieme_a} and $t\in\op]s,s'\clo[$ some unessential atomic time of $(\mu_t)_t$, then $L_{R_n\cap \op]s,s'\clo[\setminus\{t\}}=L_{R_n\cap \op]s,t\clo[}.L_{R_n\cap \op]t,s'\clo[}\rightarrow L_{\op]s,t\clo[}.L_{\op]t,s'\clo[}$ by Lemma \ref{lem:closed}. Now $L_{\op]s,t\clo[}.L_{\op]t,s'\clo[}=L_{\op]s,s'\clo[}$ since $t$ is unessential. Thus $R'=R\setminus\{t\}$ satisfies \eqref{eq:convergence_doug}, hence \eqref{eq:cv}, by point \ref{item:them:deuxieme_b}.

Let us prove \ref{item:essentiel_necessaire}. Suppose that $I$ is some essential atomic interval, i.e.\ there is an interval $J\supset I$ such that $J\setminus I$ is disconnected and  $L_{J} \neq L_{J\setminus I}$, and assume that $I\cap R=\varnothing$. Since $L_{J} \geqlc L_{J\setminus I}$, this means that there is some $(a,a')\in\op]0,1\clo[^2$ such that $L_{J}([0,a]\times[0,a'])< L_{J\setminus I}([0,a]\times[0,a'])$. Denote $(\inf J,\sup J)\in\R^2$ by $(s,s')$. If $a\not\in A_{s}$ and $a'\not\in A_{s'}$ (see Notation \ref{nota:Asx}), i.e.\ if $G([0,a^{(\prime)}])=[0,G(a^{(\prime)})]$, then, pushing the inequality by $G$, and reminding that, since $I\cap R=\varnothing$, $L_{J\setminus I}\geqlc \lim_n L_{R_n\cap J}$, so that $L_{J\setminus I}([0,a]\times[0,a'])\leqslant \lim_n L_{R_n\cap J}([0,a]\times[0,a'])$,  we get:
\begin{equation}\label{eq:mq_neq_como_Rn}
\mq^{s,s'}(\op]-\infty,G(a)]\times\op]-\infty,G(a')])< \lim_{n\to\infty}\como_{[R_n]}^{s,s'}(\op]-\infty,G(a)]\times\op]-\infty,G(a')]).
\end{equation}
Therefore $ \lim_{n\to\infty}\como_{[R_n]}$ cannot be equal to $\mq$ and we are done. If $a\in A_s$, let $A=\op]a_0,a_1\clo[$ be the connected component of $A_s$ containing $a$; notice that $\{a_0,a_1\}\cap A=\varnothing$. We prove that \eqref{eq:mq_neq_como_Rn} holds with $a_0$ or $a_1$ in place of $a$. By construction of $L_E$ for any set $E$ having $s$ as minimum, the functions $b\mapsto k_{L_E}(b,\,.\,)$, the values of which are measures on $[0,1]$, are constant on $A$. Applying this for $E=L_{\{s\}\cup J}$ and $E=L_{\{s\}\cup J\setminus I}$, we get that either (i) below is true, or the restrictions of $L_{ J}$ and $L_{ J\setminus I}$ to $[0,a_0]\times[0,a']$ coincide, thus necessarily (ii) is true (we let $\{s\}\cup J$ appear instead of $J$ but this does not matter by Remark \ref{rem:comp_mini}):\smallskip

{\bf(i)} $L_{\{s\}\cup J}([0,a_0]\times[0,a'])< L_{\{s\}\cup J\setminus I}([0,a_0]\times[0,a'])$,\smallskip

{\bf(ii)} on $A$, $k_{L_{\{s\}\cup J}}(b,[0,a'])< k_{L_{\{s\}\cup J\setminus I}}(b,[0,a'])$,\smallskip

\noindent and if (i) is false and (ii) is true then $L_{ J}([0,a_1]\times[0,a'])< L_{ J\setminus I}([0,a_1]\times[0,a'])$. Hence anyway \eqref{eq:mq_neq_como_Rn} holds with $a_0$ or $a_1$ in place of $a$. Proceed symmetrically for $a'$ if $a'\in A_{s'}$. This shows point \ref{item:essentiel_necessaire}.%
\end{proof}

\begin{rem}\label{rem:demo_nonatomique} The proof of Theorem \ref{them:deuxieme_sec}\ref{item:them:deuxieme_c}\ref{item:inatomique_facultatif} shows directly the last sentence of Remark \ref{rem:como_markovien}\ref{p1:rem:como_markovien}. Indeed it shows that if every $\mu_t$ is diffuse, then for all finite $R\subset\R$, $\como_{[R]}=\como$, which gives an expression of the Markov property, see Definition \ref{defi:markov2}.
\end{rem}

\section{A Markov probabilistic representation of the continuity equation on $\R$}\label{sec:cont_eq}

\subsection{Introduction}\label{subsec:intro5} As we briefly mentioned in \S \ref{ss:minimality} and explain below in Reminder \ref{reminder:wasserstein}, quantile couplings are optimal transport plans for the quadratic cost function. This suggests that the quantile process $\como$ or even the Markov-quantile process $\mq$ could be minimizers of dynamical optimal transport problems. This is true and rather well-known for $\como$; one approach is in \cite{Pass}. In this section we show that this also makes sense for $\mq$, and in which terms it can be formulated. Those terms make sense in dimension greater than one, leading to the question of a generalization of $\mq$ in those dimensions, see Item \ref{p2:intro5} in the list below in this introduction.

Here is the minimization problem at stake. We consider a now classical action introduced by Benamou and Brenier in the context of the incompressible Euler equations, see Definition \ref{defi:action}. If $X=(X_t)_{t\in \R}$ is a process, its action is:
\[
\A(X)=\E\int_{0}^1|\dot{X_t}|^2\,\dd t=\int_{0}^1\E|\dot{X_t}|^2\,\dd t.
\]
Note however that the original definition by Benamou and Brenier involves the velocity vector fields (one usually calls it ``Eulerian'') while we present its ``Lagrangian'' dual action involving the trajectories $t\mapsto X_t$. As it will become clear in this section, this action for infinitely many marginals is simply related to the quadratic transport problem with two marginals.

The origin of this research goes back to the interpretation by Arnold in \cite{Arn} of the solutions of the incompressible Euler equations on a compact Riemannian manifold as geodesic curves in the space of diffeomorphisms preserving the volume $\mathrm{Vol}$. In \cite{BeBr}, Benamou and Brenier relaxed the minimisation problem attached to those geodesics and introduced \emph{generalized geodesics} that are, in probabilistic terms, continuous processes $X=(X_t)_{t\in[0,1]}$ with $\law(X_t)=\vol$ at every time. Their minimisation property is encoded in the fact that they minimize $\A$ under the constraint that the marginals $\law(X_t)$ and $\law(X_0,X_1)$ are prescribed. 

Later, see \cite{JKO, Ott}, Otto and his coauthors discovered that the solutions of some PDEs, in particular the Fokker--Planck and porous medium equations can be thought of as curves of maximal (negative) slope for some functionals $F$ in the space of probability measures endowed with the 2-transport distance (alias Wasserstein distance). It catches a comprehensive picture of the infinite dimensional manifold of measures used in optimal transport, building a differential calculus on it, called ``Otto calculus''. In this context, the derivative of the curve $(\mu_t)_t$ at time $t$ shall be seen as a vector field $v_t$ of gradient type, square integrable with respect to $\mu_t$, such that the transport (or continuity) equation:
\begin{equation}\label{eq:continuity_intro}\frac{\dd}{\dd t}\mu_t+\mathrm{div}(\mu_t v_t)=0
\end{equation}
is satisfied. The speed of the curves of maximal slope of $F$ is $\sqrt{\int |v_t|^2\,\dd\mu_t}$, which corresponds to $\sqrt{\E(|\dot{X}_t|^2)}$ in Benamou--Brenier's action; it has to coincide with the opposite of the slope of $F$ at $\mu_t$, hence the derivative of $t\mapsto F(\mu_t)$ is $-\int |v_t|^2\,\dd\mu_t$.\medskip 

A thorough study of those questions has been conducted in the monograph \cite{AGS} by Ambrosio, Gigli and Savar\'e (see also \cite{Bernard,AS}) under very loose assumptions on the curve $(\mu_t)_t$ or the vector field $(v_t)_t$. They proved, in particular, that the vector field $(v_t)_t$ is uniquely determined if $(\mu_t)_t$ is absolutely continuous of order 2 (see ``$\mathcal{AC}_2$'' in \S\ref{subsec:ac_deux}). They showed also that a process minimizing the action, for prescribed marginals $\mu_t$, exists, by using limits of solutions of mollified versions of \eqref{eq:continuity_intro}. Almost every trajectory of the process is in fact solution of the Cauchy problem $\dot{X}_t=v_t(X_t)$. In a further work \cite{Lis}, Lisini studied the $\mathcal{AC}_2$ curves of probability measures on a metric space. In this context where the continuity equation is not defined, he also proved that the action can be minimized. 

\medskip

Now here is the link with our work: In both the results by Ambrosio-Gigli-Savar\'e and Lisini, no statement is given on the \emph{uniqueness} of the minimizing process $(X_t)_t$. But on $\R$, the Markov-quantile process turns out to be a minimizing process, which yields a canonical minimizer. That notion depends of course on the chosen criterion that makes it canonical:  for instance, the quantile process is a minimizer and can also be considered canonical. Our criterion and its interest are as follows. In this context where $(\mu_t)_t\in{\cal AC}_2([0,1],\R^d)$, i.e.\  has finite energy, using Theorems A and B gives rise to the two following results when $d=1$. The first one is a slightly enhanced version of Theorem \ref{them:d} given in the introduction.\maz\smallskip

\point\label{p1:intro5} Theorem \ref{them:action} makes explicit under which assumptions and in which sense $\mq$ is a canonical minimizer of the action. The existence of such a minimizer, in any dimension $d$, is classical, and our work adds a uniqueness result when $d=1$, under the assumption that it is {\em Markov}, and obtained as a limit of products of couplings.\smallskip

\point\label{p2:intro5} Theorem \ref{them:commun_d_et_1} obtains the process $\mq$, which is a minimizer of ${\cal A}$, as a limit of interpolating processes belonging to $\disp_{R_n}$ (see Definition \ref{defi:disp}) instead of the limit of $(\como_{[R_n]})_n$ as in Theorem B. Using limits of interpolating processes is the classical way to obtain minimizers (see  \cite[Chapter 7]{Vi2}, \cite{Lis}) in any dimension $d$, so this places our work within this context. The interest of doing it is that then, our process (that exists for $d=1$) satisfies a uniqueness property that makes sense for any $d$. It opens the question of the existence and uniqueness of a minimizer satisfying it, for any $d$, i.e.\ of a counterpart of the Markov-Quantile process in any metric space ---see Open question \ref{ss:unique}.\smallskip

In particular, point \ref{p1:intro5} shows that $\mq$ is concentrated on absolutely continuous curves $\gamma$. In point \ref{p2:intro5}, notice that in the general geodesic Polish metric space, the notions from Optimal Transport Theory yield good extensions of the quantile notions but, whereas a quantile catenation does not make sense, the (Markov) catenation defined in \S\ref{subsec:caten} does, as well as the Markov property. That is why the Markov-quantile process appears to be a better canonical minimizing process (open to generalizations), than the quantile process.\smallskip

In \S\ref{subsec:ac_deux}--\ref {subsec:continuity} we introduce the notions we need, which are mostly classical, and prove the propositions leading to Theorems \ref{them:action} and \ref{them:commun_d_et_1}. In \S\ref{subsec:them5} we state and prove them. As told above, in this section we give specific results in dimension $d=1$, inside a general framework making sense in all dimension $d$. Hence we work in $\R^d$ everywhere this makes sense.

\begin{notation}\label{nota:courbes_continues}In this section $(\mu_t)_{t}$ is still a family of probability measures on $\R$ or $\R^d$; in this \S\ref{sec:cont_eq} we index it by $[0,1]$; $(\XX,d)$ denotes some metric space ---the related notions will be used with $\XX=\R^d$ or $\XX=\p_2(\R^d)$ introduced below--- and $\mathcal{C}([0,1],\mathcal{X})$, or briefly $\mathcal{C}$ in case $\XX=\R$, the space of continuous curves from $[0,1]$ to $\XX$, with the $\sigma$-algebra induced by the topology of\/ \mbox{$\|\cdot\|_\infty$}. Instead of\/ $\ma((\mu_t)_t)$, we work here on the set\/ $\ma_\mathcal{C}((\mu_t)_t)$ of real probability measures on $\mathcal{C}$ (or $\mathcal{C}([0,1],\R^d)$, according to the context) with marginal $\mu_t$ for every $t\in [0,1]$.\end{notation}

\begin{rem}\label{rem:concentrated}To any $\Gamma\in\ma_\mathcal{C}((\mu_t)_t)$ corresponds $\Gamma'\in\ma((\mu_t)_t)$ defined by $\Gamma'(B)=\Gamma(B\cap\mathcal{C}([0,1],\R^d))$ for any $B$ in the cylindrical $\sigma$-algebra of $(\R^d)^{[0,1]}$. Notice that for any dense countable set $D$ of $[0,1]$, $\Gamma'((\pr^D)^{-1}\linebreak[1](\mathcal{C}(D,\R^d)))=1$. Conversely, suppose that some $Q\in\ma((\mu_t)_t)$ satisfies this property. Then we say that $Q$ is ``concentrated on $\mathcal{C}((\mu_t)_t)$'' and there is a unique $\Gamma_Q\in\ma_\mathcal{C}((\mu_t)_t)$ such that $\Gamma_Q'=Q$. So by a slight abuse, we will not distinguish $\Gamma$ and $\Gamma'$ or $Q$ and $\Gamma_Q$. For $\Gamma\in\ma_\mathcal{C}((\mu_t)_t)$ and $R$ a finite subset of $\R$, this gives sense, e.g.,  to $\Gamma_{[R]}$ after Definition \ref{defi:quantile_discretement_markovien}.
\end{rem}

\subsection{Absolutely continuous curves of order 2 and the Wasserstein distance}\label{subsec:ac_deux} Recall Convention \ref{conv:croissant}: ``$R=\{r_1,\ldots,r_m\}$'' implies $r_1<\ldots<r_m$.

\label{defi:parti}
\begin{defi}A {\em partition} of an interval $[a,b]$ is a finite subset $R=\{r_0,\ldots,r_{m+1}\}$ of $[a,b]$ with $(r_0,r_{m+1})=(a,b)$. We denote the set of partitions of $[a,b]$ by $\parti([a,b])$. The {\em mesh} $|R|$ of $R$ is $\max_{k=0}^m|r_{k+1}-r_k|$.
\end{defi}

\label{nota:length}
\begin{reminder-notation}\maz\ Let $\gamma$ be a curve in $\mathcal{C}([0,1],\mathcal{X})$. \point\ For $0\leq a<b\leq 1$ the (possibly infinite) length of $\gamma$ on $[a,b]$ is defined as $\Var_a^b(\gamma)=\sup_{R\in \parti([a,b])} \sum_{k=0}^{m} d(\gamma(r_k),\gamma(r_{k+1}))$, where $R=\{r_0,r_1,\ldots,r_m,r_{m+1}\}$.

\point\ The curve $\gamma$ is said to be absolutely continuous if for every $\delta>0$ there exists $\eps$ such that for any family of intervals $\op]a_k,b_k\clo[$ satisfying $\sum(b_k-a_k)\leq \eps$ it holds $\sum d(\gamma(a_k),\gamma(b_k))\leq \delta$. We denote by $\mathcal{AC}([0,1],\mathcal{X})$ the space of such curves. As explained for instance in \cite{AGS}, where the definition is slightly different but equivalent, these curves admit for almost every $t$ a metric derivative which we denote by $|\dot\gamma|(t)$:
$$|\dot\gamma|(t)=\lim_{h\to 0}\frac{d(\gamma(t+h)-\gamma(t))}{h}$$
(if $\mathcal{X}=\R^n$ and $\gamma$ is derivable this is $|\dot\gamma(t)|$, so the notation is consistent). Then $\Var_a^b(\gamma)=\int_a^b|\dot\gamma|(t)$ and $\Var_a^b(\gamma)$ coincides with the total variation of $\gamma$ on $[a,b]$. Equivalent definitions of absolutely continuous curves are that $t\mapsto\Var_0^t(\gamma)$ is absolutely continuous, or that there exists an integrable function $m:[0,1]\to \R^+$ such that $d(\gamma(a),\gamma(b))\leq \int_a^b m\,\dd\la$ for every $a<b$.

\point We also introduce the space $\mathcal{AC}_2([0,1],\mathcal{X})\subset\mathcal{AC}([0,1],\mathcal{X})$ of {\em absolutely continuous curves $\gamma$ of order two}, i.e.\ such that $\int_0^1|\dot{\gamma}|^2<+\infty$. Notice that Lipschitzian curves are absolutely continuous of order two.
 \end{reminder-notation}

Now we introduce the notion of energy and the subsequent Proposition \ref{pro:relax}, which seems classical but for which we could not find any reference in the literature. Similar results concerning the length, in particular for geodesic curves, can be found in \cite{AT, AGS}. We will consider them as known.

\begin{defi}\label{defi:energie}Let $\gamma$ be a mapping from $[0,1]$ to a metric space $(\mathcal{X}, d)$. For $0\leq a<b\leq 1$ the energy $\EE_a^b(\gamma)$ of $\gamma$ on $[a,b]$ is defined as:
\begin{align}\label{eq:defenergy} 
\EE_a^b(\gamma)=\sup_{R\in\parti([a,b])} \EE_a^b(\gamma, R),\ \text{where:}
\end{align}
$$\EE_a^b(\gamma,\{r_0,\ldots,r_{m+1}\})=\sum_{k=0}^{m}d(\gamma(r_{k}),\gamma(r_{k+1}))^2/(r_{k+1}-r_k).$$
\end{defi}

\noindent{\em Note.} For $[a,b]=[0,1]$ we may denote $\EE_0^1$ and $\Var_0^1$ by $\EE$ and $\Var$.

\begin{pro}\label{pro:relax}\maz
Let $\gamma$ be a mapping from $[0,1]$ to $\mathcal{X}$. Then:\smallskip

\point\label{item:pro:relaxA} If $\EE(\gamma)<\infty$ then $\gamma$ is continuous, i.e.\ $\mathcal{AC}_2\subset{\cal C}$.\smallskip

\point\label{item:pro:relaxB}\  {\bf(i)} If a partition $R'\in\parti([a,b])$ is finer than $R$, $\EE_a^b(\gamma,R)\leq\EE_a^b(\gamma,R')$. {\bf(ii)} If $\gamma$ is continuous, the limit $\lim_{|R|\to 0} \EE(\gamma,R)$ is well-defined and equals $\EE(\gamma)$. {\bf(iii)} $\EE(\gamma)$ is finite if and only if $\gamma\in \mathcal{AC}_2([0,1],\mathcal{X})$; in this case $\EE_a^b(\gamma)=\int_a^b|\dot{\gamma}|^2(t)\dd t$ for all $a<b$.\smallskip

\point\   $\EE(\gamma)$ is lower semi-continuous for the  uniform convergence.
\end{pro}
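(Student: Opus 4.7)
My plan is to proceed in the natural order (b)(i), (a), (b)(iii), (b)(ii), (c), everything being built on one convexity estimate. For $r_k<r<r_{k+1}$, the triangle inequality $d(\gamma(r_k),\gamma(r_{k+1})) \leq d(\gamma(r_k),\gamma(r))+d(\gamma(r),\gamma(r_{k+1}))$ composed with the elementary Cauchy--Schwarz bound $(u+v)^2 \leq (u^2/\alpha + v^2/\beta)(\alpha+\beta)$, applied with $\alpha = r-r_k$ and $\beta = r_{k+1}-r$, yields
\[
\frac{d(\gamma(r_k),\gamma(r_{k+1}))^2}{r_{k+1}-r_k} \leq \frac{d(\gamma(r_k),\gamma(r))^2}{r-r_k} + \frac{d(\gamma(r),\gamma(r_{k+1}))^2}{r_{k+1}-r}. \qquad (\ast)
\]
Iterating $(\ast)$ at each point of $R'\setminus R$ gives the refinement monotonicity in (b)(i). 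For (a), inserting $\{s,t\}$ into any partition of $[0,1]$ shows that $d(\gamma(s),\gamma(t))^2/(t-s)$ appears as a single term of $\EE(\gamma,R)$, hence $d(\gamma(s),\gamma(t))^2 \leq (t-s)\EE(\gamma)$; so $\gamma$ is $\tfrac{1}{2}$-Hölder, in particular continuous.

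For (b)(iii), if $\gamma \in \mathcal{AC}_2$ then Cauchy--Schwarz on each subinterval yields $d(\gamma(r_k),\gamma(r_{k+1}))^2 \leq (r_{k+1}-r_k)\int_{r_k}^{r_{k+1}} |\dot\gamma|^2\,\dd t$, hence $\EE_a^b(\gamma, R) \leq \int_a^b |\dot\gamma|^2\,\dd t$ for every $R$, and so $\EE_a^b(\gamma) \leq \int_a^b |\dot\gamma|^2\,\dd t < +\infty$. For the reverse inequality, fix a refining sequence $(R_n)$ with $|R_n|\to 0$ and dense union, and view the ratios $\psi_n(t):=d(\gamma(r_k^n),\gamma(r_{k+1}^n))/(r_{k+1}^n-r_k^n)$ on $[r_k^n,r_{k+1}^n)$ as step functions with $\EE(\gamma,R_n)=\int_0^1 \psi_n^2\,\dd t$. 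Using Lebesgue points of $|\dot\gamma|$ and a Fatou/Vitali-type argument one obtains $\int|\dot\gamma|^2\,\dd t \leq \liminf_n \EE(\gamma,R_n) \leq \EE(\gamma)$, giving equality. Conversely, if $\EE(\gamma)<\infty$, Cauchy--Schwarz in the form $\sum_k d(\gamma(r_k),\gamma(r_{k+1}))\leq \sqrt{(b-a)\EE_a^b(\gamma,R)}$ applied to any partition of $[a,b]$ yields $\mathrm{Var}_a^b(\gamma)^2 \leq (b-a)\EE(\gamma)$; this, combined with (a), implies $\gamma\in \mathcal{AC}$, so $|\dot\gamma|$ exists a.e., and the same Lebesgue-point argument applied to the resulting scalar variation gives $|\dot\gamma|\in L^2$, hence $\gamma\in \mathcal{AC}_2$. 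The identification here is essentially Theorem~1.1.2 of \cite{AGS} (see also \cite{Lis}); I only need to adapt it to our definition of $\EE$ via supremum over partitions.

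Part (b)(ii) will then follow without much extra effort. The inequality $\limsup_{|R|\to 0}\EE(\gamma,R)\leq \EE(\gamma)$ is immediate from the supremum definition. If $\EE(\gamma)<+\infty$, the Fatou argument of the previous paragraph applies to any sequence $(R_n)$ with $|R_n|\to 0$, not only refining ones, giving $\liminf \EE(\gamma,R_n)\geq \int|\dot\gamma|^2\,\dd t = \EE(\gamma)$. If $\EE(\gamma)=+\infty$, for any $M$ pick $R_0$ with $\EE(\gamma,R_0)>M$; for $R$ with $|R|$ less than the minimal gap of $R_0$, $R\cup R_0$ refines $R$ and contains $R_0$, so by (b)(i) $\EE(\gamma,R\cup R_0)>M$, and the same averaging argument shows that the correction $\EE(\gamma,R\cup R_0)-\EE(\gamma,R)$ vanishes as $|R|\to 0$. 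Finally, for (c), each map $\gamma\mapsto\EE(\gamma,R)$ with $R$ fixed depends only on finitely many values of $\gamma$ through the continuous function $d$, hence is continuous for the uniform topology; therefore $\EE=\sup_R \EE(\,\cdot\,,R)$ is lower semi-continuous as a supremum of continuous functions.

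The main technical obstacle is the lower bound in (b)(iii): a naive hope that $\psi_n\to |\dot\gamma|$ pointwise a.e.\ fails in a general metric space (when $t$ sits near the midpoint of its enclosing interval, the two chords may partially cancel in the triangle inequality), so the limit must be taken in an averaged, $L^2$ sense via Lebesgue points of $|\dot\gamma|$ rather than pointwise.
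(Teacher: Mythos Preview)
Your outline is correct and largely parallels the paper: the convexity bound $(\ast)$ for (b)(i), the $\tfrac12$-H\"older estimate for (a), Cauchy--Schwarz for the upper bound $\EE\leq\int|\dot\gamma|^2$ in (b)(iii), the inequality $\Var_a^b(\gamma)^2\leq(b-a)\EE_a^b(\gamma)$ yielding absolute continuity, and the supremum-of-continuous-functions argument for (c) are all the same.

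The substantive differences lie in (b)(ii) and in the lower bound of (b)(iii). The paper treats (b)(ii) directly and elementarily, \emph{before} (b)(iii): given $\eps>0$, fix $R_0$ with $\EE(\gamma,R_0)\geq\EE(\gamma)-\eps$; for any $R'$ of small mesh, replace each node $r_k$ of $R_0$ by the two nearest points $r_k^\pm\in R'$, so that $\{r_k^\pm\}_k$ coarsens $R'$ and continuity of $\gamma$ gives $\sum_k d(\gamma(r_k^+),\gamma(r_{k+1}^-))^2/(r_{k+1}^--r_k^+)\geq\EE(\gamma,R_0)-\eps$, whence $\EE(\gamma,R')\geq\EE(\gamma)-2\eps$ by (b)(i). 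This handles $\EE(\gamma)=+\infty$ with no extra work and avoids a subtle issue in your ``correction vanishes'' step: for merely continuous $\gamma$, the new terms $d(\gamma(r_k),\gamma(r))^2/(r-r_k)$ appearing in $\EE(\gamma,R\cup R_0)-\EE(\gamma,R)$ need not be small as $|R|\to 0$.

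For the lower bound in (b)(iii), the paper bypasses your step-function obstacle entirely by a shift-averaging trick with the difference quotient $a_h(t)=d(\gamma(t+h),\gamma(t))/h$. By definition of the metric derivative, $\liminf_{h\to 0}a_h(t)=|\dot\gamma|(t)$ a.e., so Fatou applies pointwise once one bounds $\int_0^{1-\eps}a_h^2\,\dd t\leq\EE(\gamma)$ uniformly in $h$. That bound comes from writing
\[
\int_0^{1-\eps}a_h(t)^2\,\dd t\;\leq\;\frac1h\int_0^h\sum_{i=0}^n \frac{d(\gamma(ih+\tau),\gamma((i+1)h+\tau))^2}{h}\,\dd\tau
\]
with $(n+1)h\geq 1-\eps$: for each shift $\tau$ the inner sum is $\EE(\gamma,R_\tau)\leq\EE(\gamma)$ for the uniform partition $R_\tau=\{ih+\tau\}_i$. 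This is self-contained and sidesteps precisely the pointwise-failure issue you identify with $\psi_n$; your averaged $L^2$ route via Lebesgue points can be made to work but is more delicate than you indicate.
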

\begin{proof} {\bf(a)} If $\EE(\gamma)<\infty$, there is a bound $M$ such that for any $s$ and $t>s$, $\frac{d(\gamma(s),\gamma(t))^2}{|s-t|}\leqslant M$, i.e.\ $d(\gamma(s),\gamma(t))\leqslant M\sqrt{|s-t|}$, which gives the result.\medskip

{\bf(b)}{\bf(i)} This follows from the fact that, for $\alpha, \beta>0$ and $a+b\geqslant c$:
$$\frac{1}{\alpha}a^2+\frac{1}{\beta}b^2\geqslant \frac1{\alpha+\beta}c^2,$$
itself given by the inequality $\bigl(a\sqrt{\beta/\alpha}-b\sqrt{\alpha/\beta}\bigr)^2\geqslant0$.\medskip

{\bf(ii)} We treat {(ii)} in the case $\EE(\gamma)<\infty$, letting the reader adapt the details in the case $\EE(\gamma)=\infty$. Take $\eps>0$ and $R=\{r_0,\ldots,r_{m+1}\}\in\parti([0,1])$ such that $\EE(\gamma,R)\geqslant\EE(\gamma)-\eps$. Set $\alpha=\frac{|R|}{3}$, so that if $R'=\{r'_0,\ldots,r'_{m'+1}\}\in\parti([0,1])$ and $|R'|\leq\alpha$ then for all $k\in\{0,\ldots,m\}$, $\sharp\{i\in\N:\,r'_i\in[r_k,r_{k+1}]\}\geqslant 2$. For any $R'\in\parti([0,1])$ such that $|R'|\leq\alpha$, we denote $(\min(R'\cap[r_k,r_{k+1}]),\max(R'\cap[r_k,r_{k+1}]))$ by ($r^+_k,r^-_{k+1})$. Since $\lim_{(s,t)\rightarrow(r_k,r_{k+1})}\frac{d(\gamma(s),\gamma(t))^2}{t-s}=\frac{d(\gamma(r_k),\gamma(r_{k+1}))^2}{r_{k+1}-r_k}$ for all $k$, there is an $\alpha_1>0$ such that $|R'|\leq\min(\alpha,\alpha_1)$ ensures the second inequality below, hence {(ii)}:
\begin{equation}
\EE(\gamma,\subd')\geq\sum_{k=0}^{m} d(\gamma(r_k^+)-\gamma(r_{k+1}^-))^2/(r_k^+-r_{k+1}^-)\geq \EE(\gamma, \subd)-\eps\geq \EE(\gamma)-2\eps.\nonumber
\end{equation}

{\bf(iii)} Notice that a similar argument as above gives the Chasles relation $\EE_a^c(\gamma)=\EE_a^b(\gamma)+\EE_b^c(\gamma)$ for $a<b<c$. Then we proceed in three steps.\smallskip

-- First, $\EE_0^1(\gamma)<\infty$ implies that $t\mapsto\Var_0^t(\gamma)$ is absolutely continuous, i.e.\ $\gamma $ is. By contradiction, assume that $\EE_0^1(\gamma)<\infty$ and that for some $\eps>0$ and every $\delta>0$, there exists disjoint intervals $[a_k,b_k]$ with $\sum (b_k-a_k)\leqslant \delta$ and $\sum \Var_{a_k}^{b_k}(\gamma)>\eps$. Take now $\delta<\eps^2/2\EE(\gamma)$. The convexity of the scalar square gives that  $\Var_a^b(\gamma)^2\leqslant (b-a)\EE_a^b(\gamma)$, hence, together with the Chasles relation, the last inequality in (\ref{eq:absolument_continu_absurd}) below. The second inequality of \eqref{eq:absolument_continu_absurd} is the Cauchy-Schwarz inequality. Since (\ref{eq:absolument_continu_absurd}) is a contradiction, we are done.
\begin{align}\label{eq:absolument_continu_absurd}
\eps<\sum \Var_{a_n}^{b_n}(\gamma)\leqslant \sqrt{\sum \Var_{a_n}^{b_n}(\gamma)^2/(b_n-a_n)\sum (b_n-a_n)}<\eps/\sqrt2.\smallskip
\end{align}

-- Now $\gamma\in \mathcal{AC}_2$ $\Rightarrow$ $\EE(\gamma)<\infty$. Indeed, $|\gamma(b)-\gamma(a)|^2/(b-a)\leqslant (\int_a^b |\dot\gamma|)^2/(b-a)\leqslant \int_a^b |\dot\gamma|^2$, so if $\gamma\in \mathcal{AC}_2$, $\int_0^1 |\dot\gamma|^2<\infty$ so that $\EE(\gamma)\leqslant \int_0^1 |\dot\gamma|^2<\infty$.\smallskip

-- Finally suppose that $\EE(\gamma)<\infty$. Then  $\gamma\in \mathcal{AC}_2([0,1],\mathcal{X})$. Indeed, we showed above that $\gamma\in\cal{AC}$. Now take $\eps>0$ and $h\in\op]0,\eps]$. For all $h$ let $n$ be an integer such that $(n+1)h\geq 1-\eps$. Then:
$$\begin{array}{r@{\,\,\leqslant\,\,}l}\displaystyle\int_0^{1-\eps}\biggl(\underbrace{\frac{d(\gamma(t+h)-\gamma(t))}{h}}_{a_h}\biggr)^2\dd t&\displaystyle\frac1h\int_0^h\sum_{i=0}^n\frac{d(\gamma(ih+t),\gamma((i+1)h+t)^2}{h}\dd t\\[-1ex]
&\displaystyle\EE(\gamma).
\end{array}$$
Now, since $\gamma\in\mathcal{AC}$, $|\dot\gamma|$ is almost surely defined, hence $\liminf_{h\rightarrow0}{a_h}=|\dot\gamma|$. By the Fatou lemma we get:
$$\int_0^{1-\eps} |\dot\gamma|^2(t)\leqslant \EE(\gamma).$$
This holds for every $\eps$, so that $\gamma \in \mathcal{AC}_2([0,1],\mathcal{X})$ and that the announced formula $\int_0^{1} |\dot\gamma|^2(t)=\EE(\gamma)$ is satisfied.\medskip

{\bf(c)} This holds since $\EE$ is a supremum of functions continuous on $\mathcal{C}$ for the uniform topology on $\mathcal{C}([0,1],\mathcal{X})$.
\end{proof}

\begin{reminder}\label{reminder:wasserstein}
On $\p(\R^d)^2$ the following infimum (minimum by the Prokhorov Theorem) has all the properties of a distance except that it may be infinite; it is called the $2$-Wasserstein distance:
\begin{equation}\label{eq:def_w}
W_2(\mu,\nu)=\min_{P\in \ma(\mu,\nu)}\sqrt{\int \|y-x\|^2\dd P(x,y)}.
\end{equation}
On the Wasserstein space $\mathcal{P}_2(\R^d)=\{\mu\in\mathcal{P}(\R^d)|\,\int \|x\|^2\,\dd \mu(x)<\infty\}$, $W_2$ is finite, thus is a true distance. Consequently, if $(\mu_t)_t\in\mathcal{C}([0,1],\p_2(\R^d))$, then $\Var_a^b((\mu_t)_t,R)$ and $\EE_a^b((\mu_t)_t,R)$ are finite for every $R\in \parti([a,b])$. 

A minimizer $P$ of \eqref{eq:def_w} is called an \emph{optimal transport plan} between $\mu$ and $\nu$. If $d=1$ and $W_2(\mu,\nu)<\infty$ the quantile coupling $\como(\mu,\nu)$ introduced in \S\ref{ss:minimality} is the unique optimal transport plan, see for instance \cite{RR1}. Therefore, for the quantile process $\como((\mu_t)_t)\in\ma((\mu_t)_t)$:
$$W_2(\mu_s,\mu_t)=\sqrt{\int |y-x|^2\,\dd \como^{s,t}(x,y)}.$$
\end{reminder}

\subsection{Action -- expected energy of a random curve}\label{subsec:action}

\begin{notation}Now $\mu$ denotes a family $(\mu_t)_{t\in[0,1]}$ of probability measures on $\R^d$.
\end{notation}

\begin{defi}\label{defi:action} If $\Gamma\in\p((\R^d)^\R)$ is concentrated (see Remark \ref{rem:concentrated}) on $\mathcal{C}([0,1],\linebreak[1]\R^d)$ its action $\A(\Gamma)$ is defined as:
$$\A(\Gamma)=\int_{\mathcal{C}}\EE(\gamma)\dd \Gamma(\gamma).$$
\end{defi}

Now a series of natural remarks leads to wonder about the behaviour of the measure $P$ of Theorem \ref{them:a} in this framework. Proposition \ref{pro:ap_egale_emu} gives it.

\begin{rem}\label{rem:about_action} {\bf(a)} If $\A(\Gamma)<+\infty$, $\Gamma$ is in fact concentrated on $\mathcal{AC}_2$.

{\bf(b)} If $\Gamma$ is a measure on $\mathcal{C}$, e.g., an element of $\ma_\mathcal{C}(\mu)$, then:
\begin{align}\label{eq:action_energy}
\A(\Gamma)&\eqdef\int_{\mathcal{C}}\lim_{|R|\to 0}\EE(\gamma,R)\,\dd\Gamma(\gamma)=\lim_{|R|\to 0}\int_{\mathcal{C}}\EE(\gamma,R)\,\dd\Gamma(\gamma)
\end{align}
because of the monotone convergence theorem: use a monotone sequence of partitions and Proposition \ref{pro:relax}\ref{item:pro:relaxB}.

{\bf (c)} If $\Gamma\in\ma_\mathcal{C}(\mu)$, then:
\begin{equation}\label{eq:a_superieur_a_e}
\A(\Gamma)\geqslant \EE(\mu).
\end{equation}
In case $d=1$, this is an equality if (but in general not only if) $\Gamma=\como(\mu)$. Indeed:
\begin{align}
\int_{\mathcal{C}}\EE(\gamma,R)\,\dd\Gamma(\gamma)&=\int_\mathcal{C} \sum_{k=1}^m\|\gamma(r_{k})-\gamma(r_{k+1})\|^2/(r_{k+1}-r_k)\,\dd \Gamma(\gamma)\nonumber\\\label{ici}
&= \sum_{k=1}^m \left(\int_\mathcal{C} \|\gamma(r_{k})-\gamma(r_{k+1})\|^2/(r_{k+1}-r_k)\,\dd \Gamma(\gamma)\right)\\ &\geqslant \sum_{k=1}^m W_2(\mu_{r_{k}},\mu_{r_{k+1}})^2/(r_{k+1}-r_k)=\EE(\mu,R). \nonumber
\end{align}
The inequality comes from the fact that $(\pr^{r_k,r_{k+1}})_\#\Gamma$ is in $\ma(\mu_{r_k},\mu_{r_{k+1}})$, so that $\int_\mathcal{C} \|\gamma(r_k)-\gamma(r_{k+1})\|^2\,\dd \Gamma(\gamma)\geqslant W_2(\mu_{r_k},\mu_{r_{k+1}})^2$ with equality, when $d=1$, if $(\pr^{r_k,r_{k+1}})_\#\Gamma=\como(\mu_{r_k},\mu_{r_{k+1}})$.
Now, thanks to (\ref{eq:action_energy}), when $|R|$ tends to 0 this provides $\A(\Gamma)\geqslant \EE(\mu)$, with the announced equality case.

{\bf(d)} If equality occurs in {(c)} for some measure $\Gamma$, it holds also for any $\Gamma_{[R]}$ introduced in Definition \ref{defi:quantile_discretement_markovien} ---hence, if $d=1$, for the measures $\como_{[R]}(\mu)$. Indeed, consider \eqref{ici} only for partitions finer than $R$ (by Proposition \ref{pro:relax}\ref{item:pro:relaxB}(i), the minimum in \eqref{eq:defenergy} remains the same): you get that $\A(\Gamma)=\A(\Gamma_{[R]})$.
\end{rem}

Remark \ref{rem:about_action} {(d)} ``passes to the limit'' when $(R_n)_n$ is such that $\como_{[R_n]}(\mu)\underset{n\rightarrow\infty}{\longrightarrow} P$, where $P\in\ma_{{\cal C}}(\mu)$ is the measure given by Theorem \ref{them:a}.

\begin{pro}\label{pro:ap_egale_emu}
The Markov-quantile process $\mq\in \ma(\mu)$ satisfies $\A(\mq)=\EE(\mu)$. Moreover for every $(R_n)_n$ as in Theorem \ref{them:b}, $(\como_{[R_n]})_n$ tends to $\mq$ in $\ma_\mathcal{C}(\mu)\subset\p(\mathcal{C})$.
\end{pro}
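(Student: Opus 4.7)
The proof splits naturally into three stages. First I would check that each $\como_{[R_n]}$ has finite action equal to $\EE(\mu)$: by Remark \ref{rem:about_action}(c), since every 2-marginal of $\como$ is a quantile coupling, $\A(\como) = \EE(\mu)$, and Remark \ref{rem:about_action}(d) then propagates this equality to $\A(\como_{[R_n]}) = \EE(\mu)$. Strictly speaking one must first verify that $\como_{[R_n]}$ is concentrated on $\mathcal{C}$ in the sense of Remark \ref{rem:concentrated}, so that the action is defined; this follows from a monotone-convergence computation. Namely, for every finite partition $R' \supset R_n$, the 2-marginals of $\como_{[R_n]}$ on consecutive times of $R'$ coincide with the quantile couplings $\como^{r_k, r_{k+1}}$, so $\int \EE(\gamma, R') \dd \como_{[R_n]}(\gamma) = \EE(\mu, R')$; taking a nested cofinal sequence of partitions $R'$ and applying monotone convergence together with Proposition \ref{pro:relax}\ref{item:pro:relaxB} yields $\int \EE(\gamma) \dd \como_{[R_n]}(\gamma) = \EE(\mu) < \infty$, and Proposition \ref{pro:relax}\ref{item:pro:relaxA} forces $\como_{[R_n]}$-almost every $\gamma$ to be continuous.

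Second I would upgrade the finite-dimensional convergence of Theorem \ref{them:b} to weak convergence in $\p(\mathcal{C})$. The Cauchy-Schwarz inequality $|\gamma(t) - \gamma(s)|^2 \leq (t-s) \EE(\gamma)$ shows that for each $M, N > 0$ the set
\[
K_{M,N} = \{\gamma \in \mathcal{C} : \EE(\gamma) \leq M \text{ and } |\gamma(0)| \leq N\}
\]
consists of uniformly bounded $\tfrac12$-Hölder continuous curves and is compact by Arzelà-Ascoli (it is closed thanks to the lower semicontinuity of $\EE$, see Proposition \ref{pro:relax}). Markov's inequality then gives, uniformly in $n$,
\[
\como_{[R_n]}(\mathcal{C} \setminus K_{M,N}) \leq \frac{\EE(\mu)}{M} + \mu_0(\{|x| > N\}),
\]
which tends to zero as $M, N \to \infty$. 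Hence $(\como_{[R_n]})_n$ is tight in $\p(\mathcal{C})$. Any subsequential weak limit $\Gamma \in \p(\mathcal{C})$ has the same finite-dimensional marginals as $\mq$ by Theorem \ref{them:b} (since projections from $\mathcal{C}$ to finite products of $\R$ are continuous), hence coincides with $\mq$ on the cylindrical $\sigma$-algebra by the uniqueness clause of Proposition \ref{pro:inductive_limit}. This simultaneously identifies $\mq$ with an element of $\p(\mathcal{C})$ via Remark \ref{rem:concentrated} and proves that the full sequence converges: $\como_{[R_n]} \to \mq$ in $\p(\mathcal{C})$.

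Finally, since $\EE : \mathcal{C} \to [0, +\infty]$ is lower semicontinuous for uniform convergence (Proposition \ref{pro:relax}), the map $\Gamma \mapsto \int \EE \dd \Gamma$ is lower semicontinuous on $\p(\mathcal{C})$ for weak convergence, so $\A(\mq) \leq \liminf_n \A(\como_{[R_n]}) = \EE(\mu)$; the reverse inequality is precisely Remark \ref{rem:about_action}(c) applied to $\mq \in \ma_\mathcal{C}(\mu)$, yielding $\A(\mq) = \EE(\mu)$. The main obstacle is the tightness step in the second stage: it rests crucially on the Hölder-$\tfrac12$ control provided by the uniform action bound, combined with the somewhat delicate identification of $\mq$, a priori only a measure on $\R^{\R}$ characterized by its finite-dimensional marginals, as a genuine element of $\p(\mathcal{C})$.
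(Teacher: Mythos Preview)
Your proof is correct and follows essentially the same three-step architecture as the paper: establish $\A(\como_{[R_n]})=\EE(\mu)$ via Remark~\ref{rem:about_action}(c)--(d), prove tightness in $\p(\mathcal{C})$ from the uniform action bound, identify every subsequential limit with $\mq$ through Theorem~\ref{them:b}, and conclude by lower semicontinuity of $\A$. The only substantive difference is in the compactness argument: you use the H\"older-$\tfrac12$ estimate $|\gamma(t)-\gamma(s)|^2\leq(t-s)\EE(\gamma)$ together with Arzel\`a--Ascoli, whereas the paper invokes the compact Sobolev embedding $W^{1,2}([0,1])\hookrightarrow\mathcal{C}$; your route is slightly more self-contained since it avoids an external embedding theorem, while the paper's route makes the $\mathcal{AC}_2$ structure more visible. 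You also spell out explicitly why $\como_{[R_n]}$ is concentrated on $\mathcal{C}$, a point the paper leaves implicit in its appeal to Remark~\ref{rem:about_action}(d).
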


\begin{proof}
We need the following classical claim.\smallskip

\noindent{\em Claim.} If $f:(\mathcal{C},\|\cdot\|_\infty)\rightarrow \R^+$ is lower semi-continuous, then $F:\p(\mathcal{C})\rightarrow\R$ defined by $F(\Gamma)=\int_{\mathcal{C}}f\dd\Gamma$ is also lower semi-continuous.

To check it, take $(\Gamma_n)_{n\in\N^\ast}\in\p(\mathcal{C})^{\N^\ast}$ tending weakly to some $\Gamma_0$. Then $\liminf_n F(\Gamma_n)\geqslant F(\Gamma_0)$ by Lemma 4.3 of \cite{Vi2}, the wanted result.\smallskip

The claim and Proposition \ref{pro:relax} (c) give that the action $\A:\p(\mathcal{C})\rightarrow \R$ is lower semi-continuous. Now take $(R_n)_{n\in\N^\ast}$ a sequence of finite subsets of $\R$ as given by Theorem \ref{them:b} and set $\Gamma_n\eqdef\como_{[R_n]}(\mu)$. If we show that $\Gamma_n$ converges weakly to $\mq$ in $\p(\mathcal{C})$, we will get that $\A(\mq)\leqslant \liminf_n \A(\Gamma_n)$, hence the result since $\EE(\mu)\leqslant \A(\mq)$ by Remark \ref{rem:about_action} (c) and $\A(\Gamma_n)=\EE(\mu)$ for all $n$ by Remark \ref{rem:about_action} (d). So let us show this. By the Chebyshev inequality, for every $\eps$ there exists $\alpha>0$ such that, for all $n\in\N^\ast$:
$$\Gamma_n(\{\gamma\in \mathcal{C}:\,\EE(\gamma)>\alpha\})< \eps\AND \Gamma_n(\{\gamma\in \mathcal{C}:\,|\gamma(0)|>\alpha\})< \eps.$$
Therefore $\mathcal{N}\eqdef\{\gamma\in \mathcal{C}:\,\EE(\gamma)\leqslant\alpha\}\cap \{\gamma\in \mathcal{C}:\,|\gamma(0)|\leqslant \alpha\}$ has $\Gamma_n$-mass greater than $1-2\eps$ for all $n$. It follows from its definition that on $\mathcal{N}$, $\int_0^1|\dot\gamma|^2$ and thus also $\int_0^1|\gamma|^2$ are bounded, hence $\mathcal{N}$ is included in a ball of the Sobolev space $\mathcal{W}^{1,2}([0,1])$. This Banach space is compactly embedded in $\mathcal{C}$, see \cite[Theorem 8.8]{Brez}, so that $\mathcal{N}$ is relatively compact in $\mathcal{C}$. So according to the Prokhorov theorem any subsequence of $(\Gamma_n)_n$ has a (weak) limit point. But by Theorem \ref{them:b}, each finite marginal of $(\Gamma_n)_n$ tends weakly to the corresponding marginal of $\mq$, hence all such limit point must be $\mq$, hence $(\Gamma_n)_n$ tends weakly to $\mq$.
\end{proof}

\subsection{The continuity equation}\label{subsec:continuity}

\begin{notation} For all $t\in[0,1]$, $\pr^t$ is the projection $(\R^d)^{[0,1]}\rightarrow(\R^d)^{\{t\}}=\R^d$, i.e.\ $\pr^t(\gamma)=\gamma(t)$. On $\mathcal{AC}([0,1],\R^d)$ we also define $\pr_2^t$ by $\pr_2^t(\gamma)=\dot{\gamma}(t)$ on the set where $\dot{\gamma}$ is defined and $\pr_2^t(\gamma)=0$ on its (null) complement.
\end{notation}

As defined in \cite[Definition 5.4.2]{AGS} we introduce the barycentric projection.

\begin{defi}\label{defi:barycentre} Take $\Gamma\in\p(\mathcal{AC}([0,1],\R^d)$ and for all $t\in[0,1]$ denote $(\pr^t)_\#\Gamma$ by $\mu_t$, $(\pr^t\times \pr_2^t)_\#\Gamma$ by $M_t$ and by $\kappa_t$ a kernel such that $M_t=\join(\mu_t,\kappa_t)$. The barycentric projection of $M_t$ is the $\mu_t$-almost surely defined vector field $u^\Gamma_t$ on $\R^d$ such that $u^\Gamma_t(x)$ is  the barycentre of $\kappa_t(x,\,.\,)$. Alternatively, it is defined by the equation:
\begin{align}\label{eq:bary}
\int\langle v,u_t^\Gamma\rangle(x) \dd\mu_t(x)=\int \langle v(x),u\rangle \dd M_t(x,u)=\int \langle v(\gamma_t), \dot{\gamma_t} \rangle \dd \Gamma(\gamma),
\end{align}
for every continuous bounded vector field $v$.
\end{defi}

\begin{reminder}If $(\mu_t)_t=(f_t\la_{\R^d})_t$ is a family of measures on $\R^d$ with density $(x,t)\mapsto f_t(x)$ smooth with compact support, a smooth vector field $v_t$ transports the measure $\mu_t$, in the sense that its flow $\Phi^t$ makes $\mu_t(\Phi^t(B))$ constant for any Borel set $B$, if and only if $v_t$ satisfies the continuity equation:
\begin{equation}\label{eq:continuity}
\partial_t \mu_t+\mathrm{div}_{\mu_t} (v_t)=0\ \ \text{(or $\partial_t \mu_t+\mathrm{div}(v_t \mu_t)=0$, see below),}
\end{equation}
$\operatorname{div}_{\mu_t}(v_t)$ standing for the signed measure $\mathcal{L}_{v_t}\mu_t$, where $\mathcal L$ is the Lie derivative. Now \eqref{eq:continuity} keeps a weak meaning in $\R^d\times[0,1]$ in our framework, namely:
\begin{align}\label{eq:continuity_deux}
\int_0^1\int_{\R^d}\left(\partial_t\varphi(x,t)+\langle v_t(x), \nabla_x \varphi(x,t)\rangle\right)\dd \mu_t(x)\,\dd t=0
\end{align}
for every smooth function $\varphi:\R^d\times [0,1]\rightarrow\R$ with compact support in $\R^d\times\op]0,1\clo[$. In \eqref{eq:continuity}, $\operatorname{div}_{\nu}v$ depends only on the product $v\nu$, so may be written $\mathrm{div}(v\nu)$. Indeed, for $g,h\in{\mathcal{C}}^\infty(\R^d)$, $\operatorname{div}_{g\nu}(hv)=(\dd(gh).v)\nu+gh\operatorname{div}_\nu(v)$.
\end{reminder}

\begin{reminder}\label{reminder:important}An important result proved in \cite{AGS} (see Theorem 8.2.1) is that for every solution $(\mu_t,v_t)$ of \eqref{eq:continuity} with $\int_0^1\int |v_t|^2\,\dd \mu_t\,\dd t<\infty$ there exists $\Gamma$ with: $$\dot{\gamma}(t)=v_t$$ $\Gamma\otimes \lambda$-almost surely. In particular $\int_0^1 \int |v_t|^2\,\dd \mu_t\,\dd t=\A(\Gamma)$ and therefore:
\begin{equation}\label{eq:int_v_superieure_a_e}
\int_0^1 \int |v_t|^2\,\dd \mu_t\,\dd t\geqslant \EE(\mu).
\end{equation}
Notice that, unlike for Lipschitz ODE, $\Gamma$ is not unique in general.
\end{reminder}

\begin{pro}\label{pro:barycentre}\maz
\point\ Let $\Gamma$ be a probability measure on $\mathcal{AC}([0,1],\R^d)$ such that ${\cal A}(\Gamma)<\infty$ and denote $(\pr^t)_\#\Gamma$ by $\mu_t$. Then $(\mu_t,u_t^\Gamma)_{t\in [0,1]}$ (see Definition \ref{defi:barycentre})  satisfies the continuity equation \eqref{eq:continuity_deux}.

\point\ \label{pt2:bary} If moreover $\Gamma$ minimizes $\A(\Gamma)$ on $\ma((\mu_t)_{t\in [0,1]})$, then:
$$\dot{\gamma}(t)=u_t^\Gamma,$$
$\Gamma\otimes \la$-almost surely. In particular $\Gamma$ is concentrated on integral curves of the time-dependent vector field $u_t^\Gamma$.
\end{pro}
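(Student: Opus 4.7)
\noindent\textit{Proof proposal.} For part (a), the plan is to test the putative continuity equation against a smooth compactly supported $\varphi:\R^d\times[0,1]\to\R$ by running it along trajectories. For $\Gamma$-almost every $\gamma\in\mathcal{AC}$, the function $t\mapsto\varphi(\gamma(t),t)$ is absolutely continuous, $\varphi$ vanishes near $t\in\{0,1\}$, and the chain rule gives
\[
0=\int_0^1\bigl(\partial_t\varphi(\gamma(t),t)+\langle\nabla_x\varphi(\gamma(t),t),\dot\gamma(t)\rangle\bigr)\,\dd t.
\]
I would then integrate against $\Gamma(\dd\gamma)$ and swap the order of integration. The exchange is justified because $\partial_t\varphi$ and $\nabla_x\varphi$ are bounded with compact support, and Cauchy--Schwarz together with $\A(\Gamma)<\infty$ controls $\int\int_0^1|\dot\gamma(t)|\,\dd t\,\dd\Gamma$. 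Finally I would rewrite the first summand as $\int\partial_t\varphi(x,t)\,\dd\mu_t(x)$ using $\mu_t=(\pr^t)_\#\Gamma$, and the second as $\int\langle\nabla_x\varphi(x,t),u_t^\Gamma(x)\rangle\,\dd\mu_t(x)$ by applying \eqref{eq:bary} to $v=\nabla_x\varphi(\,\cdot\,,t)$. This yields \eqref{eq:continuity_deux}.

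For part (b), the core observation is a Jensen inequality at the kernel level. With $M_t=\join(\mu_t,\kappa_t)$, for $\mu_t$-almost every $x$ the vector $u_t^\Gamma(x)$ is the barycentre of $\kappa_t(x,\,\cdot\,)$, so
\[
|u_t^\Gamma(x)|^2\leqslant \int |u|^2\,\dd\kappa_t(x,\dd u),
\]
with equality exactly when $\kappa_t(x,\,\cdot\,)=\delta_{u_t^\Gamma(x)}$. Integrating in $x$ against $\mu_t$ and then in $t$ gives
\[
\int_0^1\!\int|u_t^\Gamma(x)|^2\,\dd\mu_t(x)\,\dd t\leqslant \int_0^1\!\int|\dot\gamma(t)|^2\,\dd t\,\dd\Gamma(\gamma)=\A(\Gamma)<\infty.
\]

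By part (a) and the preceding bound, $(\mu_t,u_t^\Gamma)$ solves the continuity equation with the right integrability, so Reminder \ref{reminder:important} provides $\tilde\Gamma\in\ma_\mathcal{C}(\mu)$ concentrated on integral curves of $u_t^\Gamma$, i.e.\ such that $\dot\gamma(t)=u_t^\Gamma(\gamma(t))$ for $\tilde\Gamma\otimes\la$-almost every $(\gamma,t)$. Consequently,
\[
\A(\tilde\Gamma)=\int_0^1\!\int|u_t^\Gamma(x)|^2\,\dd\mu_t(x)\,\dd t\leqslant\A(\Gamma).
\]
Since $\tilde\Gamma\in\ma_\mathcal{C}(\mu)$ and $\Gamma$ is by assumption a minimizer of $\A$ over this set, the inequality is an equality. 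Tracing back through Jensen, this forces $\kappa_t(x,\,\cdot\,)=\delta_{u_t^\Gamma(x)}$ for $\mu_t\otimes\la$-almost every $(x,t)$, which by the very definition $M_t=(\pr^t\times\pr^t_2)_\#\Gamma$ is equivalent to $\dot\gamma(t)=u_t^\Gamma(\gamma(t))$ $\Gamma\otimes\la$-almost surely.

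The main obstacle I anticipate is purely technical, namely the measurability and integrability bookkeeping needed to apply Fubini in part (a) and to invoke the superposition principle of Reminder \ref{reminder:important} in part (b); both reduce to the a priori estimate $\int_0^1\!\int|u_t^\Gamma|^2\,\dd\mu_t\,\dd t\leqslant\A(\Gamma)$ obtained via Jensen, so no deeper difficulty is expected beyond careful use of the references to \cite{AGS}.
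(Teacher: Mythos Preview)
Your proposal is correct and follows essentially the same route as the paper. Part (a) is identical: test against $\varphi$, differentiate along trajectories, justify Fubini via the $L^2$ bound coming from $\A(\Gamma)<\infty$, and identify the two terms using $\mu_t=(\pr^t)_\#\Gamma$ and \eqref{eq:bary}. In part (b) the paper also starts from the pointwise Jensen inequality $\int\|v\|^2\,\kappa_t(x,\dd v)\geqslant |u_t^\Gamma(x)|^2$; the only cosmetic difference is that the paper phrases the conclusion of Reminder~\ref{reminder:important} as the lower bound $\int_0^1\!\int|u_t^\Gamma|^2\,\dd\mu_t\,\dd t\geqslant\EE(\mu)$ and uses that a minimizer has $\A(\Gamma)=\EE(\mu)$, whereas you invoke the same reminder to produce the competitor $\tilde\Gamma$ explicitly and compare $\A(\tilde\Gamma)\leqslant\A(\Gamma)$. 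These are two phrasings of the same argument, and the equality case of Jensen finishes both in the same way.
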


\begin{proof}

{\bf(a)} We have to show that: $\int_0^1\!\int\partial_t\varphi(x,t)+\nabla_x\varphi(x,t).u_t^\Gamma\dd\mu_t\dd t=0$, with $\varphi$ as in \eqref{eq:continuity_deux}. Let $M$ be an upper bound for $\|\nabla_x\varphi\|$. We first consider $F:(\gamma,t)\mapsto \partial_t\bigl(\varphi(\gamma(t), t)\bigr)=\partial_t\varphi(\gamma,t)+\nabla_x\varphi(\gamma,t).\dot\gamma$, then $\|F\|^2$ is bounded by $2M^2(1+\|\dot\gamma(t)\|^2)$ and has integral on $\mathcal{C}\times [0,1]$ bounded by $2M^2(1+\A(\Gamma))<\infty$. Thus $F\in L^2(\Gamma\otimes \la)$. Integrating firstly with respect to $t$ and secondly with respect to $\Gamma$, we see that $\iint F(\gamma,t)=0$. If we now use the Fubini theorem, with \eqref{eq:bary} we obtain the wished equality.

{\bf(b)} Take $\kappa_t$ the kernel given in Definition \ref{defi:barycentre}, then for almost all $(x,t)$, $\int\|v\|^2\dd\kappa_t(x,v)\geqslant\|u_t^\Gamma(x)\|^2$. But by (a), $(\mu_t,u_t^\Gamma)$ satisfies \eqref{eq:continuity} hence by Reminder \ref{reminder:important}, $\int_{0}^1 \int \|u_t^\Gamma\|^2\,\dd \mu_t\, \dd t\geqslant \EE(\mu)$. This gives the  inequality below:
$$\A(\Gamma)=\int_0^1\int \|\dot \gamma\|^2(t) \,\dd\Gamma(\gamma)\,\dd t
=\int_0^1\int\int\|v\|^2\kappa_t(x,\dd v)\,\dd\mu_t(x)\,\dd t
\geqslant \EE(\mu).$$
Now if $\A(\Gamma)$ is minimal, i.e.\ $\A(\Gamma)=\EE(\mu)$, all the inequalities above are equalities, which ensures that $\dot\gamma=u_t^\Gamma$ almost surely.
\end{proof}

\subsection{Our resulting theorems on \mb$\mq$\mn\ as a minimizer in this context}\label{subsec:them5}

The following theorem gathers:\smallskip

-- well-known facts, actually true on any $\p_2(\R^d)$ for $d\geqslant 1$, namely \ref{pt:eul} and \ref{pt:lag}\ref{item1:pt:lag}, i.e. the existence of measures $\Gamma$ for which   \eqref{eq:a_superieur_a_e} is an equality,\smallskip

-- enhancements of them following from our theorems A and B and Proposition \ref{pro:ap_egale_emu}, notably the uniqueness in the {\em Lagrangian} statement for $d=1$ ---the uniqueness of the field $v_t$ in \ref{pt:eul} is classical, but remind that it does not imply that of the minimizing process $\Gamma$ tangent to it.\smallskip

\noindent Notice that it is not known whether the process can be chosen Markov for $d\geqslant2$ (see Open question \ref{ss:unique}). Moreover $\como_{[R_n]}$ is only defined for $d=1$.

\begin{them}[Existence and uniqueness of representations]\label{them:action}\maz
Take a curve $\mu=\linebreak[1](\mu_t)_{t\in[0,1]}$ in Wasserstein space $\mathcal{P}_2(\R)$ with finite energy $\EE(\mu)$. Then:\smallskip

\point\label{pt:eul} {\em (Eulerian statement.)} There exists a vector field $v_t$ satisfying the continuity equation \eqref{eq:continuity} and such that Inequality \eqref{eq:int_v_superieure_a_e}:
$$\int_0^1\int|v_t|^2\,\dd \mu_t\,\dd t\geqslant \EE(\mu)$$
is an equality. This vector field is unique.\smallskip

\point\label{pt:lag} {\em (Lagrangian statement.)} There exists $\Gamma\in\ma_\mathcal{C}(\mu)$ such such that:
\begin{itemize}
\item[\souspoint\label{item1:pt:lag}] Inequality \eqref{eq:a_superieur_a_e}:
$\A(\Gamma)\geqslant \EE(\mu)$
is an equality,
\item[\souspoint] the measure $\Gamma$ is Markov,
\item[\souspoint] it is the limit in $\p(\mathcal{C})$ of a sequence $(\como_{[R_n]})_n$.
\end{itemize}
Such a $\Gamma$ is unique in $\ma_\mathcal{C}(\mu)$; it is the Markov-quantile process $\mq$.\smallskip

\point\label{pt:ens} {\em (Link between them.)} For any $\Gamma$ minimizing the action, i.e.\ making \eqref{eq:a_superieur_a_e} an equality, the curve $\gamma\in \mathcal{C}$ is $\Gamma$-almost surely a solution of the ODE:
$$\dot\gamma(t)=v_t(\gamma_t),$$
for almost every time. 
\end{them}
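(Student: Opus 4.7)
I would treat the Lagrangian statement (b) first and deduce (a) and (c) from it. Existence in (b) is immediate with $\Gamma=\mq$: the Markov property is Theorem~\ref{them:a}\ref{item:markov}, while $\A(\mq)=\EE(\mu)$ and the approximation $\como_{[R_n]}\to\mq$ in $\p(\mathcal{C})$ are exactly the content of Proposition~\ref{pro:ap_egale_emu}, applied with any sequence $(R_n)_n$ given by Theorem~\ref{them:b}.

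For the uniqueness in (b), let $\Gamma$ satisfy (i)--(iii); I claim that (ii) and (iii) alone suffice to force $\Gamma=\mq$. By (iii), $\como_{[R_n]}\to\Gamma$ weakly in $\p(\mathcal{C})$ for some $(R_n)_n$, hence by continuity of $(\pr^{s,t})_\#$, each 2-marginal satisfies $\Gamma^{s,t}=\lim_n\como_{[R_n]}^{s,t}$. By Remark~\ref{rem:comp_mini}, $\como_{[R_n]}^{s,t}=(G_s\otimes G_t)_\#L_{R_n\cap\op]s,t\clo[}$; the lower orthant maximality of $L_{\op]s,t\clo[}$ (Lemma~\ref{lem:existenceT}\ref{p1:lem:existenceT}) combined with the monotonicity of $(G_s\otimes G_t)_\#$ for $\leqlc$ (Remark~\ref{rem:leqlc_et_Gs}) gives $\como_{[R_n]}^{s,t}\leqlc\mq^{s,t}$, and passing to the limit at the level of cumulative distribution functions yields $\Gamma^{s,t}\leqlc\mq^{s,t}$. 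Conversely, each $\como_{[R_n]}^{s,t}$ has increasing kernel as a product of quantile couplings (Remarks~\ref{rem:quantile_transitions_croissantes} and~\ref{rem:transitions_croissantes_composee}), a property closed under weak convergence (Lemma~\ref{lem:transitions_croissantes_limite}); hence $\Gamma^{s,t}$ has increasing kernel, and combined with the Markov property (ii), Theorem~\ref{them:a}\ref{item:minimal} yields the reverse inequality $\mq^{s,t}\leqlc\Gamma^{s,t}$. Therefore $\Gamma^{s,t}=\mq^{s,t}$ for every $s<t$, and Corollary~\ref{cor:consist} concludes $\Gamma=\mq$.

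For (a), set $v_t\coloneqq u_t^{\mq}$. Proposition~\ref{pro:barycentre}(a) gives the continuity equation, while Proposition~\ref{pro:barycentre}\ref{pt2:bary} applied to the minimizer $\mq$ gives $\dot\gamma(t)=v_t(\gamma(t))$ $\mq\otimes\la$-a.s., whence $\int_0^1\!\int|v_t|^2\dd\mu_t\dd t=\A(\mq)=\EE(\mu)$. Uniqueness of $v_t$ is automatic in one dimension given the marginals: any other $L^2$ solution $v'_t$ of \eqref{eq:continuity_deux} satisfies $\partial_x((v_t-v'_t)\mu_t)=0$ in the sense of distributions for almost every $t$; a finite signed measure on $\R$ with vanishing distributional derivative must be zero, so $v_t=v'_t$ $\mu_t$-a.s. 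Part (c) then follows at once: for any minimizer $\Gamma$, Proposition~\ref{pro:barycentre}\ref{pt2:bary} gives $\dot\gamma=u^\Gamma_t(\gamma_t)$ a.s.; since $u^\Gamma_t$ is another field solving the continuity equation with $L^2$-norm equal to $\EE(\mu)$, the uniqueness just shown gives $u^\Gamma_t=v_t$ $\mu_t$-a.s.

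The crux of the whole proof is the uniqueness in (b). Minimality of the action $\A(\Gamma)=\EE(\mu)$ alone does not pin down the 2-marginals $\Gamma^{s,t}$ (unless $\mu$ is a $W_2$-geodesic), and Markov minimizers are not unique in general. The decisive insight is that (iii) forces $\Gamma^{s,t}\leqlc\mq^{s,t}$ through the lower orthant maximality of $L_{\op]s,t\clo[}$, while (ii) forces the reverse inequality through the minimality property of Theorem~\ref{them:a}\ref{item:minimal}; these two antagonistic constraints pin $\Gamma^{s,t}$ precisely to $\mq^{s,t}$.
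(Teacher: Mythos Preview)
Your proof is correct and follows the same overall architecture as the paper's: existence in \ref{pt:lag} via Proposition~\ref{pro:ap_egale_emu}, and parts \ref{pt:eul} and \ref{pt:ens} via Proposition~\ref{pro:barycentre}. Two minor differences are worth noting. For uniqueness in \ref{pt:eul}, the paper uses the standard strict-convexity argument: if $u_t$ and $v_t$ both solve \eqref{eq:continuity} and both realize equality in \eqref{eq:int_v_superieure_a_e}, then so does $(u_t+v_t)/2$, but with strictly smaller $L^2$-norm unless $u_t=v_t$, contradicting \eqref{eq:int_v_superieure_a_e}; this works in any dimension. Your argument instead exploits that in dimension one a finite signed measure with vanishing distributional derivative must be zero --- more elementary here, and in fact slightly stronger (it gives uniqueness among \emph{all} $L^2$ solutions of \eqref{eq:continuity_deux}, not just minimizing ones), but it does not generalize to $d\geqslant2$. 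For uniqueness in \ref{pt:lag}, the paper's proof is a one-line appeal to Theorem~\ref{them:b}; your explicit two-inequality argument ($\Gamma^{s,t}\leqlc\mq^{s,t}$ from the $\lcsup$ definition, $\mq^{s,t}\leqlc\Gamma^{s,t}$ from Theorem~\ref{them:a}\ref{item:minimal}) is exactly what is needed and is essentially the argument the paper spells out in the proof of the neighboring Theorem~\ref{them:commun_d_et_1}.
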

\begin{proof} {\bf\ref{pt:eul}} With $u_t^\Gamma$ given by Definition \ref{defi:barycentre}, note that $\A(\Gamma)=\int^1_0\int |u_t^\Gamma|^2\dd \mu_t\,\dd t$ for every $\Gamma$, so that Proposition \ref{pro:barycentre} gives the existence of the field. Its uniqueness comes from a standard argument: if $u_t$ and $v_t$ satisfy \eqref{eq:continuity}, so does $w_t\eqdef(u_t+v_t)/2$, but if they both make \eqref{eq:int_v_superieure_a_e} an equality and differ on a non-null subset, $\int_0^1\int|w_t|^2\,\dd \mu_t\,\dd t<\EE(\mu)$, which contradicts \eqref{eq:int_v_superieure_a_e}.

{\bf\ref{pt:lag}} Proposition \ref{pro:ap_egale_emu} shows that $\Gamma=\mq$ suits. By Theorem \ref{them:b}, the conditions of Theorem \ref{them:action}(b) characterize the Markov-quantile process, which ensures the uniqueness.

{\bf\ref{pt:ens}} Use Proposition \ref{pro:barycentre}(a) and the uniqueness in \ref{pt:eul}.
\end{proof}

In Definition \ref{defi:disp}, remember that an optimal transport is defined in Reminder \ref{reminder:wasserstein}.

\begin{defi}\label{defi:disp}\maz
Let $R=\{r_0,r_1,\ldots,r_m,r_{m+1}\}$ be a partition in $\parti([0,1])$. We denote by $\disp_{R}$ the set of measures $M\in\p(\mathcal{C})$ that are \emph{dynamical transports made Markov at the points of $R$, and linearly} (hence in fact optimally) \emph{interpolating $(\mu_t)_{t\in [0,1]}$ between them}, defined as follows.\smallskip

\point\label{p1defi:disp} For each $i\in\{0,\ldots, m\}$, the coupling $M^{r_i,r_{i+1}}\in\ma(\mu_{r_i},\mu_{r_{i+1}})$ is an optimal transport plan between $\mu_{r_i}$ and $\mu_{r_{i+1}}$,\smallskip

\point\ for $\{\lambda_1,\ldots,\lambda_n\}\subset[0,1]$ and $i^\lambda:(x,y)\in (\R^d)^2\mapsto \lambda y+(1-\lambda)x$, we have:
$$(i^{\lambda_1},\ldots, i^{\lambda_n})_\#M^{r_i,r_{i+1}}=M^{\lambda_1 r_i+(1-\lambda_1)r_{i+1},\ldots,\lambda_n r_i+(1-\lambda_n)r_{i+1}},$$

\point\ for all finite $S$ containing $\{r_1,\ldots,r_m\}$,
$$(\pr^{S})_\# M= M^{s^0_1,\ldots, s^0_{n_0},r_1}\circ M^{r_1,s^1_{1},\ldots,s^1_{n_1},r_2}\circ\ldots\circ M^{r_m,s^m_{1},\ldots,s^m_{n_m}},$$
where $S=\{s^0_1,\ldots, s^0_{n_0},r_1,s^1_{1},\ldots,s^1_{n_1},r_2,\ldots,r_m,s^m_{1},\ldots,s^m_{n_m}\}$ and where the first and/or last terms disappear if $n_0$ and/or $n_m$ is null. 
\end{defi}

\begin{rem}\label{rem:singleton}Note that $\#\disp_{R}=1$ if and only if each set $\ma(\mu_{r_i},\mu_{r_{i+1}})$, appearing in \ref{p1defi:disp}, contains a unique optimal transport. It is the case when $d=1$, where $\ma(\mu_{r_i},\mu_{r_{i+1}})=\{\como(\mu_{r_i},\mu_{r_{i+1}})\}$, see Reminder \ref{reminder:wasserstein}.
\end{rem}

\begin{them}\label{them:commun_d_et_1}
Let $d$ be a positive integer and $\mu=(\mu_t)_{t\in[0,1]}$ a curve of finite energy in $\p_2(\R^d)$. For every nested sequence $(R_n)_n$ with $R_\infty\eqdef\cup_n R_n$ dense in $[0,1]$, and $\Gamma_n\in \disp_{R_n}$ for all $n\in \N$, there exists $\Gamma\in \ma_{\mathcal{C}}(\mu)$ that is the limit in $\p(\mathcal{C}([0,1],\R^d))$ of a subsequence of $(\Gamma_n)_n$. Moreover for every $\Gamma$ obtained in this way the action $\A(\Gamma)$ is minimal, i.e.\ such that Inequality \eqref{eq:a_superieur_a_e} is an equality.

Moreover, in dimension $d=1$, a Markov limit $\Gamma$ exists and if a limit $\Gamma$ is Markov, it is the Markov-quantile measure in $\ma_\mathcal{C}((\mu_t)_{t\in [0,1]})$.
\end{them}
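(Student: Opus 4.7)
The strategy is the classical compactness-plus-lower-semicontinuity scheme for action functionals, finished in the $d=1$ case by a pushforward-convergence argument that reduces the Markov identification to the uniqueness clause of Theorem \ref{them:action}\ref{pt:lag}. Definition \ref{defi:disp} shows that $\Gamma_n$-almost every curve is piecewise affine on $R_n=\{r_0,\ldots,r_{m+1}\}$, with slope $(\gamma(r_{k+1})-\gamma(r_k))/(r_{k+1}-r_k)$ on each $[r_k,r_{k+1}]$, and that the coupling $M_n^{r_k,r_{k+1}}$ is optimal between $\mu_{r_k}$ and $\mu_{r_{k+1}}$; integrating against $\Gamma_n$ yields
$$\A(\Gamma_n)=\EE(\mu,R_n)\leqslant\EE(\mu)<\infty.$$
Combined with $\mu_0\in\p_2(\R^d)$, this makes $(\Gamma_n)_n$ tight in $\p(\mathcal{C}([0,1],\R^d))$ by the Chebyshev-plus-Sobolev argument used in the proof of Proposition \ref{pro:ap_egale_emu}, and I extract a weak subsequential limit $\Gamma$.

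Next I would identify $\Gamma$ as a minimizer lying in $\ma_\mathcal{C}(\mu)$. Nestedness of $(R_n)_n$ and density of $R_\infty$ force $|R_n|\to 0$ (cover $[0,1]$ by a finite $\varepsilon$-net in $R_\infty$, eventually contained in $R_n$). For $t\in R_\infty$, $\Gamma_n^t=\mu_t$ eventually, so $\Gamma^t=\mu_t$; for $t\notin R_\infty$, both $s\mapsto\mu_s$ (continuous since $\EE(\mu)<\infty$ implies $\mu\in\mathcal{AC}_2\subset\mathcal{C}$ by Proposition \ref{pro:relax}\ref{item:pro:relaxA}) and $s\mapsto\Gamma^s$ (continuous by dominated convergence, since $\Gamma$ is supported on continuous curves) agree on the dense set $R_\infty$, hence everywhere; so $\Gamma\in\ma_\mathcal{C}(\mu)$. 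The lower semicontinuity of $\A$ established in the proof of Proposition \ref{pro:ap_egale_emu}, combined with Proposition \ref{pro:relax}\ref{item:pro:relaxB}(ii), then gives
$$\A(\Gamma)\leqslant\liminf_n\A(\Gamma_n)=\lim_n\EE(\mu,R_n)=\EE(\mu),$$
and together with Remark \ref{rem:about_action}\,(c) this forces $\A(\Gamma)=\EE(\mu)$.

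For the $d=1$ statements I would exploit that $\#\disp_{R_n}=1$ (Remark \ref{rem:singleton}): the unique $\Gamma_n$ shares its $R_n$-time marginals with $\como_{[R_n]}$ (both being the Markov catenation of quantile couplings along $R_n$), while $\Gamma_n$-almost every curve is piecewise affine between $R_n$-points. Denoting by $\phi_n:\mathcal{C}([0,1],\R)\to\mathcal{C}([0,1],\R)$ the piecewise-linear interpolation operator on $R_n$, this means $\Gamma_n=(\phi_n)_\#\como_{[R_n]}$; since $|R_n|\to 0$, $\phi_n(\gamma)\to\gamma$ uniformly for every $\gamma\in\mathcal{C}$, and this convergence is uniform on every equicontinuous subset (Arzelà-Ascoli). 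For existence of a Markov limit, choose $(R_n)_n$ as provided by Theorem \ref{them:b}; then $\como_{[R_n]}\to\mq$ in $\p(\mathcal{C})$ by Proposition \ref{pro:ap_egale_emu}, and combining this with the tightness of $(\como_{[R_n]})_n$ and $\phi_n\to\mathrm{id}$ uniformly on its compact concentration sets gives $\Gamma_n=(\phi_n)_\#\como_{[R_n]}\to\mq$, which is Markov by Theorem \ref{them:a}. For uniqueness, given a Markov subsequential limit $\Gamma$ of $(\Gamma_n)_n$ for some $(R_n)_n$, extract a further subsequence along which also $\como_{[R_n]}\to Q$ in $\p(\mathcal{C})$; the same pushforward-convergence argument forces $\Gamma=Q$, so $Q$ is Markov; applying the first two paragraphs to $(\como_{[R_n]})_n$ (note $\A(\como_{[R_n]})=\EE(\mu)$ by Remark \ref{rem:about_action}\,(d)) shows $Q\in\ma_\mathcal{C}(\mu)$ with $\A(Q)=\EE(\mu)$, and $Q$ is by construction a limit of $(\como_{[R_n]})_n$, so Theorem \ref{them:action}\ref{pt:lag} forces $Q=\mq$, whence $\Gamma=\mq$. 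The hard step in the whole proof is precisely this pushforward-convergence assertion: concretely, controlling $|\int F\circ\phi_n\,\mathrm{d}\como_{[R_n]}-\int F\,\mathrm{d}\como_{[R_n]}|$ for $F\in C_b(\mathcal{C})$ uniformly in $n$ via tightness (almost all mass on an equicontinuous compact) and uniform continuity of $F$ there.
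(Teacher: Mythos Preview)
Your proof is correct, and for the general-$d$ compactness/lower-semicontinuity part you are essentially filling in the details the paper defers to \cite{Vi2,Lis} and Proposition~\ref{pro:ap_egale_emu}. The $d=1$ part, however, takes a genuinely different route from the paper.

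For both existence and uniqueness in $d=1$, the paper bypasses your pushforward-convergence lemma entirely by a finite-marginal argument: since $\Gamma_n$ and $\como_{[R_n]}$ agree on all $R_n$-time marginals (both are the Markov catenation of quantile couplings along $R_n$), their respective subsequential limits $\Gamma$ and $\Gamma'$ satisfy $(\pr^S)_\#\Gamma=(\pr^S)_\#\Gamma'$ for every finite $S\subset R_\infty$; as $R_\infty$ is dense and both measures live on $\mathcal{C}$, this forces $\Gamma=\Gamma'$. This is shorter than your $\Gamma_n=(\phi_n)_\#\como_{[R_n]}$ representation plus the Arzel\`a--Ascoli/uniform-continuity control of $F\circ\phi_n-F$ on tight compacts (which is correct, but is exactly the ``hard step'' you flag). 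For the final identification $\Gamma'=\mq$, you invoke the uniqueness clause of Theorem~\ref{them:action}\ref{pt:lag} as a black box, whereas the paper reproves it in place via the order-theoretic characterization of Theorem~\ref{them:a}: the limit $\Gamma'$ inherits increasing kernels from $\como_{[R_n]}$ (Lemma~\ref{lem:transitions_croissantes_limite}), so minimality of $\mq$ gives $\mq^{s,t}\leqlc(\Gamma')^{s,t}$, while for $s,t\in R_\infty$ the coupling $(\Gamma')^{s,t}$ is a limit of products of quantile couplings, hence $\leqlc\mq^{s,t}$ by Proposition~\ref{pro:comp_conv}. Your modular appeal to Theorem~\ref{them:action} is cleaner; the paper's direct argument is more self-contained and makes visible that the increasing-kernel property, not the energy minimality, is what drives uniqueness.
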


\begin{proof}
Adapting \cite[Chapter 7]{Vi2}, \cite{Lis} or Proposition \ref{pro:ap_egale_emu} to our context we obtain the first part of the theorem for every $d\geq 1$. This requires slight modifications that we do not detail: Villani's chapter is in fact written for geodesic curves $(\mu_t)_t$ between prescribed $\mu_0$ and $\mu_1$ whereas Lisini's processes are attached to curves $(\mu_t)_{t\in [0,1]}$ of finite energy but the processes of the sequence are constant on each interval between two consecutive points of the partition, whereas ours is linear. Note, as an indication, that our measures $\Gamma_n$ minimize ${\cal A}$ in $\{\Gamma\in \p(\mathcal{C}([0,1],\R^d)):\,\forall r\in R_n,\,\Gamma^r=\mu_r\}$, the minimum being $\A(\Gamma_n)=\EE(\mu,R_n)$.

In case $d=1$, take as before a nested sequence $(R_n)_n$ such that $\como_{[R_n]}$ converges to $
\mq$ in $\p(\C)$. Up to taking a subsequence, the same sequence of partitions permits $\Gamma_n\in\disp_{R_n}$ to converge to some $\Gamma$. By Definitions \ref{defi:quantile_discretement_markovien} and \ref{defi:disp}, for every $S\subset R_n$ the measure $(\pr^S)_\#\Gamma_n$ coincides with $(\pr^S)_\#\como_{[R_n]}$ and:
\[
(\pr^S)_\#\Gamma=(\pr^S)_\#\mq.
\]
As $R_\infty$ is dense in $[0,1]$ and the measures are concentrated on $\mathcal{C}$ it follows that $\Gamma=\mq$. This proves the existence part in case $d=1$

For the uniqueness statement, take as before a nested sequence $(R_n)_n$ and let $\Gamma_n$ be the single element of $\disp_{R_n}$ (see Remark \ref{rem:singleton}). Assume that $(\Gamma_n)_n$ has a Markov limit $\Gamma$. By Definitions \ref{defi:quantile_discretement_markovien} and \ref{defi:disp}, for every $S\subset R_n$ the measure $(\pr^S)_\#\Gamma_n$ coincides with $(\pr^S)_\#\como_{[R_n]}$. Using the same argument as for Proposition \ref{pro:ap_egale_emu}, up to taking a subsequence, $(\como_{[R_n]})_n$ converges to an element of $\ma_\C(\mu)$ that we denote by $\Gamma'$. Hence for every $S\subset R_\infty$,
\[
(\pr^S)_\#\Gamma=(\pr^S)_\#\Gamma'.
\]
As $R_\infty$ is dense in $[0,1]$ and the measures are concentrated on $\mathcal{C}$ it follows $\Gamma'=\Gamma$. Note now that for every $n\in \N$, the measure $\como_{[R_n]}$ has increasing kernel, so that it also holds for $\Gamma'$; similarly $\mdec$ is stabilised by  $\como_{[R_n]}$ and $\Gamma'$ (Remark \ref{rem:preserver_mdec_ferme} and Lemma \ref{lem:transitions_croissantes_limite}). Finally $\Gamma'$ is a process satisfying \ref{item:markov} and \ref{item:transitions} of Theorem \ref{them:a}. For $s<t$ we have, on the one hand, $\Gamma^{s,t}\geqlc \mq^{s,t}$ because the Markov-quantile measure is minimal (Theorem \ref{them:a}\ref{item:minimal}). On the other hand, for every $s<t$ in $R_\infty$ we have $\mq^{s,t}\geqlc \Gamma^{s,t}$ because $(\Gamma')^{s,t}$ is a limit of products of quantile couplings, and $\mq^{s,t}$ is defined in Proposition \ref{pro:comp_conv} as a supremum in this class for $\leqlc$. Therefore the Markov processes $\Gamma=\Gamma'$ and $\mq$ have the same law on $R_\infty$, hence coincide as measures on $\C$. 
\end{proof}

\begin{rem}
We have seen that both $\como\in \ma(\mu)$ and $\mq\in\ma(\mu)$ minimize ${\cal A}$. The quantile measure $\como$ is also the optimizer of another multi-marginal transportation problem raised by Brendan Pass in \cite{Pass}. It minimizes:
$$\Gamma\in \ma(\mu_t)\mapsto \int_{\mathcal{C}([0,1],\R)}\left(\varphi \left(\int_0^1\gamma(t)\,\dd t \right)\right)\dd\Gamma(\gamma)$$
where $\varphi$ is strictly convex and is the unique minimizer. Some assumptions are required (see \cite[Part 2]{Pass}) but some of them can probably be relaxed.  The paper is based on the fact that $P\in\ma(\mu_1,\ldots,\mu_d)\mapsto \int\,\varphi(x_1+\cdots+x_d)\,\dd P(x_1,\ldots,x_d)$ is minimized by $\como(\mu_1,\ldots,\mu_d)$.
\end{rem}

\section{Examples and open questions}\label{sec:exemples}
\subsection{Example of Markov-quantile processes attached to discrete measures on $\N$}

In this section $x_+$ is the positive part $\max\{0,x\}$ of any $x\in\R$.

\begin{ex}[Discrete measures]\label{ex:integers}
Let $(\mu_t)_{t\in [0,1]}$ be concentrated on $\N$ for every $t$ and assume that for every $k\in \N$ the map $A_k:t\mapsto \sum_{i=0}^k \mu_t(i)$ is in $\mathcal{C}^1([0,1])$ and piecewise monotone (e.g., $A_k$ is analytic). Let moreover $A_{-1}$ be the zero constant function. We assume that:
$$\max\left(\frac{-A'_k(t)}{\mu_t(k)},\frac{A'_{k-1}(t)}{\mu_t(k)}\right)$$
is bounded from above for $(t,k)\in[0,1]\times \mathbb{N}$. Then, using the characterization of the Markov-quantile process as a limit of quantile couplings, namely Theorem \ref{them:a}\ref{item:limite_de_compose}, it can be proved that the Markov-quantile process $(X_t)_{t\in [0,1]}$ is the time continuous Markov chain with jump rate $q_{k,k+1}=\frac{(-A'_k(t))_+}{\mu_t(k)}$ from $k$ to $k+1$, and $q_{k,k-1}=\frac{(A'_{k-1}(t))_+}{\mu_t(k)}$ from $k$ to $k-1$ and $q_{k,j}=0$ for $|j-k|\neq 1$. Denoting $\P(X_t=k)$ by $p_k$ it means that the so-called forward Kolmogorov--Chapman system is satisfied:
\begin{align*}
\frac{\dd p_k}{\dd t}(t)=
\begin{cases}
p_{1}q_{1,0}-p_0q_{0,1}&\text{if }k=0,\\
p_{k+1}q_{k+1,k}+p_{k-1}q_{k-1,k}-p_k(q_{k,k-1}+q_{k,k+1})&\text{if }k\in \N^*,\
\end{cases}
\end{align*}
where the derivative is a right derivative. Recall that the jump rate is defined for $i\neq j$ by:
\[
q_{i,j}(t)=\lim_{h\to 0^+}\frac{\P(X_{t+h}=j|\,X_t=i)}{h}.
\]
The classical theory that can be read in Feller's book \cite[Chapter XVII, section 9]{Fe_t1} and the references therein (see also \cite{Doe}) ensures that our process is solution of the forward Kolmogorov--Chapman system. The uniqueness of the solution for a Markov process is obtained from the uniform bound on the rates $q_{i,j}(t)$.

In place of a complete proof let us compute the jump rate in a typical case. Notice before that similar computations can be found in \cite[Section 4]{Ju_ejp}. We are looking for the jump rate $q_{k,k+1}(t)$ in the case of $A:=A_{k-1}$ and $B:=A_k$ locally decreasing on the right of $t$. At every time $t$ the atomic measure $\mu_t$ is completely described by the partition of the interval $[0,1]$ of quantile levels through the sequence $(A_k(t))_{k\in \N}$. Indeed, $]A_{k-1}(t),A_k(t)[\subset [0,1]$ is the interval
 of the quantile levels of the atom $\mu_t(k)\delta_k$. Recall that both $A$ and $B$ are in $\mathcal{C}^1([0,1])$. We can assume that $h$ is so small that $B(t+h)>A(t)$. For $\{r',r''\}\subset[t,t+h]$ with $r'<r''$ and $r''$ close to $r'$, the quantile coupling between $\mu_{r'}$ and $\mu_{r''}$ transports the main part of the mass of the atom $\mu_{r'}(\{k\})\delta_k$ on itself and the rest on the atoms $\mu_{r''}(\{k'\})\delta_{k'}$ with $k'>k$. We aim at proving that the conditional probability to be still in $k$ at time $t+h$ is:
\begin{align}\label{eq:grando}
1-\frac{B'(t)}{(B-A)(t)}h+O(h^2).
\end{align}
Since the probability to jump more than twice is $O(h^2)$, \eqref{eq:grando} furnishes the announced jump rate $q_{k,k+1}(t)=(-A'_k(t))_+/\mu_t(k)$ in the case of decreasing functions. So let us prove \eqref{eq:grando}.

We consider a partition $R=\{r_0,\ldots,r_{m}\}$ of $[t,t+h]$ with $(r_0,r_{m})=(t,t+h)$ and the discrete quantile Markov chain associated with it. As $A$ and $B$ are decreasing, note that no mass can leave the quantile level interval $[A,B]$ and come back on it in the same time interval $[t,t+h]$. On $[r_n,r_{n+1}]$ the probability to stay in the interval is $\frac{B(r_{n+1})-A(r_n)}{B(r_n)-A(r_n)}=1-\frac{B(r_n)-B(r_{n+1})}{B(r_n)-A(r_n)}$ as one can easily convince oneself with a picture similar to the left part of Figure \ref{figure}. We let the proof of the following fact to the reader: there exists $\delta=O(h)$ such that for every $r'<r''$ in $[t,t+h]$ we have:
$$\e^{\left(-(1+\delta)\frac{B'(t)(r''-r')}{(B-A)(t)}\right)}\leq \frac{B(r'')-A(r')}{B(r')-A(r')}\leq \e^{\left(-(1-\delta)\frac{B'(t)(r''-r')}{(B-A)(t)}\right)}.$$
We obtain this estimate for each interval $[r_n,r_{n+1}]\subset [t,t+h]$. Multiplying all together, we see that the probability to stay on the same state after $m$ steps is in $\bigl[\exp\bigl(-(1+\delta)\frac{B'(t)}{(B-A)(t)}h \bigr),\exp\bigl(-(1-\delta)\frac{B'(t)}{(B-A)(t)}h \bigr)\bigr]$, where $\delta=O(h)$. A simple Taylor expansion gives \eqref{eq:grando}.\end{ex}

\begin{ex}[Poisson distributions]\label{ex:Poisson}
Elaborating on the last example we consider, for $t\in \R^+$, $\mu_t=\mathcal{P}(t)$ where $\mathcal{P}(t)$ is the Poisson law of parameter $t$. In this case $A_k(t)=\sum_{i=0}^k\exp(-t)t^i/i!$ so that the jump rate $q_{k,k+1}(t)$ is constantly $1$ for every $k$ and $t$, and the other rates are zero. We recover the Poisson process. Note that the Poisson laws are in stochastic order, which matches with the increasing trajectories of the Poisson counting process.
\end{ex}

\begin{ex}[Binomial distributions]\label{ex:Binomial}
In this example $\mu_t=\mathcal{B}(n,t)$ for $t\in [0,1]$. Let us define a Markov process $X=(X_t)_{t\in[0,1]}\in\ma((\mu_t)_t)$ and compute its jump rates; we will then see that $\law(X)=\mq$. We define $X$ on the probability space $[0,1]^n$ by $X:(\alpha_1,\ldots,\alpha_n)\mapsto\sum_{k=0}^n \one_{\alpha_k\in[0,t]}$, so its law is $\mu_t$. The fact that $(X_t)_{t\in[0,1]}$  is Markov comes from the following coarse argument: provided $k$ coordinates of $\alpha=(\alpha_1,\ldots,\alpha_n)$ are smaller than $t$, the distribution is uniform on $[0,t]^k$ for the $k$ coordinates of the past  of $t$ and on $[t,1]^{n-k}$ for the $n-k$ of its future. Between $t$ and $t+h$ the probability to have (at least, as well as exactly) one jump is $(n-k)\frac{h}{1-t}+O(h^2)$. As $A_k(t)=\sum_{i=0}^k {n \choose i} t^i(1-t)^{n-i}$ with the notation of Example \ref{ex:integers}, it can easily be checked that $\frac{(n-k)}{1-t}=\frac{A'_k(t)}{\mu_t(k)}$, which proves that $(X_t)_{t\in[0,1]}$ is the Markov-quantile process attached to $(\mu_t)_{t\in[0,1]}$. This example could be of interest with respect to previous works on the entropic interpolation on graphs as, e.g.,  \cite{Hil_ejp,Leo16}.
\end{ex}

\subsection{Example of Markov-quantile transport processes}

The following examples are related to \S\ref{sec:cont_eq}. In particular we will consider processes tangent to a non-autonomous vector field on $\R$. Basically, in the examples, $\mu_t$ is made of two parts that are translated in opposite directions and cross. We examine three crossing situations for atomic or diffuse measures.

\begin{ex}[One atom crossing a diffuse measure]\label{ex:strongmarkov}
Consider $\mu=(\mu_t)_{t\in [0,1]}$ with $\mu_t=\frac12 \lambda\lfloor_{[t-3/4,t-1/4]}+\frac12 \delta_{0}$. This is the family of marginals of a simple process $\Gamma$ with affine trajectories, defined by $\Gamma(t\mapsto 0)=1/2$ and $\Gamma(\{t\mapsto x_0+t:\,x_0\in A\})=\lambda\lfloor_{[-3/4,-1/4]}(A)$. This is not the Markov-quantile process attached to $\mu$ but it is a Markov process and it is tangent to the optimal vector field of Theorem \ref{them:action}\ref{pt:eul}, namely:
\begin{align*}
v_t(x)=
\begin{cases}
0&\text{if }x=0,\\
1&\text{otherwise,}
\end{cases}
\end{align*}
so the action $\A(\Gamma)$ equals the minimal value $\mathcal{E}(\mu)$.

The Markov-quantile process $(X_t)_{t\in [0,1]}$ attached to $(\mu_t)_{t\in[0,1]}$ can be described as follows: the trajectories start according to $\mu_0$ and are piecewise affine, with pieces taken from the affine curves above. Provided $X_0\in [-3/4,-1/4]$, the first piece is $X_t=X_0+t$ on $[0,\tau]$ where $-X_0=\tau$. The second affine piece is constant equal to zero on $[\tau,\min(\tau+\eta,1)]$ where $\eta$ is an exponential random variable of parameter $2$, independent from $X_0$. The third piece, if it exists, is affine of slope $1$, namely $X_t=t-(\tau+\eta)$ on $[\tau+\eta,1]$.

Unlike $\Gamma$, the process $(X_t)_{t\in [0,1]}$ has increasing kernels and is a \emph{strongly} Markov process.
\end{ex}

\begin{ex}[Crossing of two purely atomic measures]\label{ex:rationnal}
Consider two measures $\alpha$ and $\beta$ of mass $1/2$, concentrated on the rational numbers of $[0,1]$, with finite or infinite support. Let $\tau_t$ be the translation of vector $t$ in $\R$. Set $\mu=(\mu_t)_{t\in\R}=((\tau_t)_\#\alpha+(\tau_{-t})_\#\beta)_{t\in\R}$. As in Example \ref{ex:strongmarkov} the measure $\Gamma\in \ma(\mu)$ is concentrated on the space of piecewise affine paths (of slopes $1$ and $-1$) is a minimizer of the action. The two measures $(\tau_t)_\#\alpha$ and $(\tau_{-t})_\#\beta$  are concentrated on $\Q$ when $t\in \Q$ and they are singular if $t\notin \Q$. Hence according to Proposition \ref{pro:barycentre} the optimal vector field $(v_t)_{t\in [0,1]}$ satisfies $\Gamma\otimes \lambda\lfloor_{[0,1]}$-almost surely $v_t=\pm 1$. It can be checked that the Markov-quantile process is again piecewise affine with a random finite number of changes of slope. Interesting exercises on the Markov-quantile process can be considered, as for instance finding the probability for a trajectory coming from $-\infty$ in $-\infty$ to tend to $+\infty$ in $+\infty$. Note that the situation seems to be well approached by truncating the measure to finitely many `big' atoms. This corresponds to the case of $\alpha$ and $\beta$ with finite support. In this particular case the above mentioned exercise reduces to the so-called `gladiator game' \cite{KLN} that is a stochastic version of Borel's Blotto game \cite{RSY}.
\end{ex}

\begin{ex}[Crossing of two diffuse measures]\label{ex:continuous}
Consider $\mu_t=\la\lfloor_{[t-2,t-1]}+\linebreak[1]\la\lfloor_{[1-t,2-t]}$ and again $\Gamma$ such that $\Gamma(\{t\mapsto t+x_0:\,x_0\in A\})=\la\lfloor_{[-2,-1]}(A)$ and $\Gamma(\{t\mapsto x_0-t:\,x_0\in A\})=\la\lfloor_{[1,2]}(A)$. Unlike in the previous examples, $\Gamma$ does not minimize $\A$ on $\ma_\C(\mu)$. All the measures $\mu_t$ are continuous so that the Markov-quantile process $(X_t)_{t\in \R}$ is the quantile process. It is affine by part and continuous. With probability $1/2$, in fact if $X_0\leq 0$, first it has slope $1$, then slope $0$ on $[\frac{1-X_0}{2},\frac{5+X_0}{2}]$ and finally slope $-1$. If $X_0>0$, the process $(X_t)_{t}$ starts with slope $-1$, is flat on $[\frac{1+X_0}{2},\frac{5-X_0}{2}]$ and continues with slope $1$ after $\frac{5-X_0}{2}$.
\end{ex}

\subsection{Theoretic Markov-quantile processes}

\begin{ex}[One atom with regular level functions]\label{ex:swimming_atom}
Take $(\mu_t)_{t\in{[0,1]}}$ such\linebreak[4] that for every $t$, $\mu_t$ has exactly one atom $x_t\in \R$ and the interval of quantile levels of this atom at time $t$ is $\op]A(t),B(t)\clo[$. Assume moreover that $A$ and $B$ are of class $\mathcal{C}^1$ and piecewise monotone. Then the Markov-quantile process $(X_t)_{t\in [0,1]}$ can be described using two Poisson point processes of jump rates $(A')_+/(B-A)$ and $(B')_-/(B-A)$. Conditionally on $F_{\mu_t}(X_t)\in \op]A(t), B(t)\clo[$, we have $X_t=x_t$ until the next time $t_0\geq t$ in the point process. Then the process $(X_t)_t$ leaves $x_t$ and starts a piece of quantile trajectory constant in the space $[0,1]$ of quantile levels with value $A(t_0)$ or $B(t_0)$. The process may hit again $x_t$ if there exists some $t_1>t_0$ with $A(t_1)=x_{t_0}$, or $B(t_1)=x_{t_0}$ respectively.
\end{ex}

The next remark is of general interest and particularly significant with respect to Example \ref{rem:a_bouger}. It presents the Markov-quantile process as one end of the spectrum of processes of law in $\ma(\mu)$ that satisfies \ref{item:transitions} of Theorem \ref{them:a}, i.e.\ have increasing kernels, the other end of which is the independent process.

\begin{rem}\label{rem:a_bouger}
The minimality condition \ref{item:minimal} of Theorem \ref{them:a} satisfied by the Markov-quantile process $(X_t)_t$ attached to some $(\mu_t)_t$ can also be stated as follows. For every process $(Y_t)_{t\in \R}$ satisfying \ref{item:markov} and \ref{item:transitions} of Theorem \ref{them:a}, for every $s<t$ and every $x\in \R$ it holds:
\[
\law(X_t|\,X_s\leq x)\leqs \law(Y_t|\,Y_s\leq x).
\]
A similar relation that concerns maxima of $\leqs$ in place of minima is satisfied by the independent process $(Z_t)_{t\in \R}$. If a process $(Y_t)_t$ has increasing kernels, we obtain:
$$\law(Y_t|\,Y_s\leq x)\leqs \law(Y_t|\,Y_s< +\infty)=\law(Y_t)=\law(Z_t|\,Z_s\leq x).$$
We conclude with the following result: Assume that for some $s<t$ and $(X_t)_t$ the Markov-quantile process, $X_s$ is independent of $X_t$. Then for any process $(Y_t)_t$ satisfying \ref{item:markov} and \ref{item:transitions} of Theorem \ref{them:a}, we have for every $x\in\R$
\[
\mu_t=\law(X_t|\,X_s\leq x)\leqs \law(Y_t|\,Y_s\leq x) \leqs \law(Z_t|\,Z_s\leq x)=\mu_t
\]
so that, due to the Markov property, for every $s'\leq s$ and $t'\geq t$, $Y_{s'}$ and $Y_{t'}$ are independent.
\end{rem}

\begin{ex}[Two atoms]\label{ex:two_atoms}
We set $\mu_t=a(t)\delta_0+b(t)\delta_1$ with $a+b=1$ but do not assume any regularity on the functions $a$ and $b$. Let $(X_t)_t$ be the Markov-quantile process. We shall show that $X_s$ and $X_t$ are independent if and only if the total variation of $a$ (or $b$) on $[s,t]$ is infinite or $m_{s,t}\eqdef \min(\inf_{[s,t]}a,\inf_{[s,t]} b)=0$. If $m_{s,t}=0$ the independence is true for any Markov process. Indeed, by assumption, for any $\eps>0$, we may take $r$ such that $\P(X_r=0)$ is small enough so that $\P(X_r=0|X_s=0)\leqslant\eps$, $\P(X_r=1|X_s=0)\geqslant1-\eps$, and $|\P(X_t=0|X_r=1)-\P(X_t=0)|\leqslant\eps$. Then:
$$\begin{array}{r@{\,\,}c@{\,\,}l}
|\P(X_t=0|X_s=0)-\P(X_t=0)|&=&\bigl|\P(X_t=0|X_r=0)\P(X_r=0|X_s=0)\\[.7ex]
\multicolumn{3}{l}{\quad+\P(X_t=0|X_r=1)\P(X_r=1|X_s=0)-\P(X_t=0)\bigr|\ \ \text{since $X$ is Markov}}\\[.7ex]
&\leqslant&|\P(X_t=0|X_r=1)-\P(X_t=0)|+2\eps\\[.7ex]
&\leqslant&3\eps,\\[.7ex]
\end{array}$$
which is the wanted independence.

Hence, we assume that $m_{s,t}>0$ and that $a$ takes values in $[m_{s,t},1-m_{s,t}]$ in $\mu_t=a(t)\delta_0+b(t)\delta_1$. We are left with the task to prove that independence is equivalent to a infinite total variation of $a$ on $[s,t]$. Let $\theta_0$ be the uniform measure on $[0,a(s)]$.  Our goal reduces to establishing $\la=\theta_0 \ell_{[s,t]}\eqdef\stosup_R \theta_0\ell_{r_1}\ell_{r_2}\cdots \ell_{r_m}$ where $R$ ranges among the partitions $\{r_0,\ldots, r_m\}$ with $(r_0,r_m)=(s,t)$. For the measures under consideration, if $a(r_{k-1})\leq a(r_k)\leq a(r_{k+1})$ or $a(r_{k-1})\geq a(r_k)\geq a(r_{k+1})$ it holds $\ell_{r_{k-1}} \ell_k \ell_{k+1}=\ell_{r_{k-1}} \ell_{r_{k+1}}$. Therefore we can assume without loss of generality that the sequence $(a(r_k))_{k=0,\ldots,m}$ has increments with alternating sign, for instance $a(r_{2k+1})\geq a(r_{2k})$ for every $k$. We define $\theta_n=\theta_0 \ell_{r_0}\cdots \ell_{r_n}$.

The measure $\theta_n$ can be written in the form:
\[
\theta_n=d_n\la_{[0,a(r_n)]}+d'_n\la_{[a(r_n),1]}\in \mdec
\]
where $d_n=a(r_n)^{-1}\theta_n([0,a(r_n)])$ in fact parametrizes the complete measure. Note, after Remark \ref{rem:a_bouger} that $\theta_n\leqs \la$, which means $d_n\geq 1\geq d'_n$. As $a(r_n)\in[m_{s,t}, 1-m_{s,t}]$, the sequence converges to $\la$ if and only if $d_n\to 1$.

Recalling the effect of the kernel $\ell_{r_{n+1}}$, described on Figure \ref{figure2} and defined in Notation \ref{nota:Asx}\ref{p:l_R_fini}, we find:
\begin{align*}
d_{n+1}-1=
\begin{cases}
(d_{n}-1)\frac{a(r_{n+1})}{a(r_n)}&\text{if }a(r_{n+1})<a(r_n),\\
(d_n-1)\frac{b(r_{n+1})}{b(r_n)}&\text{otherwise.}
\end{cases}
\end{align*}
The product $\Pi_{n=1}^m\min(\frac{a(r_{n+1})}{a(r_n)},\frac{1-a(r_{n+1})}{1-a(r_n)})$ can be arbitrarily close to zero (over all partitions of $[s,t]$) if and only if $a\in[m_{s,t},1-m_{s,t}]$ has infinite total variation. This proves the claimed equivalence.
\end{ex}

\begin{ex}[One atom on the lower levels]\label{ex:exponential}
Consider $(\mu_t)_{t\in{[0,1]}}$ such that for every $t$, $\mu_t$ has exactly one atom and this atom is between the quantile levels $A(t)=0$ and $B(t)$. An example is $\mu_t=B(t)\delta_0+(1-B(t))\mathcal{E}(1)$ where $\delta_0$ is the Dirac mass in zero and $\mathcal{E}(1)$ the exponential law of parameter $1$. No regularity assumption is made on $B$. Similar observations as in Example \ref{ex:two_atoms} permit us to specify the kernel between time $s$ and $t>s$. Let $\alpha_{s,t}$ be $\sup_{r\in[s,t]}B(r)$. Then $L_{[s,t]}$ is simply the uniform measure of mass $\alpha_{s,t}$ on $[0,\alpha_{s,t}]^2$ plus the one-dimensional uniform measure of mass $1-\alpha_{s,t}$ on the diagonal between $(\alpha_{s,t},\alpha_{s,t})$ and $(1,1)$. The same for the kernel $\ell_{s,t}$ reads:
\[
\ell_{s,t}(x,\cdot)=
\begin{cases}
\alpha_{s,t}^{-1}\la\lfloor_{[0,\alpha_{s,t}]}&\text{if }x\leq \alpha_{s,t},\\
\delta_x&\text{if }x>\alpha_{s,t}.
\end{cases}
\]
A particle of quantile value $\leq \alpha_{s,t}$ at time $s$ is uniformly mapped at time $t$ on the particles of quantile levels $[0,\alpha_{s,t}]$. If the quantile value at time $s$ is greater that $\alpha_{s,t}$, the particle keeps on with the same level until time $t$ as if it were the quantile process. 
\end{ex}

\subsection{Transformations of Markov-quantile processes}

\begin{ex}[Markov-quantile processes]\label{ex:quantile}
According to $(\mu_t)_t$, a quantile process may be Markov or not. Recalling Remark \ref{rem:como_markovien} \ref{p2:rem:como_markovien}, if $\como$ is Markov it coincides with the Markov-quantile process. As proved in \cite[Proposition 3]{Ju_seminaire}, the criterion is the following: the process is \emph{not} Markov if and only if there exists $\alpha\neq \alpha'\in [0,1]$ and $t_1<t_2<t_3$ such that the $\alpha$-quantile and the $\alpha'$-quantile of $\mu_{t_2}$ are equal but that those of $\mu_{t_1}$ and $\mu_{t_3}$ differ. This can be summarized saying that ``X's are forbidden'' where X refers to the shape of the letter, the four ends being $G_{\mu_1}(\alpha)$, $G_{\mu_1}(\alpha')$, $G_{\mu_3}(\alpha)$ and $G_{\mu_3}(\alpha')$, the intersection being $G_{\mu_2}(\alpha)=G_{\mu_2}(\alpha')$. Other letters like O, Y and Z are allowed.
\end{ex}

\begin{rem}[Reversal of time and twist of space and time]\label{ex:reversed}
If $(X_t)_{t\in \R}$ is\linebreak[4] the Markov-quantile process attached to $(\mu_t)_{t\in\R}$ then $X_{-t}$ is the Markov-quantile process in $\ma((\mu_{-t})_{t\in \R})$. This comes from Theorem \ref{them:a} \ref{item:limite_de_compose} on the limit of compounds of quantile couplings and the fact that $\trans \como(\mu,\nu)=\como(\nu,\mu)$. More generally, for every homeomorphism $\varphi$ from $\R$ into $\R$, $X_{\varphi(t)}$ has law the Markov-quantile measure of $\ma((\mu_{\varphi(t)})_{t\in \R})$. Of course non-injective monotone continuous map $\varphi$ may be used too. 

Moreover if for all $t$, $f_t:\R\to \R$ are strictly monotone functions with the same orientation for all $t$, the process $f_t(X_t)$ is a Markov-quantile process.
\end{rem}

\begin{rem}
The Markov-quantile process being time-reversible, and as a coupling $P\in \ma(\mu,\nu)$ has increasing kernel if and only if $\trans P\in\mdec(\nu,\mu)$ and $P \leqlc Q$ is equivalent to $\trans P\leqlc \trans Q$ (recall Remark \ref{rem:stable} and Definition \ref{defi:geqlc}), points \ref{item:transitions} and \ref{item:minimal} of in Theorem \ref{them:a} can be replaced by {\bf (ii')}: For every $s<t$, $\mq^{s,t}\in\mdec(\mu_s,\mu_t)$, and {\bf (iii')}: for every $s<t$, $\mq^{s,t}$ is a minimal coupling for $\leqlc$ among the processes satisfying \ref{item:markov} and (ii').
\end{rem}

\subsection{Open questions}

\subsubsection{Markovinification}\label{ss:marko}
\maz
\point\ We may interpret the resulting process in Theorem \ref{them:sous_n} as the Markov process that has infinitesimally the same transitions as $P$. However, as it depends on the choice of the partitions, this Markov process is not a priori uniquely determined. At which conditions is this Markov process \emph{uniquely determined} and how can it be characterized?  If the initial process is a quantile process, we proved in Theorem \ref{them:a} that the answer is yes, without condition,  and characterized it using orderings.

\point\ In Theorem \ref{them:b} we proved that Markovinification of the quantile process occurs for processes $(\como_{[R_n]})_n$ in place of consistent (see Definition\ \ref{defi:consist}) transport plans. Does an \emph{existence} statement analogous to Theorem \ref{them:sous_n} happen when we consider sequences $(P_{[R_n]})_{n}$?.

\subsubsection{The Kellerer theorem in dimension $d\geq 2$}\label{ss:Kel_dimsup}
Kellerer proved Theorem \ref{them:kellerer_intro} for martingales in $\R$. For measures $(\mu_t)_t$ on $\R^d$, increasing in convex order, it is known that there exists an associated martingale \cite{HiRo13,BHS} but not whether one of them is Markov. This is a major question. Note that another interpretation of the question in higher dimension is possible when considering martingales indexed by a multidimensional set (see \cite[Problem 7b]{HPRY}). This problem was solved in \cite{Ju_seminaire}.

\subsubsection{Markov Kamae--Krengel theorem}\label{ss:marko_kk}
Kamae and Krengel proved in \cite{KaKr} that if $(\mu_t)_{t\in \R}$ are measures on a partially ordered Polish space $E$ such that $t\mapsto\mu_t$ is increasing for the stochastic order, in the sense that $t\mapsto \int f\dd\mu_t$ is increasing for any increasing bounded $f:E\mapsto \R$, there exists an increasing process $(X_t)_t$ with law in $\ma(\mu)$. We proved in Theorem \ref{them:a} and \ref{them:c} that if $E$ is $\R$, the process can moreover be Markov. A natural problem is whether this is also true for any $E$. 

\subsubsection{A Markov minimizer for the action in metric spaces}\label{ss:unique}
\maz
\point\ In part \ref{sec:cont_eq}, devoted to the continuity equation, Theorem \ref{them:commun_d_et_1} presents a uniqueness statement for the Markov-quantile process. The second part is for $d=1$ only but the statement makes sense in $\R^d$ for $d\geq 2$. Is it still true then? With Proposition \ref{pro:barycentre} \ref{pt2:bary} this would in particular mean that there exists a \emph{Markov} representation of the continuity equation.

\point\ The action $\A$ and energy $\EE$ are defined on metric spaces. Definition \ref{defi:disp} can also be extended to geodesic Polish metric spaces $\mathcal{X}$ in a natural way based on processes representing the geodesics of $\p_2(\mathcal{X})$ as in \cite[Corollary 7.22]{Vi2} or \cite[\S 2.2]{AGguide}. The first part of Theorem \ref{them:commun_d_et_1} is also true in this setting; we did not prove it to avoid technicalities. However, the question of the existence of an analogue to the Markov-quantile process on such a metric space seems us very interesting from an Optimal Transport perspective.

\subsubsection{Strong Markov property}\label{ss:strong}
The Markov-quantile process is not the unique process that minimizes the energy $\EE$ as is shown in Example \ref{ex:strongmarkov}. However the process in this example is strongly Markov. Is the Markov-quantile process strongly Markov? Does this characterize it?

\bibliographystyle{abbrv}
\def\cprime{$'$}

\end{document}